\documentclass[11pt]{amsart}
\usepackage{amsmath,amssymb,a4wide,scalerel}
\usepackage{mathabx}
\usepackage{color}
\usepackage{xcolor,graphicx}
\usepackage{mathrsfs}
\usepackage{tikz,pgfplots}
\usepackage{subcaption}
\usepackage[normalem]{ulem}
\usepackage{hyperref}
\usepackage{dsfont}
\usepackage{comment}
\usepackage{enumitem}
\usepackage{ctable} % for \specialrule command

\newtheorem{theorem}{Theorem}
\newtheorem{lemma}[theorem]{Lemma}
\newtheorem{proposition}[theorem]{Proposition}
\newtheorem{remark}[theorem]{Remark}
\newtheorem{corollary}[theorem]{Corollary}

\newtheorem{assumption}{Assumption}
\newtheorem{example}{Example}

\newcommand{\dt}{\tau}

\newcommand{\R}{{\mathbb R}}
\newcommand{\N}{{\mathbb N}}
\newcommand{\E}{{\mathbb E}}

\newcommand*\dd{\mathop{}\!\mathrm{d}}

\newcommand{\sI}{{\upshape DP(0.5;1)}}
\newcommand{\sII}{{\upshape DP(0.5;2)}}
\newcommand{\hosI}{{\upshape DP(1.0;1)}}
\newcommand{\hosII}{{\upshape DP(1.0;2)}}

\usepackage{color}

\author[C.-E. Br\'ehier]{Charles-Edouard Br\'ehier}
              \address{Universite de Pau et des Pays de l'Adour, E2S UPPA, CNRS, LMAP,Pau, France}
              \email{charles-edouard.brehier@univ-pau.fr}

\author[D. Cohen]{David Cohen}
              \address{Department of Mathematical Sciences,
              Chalmers University of Technology and University of Gothenburg, 41296~Gothenburg, Sweden}
              \email{\tt david.cohen@chalmers.se}

\begin{document}

\title[Explicit domain preserving schemes for SDE\MakeLowercase{s}]{Explicit domain preserving numerical schemes for a class of stochastic differential equations}

\begin{abstract}
We construct and analyze numerical schemes for systems of stochastic differential equations, which preserve almost surely a given hypercube of arbitrary dimension. We propose a new general class of explicit schemes, such that for any choice of the time-step size the numerical solution takes values in the hypercube. We prove strong and weak convergence results for this general class of domain preserving numerical schemes, with strong order $1/2$ and weak order $1$ in general. We also construct a variant of the scheme which achieves strong order $1$ when the stochastic differential equation is driven by a one-dimensional Brownian motion. The convergence results are illustrated with numerical experiments. 
\end{abstract}

\maketitle
{\small\noindent
{\bf AMS Classification.} 60H10, 60H35, 65C30.

\bigskip\noindent{\bf Keywords.} Stochastic differential equations on hypercubes. Geometric numerical integration. Domain preserving numerical schemes. Strong and weak convergence. 

\section{Introduction}

Stochastic differential equations whose solutions are constrained almost surely to take values in a bounded domain arise in many areas of applied mathematics, biology, or epidemiology for instance. A prominent example is the stochastic Susceptible-Infected-Susceptible (SIS) epidemic model, see e.g. \cite{MR2821582}. The solution to this problem represent population proportions and thus must remain inside a bounded interval. For any given time-step size, standard numerical methods such as the Euler–Maruyama or Milstein schemes almost surely do not preserve bounded domains.

To address domain preservation, various numerical techniques have been proposed in the literature, which are briefly surveyed below. For the scalar stochastic SIS epidemic model, numerical schemes with strong order $1$ are proposed in the articles \cite{MR4274899,MR4220738,MR4790879} applying the Lamperti transform which allows to obtain a stochastic differential equation driven by additive noise, whereas an alternative numerical scheme with strong order $1$ using a logarithm transformation and the Milstein scheme is studied in the article~\cite{MR4792751}. Finally, the article~\cite{MR4444727} shows strong rate $1/2$ of convergence of a split-step numerical scheme with truncated noise.

Further domain preserving numerical schemes for other classes of stochastic differential equations have been proposed and analyzed in the articles \cite{MR2341800,MR3248050,MR4888024,MR4737060,erdogan2025}. The article~\cite{MR2341800} applies a domain preserving splitting scheme with strong order $1$ for a class of stochastic differential equations with regular enough coefficients. The Lamperti transform is applied in the articles~\cite{MR3248050,MR4888024,MR4737060,ulander2026} for the design and analysis of domain preserving schemes for scalar stochastic differential equations with strong order $1, 1.5$ and $2$. Finally, domain preserving schemes of strong order $1/2$ for systems of stochastic differential equations with solutions in an open hypercube are given in \cite{erdogan2025}. The numerical schemes proposed in \cite{erdogan2025} are based on positivity preserving numerical schemes for stochastic problems, a subject studied extensively, see for instance \cite{MR4795576,MR4729657,MR4780408,MR2186814, MR2898556, MR3433041, MR2367990, MR3732573, MR3082312, MR3331648, MR4242953, 
MR4847179,MR5004937,djurdjevac2026,bc26o1}. Observe that many existing domain preserving schemes are either tailored to scalar stochastic differential equations or restricted to specific classes of diffusion coefficients. 

In this article, we introduce and analyze an original and versatile class of explicit domain preserving numerical schemes for a class of stochastic differential equations. Precisely, almost surely the exact and numerical solutions, for any time-step size, preserve the hypercube $\mathcal{D}=[-1,1]^d$ in arbitrary dimension $d$. The stochastic differential equations are driven by multi-dimensional noise and may contain some drift component. We mainly consider stochastic differential equations with It\^o interpretation of the noise, however it is also shown how to adapt the proposed domain preserving schemes when the noise is interpreted in the Stratonovich sense.

To make the contributions of this article clear, in this section for pedagogical reasons we consider scalar stochastic differential equations
\begin{equation}\label{eq:SDEintro}
\left\lbrace
\begin{aligned}
&\dd X(t)=g(X(t))\dd B(t),\qquad t\ge 0,\\
&X(0)=x_0\in[-1,1]
\end{aligned}
\right.
\end{equation}
driven by a one-dimensional Brownian motion denoted by $\bigl(B(t)\bigr)_{t\ge 0}$, and with no drift component. Assuming that the function $g$ is Lipschitz continuous and satisfies the conditions $g(-1)=g(1)=0$, for any initial value $x_0\in[-1,1]$, almost surely one has  $X(t)\in[-1,1]$, for all $t\ge 0$, for the solution to~\eqref{eq:SDEintro}. Given a final time $T\in(0,\infty)$ and a time-step size $\dt=T/N$ with $N\in\N$, we construct a domain preserving numerical scheme with solution denoted by $\bigl(X_n\bigr)_{0\le n\le N}$ as follows. First, assuming that the mapping $g$ is of class $\mathcal{C}^2$ on $[-1,1]$, the diffusion coefficient $g$ can be decomposed as a product $g=f\sigma$, where $\sigma(x)=(x-1)(x+1)$ for all $x\in[-1,1]$ and $f$ is continuous, see Lemma~\ref{lem:g2f}. Second, if $t_n=n\dt$, on each interval $[t_n,t_{n+1}]$, the solution $X$ can be approximated by solving auxiliary stochastic differential equations 
\[
\left\lbrace
\begin{aligned}
&\dd X_n(t)=f(X_n)\sigma(X_n(t))\dd B(t),\qquad t\in[t_n,t_{n+1}],\\
&X_n(t_n)=X_n,
\end{aligned}
\right.
\]
where the factor $f$ in the decomposition $g=f\sigma$ is frozen to the value $X_n(t_n)=X_n$ at time $t=t_n$. Unfortunately, the exact solution $X_n(t)$ is not known. We propose a domain preserving scheme by considering its Stratonovich formulation
\[
\dd X_n(t)=-\frac{f(X_n)^2}{2}\bigl(\sigma'\sigma\bigr)(X_n(t))\dd t+f(X_n)\sigma(X_n(t))\circ\dd B(t).
\]
and by applying a Lie--Trotter splitting scheme using the flows $\phi$ and $\varphi$ associated to the mappings $-\frac12\sigma'\sigma$ and $\sigma$ (see Section~\ref{sec:schemes} for their expressions~\eqref{eq:flowphi} and~\eqref{eq:flowvarphi}). We then obtain the numerical scheme
\[
X_{n+1}=\varphi\bigl(f(X_n)\delta B_n,\phi(f(X_n)^2\dt)\bigr).
\]
It is straightforward to see that the proposed scheme preserves the domain $[-1,1]$, for any choice of the time-step size: if $X_0=x_0\in[-1,1]$, then almost surely $X_n\in[-1,1]$ for all $n\in\{1,\ldots,N\}$. We refer to Section~\ref{sec:schemes} for more details on the derivation of the numerical scheme above, for other examples and for its generalization for systems and noise in arbitrary dimension.

For the scalar problem~\eqref{eq:SDEintro}, we consider a general class of schemes, which can be written as
\begin{equation}\label{eq:schemeintro}
X_{n+1}=\Phi\left(f(X_n)^2,f(X_n)\delta B_n,X_n\right)
\end{equation}
The scheme above is only of strong order $1/2$. We also consider the following scheme
\begin{equation}\label{eq:scheme1intro}
X_{n+1}=\Phi\left(f(X_n)^2,f(X_n)\delta B_n+\frac12\bigl(gf'\bigr)[\delta B_n^2-\dt],X_n\right).
\end{equation}
which is shown to be of strong order $1$. Compared with~\eqref{eq:schemeintro}, the scheme~\eqref{eq:scheme1intro} contains a correction term, like in the definition of the Milstein scheme compared with the standard Euler--Maruyama scheme.

We refer to Section~\ref{sec:schemes} for the formulation of the considered scheme~\eqref{eq:schemeX} for systems of stochastic differential equations in arbitrary dimension $d$, driven by multi-dimensional noise and with non-zero drift. Appropriate conditions on the integrator $\Phi$ to ensure the preservation of the domain and the convergence of the scheme~\eqref{eq:schemeintro} to the solution of~\eqref{eq:SDEintro} are given in Section~\ref{sec:schemes}.

Let us now present the main results of this article, for the schemes~\eqref{eq:schemeintro} and~\eqref{eq:scheme1intro} applied to the It\^o stochastic differential equation~\eqref{eq:SDEintro}. We refer to Section~\ref{sec:main} for general and rigorous statements for the schemes presented in Section~\ref{sec:schemes}.
\begin{itemize}
\item In Proposition~\ref{propo:dpX}, we prove that the schemes~\eqref{eq:schemeintro} and~\eqref{eq:scheme1intro} are domain preserving, for any time-step size $\dt$: if $X_0=x_0\in\mathcal{D}=[-1,1]$, then for all $n\in\{1,\ldots,N\}$ one has almost surely $X_n\in\mathcal{D}=[-1,1]$.  
\item In Theorems~\ref{theo:strong}~and~\ref{theo:weak}, we prove that the scheme~\eqref{eq:schemeintro} converges to the solution to~\eqref{eq:SDEintro} with strong order $1/2$ and weak order $1$. Precisely, under appropriate regularity conditions on the diffusion coefficient $g$, the integrator $\Phi$ and the test function $\theta$, one has strong and weak error estimates
\[
\sup_{0\leq n\leq N}\left( \E\left[ |X_n-X(t_n)|^2 \right]\right)^{\frac12}\lesssim\dt^{\frac12},\quad |\E\left[ \theta(X_N) \right]-\E\left[ \theta(X(T))\right]|\lesssim\dt.
\]
\item In Theorem~\ref{theo:stronghigherorder}, we prove that the scheme~\eqref{eq:scheme1intro} converges to the solution to~\eqref{eq:SDEintro} with strong order $1$. Precisely, under additional conditions on the integrator $\Phi$ and appropriate regularity conditions on the diffusion coefficient $g$, one has strong estimates
\[
\sup_{0\leq n\leq N}\left( \E\left[ |X_n-X(t_n)|^2 \right]\right)^{\frac12}\lesssim\dt.
\]
\end{itemize}

All the main results are illustrated with numerical experiments, in order to validate them in various situations where the dimension $d$ of the system, the dimension $M$ of the Brownian motion change, and when a drift is added.

The presentation and the analysis of the domain preserving schemes are performed for stochastic differential equations where the noise is interpreted in the It\^o sense. In Section~\ref{sec:Strato}, we transfer and illustrate our results when the noise is interpreted in the Stratonovich sense, by considering the equivalent It\^o formulation. In particular, we illustrate the importance of considering the It\^o formulation when applying the freezing technique in the construction of the domain preserving scheme to ensure its consistency. We state convergence results, Corollaries~\ref{cor:strongStrato},~\ref{cor:weakStrato} and~\ref{cor:stronghigherorderStrato}, without proofs.

We provide detailed proofs of the convergence results, Theorems~\ref{theo:strong},~\ref{theo:weak} and~\ref{theo:stronghigherorder}.

Note that the domain preserving schemes introduced in this article are applied and studied in the context of parabolic semilinear stochastic partial differential equations in~\cite{bcc26}. In addition, a variant of the scheme~\eqref{eq:scheme1intro} is proposed and analyzed in~\cite{bc26o1} to obtain a positivity preserving scheme with strong order $1$ for a class of scalar stochastic differential equations.

This article is organized as follows. Section~\ref{sec:setting} presents the considered class of It\^o stochastic differential equations and the assumptions on drift and diffusion coefficients.
We explain the derivation of the abstract classes~\eqref{eq:schemeX} and~\eqref{eq:schemeX1} of domain preserving schemes in Section~\ref{sec:schemes}, with several examples implemented in practice for the numerical experiments. Then, in Section~\ref{sec:main}, we state and illustrate the main results of this article: domain preservation (Proposition~\ref{propo:dpX} in Section~\ref{sec:main-DP}), strong error estimates with order $1/2$ (Theorem~\ref{theo:strong} in Section~\ref{sec:main-strong}, weak error estimates (Theorem~\ref{theo:weak} in Section~\ref{sec:main-weak}) and strong error estimates with order $1$ for the scheme~\eqref{eq:schemeX1} (Theorem~\ref{theo:stronghigherorder} in Section~\ref{sec:main-strong1}). All the convergence results are illustrated with dedicated numerical experiments. Section~\ref{sec:Strato} presents how to adapt the domain preserving schemes and the convergence results for systems driven by Stratonovich noise. Finally, the proofs of the main results are provided in Section~\ref{sec:proof1} (Theorem~\ref{theo:strong}), Section~\ref{sec:proof2} (Theorem~\ref{theo:weak}) and Section~\ref{sec:proof3} (Theorem~\ref{theo:stronghigherorder}).

\section{Setting}\label{sec:setting}

\subsection{Notation}

Given an integer $d\in\mathbb{N}$, the domain of the problem is defined as $\mathcal{D}=[-1,1]^d$. The interior and the boundary of the domain are denoted by $\mathring{\mathcal{D}}=(-1,1)^d$ and $\partial\mathcal{D}$ respectively. Note that domains of type $\prod_{k=1}^{d}[L_k,R_k]$, with arbitrary real numbers $L_k<R_k$, may be considered with minor modifications. Without loss of generality we impose $L_k=-1$ and $R_k=+1$ for all $k\in·\{1,\ldots,d\}$ to simplify the presentation.

For any integer $D\in\N$ and any continuous function $\theta\colon\mathcal{D}\to \R^D$, set $\|\theta\|_\infty=\underset{x\in\mathcal{D}}\sup~\|\theta(x)\|$.

Let $(\Omega,\mathcal F, \mathbb P)$ be a probability space. Given an integer $M\in\N$, let $\bigl(B(t)\bigr)_{t\ge 0}$ be a standard $\R^M$-valued Brownian motion. For all $t\ge 0$, using the notation $B(t)=\bigl(B^1(t),\ldots,B^M(t)\bigr)$, the processes $\{B^m;~1\le m\le M\}$ are independent standard real-valued Brownian motions. If $z,z'\in\R^M$, the inner product of $z$ and $z'$ is denoted by $z\cdot z'$.

For all $m\in\{0,1,\ldots,M\}$, the vector field $g^m\colon\mathcal{D}\to\R^d$ is assumed at least to be of class $\mathcal{C}^2$. Stronger regularity conditions are imposed in the sequel for the statements of the convergence results. For all $x=(x_1,\ldots,x_d)\in\mathcal{D}$, let $g^m(x)=\bigl(g_1^m(x),\ldots,g_d^m(x)\bigr)$, with functions $g_k^m\colon\mathcal{D}\to\R$ for all $k\in\{1,\ldots,d\}$. Moreover, for all $k\in\{1,\ldots,d\}$ and all $x\in\mathcal{D}$, set $g_k(x)=\bigl(g_k^1(x),\ldots,g_k^M(x)\bigr)\in\R^M$.

The vector fields are assumed to satisfy the following condition.
\begin{assumption}\label{ass:boundary}
For all $m\in\{0,1,\ldots,M\}$ and all $k\in\{1,\ldots,d\}$, one has
\begin{equation*}
x_k\in\{-1,1\}\:\Rightarrow\: g_k^m(x)=0.
\end{equation*}
\end{assumption}

Let $x_0\in\mathcal{D}$ be a deterministic initial value. We consider the It\^o stochastic differential equation
\begin{equation}\label{eq:sdeNagumog}
\left\lbrace
\begin{aligned}
\dd X(t)&=g^0(X(t))\dd t+\sum_{m=1}^Mg^m(X(t))\dd B^m(t)\quad t\ge 0,\\
X(0)&=x_0,
\end{aligned}
\right.
\end{equation}
where the unknown $\bigl(X(t)\bigr)_{t\ge 0}$ is a continuous process with values in $\R^d$. The vector fields $g^m$ for $m\in\{0,\ldots,M\}$ satisfy Assumption~\ref{ass:boundary} and are of class $\mathcal{C}^2$ on $\mathcal{D}$, therefore~\eqref{eq:sdeNagumog} admits a unique solution $\bigl(X(t)\bigr)_{t\ge 0}$, which takes values in the domain $\mathcal{D}$: almost surely $X(t)\in\mathcal{D}$ for all $t\ge 0$.

Define the components $X_1,\ldots,X_d$ of the solution $X$ as follows: let $X(t)=\bigl(X_1(t),\ldots,X_{d}(t)\bigr)$ for all $t\ge 0$. Using the notation introduced above for the inner product in $\R^M$ and for the mappings $g_k\colon\mathcal{D}\to\R^M$, the $d$-dimensional stochastic differential equation~\eqref{eq:sdeNagumog} can be rewritten as the system of stochastic differential equations, for $k\in\{1,\ldots,d\}$,
\begin{equation}\label{eq:sdeNagumogSystem}
\left\lbrace
\begin{aligned}
\dd X_k(t)&=g_k^0(X(t))\dd t+\sum_{m=1}^{M}g_k^m(X(t))\dd B^m(t)\\
&=g_k^0(X(t))\dd t+g_k(X(t))\cdot \dd B(t)\quad t\ge 0,\\
X_k(0)&=x_{0,k},
\end{aligned}
\right.
\end{equation}
where the initial value is written as $x_0=\bigl(x_{0,1},\ldots,x_{0,d}\bigr)$. Note that almost surely $X_k(t)\in[-1,1]$ for all $t\ge 0$ and all $k\in\{1,\ldots,d\}$.

In the error analysis developed in this work, the following temporal regularity property of solutions $\bigl(X(t)\bigr)_{t\geq0}$ to the stochastic differential equation~\eqref{eq:sdeNagumog} is employed.
\begin{proposition}\label{propo:reg_exact}
Under Assumption~\ref{ass:boundary}, for all $T\in(0,\infty)$, there exists $C(T)\in(0,\infty)$ such that for all $t,s\in[0,T]$ one has 
\begin{equation}\label{eq:reg_exact}
\E[\|X(t)-X(s)\|^2]\le C(T)|t-s|.
\end{equation}
\end{proposition}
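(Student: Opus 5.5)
Without loss of generality take $0\le s\le t\le T$. The plan is to use the integral form of~\eqref{eq:sdeNagumog},
\[
X(t)-X(s)=\int_s^t g^0(X(r))\dd r+\sum_{m=1}^M\int_s^t g^m(X(r))\dd B^m(r),
\]
and to bound the two contributions separately, using $\|a+b\|^2\le 2\|a\|^2+2\|b\|^2$. The key point is that the solution stays in the compact domain $\mathcal{D}$ almost surely, as recalled after~\eqref{eq:sdeNagumog}. Since each $g^m$ is continuous on the compact set $\mathcal{D}$, we have $\|g^m\|_\infty<\infty$, so the coefficients evaluated along the trajectory are uniformly bounded. No moment bounds on $X$ and no Gronwall argument are needed.

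For the drift term we apply Cauchy--Schwarz (or simply the sup bound):
\[
\E\Bigl\|\int_s^t g^0(X(r))\dd r\Bigr\|^2\le \|g^0\|_\infty^2|t-s|^2\le T\|g^0\|_\infty^2|t-s|.
\]
For the stochastic term, the integrands $g^m(X(r))$ are bounded and adapted, so the Itô isometry applies. Independence of the $B^m$ kills the cross terms, giving
\[
\E\Bigl\|\sum_{m=1}^M\int_s^t g^m(X(r))\dd B^m(r)\Bigr\|^2=\sum_{m=1}^M\int_s^t\E\|g^m(X(r))\|^2\dd r\le \sum_{m=1}^M\|g^m\|_\infty^2|t-s|.
\]

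Combining these yields~\eqref{eq:reg_exact} with $C(T)=2T\|g^0\|_\infty^2+2\sum_{m=1}^M\|g^m\|_\infty^2$. The only step that needs care is justifying that $X(r)\in\mathcal{D}$ for all $r$. This is where Assumption~\ref{ass:boundary} enters, through the invariance of $\mathcal{D}$ stated after~\eqref{eq:sdeNagumog}, which we take as given. Once invariance is in hand, boundedness of the coefficients makes the estimate routine.
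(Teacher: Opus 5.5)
Your proof is correct and follows the same route as the paper. You write $X(t)-X(s)$ in integral form, bound the coefficients using their boundedness on the compact set $\mathcal{D}$ (where the solution stays), and apply the It\^o isometry to the stochastic term; the only difference is that you absorb $|t-s|^2\le T|t-s|$ explicitly into the constant.
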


\begin{proof}[Proof of Proposition~\ref{propo:reg_exact}]
For all $t,s\in[0,T]$, one has
\[
X(t)-X(s)=\int_{s}^{t}g^0(X(r))\dd r+\sum_{m=1}^{M}\int_{s}^{t}g^m(X(r))\dd B^m(r).
\]
The vector fields $g^0,g^1,\ldots,g^M$ are continuous on $\mathcal{D}=[-1,1]^d$, thus they are bounded. As a result, applying the It\^o isometry property, one obtains
\[
\E[\|X(t)-X(s)\|^2]\le 2|t-s|^2\|g^0\|_\infty^2+2\sum_{m=1}^{M}|t-s|\|g^m\|_\infty^2.
\]
The proof is thus completed.
\end{proof}

\subsection{Equivalent formulation of the stochastic differential equation} 
In this section, we explain one of the main idea for the construction of domain preserving numerical scheme for the stochastic differential equation~\ref{eq:sdeNagumog}: we derive the equivalent formulation~\eqref{eq:sdeNagumo} related to auxiliary differential equations. 

For that purpose, introduce the mapping $\sigma\colon[-1,1]\to\R$ defined by
\begin{equation}\label{eq:sigma}
\sigma(y)=(y-1)(y+1),\qquad \forall~y\in[-1,1].
\end{equation}

The construction of domain preserving schemes for~\eqref{eq:sdeNagumog} is based on making a connection between the components $X_1,\ldots,X_d$ given by~\eqref{eq:sdeNagumogSystem} and two elementary scalar differential equations. On the one hand, the elementary scalar It\^o stochastic differential equation with diffusion coefficient is given by 
\begin{equation}\label{eq:Y}
\left\lbrace
\begin{aligned}
\dd Y(t)&=\sigma(Y(t))\dd \beta(t),\quad t\ge 0,\\
Y(0)&=y_0,
\end{aligned}
\right.
\end{equation}
driven by a standard-real valued Brownian motion $\bigl(\beta(t)\bigr)_{t\ge 0}$. On the other hand, the elementary scalar ordinary differential equation with vector field $\sigma$ is given by 
\begin{equation}\label{eq:y}
\left\lbrace
\begin{aligned}
\dot{y}(t)&=\sigma(y(t))\dd t,\quad t\in\R,\\
y(0)&=y_0,
\end{aligned}
\right.
\end{equation}
with arbitrary initial value $y_0\in[-1,1]$.

The connection is made by the following result.
\begin{lemma}\label{lem:g2f}
Let $G\colon[-1,1]\to\R$ be a function of class $\mathcal{C}^2$, which satisfies $G(-1)=G(1)=0$.

There exists a continuous mapping $F\colon[-1,1]\to\R$ such that 
\[
G(y)=F(y)\sigma(y)=F(y)(y-1)(y+1),\quad \forall~y\in[-1,1].
\]
Moreover, for any integer $p\in\N$, if $G$ is of class $\mathcal{C}^{p+2}$, then $F$ is of class $\mathcal{C}^p$.
\end{lemma}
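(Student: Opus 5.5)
The natural approach is the Hadamard-type division lemma, applied separately near each boundary point. On the open interval $(-1,1)$ the function $\sigma$ does not vanish. We therefore set
\[
F(y)=\frac{G(y)}{\sigma(y)},\qquad y\in(-1,1).
\]
This $F$ is of class $\mathcal{C}^{p+2}$ on $(-1,1)$ whenever $G$ is. All the work lies in showing that $F$ extends continuously, respectively $\mathcal{C}^p$, up to the endpoints $\pm1$. By the symmetry $y\mapsto -y$ it suffices to treat the endpoint $y=1$.

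Near $y=1$, I would use $G(1)=0$ and the fundamental theorem of calculus to write
\[
G(y)=(y-1)\int_0^1 G'\bigl(1+s(y-1)\bigr)\dd s=(y-1)H(y).
\]
If $G$ is of class $\mathcal{C}^{q}$, then $H$ is of class $\mathcal{C}^{q-1}$ on $[-1,1]$. To see this, differentiate under the integral sign: $H^{(j)}(y)=\int_0^1 s^jG^{(j+1)}(1+s(y-1))\dd s$, which is continuous by uniform continuity of $G^{(j+1)}$ on the compact interval. Then on $(0,1]$ we have $F(y)=H(y)/(y+1)$, where $y+1\ge1$ is smooth and bounded away from zero. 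Hence $F$ extends to $(0,1]$ as a $\mathcal{C}^{q-1}$ function. The same construction at $y=-1$, with $G(y)=(y+1)\tilde H(y)$, gives the extension $F=\tilde H/(y-1)$ on $[-1,0)$. The two expressions agree with $G/\sigma$ on the overlap $(-1,1)$, so they glue to a single function $F$ on $[-1,1]$ satisfying $G=F\sigma$ everywhere. At $y=\pm1$ the identity holds because both sides vanish.

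For the regularity count, $G\in\mathcal{C}^2$ gives $H\in\mathcal{C}^1$, hence $F$ is continuous and in fact $\mathcal{C}^1$. More generally, $G\in\mathcal{C}^{p+2}$ gives $F\in\mathcal{C}^{p+1}\supset\mathcal{C}^p$, which is stronger than what the lemma claims. The only step needing care is justifying differentiation under the integral for $H$ up to the endpoint. This is routine because the integrand and its $y$-derivatives are jointly continuous on the compact set $[0,1]\times[-1,1]$. So I do not expect any genuine obstacle. The stated loss of two derivatives, instead of the one derivative the argument actually loses, simply leaves slack in the statement.
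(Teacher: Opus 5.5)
Your proof is correct, but it takes a different route from the paper.

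The paper divides globally, twice. From $G(-1)=0$ it writes $G(y)=(y+1)F_1(y)$ on all of $[-1,1]$, with $F_1(y)=\int_0^1G'\bigl((\eta-1)+\eta y\bigr)\dd\eta$. It then notes $F_1(1)=\frac12G(1)=0$ and divides $F_1$ by $(y-1)$. This gives the single formula $F(y)=\int_0^1\int_0^1G''\bigl(\zeta y+(1-\zeta)\eta+\eta-1\bigr)\eta\dd\zeta\dd\eta$. Each global division costs one derivative, which is where the loss $\mathcal{C}^{p+2}\to\mathcal{C}^p$ comes from.

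You localize instead. Near each endpoint only the vanishing factor of $\sigma$ has to be divided out, since the other factor is bounded away from zero. So you lose one derivative, $\mathcal{C}^{q}\to\mathcal{C}^{q-1}$, and in particular $G\in\mathcal{C}^{p+1}$ already gives $F\in\mathcal{C}^p$. The continuous $F$ is unique, because it must equal $G/\sigma$ on $(-1,1)$. So the paper's $F$ is the same function as yours, and your argument shows the paper's two-derivative loss is an artifact of dividing twice globally.

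What the paper's route buys is one closed-form integral formula valid on the whole interval. The paper uses it right after the lemma, treating the coordinates $x_\ell$, $\ell\neq k$, as parameters. This defines $f_k^m$ in \eqref{eq:fmk} (and later $f_{k,\ell}^m$ in \eqref{eq:fmkl}), gives joint regularity on $\mathcal{D}$ directly, and yields the bound $\sup_{|\alpha|\le p}|\partial^\alpha f_k^m|\le\sup_{|\alpha|\le p+2}|\partial^\alpha g_k^m|$ with constant one.

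Your gluing argument also works with parameters, since $H$ and $\tilde H$ depend jointly regularly on them. It gives a piecewise description, and the derivative bounds carry constants from the factors $1/(y\pm1)$ and their derivatives. In exchange, your route gives the sharper, one-derivative regularity statement.
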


\begin{proof}[Proof of Lemma~\ref{lem:g2f}]

First, since $G(-1)=0$, for all $y\in[-1,1]$ one has
\[
G(y)=G(y)-G(-1)=\int_{0}^{1}G'\bigl((\eta-1)+\eta y\bigr)(y+1)\dd \eta=(y+1)F_1(y),
\]
where the mapping $F_1$ is defined as
\[
F_1(y)=\int_{0}^{1}G'\bigl((\eta-1)+\eta y\bigr)\dd \eta,\qquad \forall~y\in[-1,1].
\]
Since $G$ is of class $\mathcal{C}^2$, the mapping $F_1$ defined above is of class $\mathcal{C}^1$ on $[-1,1]$.

Second, observe that $F_1(1)=\frac12 G(1)=0$. Therefore for all $y\in[-1,1]$ one has
\[
F_1(y)=F_1(y)-F_1(1)=\int_0^1 F_1'\bigl((1-\zeta)+\zeta y)(y-1)\dd \zeta=(y-1)F_2(y),
\]
where the mapping $F_2$ is defined as
\[
F_2(y)=\int_{0}^1 F_1'(1-\zeta+\zeta y)\dd \zeta=\int_0^1\int_0^1 G''\bigl(\zeta y+(1-\zeta)\eta+\eta-1\bigr)\eta\dd \zeta\dd \eta,\qquad \forall~y\in[-1,1].
\]
Since $G$ is of class $\mathcal{C}^2$, the mapping $F_2$ defined above is continuous on $[-1,1]$.

As a result, one obtains the required result with $F=F_2$: for all $y\in[-1,1]$, one has
\[
G(y)=(y+1)F_1(y)=(y+1)(y-1)F(y).
\]
Finally, given $p\in\N$, it is straightforward to check that if $G$ is of class $\mathcal{C}^{p+2}$, then $G''$ and $F=F_2$ are of class $\mathcal{C}^p$.

The proof of Lemma~\ref{lem:g2f} is thus completed. 
\end{proof}

For all $m\in\{0,\ldots,M\}$ and $k\in\{1,\ldots,d\}$, applying Lemma~\ref{lem:g2f} to the mapping $y\colon[-1,1]\mapsto g_k^m(x_1,\ldots,x_{k-1},y,x_{k+1},\ldots,x_d)$ for arbitrary $(x_1,\ldots,x_{k-1},x_{k+1},\ldots,x_d)\in[-1,1]^{d-1}$, there exists a function $f_k^m\colon\mathcal{D}\to\R$ such that one has
\begin{equation}\label{eq:sigmaF}
g_k^m(x)=\sigma(x_k)f_k^m(x),\qquad \forall~x\in\mathcal{D},
\end{equation}
where we recall that $\sigma$ is defined in~\eqref{eq:sigma}. Moreover, owing to the proof of Lemma~\ref{lem:g2f}, the function $f_k^m$ is given by
\begin{equation}\label{eq:fmk}
f_k^m(x)=\int_0^1\int_0^1 \partial_{x_k}^2g_k^m\bigl(x_1,\ldots,x_{k-1},\zeta x_k+(1-\zeta)\eta+\eta-1,x_{k+1},\ldots,x_d\bigr)\eta\dd \zeta\dd \eta.
\end{equation}
As a result, since for all $m\in\{0,\ldots,M\}$ the vector fields $g^m$ are of class $\mathcal{C}^2$ on the domain $\mathcal{D}$, the functions $f_k^m$ are continuous on $\mathcal{D}$ for all $m\in\{0,\ldots,M\}$ and $k\in\{1,\ldots,d\}$. Moreover, for any integer $p\in\N$, if for all $m\in\{0,\ldots,M\}$ the vector fields $g^m$ are of class $\mathcal{C}^{p+2}$ on the domain $\mathcal{D}$, the functions $f_k^m$ are of class $\mathcal{C}^p$ on $\mathcal{D}$ for all $m\in\{0,\ldots,M\}$ and $k\in\{1,\ldots,d\}$. Finally, one has
\[
\underset{|\alpha|\le p}\sup~\underset{x\in\mathcal{D}}\sup~|\partial^\alpha f_k^m|\le \underset{|\alpha|\le p+2}\sup~\underset{x\in\mathcal{D}}\sup~|\partial^\alpha g_k^m|,
\]
where for any multi-index $\alpha=(\alpha_1,\ldots,\alpha_d)\in\N_0^d$, one has $|\alpha|=\sum_{k=1}^{d}\alpha_k$ and $\partial^\alpha v=\partial_{x_1}^{\alpha_1}\ldots\partial_{x_d}^{\alpha_d}v$ for any function $v$ of class $\mathcal{C}^{|\alpha|}$.

For all $k\in\{1,\ldots,d\}$ and $x\in\mathcal{D}$, set
\begin{align}
f_k(x)&=\bigl(f_k^1(x),\ldots,f_k^M(x)\bigr)\in\R^M,\label{eq:deffk}\\
h_k(x)&=|f_k^0(x)|+\|f_k(x)\|^2\in\R^+.\label{eq:defhk}
\end{align}
Note that the mappings $f_k$ and $h_k$ are continous and bounded on $\mathcal{D}$.

The formulation~\eqref{eq:sdeNagumogSystem} of the stochastic differential equation~\eqref{eq:sdeNagumog} can then be written as the equivalent system, for $k\in\{1,\ldots,d\}$,
\begin{equation}\label{eq:sdeNagumo}
\left\lbrace
\begin{aligned}
\dd X_k(t)&=\sigma(X_k(t))f^0_k(X(t))\dd t+\sigma(X_k(t))\sum_{m=1}^Mf^m_k(X(t))\dd B^m(t),\\
&=\sigma(X_k(t))f^0_k(X(t))\dd t+\sigma(X_k(t))f_k(X(t))\dd B(t),\quad t>0,\\
X_k(0)&=x_{0,k}.
\end{aligned}
\right.
\end{equation}
The formulation~\eqref{eq:sdeNagumo} is the fundamental ingredient for the construction of domain preserving schemes for the SDE~\eqref{eq:sdeNagumog}. Furthermore, it also plays a key role in the error analysis.

\section{Domain preserving integrators}\label{sec:schemes}

The objective of this section is to describe a class of domain preserving integrators for the stochastic differential equation~\eqref{eq:sdeNagumog}. As emphasized above, it is convenient to consider the equivalent formulation~\eqref{eq:sdeNagumo}.

For an arbitrary time horizon $T\in(0,\infty)$ and a positive integer $N\in\N$, define the time-step size $\dt=T/N$.
Furthermore, for all $n\in\{0,1,\ldots,N\}$ set $t_n=n\dt$, and define the Brownian increments $\delta B_{n}^m=B^m(t_{n+1})-B^m(t_n)$ for $m=1,\ldots,M$. The notation $\delta B_n=(\delta B_{n}^1,\ldots,\delta B_{n}^M)=B(t_{n+1})-B(t_n)$ is employed below for increments of the $M$-dimensional Brownian motion $B$.

Given the time-step size $\dt=T/N$, the numerical solution defined by a domain preserving integrator is denoted by $\bigl(X_n\bigr)_{n=0,\ldots,N}$, where for all $n\in\{0,\ldots,N\}$, $X_n$ is a random variable with values in $\mathcal{D}\subset \R^d$. The components of $X_n$ are real-valued random variables denoted by $X_{n,1},\ldots,X_{n,d}$, taking values in $[-1,1]$.

At the initial time $t_0=0$, one sets $X_0=x_0$, i.e. $X_{0,k}=x_{0,k}$ for all $k\in\{1,\ldots,d\}$. Given $n\in\{0,\ldots,N-1\}$ and the numerical solution $X_n$ at time $t_n$, the numerical solution $X_{n+1}$ at time $t_{n+1}=t_n+\dt$ is constructed as follows. The formulation~\eqref{eq:sdeNagumo} suggests considering in the time interval $[t_n,t_{n+1}]$ the auxiliary stochastic differential equations
\begin{equation}\label{eq:auxsdescheme}
\left\lbrace
\begin{aligned}
\dd X_{n,k}(t)&=\sigma(X_{n,k}(t))f^0_k(X_n)\dd t+\sigma(X_{n,k}(t))\sum_{m=1}^Mf^m_k(X_n)\dd B^m(t),\quad t\in[t_n,t_{n+1}],\\
X_{n,k}(t_n)&=X_{n,k},
\end{aligned}
\right.
\end{equation}
indexed by $k\in\{1,\ldots,d\}$. In~\eqref{eq:auxsdescheme}, the functions $f_k^0,f_k^1,\ldots,f_k^M$ are frozen at the value $X_{n,k}(t_n)=X_{n,k}$ at time $t_n$, whereas the mapping $\sigma$ is not frozen. Recalling the relation $g_k^m(x)=f_k^m(x)\sigma(x_k)$, note that freezing the coefficient $g_k^m$ in the formulation~\eqref{eq:sdeNagumogSystem} of the system~\eqref{eq:sdeNagumo} would simply provide the standard explicit Euler--Maruyama, which is consistent but is not domain preserving. Keeping a variable $\sigma$ in the stochastic differential equation~\eqref{eq:auxsdescheme} is a key ingredient for the construction of domain preserving schemes.

Next, since the functions $f_k^0,f_k^1,\ldots,f_k^M$ are frozen on the time interval $[t_n,t_{n+1}]$, it suffices to be able to combine domain preserving integrators for the auxiliary stochastic and ordinary differential equations~\eqref{eq:Y} and~\eqref{eq:y} presented in Section~\ref{sec:setting}. The ordinary differential equation~\eqref{eq:y} can be solved exactly, its flow is denoted by $\varphi$ (its expression is provided below). However the exact solution to the stochastic differential equation~\eqref{eq:Y} is not known, and some numerical approximation procedure is required.

Consider integrators for~\eqref{eq:Y} of the type
\begin{equation}\label{eq:schemeY}
\left\lbrace
\begin{aligned}
Y_{n+1}&=\Phi(\dt,\delta \beta_n,Y_n),\quad n=0,\ldots,N-1,\\
Y_0&=y_0,
\end{aligned}
\right.
\end{equation}
where $\delta\beta_n=\beta(t_{n+1})-\beta(t_n)$ are increments of the Brownian motion $\bigl(\beta(t)\bigr)_{t\ge 0}$. Approximations $X_{n+1,k}$ of $X_{n,k}(t_{n+1})$, for instance, can be obtained by a Lie--Trotter splitting procedure: for all $k\in\{1,\ldots,d\}$, set
\[
X_{n+1,k}=\varphi(f_k^0(X_n)\dt,\cdot)\circ \Phi(f_k^1(X_n)^2\dt,f_k^1(X_n)\delta B_n^1,\cdot)\circ\ldots\circ\Phi(f_k^M(X_n)^2\dt,f_k^M(X_n)\delta B_n^m,\cdot)(X_{n,k}).
\]
Note that multiplying the time-step size $\dt$ by $f_k^m(X_n)^2$ in the first variable of $\Phi$, and multiplying the Brownian increment $\delta B_n^m$ by $f_k^m(X_n)$ in the second variable of $\Phi$ is natural owing to scaling properties of Brownian motion.

Even if using a Lie--Trotter splitting procedure is appealing, in this work we consider a different class of numerical schemes: given the initial value $X_0=x_0$, for all $k\in\{1,\ldots,d\}$, for all $n\in\{0,\ldots,N-1\}$, set
\begin{equation}\label{eq:schemeX}
\begin{aligned}
X_{n+1,k}&=\Phi\left(\sum_{m=1}^Mf^m_k(X_n)^2\dt,\sum_{m=1}^Mf^m_k(X_n)\delta B^m_{n}+f^0_k(X_n)\dt,X_{n,k}\right)\\
&=\Phi\left(\|f_k(X_n)\|^2\dt,f_k(X_n)\cdot\delta B_{n}+f^0_k(X_n)\dt,X_{n,k}\right),
\end{aligned}
\end{equation}
using the notation $\|f_k(x)\|^2=\sum_{m=1}^Mf^m_k(x)^2$ for all $x\in\mathcal{D}$. Note that the numerical scheme~\eqref{eq:schemeX} is explicit.

It remains to provide a class of integrators $\Phi$ which ensures that the numerical scheme~\eqref{eq:schemeX} is domain preserving and consistent. Before providing a general framework with appropriate order and regularity conditions, two examples are presented below. These examples are implemented in the numerical experiments reported below.

A basic ingredient employed for the two examples is the flow $\varphi$ associated with the ordinary differential equation~\eqref{eq:y}: for all $s\in\R$ and all $y\in[-1,1]$, one has the expression
\begin{equation}\label{eq:flowvarphi}
\varphi(s,y)=\frac{(1-e^{2s})+(1+e^{2s})y}{(1+e^{2s})+(1-e^{2s})y}.
\end{equation}
Then, for any $y_0\in[-1,1]$, the mapping $t\in\R\mapsto y(t)=\varphi(t,y_0)$ is the unique solution to the ordinary differential equation~\eqref{eq:y} (on positive and negative times). It is worth noting that $\varphi(s,y)\in[-1,1]$ for all $s\in\R$ and all $y\in[-1,1]$.

\begin{example}\label{exA}
The stochastic differential equation~\eqref{eq:Y} is interpreted in the It\^o sense, therefore its solution is not directly related to the flow $\varphi$ of~\eqref{eq:y}. However, its equivalent Stratonovich formulation is given by the stochastic differential equation
\begin{equation}\label{eq:Y-Strato}
\left\lbrace
\begin{aligned}
\dd Y(t)&=-\frac12\bigl(\sigma'\sigma\bigr)(Y(t))\dd t+\sigma(Y(t))\circ\dd B(t),\quad t\ge 0,\\
Y(0)&=y_0,
\end{aligned}
\right.
\end{equation}
and it can be approximated by applying a Lie--Trotter splitting strategy. Indeed, for all $y\in[-1,1]$ one has
\[
-\frac12\bigl(\sigma'\sigma\bigr)(y)=y-y^3,
\]
and the flow $\phi$ of the auxiliary ordinary differential equation
\begin{equation}\label{eq:y2}
\dot{y}(t)=-\frac12\bigl(\sigma'\sigma\bigr)(y(t))
\end{equation}
is given by
\begin{equation}\label{eq:flowphi}
\phi(s,y)=\frac{y}{\sqrt{y^2+(1-y^2)e^{-2s}}},\qquad \forall~s\in\R,~y\in[-1,1].
\end{equation}
It is worth noting that $\phi(s,y)\in[-1,1]$ for all $s\in\R$ and all $y\in[-1,1]$.

Applying a Lie--Trotter splitting scheme for the approximation of the Stratonovich stochastic differential equation~\eqref{eq:Y-Strato}, one obtains the following numerical scheme: with initial value $Y_0=y_0$, for all $n\in\{0,\ldots,N-1\}$, set
\[
\left\lbrace
\begin{aligned}
&\widehat{Y}_n=\phi(\dt,Y_n),\\
&Y_{n+1}=\varphi(\delta B_n,\widehat{Y}_n).
\end{aligned}
\right.
\]
This yields a scheme of the type~\eqref{eq:schemeY} for the auxiliary stochastic differential equation~\eqref{eq:Y}, with the integrator $\Phi$ defined as 
\begin{equation}\label{integratorA}
\Phi(s,\gamma,y)=\varphi\left(\gamma,\phi(s,y)\right),\quad \forall~s\geq0,\gamma\in\mathbb R,y\in[-1,1].  
\end{equation}
Alternatively, one could consider the integrator
\begin{equation}\label{integratorAbis}
\Phi(s,\gamma,y)=\phi\left(s,\varphi(\gamma,y)\right),\quad \forall~s\geq0,\gamma\in\mathbb R,y\in[-1,1],
\end{equation}
based on a Lie--Trotter splitting strategy, with reverse order of treatment for the drift and diffusion contributions. The integrators~\eqref{integratorA} and~\eqref{integratorB} satisfy similar properties, and in the sequel only the first version~\eqref{integratorA} is considered.
\end{example}

\begin{example}\label{exB}
As in Example~\ref{exA}, consider the Stratonovich equivalent formulation~\eqref{eq:Y-Strato} of the It\^o stochastic differential equation~\eqref{eq:Y}. On each time interval $[t_n,t_{n+1}]$, if $Y_n$ denotes the approximation of the solution at time $t_n$, then the coefficient $\sigma'$ may be frozen at $Y_n$, and one thus needs to solve the auxiliary Stratonovich stochastic differential equation, for $t\in[t_n,t_{n+1}]$,
\[
\left\lbrace
\begin{aligned}
\dd Y_n(t)&=-\frac12\sigma'(Y_n)\sigma(Y_n(t))+\sigma(Y_n(t))\circ \dd B(t),\\
&=\sigma(Y_n(t))\circ\bigl(-\frac12\sigma'(Y_n)\dd t+\dd B(t)\bigr),\\
Y_n(t_n)&=Y_n.
\end{aligned}
\right.
\]
The solution $Y_n(t_{n+1})$ and the numerical approximation $Y_{n+1}$ at time $t_{n+1}$ are then given as
\[
Y_{n+1}=Y_n(t_{n+1})=\varphi\left(\delta B_n-\frac12\dt\sigma'(Y_n),Y_n\right).
\]
This yields a scheme of the type~\eqref{eq:schemeY} for the auxiliary stochastic differential equation~\eqref{eq:Y}, with the integrator $\Phi$ defined as 
\begin{equation}\label{integratorB}
\Phi(s,\gamma,y)=\varphi\left(\gamma-\frac12\sigma'(y)s,y\right),\quad \forall~s\geq0,\gamma\in\mathbb R,y\in[-1,1]. 
\end{equation}
Compared to the scheme~\eqref{integratorA}, the scheme~\eqref{integratorB} only requires the evaluation of the flow $\varphi$ associated with the vector field $\sigma$.
\end{example}

It is worth noting that the schemes of type~\eqref{eq:schemeY} obtained with the integrators $\Phi$ given by~\eqref{integratorA} from Example~\ref{exA} and~\eqref{integratorB} from Example~\ref{exB} are domain preserving schemes for the auxiliary stochastic differential equation~\eqref{eq:Y} which converge with strong order $1$.

Let us exemplify the domain preserving numerical scheme~\eqref{eq:schemeX} for a simplified version of the stochastic differential equation~\eqref{eq:sdeNagumog}, with no drift, in dimension $d=1$ and with a one-dimensional Brownian motion ($M=1$). Considering
\begin{equation}\label{eq:SDEsimple}
\dd X(t)=g(X(t))\dd B(t),
\end{equation}
choosing the integrator~\eqref{integratorA} from Example~\ref{exA}, one obtains the domain preserving scheme
\begin{equation}\label{eq:schemeintA}
X_{n+1}=\varphi\left(f(X_n)\delta B_n,\phi(f(X_n)^2\dt,X_n)\right),\quad \forall~n\in\{1,\ldots,N\},
\end{equation}
and choosing the integrator~\eqref{integratorB} from Example~\ref{exB}, one obtains the domain preserving scheme
\begin{equation}\label{eq:schemeintB}
X_{n+1}=\varphi\left(f(X_n)\delta B_n-\frac12(\sigma'f^2)(X_n)\dt,X_n\right),\quad \forall~n\in\{1,\ldots,N\}.
\end{equation}

In order to ensure that the scheme~\eqref{eq:schemeX} is domain preserving -- see Proposition~\ref{propo:dpX} in Section~\ref{sec:main} -- the following condition is imposed on the integrator $\Phi$.
\begin{assumption}\label{ass:integratorDP}
The mapping $\Phi\colon\R^+\times\R\times[-1,1]\to\R$ is continuous and one has
\begin{equation}\label{eq:condPhiDP}
\Phi(s,\gamma,y)\in[-1,1],\quad \forall~s\ge 0,~\forall~\gamma\in\R,~\forall~y\in[-1,1].
\end{equation}
\end{assumption}
Note that the integrators~\eqref{integratorA} and~\eqref{integratorB} introduced in Examples~\ref{exA} and~\ref{exB} verify Assumption~\ref{ass:integratorDP}.

In Section~\ref{sec:main}, strong and weak error estimates for the scheme~\eqref{eq:schemeX} applied to~\eqref{eq:sdeNagumog} are established under appropriate regularity conditions on the vector fields $g^0,g^1,\ldots,g^M$ and on the mapping $\Phi$, which will be adapted to the statement of each result. In addition, the consistency is justified when the mapping $\Phi$ satisfies the following conditions.

\begin{assumption}\label{ass:integrator}
The mapping $\Phi\colon\R^+\times\R\times[-1,1]\to\R$ is of class $\mathcal{C}^2$, and satisfies the following conditions: for all $y\in[-1,1]$, one has
\begin{align}
&\Phi(0,0,y)=y,\label{eq:condPhi0}\\
&\partial_\gamma\Phi(0,0,y)=\sigma(y),\label{eq:condPhi1}\\
&\partial_s\Phi(0,0,y)+\frac12\partial_{\gamma}^2\Phi(0,0,y)=0.\label{eq:condPhi2}
\end{align}
\end{assumption}

For the analysis of the numerical scheme~\eqref{eq:schemeX}, it is convenient to introduce the auxiliary mapping
\begin{equation}\label{eq:psi}
\psi=\partial_s\Phi+\frac12\partial_{\gamma}^2\Phi.
\end{equation}
Note that~\eqref{eq:condPhi2} is equivalent to assuming that $\psi(0,0,y)=0$ for all $y\in[-1,1]$.

In Section~\ref{sec:main}, Theorems~\ref{theo:strong} and~\ref{theo:weak} show that the scheme~\eqref{eq:schemeX} converges to the solution~\eqref{eq:sdeNagumog} with strong order $1/2$ and weak order $1$. The numerical experiments illustrate that those orders of convergence are optimal.

One can achieve a higher strong order of convergence with another class of domain preserving integrators, when the stochastic differential equation~\eqref{eq:sdeNagumog} is driven by a one-dimensional Brownian motion, i.e. when $M=1$. Assuming that $M=1$, for simplicity, let $g=g^1$, $f_k=f_k^1$, and $B=B^1$. The proposed higher-order domain preserving scheme is given as follows: given the initial value $X_0=x_0$, for all $k\in\{1,\ldots,d\}$, for all $n\in\{0,\ldots,N-1\}$, set 
\begin{equation}\label{eq:schemeX1}
X_{n+1,k}=\Phi\left(f_k(X_n)^2\dt,f_k(X_n)\delta B_n+\frac12 (g\cdot\nabla)f_k(X_n)(\delta B_n^2-\dt)+f^0_k(X_n)\dt,X_{n,k}\right).
\end{equation}
If the mapping $\Phi$ satisfies Assumption~\ref{ass:integratorDP}, the scheme~\eqref{eq:schemeX1} is domain preserving. Concerning the convergence, Assumption~\ref{ass:integrator} needs to be reinforced to prove that the scheme~\eqref{eq:schemeX1} has strong order of convergence $1$.
\begin{assumption}\label{ass:integrator-higher}
The mapping $\Phi\colon\R^+\times\R\times[-1,1]\to\R$ satisfies the condition~\eqref{eq:condPhiDP} from Assumption~\ref{ass:integratorDP}, and the conditions~\eqref{eq:condPhi0} and~\eqref{eq:condPhi1} from Assumption~\ref{ass:integrator}. Moreover, for all $y\in[-1,1]$, one has
\begin{equation}\label{eq:condPhi-higher}
\partial_s\Phi(0,0,y)=-\frac12\sigma'(y)\sigma(y)\quad\text{and}\quad \partial_{\gamma}^2\Phi(0,0,y)=\sigma'(y)\sigma(y).
\end{equation}
\end{assumption}
In other words, the condition~\eqref{eq:condPhi2} from Assumption~\ref{ass:integrator} is replaced by the condition~\eqref{eq:condPhi-higher} in Assumption~\ref{ass:integrator-higher}. Clearly, the condition~\eqref{eq:condPhi-higher} implies the condition~\eqref{eq:condPhi2}.

The following result shows that Assumptions~\ref{ass:integratorDP},~\ref{ass:integrator} and~\ref{ass:integrator-higher} are satisfied by the integrators~\eqref{integratorA} and~\eqref{integratorB} introduced in Examples~\ref{exA} and~\ref{exB}.
\begin{lemma}\label{lem:examples1}
Assume that $\Phi$ is defined by~\eqref{integratorA} or by~\eqref{integratorB}. Then Assumptions~\ref{ass:integratorDP},~\ref{ass:integrator} and~\ref{ass:integrator-higher} are satisfied.
\end{lemma}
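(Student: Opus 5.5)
The plan is to verify each condition directly from the explicit formulas~\eqref{eq:flowvarphi} and~\eqref{eq:flowphi}, using mainly their flow properties rather than brute-force differentiation. The key facts are: $\varphi(0,y)=y$, $\partial_s\varphi(s,y)=\sigma(\varphi(s,y))$, hence $\partial_s^2\varphi(0,y)=\sigma'(y)\sigma(y)$. Similarly $\phi(0,y)=y$ and $\partial_s\phi(0,y)=-\frac12\sigma'(y)\sigma(y)$. Also $\partial_y\varphi(0,y)=1$, and $\partial_y\partial_s\varphi(0,y)=\sigma'(y)$.

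\textbf{Regularity and domain preservation.} For Assumption~\ref{ass:integratorDP}, note that $\varphi(s,\cdot)$ and $\phi(s,\cdot)$ map $[-1,1]$ into $[-1,1]$, as remarked after~\eqref{eq:flowvarphi} and~\eqref{eq:flowphi}. This follows because $\pm1$ are equilibria of $\sigma$ and of $-\frac12\sigma'\sigma$, so by uniqueness trajectories started in $[-1,1]$ cannot cross them. Compositions of these maps therefore stay in $[-1,1]$, whatever the first argument. In~\eqref{integratorB} the first argument $\gamma-\frac12\sigma'(y)s$ is any real number, which is allowed. Continuity and $\mathcal{C}^2$ (indeed $\mathcal{C}^\infty$) regularity on $\R^+\times\R\times[-1,1]$ follow from the formulas once we check the denominators do not vanish:
\begin{itemize}
\item For $\varphi$, the denominator $(1+e^{2s})+(1-e^{2s})y\ge 2\min(1,e^{2s})>0$ for $|y|\le1$.
\item For $\phi$, the quantity $y^2+(1-y^2)e^{-2s}\ge\min(1,e^{-2s})>0$, so the square root is smooth.
\end{itemize}

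\textbf{Integrator~\eqref{integratorA}.} Here $\Phi(s,\gamma,y)=\varphi(\gamma,\phi(s,y))$, and the derivatives at $(0,0,y)$ are:
\begin{itemize}
\item $\Phi(0,0,y)=y$;
\item $\partial_\gamma\Phi(0,0,y)=\sigma(\phi(0,y))=\sigma(y)$;
\item $\partial_\gamma^2\Phi(0,0,y)=\partial_s^2\varphi(0,y)=\sigma'(y)\sigma(y)$;
\item $\partial_s\Phi(0,0,y)=\partial_y\varphi(0,y)\,\partial_s\phi(0,y)=-\frac12\sigma'(y)\sigma(y)$.
\end{itemize}
This gives~\eqref{eq:condPhi0},~\eqref{eq:condPhi1} and~\eqref{eq:condPhi-higher}, and hence~\eqref{eq:condPhi2}.

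\textbf{Integrator~\eqref{integratorB}.} Write $\Phi(s,\gamma,y)=\varphi(u,y)$ with $u=\gamma-\frac12\sigma'(y)s$, which is linear in $(s,\gamma)$. Then:
\begin{itemize}
\item $\Phi(0,0,y)=\varphi(0,y)=y$;
\item $\partial_\gamma\Phi=\partial_s\varphi(u,y)$, which equals $\sigma(y)$ at the origin;
\item $\partial_\gamma^2\Phi=\partial_s^2\varphi(u,y)$, which equals $\sigma'(y)\sigma(y)$ at the origin;
\item $\partial_s\Phi=-\frac12\sigma'(y)\,\partial_s\varphi(u,y)$, which equals $-\frac12\sigma'(y)\sigma(y)$ at the origin.
\end{itemize}
So Assumptions~\ref{ass:integrator} and~\ref{ass:integrator-higher} hold.

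The only step needing care is the global $\mathcal{C}^2$ regularity and the invariance of $[-1,1]$ for negative times of $\varphi$, since $\gamma$ ranges over all of $\R$. I would settle both via the explicit formula: the denominator bound above handles regularity. For invariance I would check directly that
\[
1\mp\varphi(s,y)=\frac{2e^{2s}(1\mp y)\ \text{or}\ 2(1\mp y)}{\text{denominator}}\ge0,
\]
which holds for all $s\in\R$ and $|y|\le1$.
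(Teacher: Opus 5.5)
Your proof is correct and follows the same route as the paper's appendix. You evaluate $\Phi$ and its derivatives at $(0,0,y)$ through the flow identities $\partial_s\varphi(s,y)=\sigma(\varphi(s,y))$ and $\partial_s\phi(0,y)=-\frac12(\sigma'\sigma)(y)$, and you get domain preservation from the invariance of $[-1,1]$ under both flows. Your value $\partial_\gamma^2\Phi(0,0,y)=\partial_s^2\varphi(0,y)=(\sigma'\sigma)(y)$ is the correct one. The appendix's stated $\frac12(\sigma'\sigma)(y)$, like its $\varphi(0,y)=\phi(0,y)=0$, is a slip that would in fact contradict \eqref{eq:condPhi-higher}. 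Your explicit lower bounds on the denominators and your formulas for $1\mp\varphi$ also supply the $\mathcal{C}^2$ regularity and the negative-time invariance that the paper only asserts.
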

The proof of Lemma~\ref{lem:examples1} is provided in Appendix~\ref{app}.

As above, let us exemplify the domain preserving numerical scheme~\eqref{eq:schemeX1} for the simplified version~\eqref{eq:SDEsimple} of the stochastic differential equation~\eqref{eq:sdeNagumog}, with no drift, in dimension $d=1$ and with a one-dimensional Brownian motion. Choosing the integrator~\eqref{integratorA} from Example~\ref{exA}, one obtains the domain preserving scheme
\begin{equation}\label{eq:scheme1intA}
X_{n+1}=\varphi\left(f(X_n)\delta B_n+\frac12(f'g)(X_n)(\delta B_n^2-\dt),\phi(f(X_n)^2\dt,X_n)\right),\quad \forall~n\in\{1,\ldots,N\},
\end{equation}
and choosing the integrator~\eqref{integratorB} from Example~\ref{exB}, one obtains the domain preserving scheme
\begin{equation}\label{eq:scheme1intB}
X_{n+1}=\varphi\left(f(X_n)\delta B_n+\frac12(f'g)(X_n)(\delta B_n^2-\dt)-\frac12(\sigma'f^2)(X_n)\dt,X_n\right),\quad \forall~n\in\{1,\ldots,N\}.
\end{equation}

It remains to provide regularity assumptions on the mapping $\Phi$. Given an integer $p\in\N$ and a positive real number $\beta\in(0,\infty)$, for any function $\Phi\colon\R^+ \times\R\times[-1,1]\to\R$ of class $\mathcal{C}^p$, define
\begin{equation}\label{eq:regPhi}
\|\Phi\|_{p,\beta}=\sum_{|\alpha|\le p}\underset{s\ge 0,\gamma\in\R}\sup~e^{-\beta(s+|\gamma|)}\left|\partial_s^{\alpha_s}\partial_\gamma^{\alpha_\gamma}\partial_y^{\alpha_y}\Phi(s,\gamma,y) \right|\in[0,\infty],
\end{equation}
where $\alpha=(\alpha_s,\alpha_\gamma,\alpha_y)\in\N_0^3$ and $|\alpha|=\alpha_s+\alpha_\gamma+\alpha_y$.

In Theorems~\ref{theo:strong},~\ref{theo:weak} and~\ref{theo:stronghigherorder}, it is imposed that there exists a sufficiently large integer $p$ (depending on the result) and a positive real number $\beta$ such that $\|\Phi\|_{p,\beta}<\infty$. Imposing this type of condition is natural, indeed the integrators~\eqref{integratorA} and~\eqref{integratorB} introduced in Examples~\ref{exA} and~\ref{exB} satisfy the following property.
\begin{lemma}\label{lem:examples2}
Assume that $\Phi$ is defined by~\eqref{integratorA} or by~\eqref{integratorB}. Then, for all $p\in\N$, there exists $\beta\in(0,\infty)$ such that $\|\Phi\|_{p,\beta}<\infty$.
\end{lemma}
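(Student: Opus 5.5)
The plan is to first obtain bounds with exponential weights on all derivatives of the two flows $\varphi$ and $\phi$, and then combine them. For integrator~\eqref{integratorA} we use the chain rule. For integrator~\eqref{integratorB} we use the chain rule together with a simple shift of exponential weights.

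\emph{Step 1: the flow $\varphi$.} Write $\varphi(s,y)=\frac{a(s)+b(s)y}{b(s)+a(s)y}$ with $a(s)=1-e^{2s}$ and $b(s)=1+e^{2s}$. Since $b(s)>|a(s)|$, for $y\in[-1,1]$ the denominator satisfies
\[
b(s)+a(s)y\ge b(s)-|a(s)|=2\min(1,e^{2s})=2e^{-2s^-}.
\]
Numerators and denominators, and all their derivatives in $(s,y)$, are $O(e^{2|s|})$. Each derivative of a quotient produces at most $p+1$ powers of the inverse denominator. Hence for every multi-index of order at most $p$ we get
\[
|\partial_s^{i}\partial_y^{j}\varphi(s,y)|\le C_pe^{c_p|s|}
\]
with $c_p$ of order $2(p+2)$. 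A cleaner route, which I would try first, is the symmetric form $\varphi(s,y)=\tanh(s+\operatorname{artanh} y)$ valid on the interior. It shows that $\partial_s^i\varphi$ is a polynomial in $\varphi$ (since $\partial_s\varphi=\sigma(\varphi)$), hence bounded. The $y$-derivatives follow from $\partial_y\varphi=\frac{4e^{2s}}{(b+ay)^2}$, which by the denominator bound is $O(e^{2|s|})$. Higher derivatives are handled by induction, using that $\partial_y^j\varphi$ is a rational function with denominator $(b+ay)^{j+1}$.

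\emph{Step 2: the flow $\phi$.} For $s\ge0$ only, write $\phi(s,y)=y\,(y^2+(1-y^2)e^{-2s})^{-1/2}$. The quantity under the root, $q=e^{-2s}+y^2(1-e^{-2s})$, satisfies $q\ge e^{-2s}$ and is smooth. Each derivative of $q^{-1/2}$ is a sum of terms of the form $q^{-1/2-k}$ times polynomials in $(y,e^{-2s})$. These are bounded by $Ce^{(1+2k)s}$ with $k\le p$, so $\|\phi\|$-type bounds hold with exponent $(2p+1)s$.

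\emph{Step 3: composition.} For~\eqref{integratorA}, Faà di Bruno applied to $\varphi(\gamma,\phi(s,y))$ gives each derivative of order at most $p$ as a finite sum of products. Each product consists of one derivative of $\varphi$ evaluated at $(\gamma,\phi)$, which is at most $Ce^{c_p|\gamma|}$ because the bound in Step 1 is uniform in $y\in[-1,1]$ and $\phi\in[-1,1]$, times at most $p$ derivatives of $\phi$, each at most $Ce^{(2p+1)s}$. The result is a bound $Ce^{c_p|\gamma|+p(2p+1)s}$, so $\beta=\max(c_p,p(2p+1))$ works. For~\eqref{integratorB}, the inner map $(s,\gamma,y)\mapsto(\gamma-\frac12\sigma'(y)s,y)=(\gamma-ys,y)$ is polynomial. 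Its derivatives grow at most linearly in $s$, and $|\gamma-ys|\le|\gamma|+s$. Faà di Bruno then gives a bound $C(1+s)^pe^{c_p(|\gamma|+s)}$, which is absorbed by taking $\beta=c_p+1$.

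The main obstacle is the bookkeeping in Step 1: one has to verify that $y$-derivatives of $\varphi$ grow only exponentially in $|s|$ uniformly over the closed interval, including near $y=\pm1$ where $\operatorname{artanh}$ is singular. The lower bound $b+ay\ge 2e^{-2s^-}$ together with the induction on rational-function form handles this cleanly, so I would rely on it rather than on the $\tanh$ representation.
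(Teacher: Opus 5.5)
Your proof is correct and has the same two-stage structure as the paper. You first bound every derivative of order at most $p$ of the flows $\varphi$ and $\phi$ by $e^{C_p|s|}$, uniformly in $y\in[-1,1]$. You then pass to $\Phi$ by Fa\`a di Bruno, using that the inner maps keep the second argument of $\varphi$ in $[-1,1]$. For \eqref{integratorB} you also use that $(s,\gamma,y)\mapsto(\gamma-ys,y)$ has derivatives of at most linear growth in $s$ and satisfies $|\gamma-ys|\le|\gamma|+s$.

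The real difference is how you obtain the flow bounds \eqref{eq:reg-varphi-phi}.
\begin{itemize}
\item The paper treats them as a generic property of flows of smooth vector fields. Trajectories stay in $[-1,1]$, so the variational equations have bounded coefficients. These are $\partial_s\partial_y\varphi=\sigma'(\varphi)\,\partial_y\varphi$ and the analogous linear inhomogeneous equations for higher $y$-derivatives, so a Gr\"onwall recursion gives exponential growth. Derivatives in $s$ are then reduced to $y$-derivatives through the ODE itself.
\item You instead read the bounds off the closed forms, through the lower bounds $b(s)+a(s)y\ge 2\min(1,e^{2s})$ and, for $s\ge0$, $q\ge e^{-2s}$.
\end{itemize}

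Your route is fully explicit and gives concrete exponents, hence an explicit $\beta$. It only needs $\phi$ for $s\ge0$, which is all the norm \eqref{eq:regPhi} involves. The paper's route uses no formulas, and would carry over unchanged to integrators built from flows of any smooth vector fields vanishing at $\pm1$.

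One harmless slip: the symmetric form is $\varphi(s,y)=\tanh(\operatorname{artanh}y-s)$, not $\tanh(s+\operatorname{artanh}y)$. You can check this from $\varphi(s,0)=-\tanh s$. Since you rely on the rational form instead, this does not affect the argument.
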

The proof of Lemma~\ref{lem:examples2} is provided in Appendix~\ref{app}.

\section{Main results and numerical illustrations}\label{sec:main}

In this section, we present the main theoretical results on the proposed numerical schemes~\eqref{eq:schemeX}~and~\eqref{eq:schemeX1} (when $M=1$), applied to the stochastic differential equation~\eqref{eq:sdeNagumog}. 
The results are illustrated with numerical experiments using several domain preserving schemes denoted as follows:
\begin{itemize}
\item the scheme~\eqref{eq:schemeX} applied with the integrator~\eqref{integratorA} from Example~\ref{exA} is denoted by~\sI,
\item the scheme~\eqref{eq:schemeX} applied with the integrator~\eqref{integratorB} from Example~\ref{exB} is denoted by~\sII,
\item the scheme~\eqref{eq:schemeX1} applied when $M=1$ with the integrator~\eqref{integratorA} from Example~\ref{exA} is denoted by~\hosI,
\item the scheme~\eqref{eq:schemeX1} applied when $M=1$ with the integrator~\eqref{integratorB} from Example~\ref{exB} is denoted by~\hosII.
\end{itemize}
Moreover, we compare the above domain preserving schemes with the standard Euler--Maruyama and Milstein schemes which are denoted by {\upshape EM} and {\upshape Mil}, respectively, see for instance \cite{MR1214374,MR4369963,MR4241457}. 

Section~\ref{sec:main-DP} is devoted to check and illustrate the domain preservation for the schemes~\eqref{eq:schemeX} and~\eqref{eq:schemeX1}. Strong error estimates with order $1/2$ for the scheme~\eqref{eq:schemeX} are stated in Theorem~\ref{theo:strong} and illustrated by numerical experiments in Section~\ref{sec:main-strong}. Weak error estimates with order $1$ for the scheme~\eqref{eq:schemeX} are stated in Theorem~\ref{theo:weak} and illustrated by numerical experiments in Section~\ref{sec:main-weak}. Strong error estimates with order $1$ when $M=1$ for the scheme~\eqref{eq:schemeX1} are stated in Theorem~\ref{theo:stronghigherorder} and illustrated by numerical experiments in Section~\ref{sec:main-strong1}.

\subsection{Domain preservation}\label{sec:main-DP}

First, we check that the schemes~\eqref{eq:schemeX}~and~\eqref{eq:schemeX1} constructed in Section~\ref{sec:schemes} are domain preserving, if the mapping $\Phi$ satisfies Assumption~\ref{ass:integratorDP}.

\begin{proposition}\label{propo:dpX} 
Assume that the vector fields $g^m$ for $m\in\{0,1,\ldots,M\}$ satisfy Assumption~\ref{ass:boundary}. Assume that the mapping $\Phi$ satisfies Assumption~\ref{ass:integratorDP}. The schemes~\eqref{eq:schemeX}~and~\eqref{eq:schemeX1} are domain preserving: for any initial value $x_0\in\mathcal{D}$, for any time $T\in(0,\infty)$ and for any time-step size $\dt=T/N$ with $N\in\N$, almost surely one has
\[
X_n\in\mathcal{D},\quad \forall~n\in\{0,\ldots,N\}.
\]
\end{proposition}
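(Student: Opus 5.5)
The argument is a direct induction on $n$ using Assumption~\ref{ass:integratorDP} componentwise. For $n=0$, $X_0=x_0\in\mathcal{D}$ by assumption. Now fix $n\in\{0,\ldots,N-1\}$ and assume that almost surely $X_n\in\mathcal{D}$, i.e. $X_{n,k}\in[-1,1]$ for all $k\in\{1,\ldots,d\}$.

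The first step is to check that the arguments of $\Phi$ in~\eqref{eq:schemeX} and~\eqref{eq:schemeX1} lie in the admissible set $\R^+\times\R\times[-1,1]$. Since $X_n\in\mathcal{D}$, the quantities $f_k^m(X_n)$ are well-defined real numbers; the functions $f_k^m$ from~\eqref{eq:sigmaF}--\eqref{eq:fmk} are defined on $\mathcal{D}$ thanks to Assumption~\ref{ass:boundary} and Lemma~\ref{lem:g2f}. For~\eqref{eq:schemeX1} one also needs $(g\cdot\nabla)f_k(X_n)$, which requires $f_k$ to be differentiable; this holds under the $\mathcal{C}^3$ regularity implicit in the definition of that scheme. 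The first argument $\|f_k(X_n)\|^2\dt$ (resp. $f_k(X_n)^2\dt$) is nonnegative. The second argument is a finite real number almost surely, since the Brownian increments are real-valued. The third argument is $X_{n,k}\in[-1,1]$ by the induction hypothesis.

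The second step is to apply~\eqref{eq:condPhiDP} from Assumption~\ref{ass:integratorDP}. It gives $X_{n+1,k}\in[-1,1]$ for every $k$, hence $X_{n+1}\in\mathcal{D}$ almost surely. Since the condition holds for all $(s,\gamma,y)$ and not just typical ones, no estimate on the size of the increments or on $\dt$ is needed. This is why the conclusion holds for every time-step size.

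There is no real obstacle here. The only point requiring care is that the scheme is well defined at each step, because $f_k^m$ and its derivatives are only defined on $\mathcal{D}$. This is exactly what the induction hypothesis provides, so the well-posedness and domain preservation are proved simultaneously. Taking a finite intersection over $n$ of the almost sure events then gives the almost sure statement for all $n\in\{0,\ldots,N\}$ at once.
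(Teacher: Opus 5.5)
Your proof is correct and matches the paper's argument: induction on $n$, with condition~\eqref{eq:condPhiDP} applied componentwise to deduce $X_{n+1,k}\in[-1,1]$ from $X_{n,k}\in[-1,1]$, for either scheme and any time-step size. Your extra remark, that the induction hypothesis is also what makes the scheme well defined (the third argument of $\Phi$ and the evaluations of $f_k^m$ need $X_n\in\mathcal{D}$), is a useful clarification the paper leaves implicit.
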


\begin{proof}[Proof of Proposition~\ref{propo:dpX}]
It suffices to apply a recursion argument. For $n=0$, it is assumed that $X_0=x_0\in\mathcal{D}$. Given $n\in\{0,\ldots,N-1\}$, assuming that $X_n\in\mathcal{D}$, then for all $k\in\{1,\ldots,d\}$ one has $X_{n,k}\in[-1,1]$. Then, owing to the condition~\eqref{eq:condPhiDP} from Assumption~\ref{ass:integratorDP}, using either~\eqref{eq:schemeX} or~\eqref{eq:schemeX1}, one obtains $X_{n+1,k}\in[-1,1]$ for all $k\in\{1,\ldots,d\}$, which yields $X_{n+1}\in\mathcal{D}$. This concludes the recursion argument and the proof of Proposition~\ref{propo:dpX}.
\end{proof}

\begin{remark}
If for the definition of the schemes~\eqref{eq:schemeX} and~\eqref{eq:schemeX1} one considers the integrators~\eqref{integratorA} and~\eqref{integratorB} from Examples~\ref{exA} and~\ref{exB}, which are based on the flows $\varphi$ and $\phi$ of the ordinary differential equations~\eqref{eq:y} and~\eqref{eq:y2}, a stronger property is obtained. Recall that $\mathring{\mathcal{D}}=(-1,1)^d$ is the interior of the domain $\mathcal{D}=[-1,1]^d$. If $X_0\in\mathring{\mathcal{D}}$, then almost surely one has $X_n\in\mathring{\mathcal{D}}$ for all $n\in\{0,\ldots,N\}$.
\end{remark}

In order to illustrate the domain preservation for the scheme~\eqref{eq:schemeX} and its superiority compared to the standard Euler--Maruyama scheme, results of a numerical experiment are reported in Table~\ref{table:DP1}. It is performed in dimension $d=2$, for a one-dimensional Brownian motion ($M=1$), on the time interval $[0,T]=[0,20]$, with a time-step size $\dt=2^{4}$, and with the initial value $x_0=(0.5,-0.5)$. Two examples of vector fields $g^0$ and $g^1$ are considered:
\[
g^m(x_1,x_2)=\bigl(f_1^m(x_1,x_2)\sigma(x_1),f_2^m(x_1,x_2)\sigma(x_2)\bigr),\quad \forall~(x_1,x_2)\in[-1,1]^2,~m\in\{0,1\},
\]
with either
\[
f_1^0(x_1,x_2)=x_1x_2,~f_2^0(x_1,x_2)=\cos(x_1)\sin(x_2),\quad f_1^1(x_1,x_2)=1.5x_1^2x_2,~f_2^1(x_1,x_2)=1.5x_1x_2^2,
\]
or
\[
f_1^0(x_1,x_2)=x_1^2x_2,~f_2^0(x_1,x_2)=\cos(x_1)+x_2^2,\quad f_1^1(x_1,x_2)=\exp(x_2),~f_2^1(x_1,x_2)=\exp(x_1+x_2).
\]
In this numerical experiment, $N_{\rm MC}=10^2$ independent realizations for each scheme are simulated and the proportions of numerical solutions which remain in the domain $\mathcal{D}=[-1,1]^2$ at all times, is reported in Table~\ref{table:DP1}. More precisely, one computes the proportions of realizations such that $X_{n,1}\in[-1,1]$ and such that $X_{n,2}\in[-1,1]$ for all $n\in\{0,\ldots,N\}$. The results reported in Table~\ref{table:DP1} validate the property that the scheme~\eqref{eq:schemeX} is domain preserving, as stated in Proposition~\ref{propo:dpX}, whereas the standard Euler--Maruyama scheme is not.

\begin{table}[h]
\begin{center}
\resizebox{\columnwidth}{!}{%
\begin{tabular}{|c | c |c | c| c| }
   %\multicolumn{}{c}{}\\
   %\hline
   \cline{3-5}
    \multicolumn{1}{c}{} &   & \sI  & \sII  & {\upshape EM}  \\
  \hline
  $f^0(x_1,x_2)$ & $f^1(x_1,x_2)$ & ($x_1$, $x_2$) & ($x_1$, $x_2$) & ($x_1$, $x_2$)  \\
  \specialrule{.1em}{.05em}{.05em}
  $(x_1x_2,\cos(x_1)\sin(x_2))^T$ & $1.5(x_1^2x_2,x_1x_2^2)^T$ & $100/100$, $100/100$ & $100/100$, $100/100$ & $91/100$, $92/100$ \\
  \hline
  $(x_1^2x_2,\cos(x_1)+x_2^2)^T$ & $(\exp(x_2),\exp(x_1+x_2))^T$ & $100/100$, $100/100$ & $100/100$, $100/100$ & $90/100$, $14/100$ \\
  \hline
\end{tabular}
}
\end{center}
\caption{Validation of the domain preservation for the schemes 
~\sI~and~\sII~(scheme~\eqref{eq:schemeX} with the integrator $\Phi$ given by~\eqref{integratorA} or~\eqref{integratorB}), and comparison with the Euler--Maruyama scheme {\upshape EM}. Proportion of realizations which remain in the domain.}
\label{table:DP1}
\end{table}

The numerical experiment is repeated to illustrate the domain preservation for the scheme~\eqref{eq:schemeX1} and its superiority compared to the Milstein scheme. The results are obtained with the same parameters as for Table~\ref{table:DP1} and are reported in Table~\ref{table:DP2}. The results reported in Table~\ref{table:DP2} validate the property that the scheme~\eqref{eq:schemeX1} is domain preserving, as stated in Proposition~\ref{propo:dpX}, whereas the standard Milstein scheme is not.

\begin{table}[h]
\begin{center}
\resizebox{\columnwidth}{!}{%
\begin{tabular}{|c | c |c | c| c| }
   %\multicolumn{}{c}{}\\
   %\hline
   \cline{3-5}
    \multicolumn{1}{c}{} &   & \hosI  & \hosII  & {\upshape Mil}  \\
  \hline
  $f^0(x_1,x_2)$ & $f^1(x_1,x_2)$ & ($x_1$, $x_2$) & ($x_1$, $x_2$) & ($x_1$, $x_2$)  \\
  \specialrule{.1em}{.05em}{.05em}
  $(x_1x_2,\cos(x_1)\sin(x_2))^T$ & $1.5(x_1^2x_2,x_1x_2^2)^T$ & $100/100$, $100/100$ & $100/100$, $100/100$ & $99/100$, $100/100$ \\
  \hline
  $(x_1^2x_2,\cos(x_1)+x_2^2)^T$ & $(\exp(x_2),\exp(x_1+x_2))^T$ & $100/100$, $100/100$ & $100/100$, $100/100$ & $90/100$, $89/100$ \\
  \hline
\end{tabular}
}
\end{center}
\caption{Validation of the domain preservation for the schemes 
\hosI~and~\hosII~(scheme~\eqref{eq:schemeX1} with the integrator $\Phi$ given by~\eqref{integratorA} or~\eqref{integratorB}), and comparison with the Milstein scheme {\upshape Mil}. Proportion of realizations which remain in the domain.}
\label{table:DP2}
\end{table}

Figure~\ref{fig:plotDP} also illustrates the domain preservation property for the schemes~\eqref{eq:schemeX} and~\eqref{eq:schemeX1} stated in Proposition~\ref{propo:dpX}, and their superiority compared with the standard Euler--Maruyama and Milstein schemes. The figures show the evolutions of the two components $X_{n,1}$ and $X_{n,2}$ as time evolves for $t\in\{t_n=n\dt,~n=0,\ldots,N\}$ with final time $T=1$ and time-step size $\dt=2^{-4}$. In Figure~\ref{fig:plotDPa}, the domain preserving scheme~\sI (scheme~\eqref{eq:schemeX} with integrator~\eqref{integratorA} from Example~\ref{exA}) and the standard Euler--Maruyama scheme are considered. In Figure~\ref{fig:plotDPb}, the domain preserving scheme~\hosI (scheme~\eqref{eq:schemeX1} with integrator~\eqref{integratorA} from Example~\ref{exB}) and the Milstein scheme are considered. Variants with the integrator~\eqref{integratorB} from Example~\ref{exB} are not considered as the results would be similar. One observes in Figure~\ref{fig:plotDP} that the proposed schemes are domain preserving, contrary to the standard Euler--Maruyama and Milstein schemes.

\begin{figure}[h]
\centering
\begin{subfigure}{.5\textwidth}
\includegraphics[width=\textwidth]{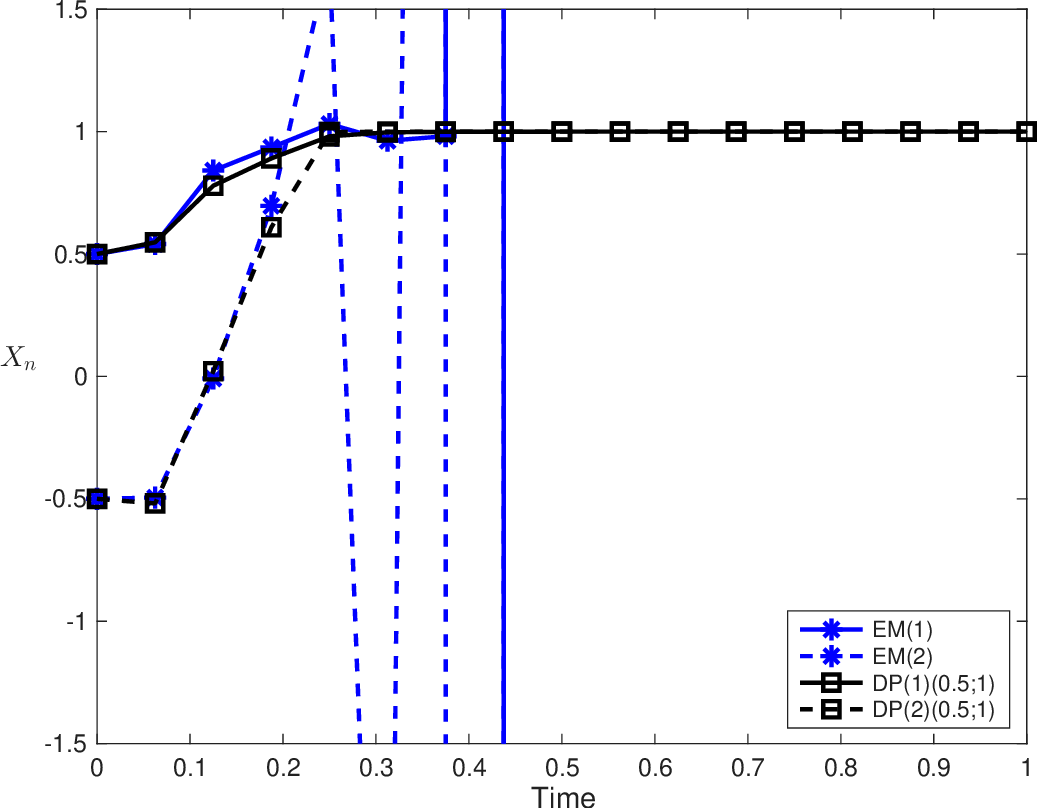} 
\caption{\sI~$\&$~Euler--Maruyama schemes}
\label{fig:plotDPa}
\end{subfigure}%
\begin{subfigure}{.5\textwidth}
\includegraphics[width=\textwidth]{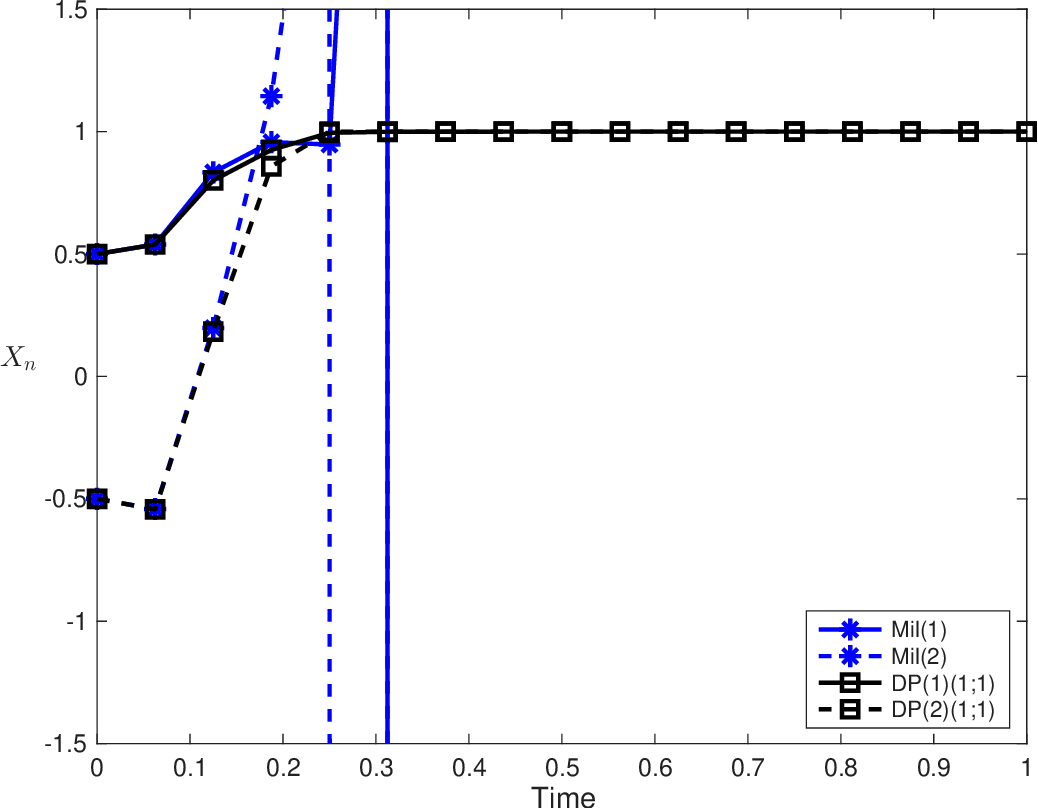}
\caption{\hosI~$\&$~Milstein schemes}
\label{fig:plotDPb}
\end{subfigure}
\caption{Illustration of Proposition~\ref{propo:dpX} (domain preservation for the schemes~\sI~and~\hosI). Evolution of the components of the schemes.}
\label{fig:plotDP}
\end{figure}

\subsection{Strong error estimates}\label{sec:main-strong}

This section is devoted to the first convergence result concerning the domain preserving scheme~\eqref{eq:schemeX}: it converges in the mean-square sense, with order $1/2$. Error estimates are stated in Theorem~\ref{theo:strong}. The result is then illustrated with numerical experiments, where different parameters (dimension $d$ of the system, dimension $M$ of the Brownian motion, presence or absence of drift) are varied.

\begin{theorem}\label{theo:strong}
Assume that the vector fields $g^m$ for $m\in\{0,1,\ldots,M\}$ are of class $\mathcal{C}^2$ on $\mathcal{D}$ and satisfy Assumption~\ref{ass:boundary}. Let $\bigl(X(t)\bigr)_{t\ge 0}$ denote the solution to the stochastic differential equation~\eqref{eq:sdeNagumog} with initial value $X(0)=x_0\in\mathcal D$. Given the final time $T\in(0,\infty)$ and the time-step size $\dt=T/N$ with $N\in\N$, let $\bigl(X_n\bigr)_{n=0,\ldots,N}$ be the solution to the domain preserving scheme~\eqref{eq:schemeX}.

Assume that the integrator $\Phi$ satisfies Assumptions~\ref{ass:integratorDP} and~\ref{ass:integrator}, and that there exists $\beta\in(0,\infty)$ such that $\|\Phi\|_{3,\beta}<\infty$ (see the definition~\eqref{eq:regPhi} of $\|\Phi\|_{p,\beta}$).

For all $T\in(0,\infty)$, there exists $C(T)\in(0,\infty)$ such that for all $\dt=T/N$ with $N\in\N$, one has
\begin{equation}\label{eq:strong}
\sup_{0\leq n\leq N}\left( \E\left[ \|X_n-X(t_n)\|^2 \right]\right)^{\frac12}\le C(T)\dt^{\frac12}.
\end{equation}
\end{theorem}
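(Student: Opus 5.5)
The plan is to write the scheme~\eqref{eq:schemeX} as a perturbed Euler--Maruyama scheme and then run a standard mean-square error recursion. Write $s_{n,k}=\|f_k(X_n)\|^2\dt$ and $\gamma_{n,k}=f_k(X_n)\cdot\delta B_n+f_k^0(X_n)\dt$. Apply Taylor's formula to $\Phi(s,\gamma,X_{n,k})$ around $(0,0,X_{n,k})$ up to second order in $\gamma$ and first order in $s$, with integral remainder. Using~\eqref{eq:condPhi0},~\eqref{eq:condPhi1} and the relation $\sigma(X_{n,k})f_k^m(X_n)=g_k^m(X_n)$ from~\eqref{eq:sigmaF}, one gets
\[
X_{n+1,k}=X_{n,k}+g_k^0(X_n)\dt+g_k(X_n)\cdot\delta B_n+\partial_s\Phi(0,0,X_{n,k})s_{n,k}+\tfrac12\partial_\gamma^2\Phi(0,0,X_{n,k})\gamma_{n,k}^2+R_{n,k}.
\]
The remainder $R_{n,k}$ involves third derivatives of $\Phi$ evaluated at intermediate points, times $|s|^{3/2}$-type or $|\gamma|^3$ or $s|\gamma|$ terms. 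These derivatives are controlled by $\|\Phi\|_{3,\beta}e^{\beta(s+|\gamma|)}$. Since $f_k$ and $f_k^0$ are bounded and $\E[e^{c|\delta B_n|}]$ is uniformly bounded for $\dt\le T$, Cauchy--Schwarz gives $\E[|R_{n,k}|^2]\lesssim\dt^3$, with $\E[R_{n,k}\mid\mathcal F_{t_n}]$ also of size $O(\dt^{3/2})$ in $L^2$.

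The key cancellation comes from~\eqref{eq:condPhi2}. Write $\gamma_{n,k}^2=(f_k(X_n)\cdot\delta B_n)^2+O(\dt^{3/2})$ and split $(f_k\cdot\delta B_n)^2=\|f_k\|^2\dt+\xi_{n,k}$, where $\xi_{n,k}=(f_k\cdot\delta B_n)^2-\|f_k\|^2\dt$ has zero conditional mean and conditional second moment $O(\dt^2)$. The deterministic parts then combine into $\psi(0,0,X_{n,k})\|f_k(X_n)\|^2\dt=0$. Hence
\[
X_{n+1}=X_n+g^0(X_n)\dt+\sum_m g^m(X_n)\delta B_n^m+\mathcal M_n+\mathcal R_n,
\]
where $\mathcal M_n$ is a martingale increment with $\E\|\mathcal M_n\|^2\lesssim\dt^2$, and $\mathcal R_n$ satisfies $\E\|\mathcal R_n\|^2\lesssim\dt^3$. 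This cancellation is the crucial step: without~\eqref{eq:condPhi2} there would be an $O(\dt)$ drift bias and the scheme would be inconsistent.

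Next, compare with the exact solution. Set $e_n=X_n-X(t_n)$ and
\[
X(t_{n+1})=X(t_n)+\int_{t_n}^{t_{n+1}}g^0(X(r))\dd r+\sum_m\int_{t_n}^{t_{n+1}}g^m(X(r))\dd B^m(r).
\]
Then $e_{n+1}=e_n+[g^0(X_n)-g^0(X(t_n))]\dt+\sum_m[g^m(X_n)-g^m(X(t_n))]\delta B^m_n+\mathcal M_n+\mathcal R_n-\rho_n$. Here $\rho_n$ collects the terms $\int_{t_n}^{t_{n+1}}(g^m(X(r))-g^m(X(t_n)))\dd B^m(r)$, which are martingale increments with second moment $\lesssim\dt^2$ by Proposition~\ref{propo:reg_exact} and the Lipschitz property of $g^m$ ($\mathcal{C}^2$ on a compact set). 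It also collects the drift analogue, whose second moment is $\lesssim\dt^3$.

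Now square, take expectations, and use orthogonality of the martingale increments to $\mathcal F_{t_n}$-measurable terms. Young's inequality handles the $O(\dt^{3/2})$-in-$L^2$ conditional-mean parts. This yields
\[
\E\|e_{n+1}\|^2\le(1+C\dt)\E\|e_n\|^2+C\dt^2.
\]
Since $e_0=0$, the discrete Gronwall lemma gives $\E\|e_n\|^2\le C(T)\dt$, which is~\eqref{eq:strong}.

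The main obstacle is bookkeeping in the Taylor remainder. The exponential weight in $\|\Phi\|_{3,\beta}$ requires exponential moments of Gaussian increments. One must also check that the mixed term $\partial_\gamma^2\Phi\cdot f_k^0\dt\,(f_k\cdot\delta B_n)$ and the terms $\partial_s\partial_\gamma\Phi\, s\gamma$ are either martingale increments of size $\dt^{3/2}$ or have $L^2$ norm $O(\dt^{3/2})$. Either way they contribute at most $C\dt^2$ per step, which suffices for order $1/2$.
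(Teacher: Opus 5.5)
Your proposal is correct, but it takes a different route from the paper. You work purely at the discrete level. A one-step Taylor expansion of $\Phi(\cdot,\cdot,X_{n,k})$ in $(s,\gamma)$ rewrites the scheme as Euler--Maruyama plus two extra pieces: a conditionally centered increment $\mathcal M_n$ of size $O(\dt)$ in $L^2$, and a remainder of size $O(\dt^{3/2})$ in $L^2$. The one-step recursion $\E\|e_{n+1}\|^2\le(1+C\dt)\E\|e_n\|^2+C\dt^2$ then closes the argument. In that recursion, cross terms with $e_n$ vanish by $\mathcal F_{t_n}$-orthogonality or are absorbed by Young's inequality. This is essentially Milstein's local-to-global mean-square argument, with local mean error $O(\dt^{3/2})$ and local mean-square error $O(\dt)$.

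The paper proceeds differently. It introduces the continuous interpolation $\widetilde X_k(t)=\Phi(\widetilde Z_k(t))$ from \eqref{eq:Xtilde} and derives its It\^o SDE \eqref{eq:auxXtilde1}, in which $\psi$ appears exactly as the It\^o drift correction. Using $\partial_\gamma\Phi(0,0,y)=\sigma(y)$ and $\psi(0,0,y)=0$, it then writes $e_{n,k}$ globally as a sum of time and stochastic integrals. Besides Lipschitz terms, their integrands are the increments $\partial_\gamma\Phi(\widetilde Z_k(t))-\partial_\gamma\Phi(\widetilde Z_k(t_j))$ and $\psi(\widetilde Z_k(t))-\psi(\widetilde Z_k(t_j))$. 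These are bounded over the whole of $[0,t_n]$ at once by It\^o isometry and Cauchy--Schwarz, giving $\E\|e_n\|^2\le C\dt\sum_{j<n}\E\|e_j\|^2+C\dt$.

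The paper never isolates the martingale part of the $\psi$-contribution. It simply bounds $e^{(3)}_{n,k}$ by $O(\dt^{1/2})$ in $L^2$, which suffices for order $1/2$. Your splitting $(f_k\cdot\delta B_n)^2=\|f_k\|^2\dt+\xi_{n,k}$ instead makes explicit that \eqref{eq:condPhi2} is exactly what removes an $O(\dt)$ conditional drift bias.

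Your route is more elementary and self-contained. It needs no interpolation and no It\^o formula for $\Phi(\widetilde Z_k)$, and only $(s,\gamma)$-derivatives at fixed $y$. The paper's route avoids Taylor-remainder bookkeeping and the per-step conditional-mean arguments. More importantly, it builds the auxiliary processes $\widetilde Z_k,\widetilde X_k$ that are reused in the weak-error analysis through the Kolmogorov equation. The variants $\widehat Z_k,\widehat X_k$ are then used for the order-one estimate.
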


The proof of Theorem~\ref{theo:strong} is postponed to Section~\ref{sec:proof1}.

\begin{remark}
Theorem~\ref{theo:strong} may be generalized as follows: for all $p\in\N$ and all $T\in(0,\infty)$, there exists $C_p(T)\in(0,\infty)$ such that for all $\dt=T/N$ with $N\in\N$, one has
\[
\sup_{0\leq n\leq N}\left( \E\left[ \|X_n-X(t_n)\|^p \right]\right)^{\frac1p}\le C(T)\dt^{\frac12}.
\]
In this article, we consider only the case $p=2$, i.\,e. mean-square error estimates, to simplify the presentation.
\end{remark}

Figures~\ref{fig:plot1d} and~\ref{fig:plot2d} are illustrations of Theorem~\ref{theo:strong} in various situations and show that the order $1/2$ is optimal. The mean-square error is defined as the left-hand side of~\eqref{eq:strong}. In the numerical experiments, the expectation is estimated by Monte Carlo averaging over $10^3$ independent realizations, which has been verified to be sufficient in practice to have a negligible statistical error for the observation of the rate of convergence. The final time is $T=1$. The time-step size $\dt$ takes values ranging from $2^{-4}$ to $2^{-14}$. Moreover, the reference solution $X^{\text{ref}}$ is computed using the scheme~\sI with time-step size $\dt_{\text{ref}}=2^{-14}$. In these plots, a reference slope indicating the order of convergence $1/2$ with respect to the time-step size $\dt$ is also given.

In the numerical experiment reported in Figure~\ref{fig:plot1d}, the dimension is $d=1$ and the initial value is $x_0=0.5$. In Figure~\ref{fig:plot1da} (left-hand side), there is no drift ($f^0(x)=0$), and one has $M=1$ with the diffusion coefficient $f^1(x)=x^2$. In Figure~\ref{fig:plot1db} (right-hand side), a drift is added such that $f^0(x)=x$, and one has $M=2$, with diffusion coefficients such that $f^1(x)=x^2$ and $f^2(x)=\exp(x)$.

\begin{figure}[h]
\centering
\begin{subfigure}{.5\textwidth}
\includegraphics[width=\textwidth]{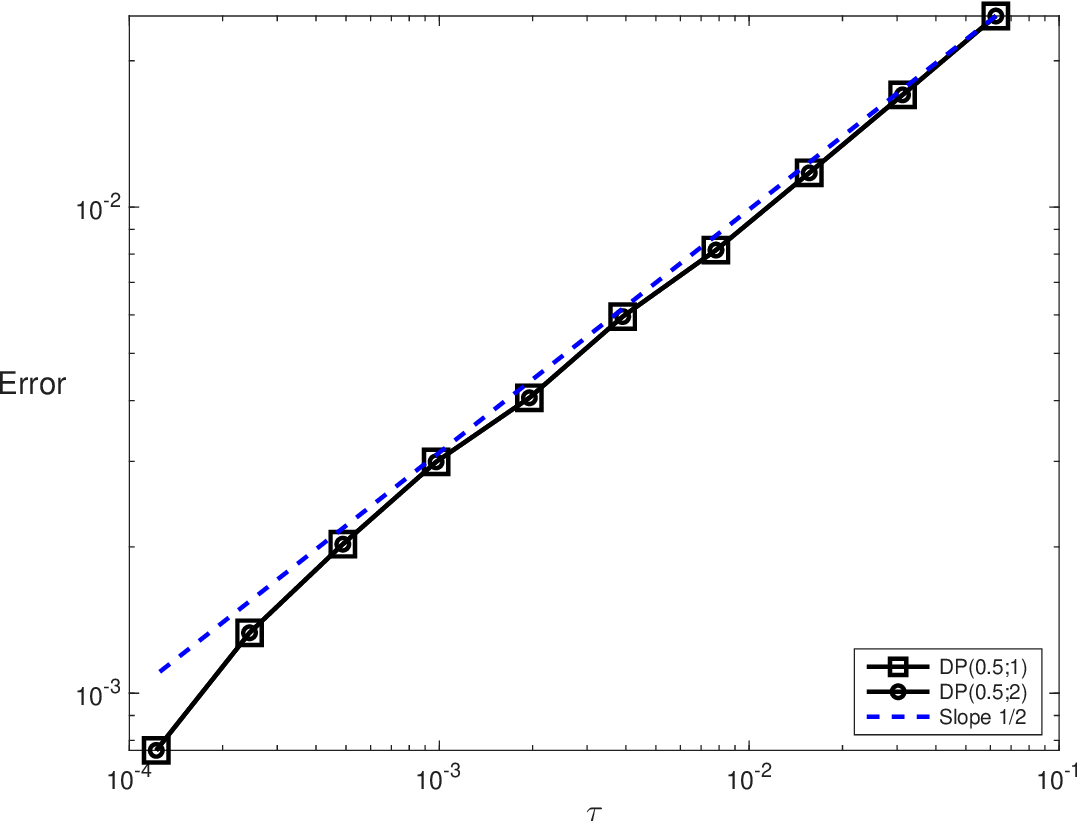} 
\caption{Case with no drift and $M=1$.
%\\ $f^1(x)=x^2$
}
\label{fig:plot1da}
\end{subfigure}%
\begin{subfigure}{.5\textwidth}
\includegraphics[width=\textwidth]{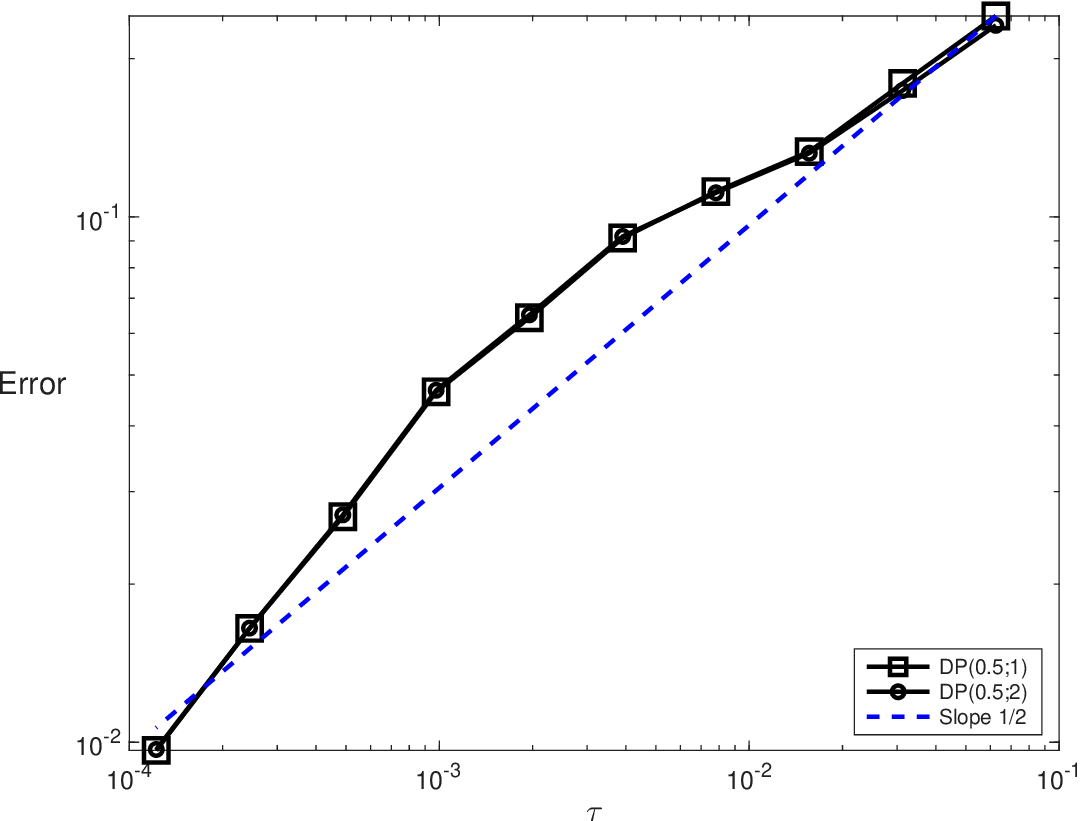}
\caption{Case with drift and $M=2$.
%\\ $f^0(x)=x$, $f^1(x)=x^2$, $f^2(x)=e^x$.
}
\label{fig:plot1db}
\end{subfigure}
\caption{Illustration of Theorem~\ref{theo:strong}. Behavior of the mean-square error for the domain preserving schemes ~\sI~and~\sII~in dimension $d=1$.}
\label{fig:plot1d}
\end{figure}

In the numerical experiment reported in Figure~\ref{fig:plot2d}, the dimension is $d=2$ and the initial value is $x_0=(0.5,-0.5)$. In Figure~\ref{fig:plot2da} (left-hand side), there is no drift ($f^0(x)=0$), and one has $M=1$, with diffusion coefficient such that $f^1(x_1,x_2)=\bigl(x_1^2x_2,x_1x_2^2\bigr)$. In Figure~\ref{fig:plot2db} (right-hand side), a drift is added such that $f^0(x_1,x_2)=\bigl(x_1x_2,x_1x_2^2)$, and one has $M=2$, with diffusion coefficients such that $f^1(x_1,x_2)=\bigl(x_1^2x_2,x_1x_2^2\bigr)$ and $f^2(x_1,x_2)=\bigl(\exp(x_1+x_2),x_1+x_2\bigr)$.

\begin{figure}[h]
\centering
\begin{subfigure}{.5\textwidth}
\includegraphics[width=\textwidth]{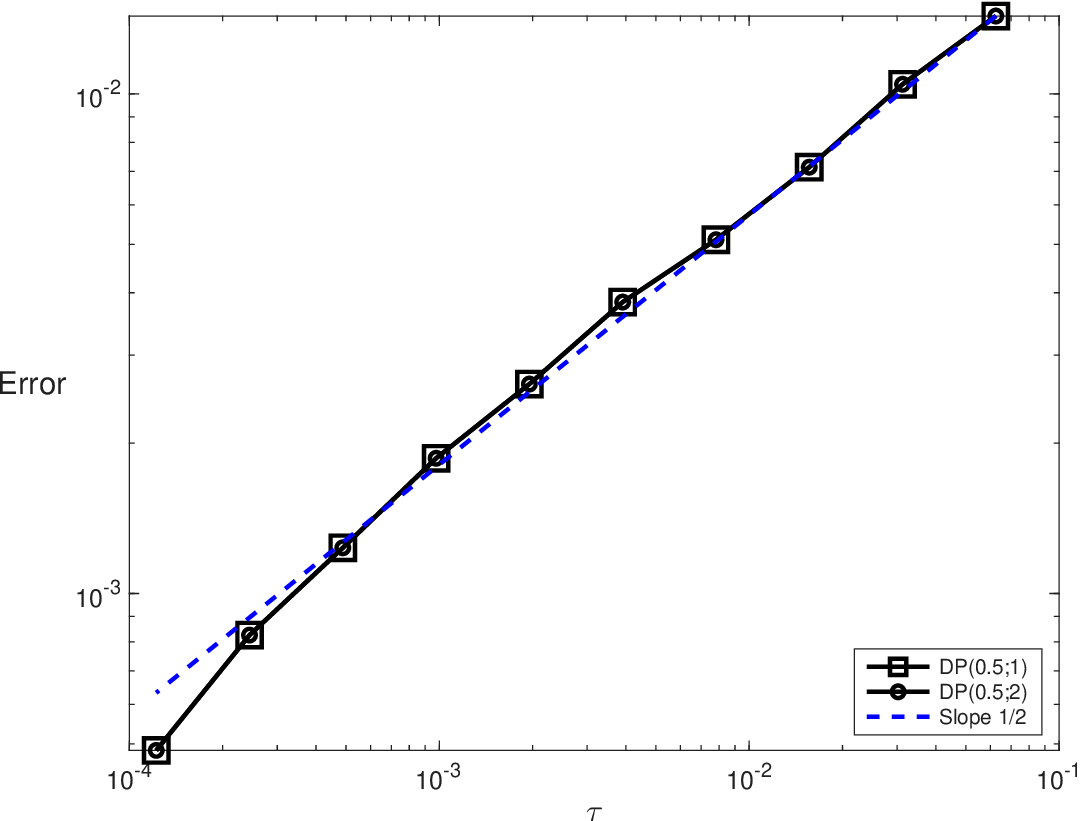} 
\caption{Case with no drift and $M=1$.
%\\ $f^1(x_1,x_2)=\bigl(x_1^2x_2,x_1x_2^2\bigr)$
}
\label{fig:plot2da}
\end{subfigure}%
\begin{subfigure}{.5\textwidth}
\includegraphics[width=\textwidth]{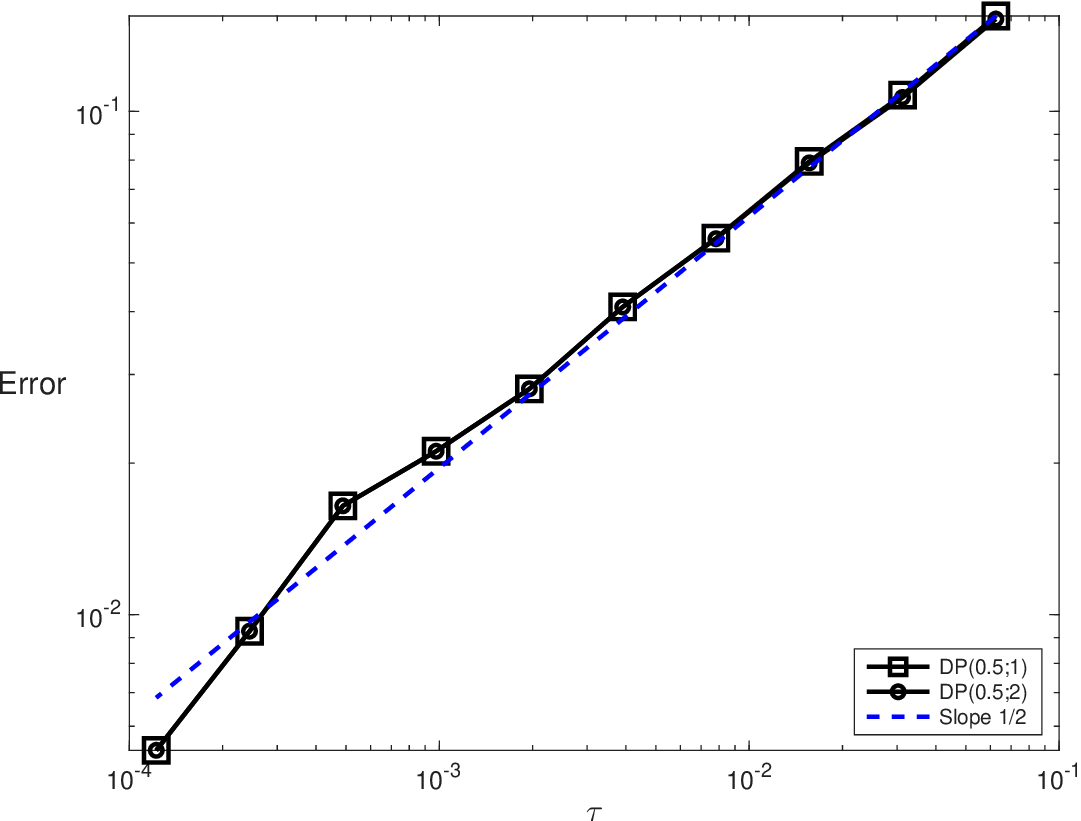}
\caption{Case with drift and $M=2$.
%\\ $f^0(x_1,x_2)=\bigl(x_1x_2,x_1x_2^2)$, $f^1(x_1,x_2)=\bigl(x_1^2x_2,x_1x_2^2\bigr)$, $f^2(x_1,x_2)=\bigl(e^{x_1+x_2},x_1+x_2\bigr)$
}
\label{fig:plot2db}
\end{subfigure}
\caption{Illustration of Theorem~\ref{theo:strong}. Behavior of the mean-square error for the domain preserving schemes ~\sI~and~\sII~in dimension $d=2$.}
\label{fig:plot2d}
\end{figure}

Figures~\ref{fig:plot1d} and~\ref{fig:plot2d} validate the mean-square convergence result with order $1/2$ obtained in Theorem~\ref{theo:strong}.

\subsection{Weak error estimates}\label{sec:main-weak}

This section is devoted to the second convergence result concerning the domain preserving scheme~\eqref{eq:schemeX}: it converges in the weak sense, with order $1$. Error estimates are stated in Theorem~\ref{theo:weak}. The result is then illustrated with numerical experiments.

\begin{theorem}\label{theo:weak}
Assume that the vector fields $g^m$ for $m\in\{0,1,\ldots,M\}$ are of class $\mathcal{C}^3$ on $\mathcal{D}$ and satisfy Assumption~\ref{ass:boundary}. Let $\bigl(X(t)\bigr)_{t\ge 0}$ denote the solution to the stochastic differential equation~\eqref{eq:sdeNagumog} with initial value $X(0)=x_0\in\mathcal D$. Given the final time $T\in(0,\infty)$ and the time-step size $\dt=T/N$ with $N\in\N$, let $\bigl(X_n\bigr)_{n=0,\ldots,N}$ be the solution to the domain preserving scheme~\eqref{eq:schemeX}.

Assume that the integrator $\Phi$ satisfies Assumptions~\ref{ass:integratorDP} and~\ref{ass:integrator}, and that there exists $\beta\in(0,\infty)$ such that $\|\Phi\|_{4,\beta}<\infty$ (see the definition~\eqref{eq:regPhi} of $\|\Phi\|_{p,\beta}$).

For all $T\in(0,\infty)$ and any mapping $\theta\colon \mathcal{D}\to\R$ of class $\mathcal{C}^3$, there exists $C(T,\theta)\in(0,\infty)$ such that for all $\dt=T/N$ with $N\in\N$, one has
\begin{equation}\label{eq:weak}
|\E\left[ \theta(X_N) \right]-\E\left[ \theta(X(T))\right]|\leq C(T,\theta)\dt.
\end{equation}
\end{theorem}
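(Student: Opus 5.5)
The proof will follow the standard Talay--Tubaro strategy based on the Kolmogorov equation. Define $u(t,x)=\E[\theta(X^x(t))]$ for $x\in\mathcal{D}$ and $t\in[0,T]$, where $X^x$ solves~\eqref{eq:sdeNagumog} with initial value $x$. Since the vector fields $g^m$ are of class $\mathcal{C}^3$ on the compact domain $\mathcal{D}$ and satisfy Assumption~\ref{ass:boundary}, the solution stays in $\mathcal{D}$. We first need that $u$ is of class $\mathcal{C}^{1,3}$ with bounded derivatives up to order $3$ in $x$, uniformly in $t\in[0,T]$, and that $u$ solves the backward Kolmogorov equation $\partial_t u=\mathcal{L}u$ with $\mathcal{L}=g^0\cdot\nabla+\frac12\sum_m (g^m\otimes g^m):\nabla^2$. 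This follows from differentiating the flow $x\mapsto X^x(t)$ three times; the derivatives satisfy linear SDEs with bounded coefficients because the $g^m$ are $\mathcal{C}^3$ on $\mathcal{D}$. One can extend the $g^m$ to $\mathcal{C}^3$ compactly supported fields on $\R^d$ and use classical results, since the solution never leaves $\mathcal{D}$.

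Next we telescope:
\[
\E[\theta(X_N)]-\E[\theta(X(T))]=\sum_{n=0}^{N-1}\E\bigl[u(T-t_{n+1},X_{n+1})-u(T-t_n,X_n)\bigr].
\]
It then suffices to prove a local weak consistency estimate: for all $x\in\mathcal{D}$ and all $v$ of class $\mathcal{C}^3$,
\[
\bigl|\E[v(X_1^x)]-v(x)-\dt\,\mathcal{L}v(x)\bigr|\le C\dt^2\|v\|_{\mathcal{C}^3},
\]
where $X_1^x$ is one step of~\eqref{eq:schemeX} from $x$. Combined with $u(t-\dt,\cdot)-u(t,\cdot)=-\dt\mathcal{L}u+O(\dt^2)$, which requires a bound on $\partial_t^2u$, this gives an $O(\dt^2)$ bound per step and hence $O(\dt)$ globally. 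To avoid needing $\partial_t^2 u$, one can instead apply It\^o's formula to $u(T-t,\cdot)$ along a continuous interpolation of the scheme. A simpler route is to write $\E[u(T-t_n,X_n)]-\E[u(T-t_{n+1},X_{n+1})]$ and use the mean-value theorem in time together with $\partial_t u=\mathcal{L}u$, so that only the spatial regularity of $\mathcal{L}u$ is needed. That demands $u\in\mathcal{C}^4$ in space, which is too much. I will therefore keep the It\^o-formula-in-time approach, which only uses $\mathcal{C}^3$ bounds of $u$.

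The core step, and the main obstacle, is the local consistency of the scheme. Write $S_k=\|f_k(x)\|^2\dt$ and $\Gamma_k=f_k(x)\cdot\delta B+f_k^0(x)\dt$, so that $X_{1,k}=\Phi(S_k,\Gamma_k,x_k)$. Taylor-expand $\Phi$ around $(0,0,x_k)$ to third order, with a remainder controlled by $\|\Phi\|_{4,\beta}$ times $e^{\beta(S_k+|\Gamma_k|)}$. The exponential moments of Gaussian increments are finite, so the remainder terms have moments of order $\dt^{3/2}$ and higher. Conditions~\eqref{eq:condPhi0}--\eqref{eq:condPhi1} give
\[
\E[X_{1,k}-x_k]=\sigma(x_k)f_k^0\dt+\bigl(\partial_s\Phi+\tfrac12\partial_\gamma^2\Phi\bigr)\|f_k\|^2\dt+O(\dt^2)=g_k^0(x)\dt+O(\dt^2),
\]
using~\eqref{eq:condPhi2}. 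Similarly, $\E[(X_{1,k}-x_k)(X_{1,l}-x_l)]=\sigma(x_k)\sigma(x_l)f_k\cdot f_l\,\dt+O(\dt^2)=g_k\cdot g_l\,\dt+O(\dt^2)$, and third moments are $O(\dt^2)$. The delicate point is that odd moments of $\delta B$ vanish. This makes the $\dt^{3/2}$ contributions, such as $\partial_s\partial_\gamma\Phi\,S_k\Gamma_k$, $\partial_\gamma^3\Phi\,\Gamma_k^3$ and cross terms, vanish in expectation. The genuinely fourth-order Taylor remainders are then handled by $\|\Phi\|_{4,\beta}$. Inserting these moment expansions into a third-order Taylor expansion of $v$ around $x$, with a remainder bounded by $\|v\|_{\mathcal{C}^3}\E|X_1^x-x|^3$, leaves a problem: this bound is only $O(\dt^{3/2})$. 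To fix it, we expand $v$ to third order exactly and use the $\mathcal{C}^3$ modulus only on a fourth-moment-sized remainder. Equivalently, we use the fact that $\E|X_1^x-x-\text{(odd Gaussian part)}|^3$ terms are cancelled by symmetry. If that step proves too delicate with only $\mathcal{C}^3$ regularity, we fall back on the It\^o-formula approach: interpolate $\Phi(s\|f_k\|^2,\,f_k\cdot(B(t)-B(t_n))+f_k^0 s,\,x_k)$ for $s=t-t_n$. Its It\^o differential has drift $\sigma f_k^0+\|f_k\|^2\psi+\ldots$, and $\psi(0,0,\cdot)=0$ makes the drift defect of size $O(s^{1/2})$ with zero conditional mean at leading order. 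The conditional-mean argument then gives the $\dt^2$ local error after one further expansion.
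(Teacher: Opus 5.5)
The step you call the core of the argument cannot be carried out under the stated $\mathcal{C}^3$ hypotheses, and the proposed repair does not work. The one-step estimate $\bigl|\E[v(X_1^x)]-v(x)-\dt\,\mathcal{L}v(x)\bigr|\le C\dt^2\|v\|_{\mathcal{C}^3}$ is false for general $v$ of class $\mathcal{C}^3$.

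To see this, take an interior point $x$ with $g_1(x)\neq 0$ and $v(y)=|y_1-x_1|^{7/2}$. This $v$ is of class $\mathcal{C}^3$ on $\mathcal{D}$ and satisfies $v(x)=0$, $\nabla v(x)=0$, $\nabla^2v(x)=0$, hence $\mathcal{L}v(x)=0$. Since $X_{1,1}^x-x_1=\sigma(x_1)f_1(x)\cdot\delta B_0+O(\dt)$, one gets $\E[v(X_1^x)]\sim\|g_1(x)\|^{7/2}\,\E|\xi|^{7/2}\,\dt^{7/4}$, with $\xi$ a standard Gaussian. The same happens for the exact flow, so no property of $\Phi$ can help.

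Odd-moment cancellation only disposes of the frozen term $\frac16\nabla^3v(x)[h,h,h]$. The remainder $\int_0^1\frac{(1-r)^2}{2}\bigl(\nabla^3v(x+rh)-\nabla^3v(x)\bigr)[h,h,h]\,\dd r$ is $O(\E\|h\|^4)=O(\dt^2)$ only when $\nabla^3v$ is Lipschitz. Mere continuity of $\nabla^3v$ yields only $o(\dt^{3/2})$. The time increment $u(T-t_{n+1},\cdot)-u(T-t_n,\cdot)$ also needs $\mathcal{L}^2u$, as you noticed. So the Taylor route proves the result only for $\theta$ and $g^m$ of class $\mathcal{C}^4$.

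With $\mathcal{C}^3$ data, the quantity of order $\dt^2$ is not the distance of either one-step law to $v+\dt\mathcal{L}v$. It is the \emph{difference} between one step of the scheme and one step of the exact dynamics, and this has to be captured through the Kolmogorov equation along a path.

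Your fallback is exactly the paper's proof, and it is the one to write out; it needs none of the moment computations of the Taylor route. On $[t_n,t_{n+1})$, apply It\^o's formula to $u(T-t,\widetilde X(t))$, where $\widetilde X_k(t)=\Phi(\widetilde Z_k(t))$ solves \eqref{eq:auxXtilde1}, and subtract \eqref{eq:Kolmogorov} at $(T-t,\widetilde X(t))$. What remains are coefficient defects multiplied by $\partial_{x_k}u$ or $\partial^2_{x_kx_\ell}u$ at $(T-t,\widetilde X(t))$. By \eqref{eq:condPhi1}, \eqref{eq:condPhi2} and $g_k^m(x)=\sigma(x_k)f_k^m(x)$, every defect vanishes at $t=t_n$, and it splits into three kinds of term:
the differences $g_k^0(\widetilde X(t_n))-g_k^0(\widetilde X(t))$ and $(g_k\cdot g_\ell)(\widetilde X(t_n))-(g_k\cdot g_\ell)(\widetilde X(t))$;
the differences $\partial_\gamma\Phi(\widetilde Z_k(t))-\partial_\gamma\Phi(\widetilde Z_k(t_n))$ and $\psi(\widetilde Z_k(t))-\psi(\widetilde Z_k(t_n))$, multiplied by $\mathcal{F}_{t_n}$-measurable factors;
and, from the diffusion coefficient, the quadratic term $\bigl[\partial_\gamma\Phi(\widetilde Z_k(t))-\partial_\gamma\Phi(\widetilde Z_k(t_n))\bigr]\bigl[\partial_\gamma\Phi(\widetilde Z_\ell(t))-\partial_\gamma\Phi(\widetilde Z_\ell(t_n))\bigr]$.
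You did not mention the quadratic term. It is not centered, but it is $O(\dt)$ in $L^1$ by Cauchy--Schwarz.

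For the linear defects, replace $\partial^\alpha u(T-t,\widetilde X(t))$ by $\partial^\alpha u(T-t,\widetilde X(t_n))$ at a cost of $O(\dt^{1/2})\cdot O(\dt^{1/2})$ in $L^1$. This is the only place where third derivatives of $u$ are used, which is why $\mathcal{C}^3$ suffices. In the frozen term, expand the defect by It\^o's formula. The $\dd B$ part has zero expectation after conditioning on $\mathcal{F}_{t_n}$. The $\dd s$ part is $O(t-t_n)$, because derivatives of $\Phi$ up to order four (needed for second derivatives of $\psi$) grow at most like $e^{C(s+|\gamma|)}$ and Gaussian exponential moments are bounded uniformly in $\dt$. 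Each step therefore contributes $O(\dt^2)$, and summing over $n$ gives \eqref{eq:weak}.
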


The proof of Theorem~\ref{theo:weak} is postponed to Section~\ref{sec:proof2}.

Figures~\ref{fig:plotw1d} and~\ref{fig:plotw2d} are illustrations of Theorem~\ref{theo:weak} in various situations, and for various choices of the test function $\theta$. They show that the order $1$ of the domain preserving scheme~\eqref{eq:schemeX} is optimal. The weak error is defined as the left-hand side of~\eqref{eq:weak}. In the numerical experiments, the expectation is estimated by Monte Carlo averaging over $10^6$ independent realizations, which has been verified to be sufficient in practice to have a negligible statistical error for the observation of the rate of convergence. The final time is $T=1$. The time-step size $\dt$ takes values ranging from $2^{-4}$ to $2^{-12}$. Moreover, the reference solution $X^{\text{ref}}$ is computed using the schemes~\sI~or~\sII~with time-step size $\dt_{\text{ref}}=2^{-12}$. Reference slopes indicating the orders of convergence $1/2$ and $1$ with respect to the time-step size $\dt$ are also provided in these figures.

In the numerical experiment reported in Figure~\ref{fig:plotw1d}, the dimension is $d=1$ and the initial value is $x_0=0.5$. In Figure~\ref{fig:plotw1da} (left-hand side), the scheme~\sI~is considered, i.\,e. the scheme~\eqref{eq:schemeX} with the integrator~\eqref{integratorA} from Example~\ref{exA}. In Figure~\ref{fig:plotw1db} (right-hand side), the scheme~\sII~is considered, i.\,e. the scheme~\eqref{eq:schemeX} with the integrator~\eqref{integratorB} from Example~\ref{exB}. One has $M=2$ and the vector fields are given such that for all $x\in[-1,1]$ one has
\[
f^0(x)=x,\quad f^1(x)=x^2,\quad f^2(x)=\exp(x).
\]
The test function $\theta$ is given by one of the examples below: for all $x\in[-1,1]$ one has
\[
\theta_1(x)=x^2,\quad \theta_2(x)=x,\quad \theta_3(x)=\exp(x),\quad \theta_4(x)=\cos(\pi x),\quad \theta_5(x)=\sin(\pi x).
\]

\begin{figure}[h]
\centering
\begin{subfigure}{.5\textwidth}
\includegraphics[width=\textwidth]{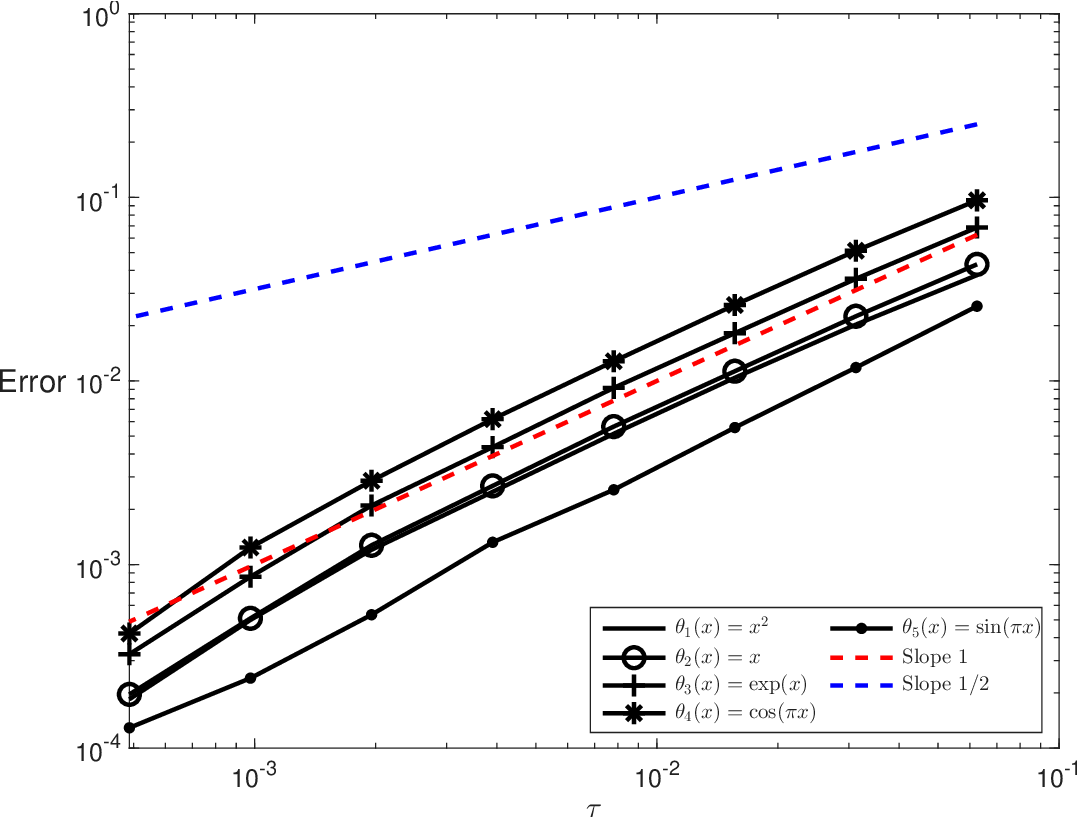} 
\caption{Scheme~\sI.}
\label{fig:plotw1da}
\end{subfigure}%
\begin{subfigure}{.5\textwidth}
\includegraphics[width=\textwidth]{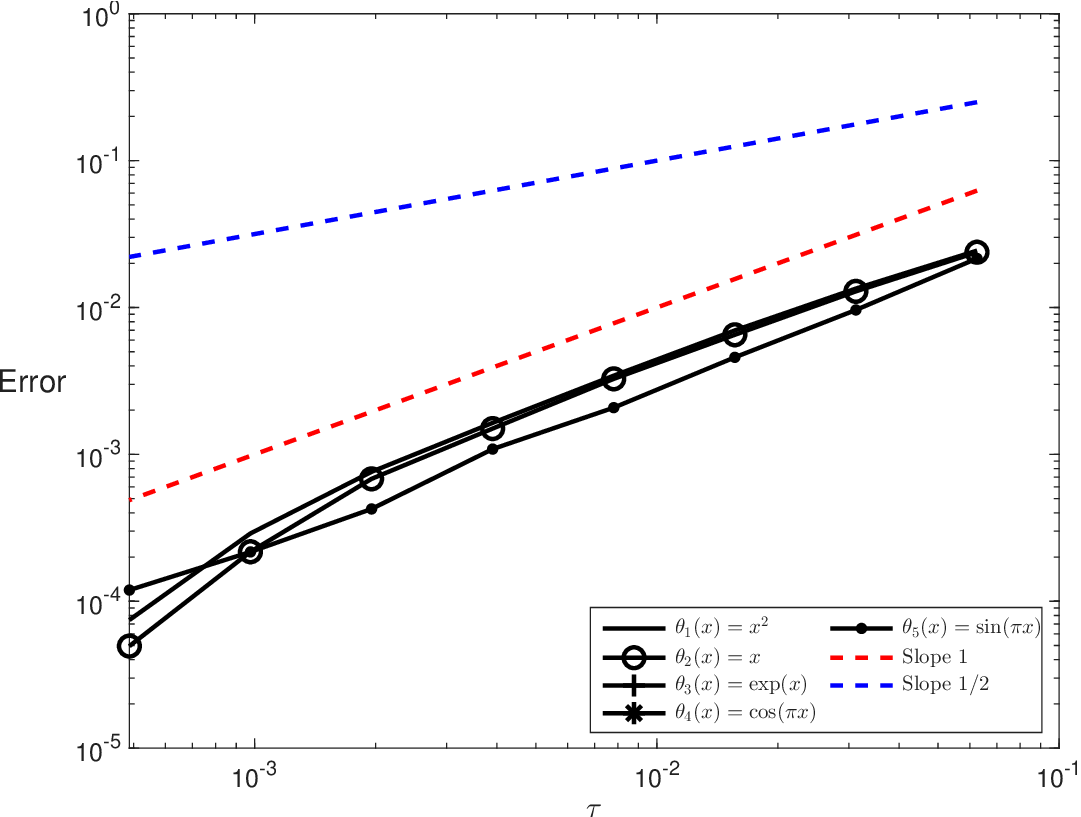}
\caption{Scheme~\sII.}
\label{fig:plotw1db}
\end{subfigure}
\caption{Illustration of Theorem~\ref{theo:weak}. Behavior of the weak error with different functions $\theta$ for the domain preserving schemes~\sI~and~\sII~ in dimension $d=1$.}
\label{fig:plotw1d}
\end{figure}

In the numerical experiment reported in Figure~\ref{fig:plotw2d}, the dimension is $d=2$ and the initial value is $x_0=(0.5,-0.5)$. In Figure~\ref{fig:plotw2da} (left-hand side), the scheme~\sI~is considered, i.\,e. the scheme~\eqref{eq:schemeX} with the integrator~\eqref{integratorA} from Example~\ref{exA}. In Figure~\ref{fig:plotw2db} (right-hand side), the scheme~\sII~is considered, i.\,e. the scheme~\eqref{eq:schemeX} with the integrator~\eqref{integratorB} from Example~\ref{exB}. One has $M=2$ and the vector fields are given such that for all $x=(x_1,x_2)\in[-1,1]^2$ one has
\[
f^0(x)=\bigl(x_1x_2,\cos(x_1)\sin(x_2)\bigr),\quad f^1(x)=\bigl(x_1^2x_2,x_1x_2^2\bigr),\quad f^2(x)=\bigl(\exp(x_1+x_2),x_1+x_2\bigr).
\]
The function $\theta$ is given by one of the examples below: for all $x=(x_1,x_2)\in[-1,1]^2$ one has
\[
\theta_1(x)=x_1+x_2^2,\quad \theta_2(x)=\exp(x_1)+x_2,\quad \theta_3(x)=\cos(x_1)+\sin(x_2),\quad \theta_4(x)=x_1+x_2.
\]

\begin{figure}[h]
\centering
\begin{subfigure}{.5\textwidth}
\includegraphics[width=\textwidth]{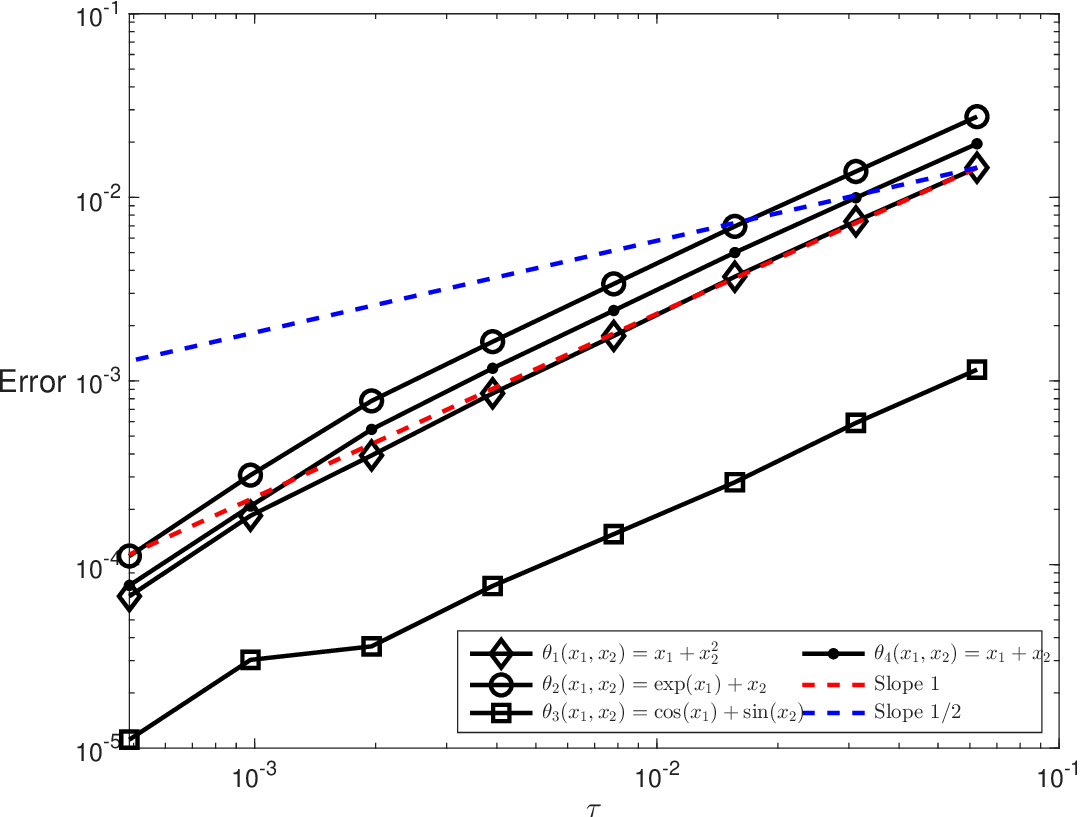} 
\caption{Scheme \sI.}
\label{fig:plotw2da}
\end{subfigure}%
\begin{subfigure}{.5\textwidth}
\includegraphics[width=\textwidth]{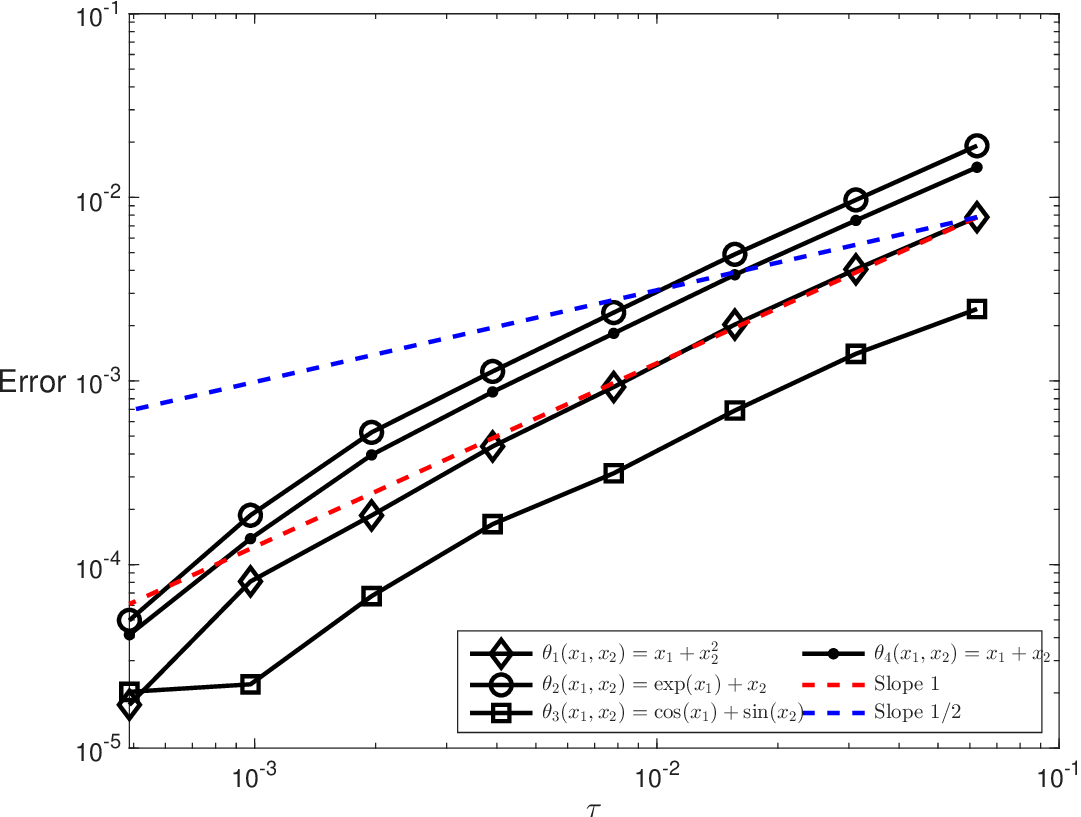}
\caption{Scheme \sII.}
\label{fig:plotw2db}
\end{subfigure}
\caption{Illustration of Theorem~\ref{theo:weak}. Behavior of the weak error with different functions $\theta$ for the domain preserving schemes~\sI~and~\sII~ in dimension $d=2$.}
\label{fig:plotw2d}
\end{figure}

Figures~\ref{fig:plotw1d} and~\ref{fig:plotw2d} validate the weak convergence result with order $1$ obtained in Theorem~\ref{theo:weak}.

\subsection{Higher-order strong error estimates ($\mathbf{M=1}$)}\label{sec:main-strong1}

This section is devoted to showing that when $M=1$ the domain preserving scheme~\eqref{eq:schemeX1} converges with strong order $1$, whereas for the scheme~\eqref{eq:schemeX} the strong order of convergence is $1/2$ as stated in Theorem~\ref{theo:strong} and observed in Figure~\ref{fig:plot1d}. Error estimates are stated in Theorem~\ref{theo:stronghigherorder}. The result is then illustrated with numerical experiments.

\begin{theorem}\label{theo:stronghigherorder}
Assume that $M=1$, and that the vector fields $g^0$ and $g=g^1$ are of class $\mathcal{C}^3$ on $\mathcal{D}$ and satisfy Assumption~\ref{ass:boundary}. Let $\bigl(X(t)\bigr)_{t\ge 0}$ denote the solution to the stochastic differential equation~\eqref{eq:sdeNagumog} with initial value $X(0)=x_0\in\mathcal D$ and $M=1$. Given the final time $T\in(0,\infty)$ and the time-step size $\dt=T/N$ with $N\in\N$, let $\bigl(X_n\bigr)_{n=0,\ldots,N}$ be the solution to the domain preserving scheme~\eqref{eq:schemeX1} with $M=1$.

Assume that the integrator $\Phi$ satisfies Assumptions~\ref{ass:integratorDP} and~\ref{ass:integrator-higher}, and that there exists $\beta\in(0,\infty)$ such that $\|\Phi\|_{4,\beta}<\infty$ (see the definition~\eqref{eq:regPhi} of $\|\Phi\|_{4,\beta}$).

There exists $\overline{\dt}\in(0,1)$ such that the following holds. For all $T\in(0,\infty)$, there exists $C(T)\in(0,\infty)$ such that for all $\dt=T/N$ with $N\in\N$ with $\dt\in(0,\overline{\dt})$, one has
\begin{equation}\label{eq:stronghigherorder}
\sup_{0\leq n\leq N} \left( \E \left[ \|X_n-X(t_n)\|^2 \right] \right)^{1/2}\leq C(T)\dt.
\end{equation}
\end{theorem}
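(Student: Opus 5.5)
The plan is to treat \eqref{eq:schemeX1} as a perturbation of the Milstein scheme and to use the standard fundamental theorem for mean-square convergence (Milstein's theorem). One checks two local properties of the one-step map $x\mapsto\Psi_\dt(x,\delta B_n)$ defined by \eqref{eq:schemeX1}, relative to the exact one-step solution $X^{x}(t_n+\dt)$ started from $x$ at time $t_n$:
\[
\bigl\|\E[\Psi_\dt(x,\delta B_n)-X^x(t_n+\dt)]\bigr\|\le C\dt^{2},\qquad \bigl(\E\|\Psi_\dt(x,\delta B_n)-X^x(t_n+\dt)\|^2\bigr)^{1/2}\le C\dt^{3/2},
\]
uniformly in $x\in\mathcal{D}$. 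One also needs the mean-square stability property of the exact flow, namely $\E\|X^x(t)-X^y(t)\|^2\le e^{Ct}\|x-y\|^2$. This follows from It\^o's formula, since the $g^m$ are $\mathcal{C}^3$, hence Lipschitz on $\mathcal{D}$. Both the exact and numerical solutions stay in the compact set $\mathcal{D}$ by Proposition~\ref{propo:dpX}, so all constants are uniform. Milstein's theorem then gives the global order $2-\frac12=\frac32-\frac12=1$.

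The core step is a local expansion of $\Phi$. Write $s=f_k(x)^2\dt$ and $\gamma=f_k(x)\delta B_n+\frac12 (g\cdot\nabla)f_k(x)(\delta B_n^2-\dt)+f_k^0(x)\dt$. Taylor expanding $\Phi(s,\gamma,x_k)$ around $(0,0,x_k)$ to second order, with an integral remainder of third order, gives
\[
\Phi=x_k+\sigma(x_k)\gamma+\partial_s\Phi\, s+\tfrac12\partial_\gamma^2\Phi\,\gamma^2+R,
\]
with $R$ bounded by third derivatives times $(s+|\gamma|)^3$. Using $\|\Phi\|_{4,\beta}<\infty$, these derivatives grow at most like $e^{\beta(s+|\gamma|)}$, so one gets $\E|R|^2\lesssim\dt^3$ from Gaussian exponential moments. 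Here one should restrict $\dt$ so that $\E e^{c\beta|\delta B_n^2|}$ is finite; this is presumably the origin of $\overline{\dt}$, since $\gamma$ contains $\delta B_n^2$. For the mixed term, $|\partial_s\partial_\gamma\Phi|\,s|\gamma|=O(\dt^{3/2})$ in $L^2$ with zero-ish mean. It is safer to keep it in $R$ and bound its mean using symmetry, or to note that a term of size $O(\dt^{3/2})$ in $L^2$ with a mean of $O(\dt^2)$ is acceptable.

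Next, insert \eqref{eq:condPhi-higher} and use $\sigma(x_k)f_k(x)=g_k(x)$, $\sigma(x_k)f_k^0=g_k^0$. Then
\[
\Phi=x_k+g_k^0\dt+g_k\delta B_n+\tfrac12\sigma(x_k)(g\cdot\nabla)f_k(\delta B_n^2-\dt)+\tfrac12\sigma'\sigma f_k^2(\delta B_n^2-\dt)+O(\dt^{3/2}).
\]
The last two coefficients combine into $\frac12 (g\cdot\nabla)(\sigma(x_k)f_k)=\frac12 (g\cdot\nabla) g_k$, because $(g\cdot\nabla)\sigma(x_k)=g_k\sigma'(x_k)=\sigma\sigma'f_k$. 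This is exactly the Milstein increment. The remaining terms of the form $\sigma'\sigma f_k^2 O(\dt\,\delta B_n)$, and similar ones, are $O(\dt^{3/2})$ in $L^2$ and have mean $O(\dt^2)$ after taking odd moments into account. Comparing with the known local properties of the Milstein scheme for $M=1$ (mean error $O(\dt^2)$, mean-square error $O(\dt^{3/2})$, with $g^m\in\mathcal{C}^3$) yields the two local estimates.

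The main obstacle is the careful control of the mean of the local error at order $\dt^2$. This requires identifying all $O(\dt^{3/2})$ terms in the expansion, such as $\partial_s\partial_\gamma\Phi\, s\gamma$, $\partial_\gamma^3\Phi\,\gamma^3/6$, and the $f^0$–$\delta B$ cross terms. One must check that each either has vanishing leading mean by oddness in $\delta B_n$ or is genuinely $O(\dt^2)$. This in turn forces the Taylor expansion to be carried to order four, using $\|\Phi\|_{4,\beta}$, with an $L^1$ bound of order $\dt^2$ on the fourth-order remainder, again via exponential moments valid for $\dt<\overline{\dt}$.
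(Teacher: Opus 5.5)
Your proposal is correct, but it takes a genuinely different route from the paper. You reduce everything to one-step consistency, namely local mean error $O(\dt^2)$ and local root-mean-square error $O(\dt^{3/2})$ uniformly on $\mathcal{D}$, and then invoke Milstein's fundamental mean-square theorem. The consistency comes from a Taylor expansion of $\Phi$ in $(s,\gamma)$ at $(0,0,x_k)$. The expansion reproduces the Milstein increment through \eqref{eq:condPhi-higher} and the identity $(g\cdot\nabla)(\sigma(x_k)f_k)=\sigma'\sigma f_k^2+\sigma(g\cdot\nabla)f_k$. The leftover $O(\dt^{3/2})$ terms ($\gamma^3$, $s\gamma$, the cross terms of $\gamma^2$) have mean $O(\dt^2)$ by oddness of Gaussian moments, once you carry the expansion to a fourth-order remainder controlled by $\|\Phi\|_{4,\beta}$. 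I checked this bookkeeping: $\E[\Phi-x_k]=g_k^0\dt+O(\dt^2)$ does hold.

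The paper never Taylor-expands $\Phi$ and never uses the fundamental theorem. It builds the continuous-time interpolant $\widehat{X}_k(t)=\Phi(\widehat{Z}_k(t))$, whose $\widehat{\Gamma}_k(t)$ contains the iterated integral $\int_{t_n}^t[B(s)-B(t_n)]\,\dd B(s)$. It derives the SDE \eqref{eq:auxXhat1} for this interpolant by It\^o's formula. It then uses \eqref{eq:condPhi1} and \eqref{eq:condPhi-higher} at $\widehat{Z}_k(t_j)=(0,0,X_{j,k})$ to match a Milstein-type expansion of the exact solution. Finally it splits the global error into many sums $\varepsilon^{(1,\iota)},\varepsilon^{(2,\iota)},\varepsilon^{(3,\iota)}$, bounds each by It\^o isometry, stochastic Fubini and Lemma~\ref{lem:auxexpo2}, and closes with a discrete Gr\"onwall inequality.

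The cancellation you obtain from odd moments is realised in the paper by martingale structure across steps, for example $\varepsilon^{(2,4,2)}$ after stochastic Fubini. So the paper avoids any fourth-order expansion, at the price of a long list of terms. Your route is more modular and shows exactly why \eqref{eq:condPhi-higher} is the right condition. Both routes get $\overline{\dt}$ from the same exponential-moment restriction coming from the $\delta B_n^2$ term in $\gamma$.

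Two small points would make your argument airtight:
\begin{itemize}
\item The proof of the fundamental theorem needs more than the stability bound you quote. It also uses $\E\|X^x(\dt)-X^y(\dt)-(x-y)\|^2\le C\dt\|x-y\|^2$ to control the cross term between the propagated error and the local error. This bound also follows from Lipschitz coefficients.
\item The theorem is stated on $\R^d$ with globally Lipschitz coefficients. You should either extend $g^0,g$ Lipschitz-ly outside $\mathcal{D}$, or note that the proof only evaluates local errors at points of $\mathcal{D}$. The latter is legitimate by Proposition~\ref{propo:dpX} and by invariance of $\mathcal{D}$ under the exact flow.

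\end{itemize}
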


\begin{remark}
Similar to Theorem~\ref{theo:strong},  Theorem~\ref{theo:stronghigherorder} may be generalized as follows: for all $p\in\N$ and all $T\in(0,\infty)$, there exists $C_p(T)\in(0,\infty)$ such that for all $\dt=T/N$ with $N\in\N$, one has
\[
\sup_{0\leq n\leq N}\left( \E\left[ \|X_n-X(t_n)\|^p \right]\right)^{\frac1p}\le C(T)\dt.
\]
In this article, we consider only the case $p=2$, i.\,e. mean-square error estimates, to simplify the presentation.
\end{remark}

Figure~\ref{fig:plotOrder1} is an illustration of Theorem~\ref{theo:stronghigherorder}. The mean-square error is defined as the left-hand side of~\eqref{eq:stronghigherorder}. In the numerical experiments, the expectation is estimated by Monte Carlo averaging over $10^3$ independent realizations, which has been verified to be sufficient in practice to have a negligible statistical error for the observation of the rate of convergence. The final time is $T=1$. The time-step size $\dt$ takes values ranging from $2^{-4}$ to $2^{-14}$. Moreover, the reference solution $X^{\text{ref}}$ is computed with time-step size $\dt_{\text{ref}}=2^{-14}$. Reference slopes indicating the orders of convergence $1/2$ and $1$ with respect to the time-step size $\dt$ are also provided. The schemes~\hosI~and~\hosII~are considered, i.\,e. the schemes~\eqref{eq:schemeX1} with the integrator~\eqref{integratorA} from Example~\ref{exA} and the integrator~\eqref{integratorB} from Example~\ref{exB} respectively. For comparison with Theorem~\ref{theo:strong}, the schemes~\sI~and~\sII~are also considered.

In the numerical experiment reported in Figure~\ref{fig:plotOrder1a} (left-hand side), the dimension is $d=1$ and the initial value is $x_0=0.5$. The vector fields are given such that $f^0(x)=x$ and $f^1(x)=x^2$ for all $x\in[-1,1]$. In the numerical experiment reported in Figure~\ref{fig:plotOrder1b} (right-hand side), the dimension is $d=2$ and the initial value is $x_0=(0.5,-0.5)$. The vector fields are given such that $f^0(x_1,x_2)=\bigl(x_1x_2\cos(x_1)\sin(x_2)\bigr)$ and $f^1(x_1,x_2)=x_1^2x_2,x_1x_2^2\bigr)$ for all $x=(x_1,x_2)\in[-1,1]^2$.

\begin{figure}[h]
\centering
\begin{subfigure}{.5\textwidth}
\includegraphics[width=\textwidth]{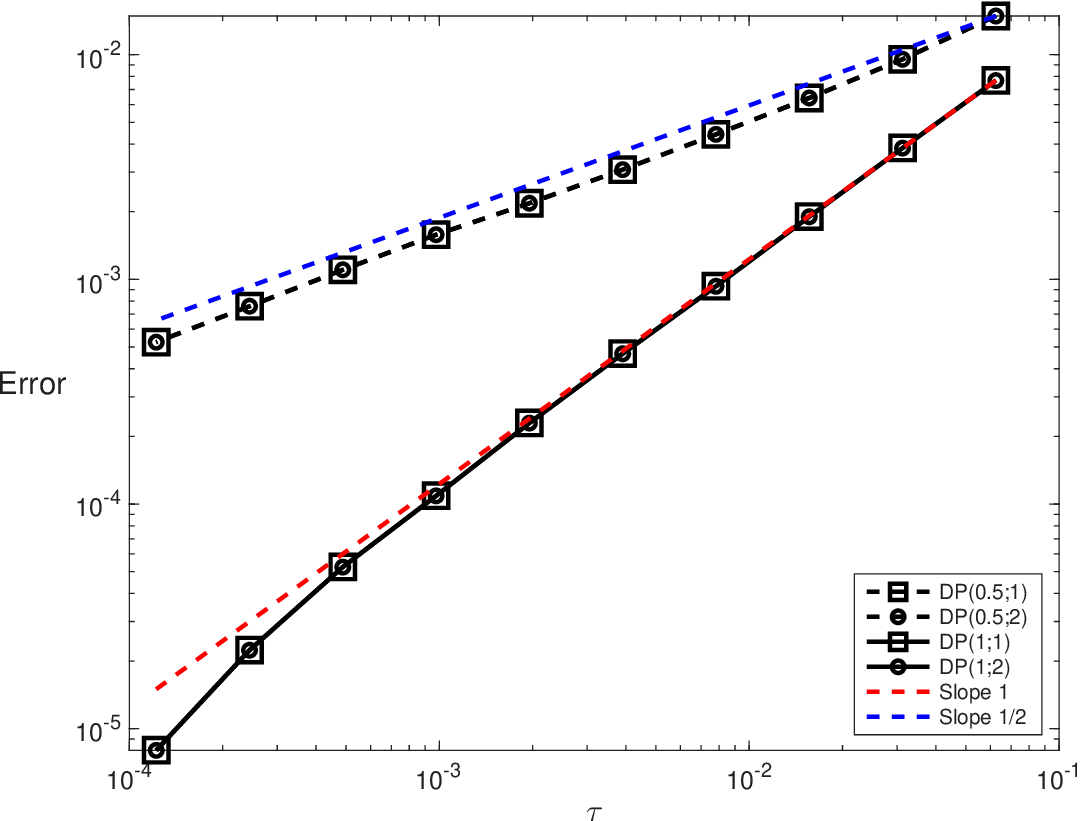} 
\caption{Dimension $d=1$.}
\label{fig:plotOrder1a}
\end{subfigure}%
\begin{subfigure}{.5\textwidth}
\includegraphics[width=\textwidth]{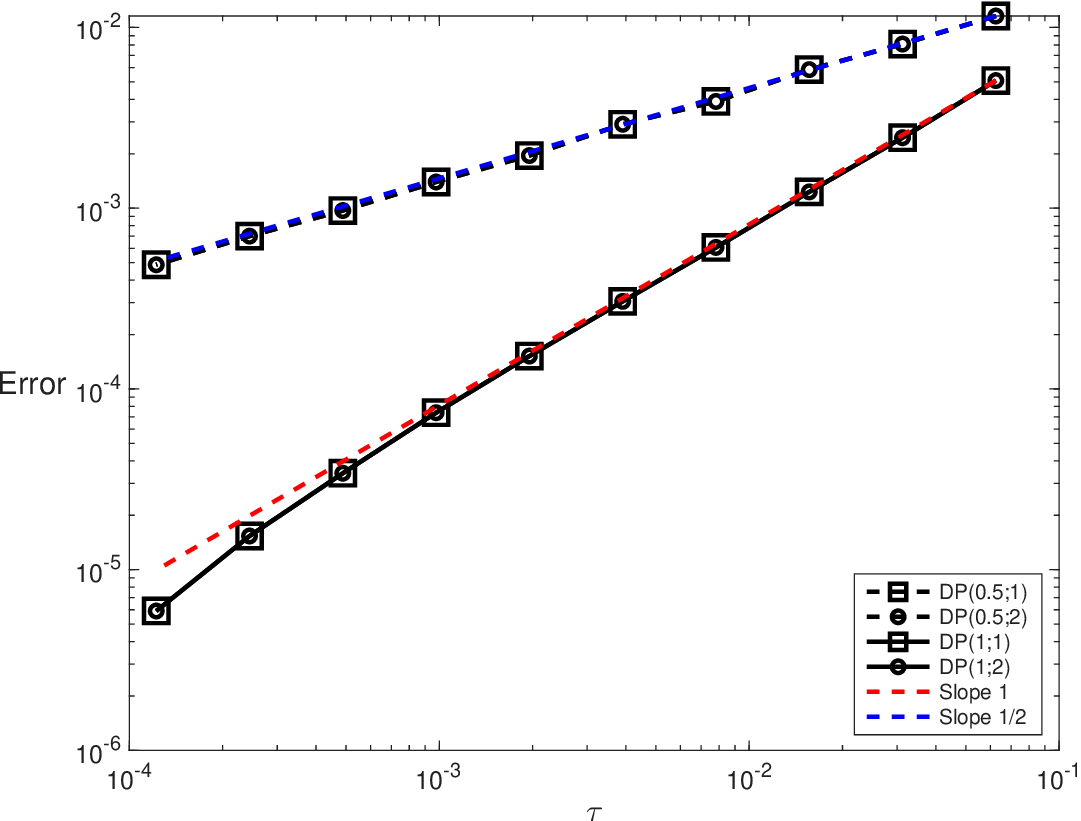}
\caption{Dimension $d=2$.}
\label{fig:plotOrder1b}
\end{subfigure}
\caption{Illustration of Theorem~\ref{theo:stronghigherorder}. Behavior of the mean-square error when $M=1$ for the domain preserving schemes~\hosI~and~\hosII, and comparison with the domain preserving schemes~\sI~and~\sII.}
\label{fig:plotOrder1}
\end{figure}

Figure~\ref{fig:plotOrder1} validates the mean-square convergence result with order $1$ obtained in Theorem~\ref{theo:stronghigherorder}.

\section{Domain preserving schemes for Stratonovich stochastic differential equations}\label{sec:Strato}

The objective of this section is to describe how to adapt the construction of explicit domain preserving schemes for stochastic differential equations where the noise is interpreted in the Stratonovich sense. In this section, we consider
\begin{equation}\label{eq:sdeNagumogStrato}
\left\lbrace
\begin{aligned}
\dd X(t)&=g^0(X(t))\dd t+\sum_{m=1}^Mg^m(X(t))\circ\dd B^m(t)\quad t\ge 0,\\
X(0)&=x_0.
\end{aligned}
\right.
\end{equation}
The equivalent It\^o formulation of the Stratonovich stochastic differential equation~\eqref{eq:sdeNagumogStrato} is given by
\begin{equation}\label{eq:sdeNagumogStratoIto}
\left\lbrace
\begin{aligned}
\dd X(t)&=\overline{g}^0(X(t))\dd t+\sum_{m=1}^Mg^m(X(t))\dd B^m(t)\quad t\ge 0,\\
X(0)&=x_0,
\end{aligned}
\right.
\end{equation}
with the vector field $\overline{g}^0\colon\mathcal{D}\to \R^d$ given by
\[
\overline{g}^0(x)=g^0(x)+\frac12\sum_{m=1}^{M}\bigl(g^m\bigr)'(x)g^m(x),\qquad \forall~x\in\mathcal{D},
\]
where $\bigl(g^m\bigr)'(x)$ denotes the Jacobian matrix of the vector field $g^m$ at $x\in\mathcal{D}$. The components $\overline{g}_1^0,\ldots,\overline{g}_d^0$ of the vector field $\overline{g}^0$ are expressed as follows: for all $k\in\{1,\ldots,d\}$ and all $x\in\mathcal{D}$, one has
\[
\overline{g}_k^0(x)=g_k^0(x)+\frac12\sum_{m=1}^{M}\sum_{\ell=1}^{d}\partial_{x_\ell}g_k^m(x)g_\ell^m(x)=g_k^0(x)+\frac12\sum_{m=1}^{M}\bigl(g^m\cdot\nabla\bigr)g^m(x).
\]
In order to ensure that the vector field $\overline{g}^0$ is of class $\mathcal{C}^2$ on $\mathcal{D}$, it is assumed that $g^0$ is of class $\mathcal{C}^2$ on $\mathcal{D}$ and that $g^1,\ldots,g^M$ are of class $\mathcal{C}^3$ on $\mathcal{D}$.

Observe that even if the vector fields $g^0,g^1,\ldots,g^M$ satisfy Assumption~\ref{ass:boundary}, the property is not verified for the vector field $\overline{g}^0$. In this section, the following stronger condition 
is thus imposed.
\begin{assumption}\label{ass:boundaryStrato}
For all $m\in\{0,1,\ldots,M\}$, one has
\begin{equation*}
g^m(x)=0,\qquad \forall~x\in\mathcal{D}.
\end{equation*}
\end{assumption}
Under Assumption~\ref{ass:boundaryStrato}, the vector fields $\overline{g}^0,g^1,\ldots,g^M$ satisfy Assumption~\ref{ass:boundary}, the It\^o stochastic differential equation~\eqref{eq:sdeNagumogStratoIto}, and thus its equivalent Stratonovich formulation~\eqref{eq:sdeNagumogStrato}, admits a unique solution $\bigl(X(t)\bigr)_{t\ge 0}$, which takes values in the domain $\mathcal{D}$ almost surely.

Sections~\ref{sec:variantschemeStrato} and~\ref{sec:variantscheme1Strato} below present variants of the schemes~\eqref{eq:schemeX} and of the scheme~\eqref{eq:schemeX1} (if $M=1$) when applied to the Stratonovich stochastic differential equation~\eqref{eq:sdeNagumogStrato}. The proposed domain preserving schemes are constructed as the schemes~\eqref{eq:schemeX} and~\eqref{eq:schemeX1} applied to the equivalent It\^o formulation~\eqref{eq:sdeNagumogStratoIto}. Convergence results are stated without proofs since they directly follow from those given in Section~\ref{sec:main}. Some numerical experiments are also reported to illustrate the behavior of the domain preserving schemes. Before proceeding, a last ingredient is necessary.

For all $m\in\{1,\ldots,M\}$ and all $k\in\{1,\ldots,d\}$, recall the expression~\eqref{eq:sigmaF} of $g_k^m$, where the mapping $\sigma$ is given by~\eqref{eq:sigma} and the function $f_k^m$ is given by~\eqref{eq:fmk} and is obtained from the application of Lemma~\ref{lem:g2f}. For the vector field $\overline{g}^0$, a similar property is obtained, due to Assumption~\ref{ass:boundaryStrato}. For all $k,\ell\in\{1,\ldots,d\}$, applying Lemma~\ref{lem:g2f} to the mapping $y\colon[-1,1]\mapsto g_\ell^m(x_1,\ldots,x_{k-1},y,x_{k+1},\ldots,x_d)$ for arbitrary $(x_1,\ldots,x_{k-1},x_{k+1},\ldots,x_d)\in[-1,1]^{d-1}$, there exists a function $f_{k,\ell}^m\colon\mathcal{D}\to\R$ such that one has
\begin{equation}\label{eq:sigmaFkl}
g_\ell^m(x)=\sigma(x_k)f_{k,\ell}^m(x),\qquad \forall~x\in\mathcal{D}, 
\end{equation}
where, owing to the proof of Lemma~\ref{lem:g2f}, the function $f_{k,\ell}^m$ is given by
\begin{equation}\label{eq:fmkl}
f_{k,\ell}^m(x)=\int_0^1\int_0^1 \partial_{x_k}^2g_\ell^m\bigl(x_1,\ldots,x_{k-1},\zeta x_k+(1-\zeta)\eta+\eta-1,x_{k+1},\ldots,x_d\bigr)\eta\dd \zeta\dd \eta.
\end{equation}
As the vector fields $g^1,\ldots,g^M$ are of class $\mathcal{C}^3$, thus of class $\mathcal{C}^2$, on $\mathcal{D}$, the mappings $f_{k,\ell}^m$ are continuous and bounded on $\mathcal{D}$.

For all $k\in\{1,\ldots,d\}$ and all $x\in\mathcal{D}$, define
\begin{equation}\label{eq:defbarfk}
\overline{f}_k^0(x)=f_k^0(x)+\frac12\sum_{m=1}^{M}\sum_{\ell=1}^{d}\partial_{x_\ell}g_{k}^m(x)f_{k,\ell}^m(x),
\end{equation}
and note that by construction one obtains the required expression for the components of the vector field $\overline{g}^0$: for all $k\in\{1,\ldots,d\}$ one has
\[
\overline{g}_k^0(x)=\sigma(x_k)\overline{f}_k^0(x),\qquad \forall~x\in\mathcal{D}.
\]

\subsection{Variant of the numerical scheme~\eqref{eq:schemeX} applied to~\eqref{eq:sdeNagumogStrato} }\label{sec:variantschemeStrato}

Applying the domain preserving scheme~\eqref{eq:schemeX} to the equivalent It\^o formulation~\eqref{eq:sdeNagumogStratoIto} of the Stratonovich stochastic differential equation~\eqref{eq:sdeNagumogStrato}, one obtains the following class of domain preserving numerical schemes for~\eqref{eq:sdeNagumogStrato}: given the initial value $X_0=x_0$, for all $k\in\{1,\ldots,d\}$, for all $n\in\{0,\ldots,N-1\}$, set
\begin{equation}\label{eq:schemeXStrato}
\begin{aligned}
X_{n+1,k}&=\Phi\left(\sum_{m=1}^Mf^m_k(X_n)^2\dt,\sum_{m=1}^Mf^m_k(X_n)\delta B^m_{n}+\overline{f}^0_k(X_n)\dt,X_{n,k}\right)\\
&=\Phi\left(\|f_k(X_n)\|^2\dt,f_k(X_n)\cdot\delta B_{n}+\overline{f}^0_k(X_n)\dt,X_{n,k}\right),
\end{aligned}
\end{equation}
where the auxiliary mappings $\overline{f}_k^0$ are defined by~\eqref{eq:defbarfk}.
The integrator $\Phi$ is assumed to satisfy Assumption~\ref{ass:integratorDP} to ensure that the scheme~\eqref{eq:schemeXStrato} is domain preserving: if $X_0\in\mathcal{D}$, then almost surely one has $X_n=\bigl(X_{n,1},\ldots,X_{n,d}\bigr)\in\mathcal{D}$ for all $n\in\{1,\ldots,N\}$. This is a straightforward corollary of Proposition~\ref{propo:dpX}.

As in Section~\ref{sec:schemes}, let us exemplify the domain preserving numerical scheme~\eqref{eq:schemeXStrato} for a simplified version of the stochastic differential equation~\eqref{eq:sdeNagumogStrato}, with no drift, in dimension $d=1$ and with a one-dimensional Brownian motion. Considering
\begin{equation}\label{eq:SDEStratosimple}
\dd X(t)=g(X(t))\circ\dd B(t),
\end{equation}
choosing the integrator~\eqref{integratorA} from Example~\ref{exA}, one obtains the domain preserving scheme
\begin{equation}\label{eq:schemeintAStrato}
X_{n+1}=\varphi\left(f(X_n)\delta B_n+\frac12(g'f)(X_n)\tau,\phi(f(X_n)^2\dt,X_n)\right),\quad \forall~n\in\{1,\ldots,N\},
\end{equation}
whereas choosing the integrator~\eqref{integratorB} from Example~\ref{exB}, and taking the identities $g=f\sigma$ and $g'=f'\sigma+f\sigma'$ into account, one obtains the domain preserving scheme
\begin{equation}\label{eq:schemeintBStrato}
X_{n+1}=\varphi\left(f(X_n)\delta B_n+\frac12\bigl(f'g\bigr)(X_n)\dt,X_n\right),\quad \forall~n\in\{1,\ldots,N\}.
\end{equation}

Concerning consistency of the scheme~\eqref{eq:schemeXStrato} and strong and weak convergence rates, one has the two results below, which follow from applying Theorem~\ref{theo:strong} and~\ref{theo:weak} to the scheme~\eqref{eq:schemeXStrato} for the equivalent It\^o formulation~\eqref{eq:sdeNagumogStratoIto} of the stochastic differential equation~\eqref{eq:sdeNagumogStrato}.

\begin{corollary}\label{cor:strongStrato}
Assume that the vector field $g^0$ is of class $\mathcal{C}^2$ on $\mathcal{D}$, that the vector fields $g^m$ for $m\in\{1,\ldots,M\}$ are of class $\mathcal{C}^3$ on $\mathcal{D}$, and that Assumption~\ref{ass:boundaryStrato} is satisfied. Let $\bigl(X(t)\bigr)_{t\ge 0}$ denote the solution to the stochastic differential equation~\eqref{eq:sdeNagumogStrato} with initial value $X(0)=x_0\in\mathcal D$. Given the final time $T\in(0,\infty)$ and the time-step size $\dt=T/N$ with $N\in\N$, let $\bigl(X_n\bigr)_{n=0,\ldots,N}$ be the solution to the domain preserving scheme~\eqref{eq:schemeXStrato}.

Assume that the integrator $\Phi$ satisfies Assumptions~\ref{ass:integratorDP} and~\ref{ass:integrator}, and that there exists $\beta\in(0,\infty)$ such that $\|\Phi\|_{3,\beta}<\infty$ (see the definition~\eqref{eq:regPhi} of $\|\Phi\|_{p,\beta}$).

For all $T\in(0,\infty)$, there exists $C(T)\in(0,\infty)$ such that for all $\dt=T/N$ with $N\in\N$, one has
\begin{equation}\label{eq:strongStrato}
\sup_{0\leq n\leq N}\left( \E\left[ \|X_n-X(t_n)\|^2 \right]\right)^{\frac12}\le C(T)\dt^{\frac12}.
\end{equation}
\end{corollary}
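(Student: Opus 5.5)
The plan is to reduce Corollary~\ref{cor:strongStrato} directly to Theorem~\ref{theo:strong}, applied to the equivalent It\^o formulation~\eqref{eq:sdeNagumogStratoIto}. The vector fields there are $\overline{g}^0,g^1,\ldots,g^M$. The argument has four steps.

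\textbf{Step 1: the It\^o coefficients satisfy the hypotheses of Theorem~\ref{theo:strong}.}
\begin{itemize}
\item \emph{Regularity.} Since $g^0\in\mathcal{C}^2$ and $g^m\in\mathcal{C}^3$ for $m\ge1$, the field $\overline{g}^0=g^0+\frac12\sum_m (g^m)'g^m$ is of class $\mathcal{C}^2$, and $g^1,\ldots,g^M$ are in particular of class $\mathcal{C}^2$.
\item \emph{Boundary condition for $g^m$, $m\ge 0$.} Assumption~\ref{ass:boundaryStrato} (read as $g^m$ vanishing on $\partial\mathcal{D}$) implies Assumption~\ref{ass:boundary} for $g^0,g^1,\ldots,g^M$.
\item \emph{Boundary condition for $\overline{g}^0$.} We must check that $\overline{g}_k^0(x)=0$ whenever $x_k\in\{-1,1\}$. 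On such a face, $g^m_\ell(x)=0$ for every $\ell$, because $x\in\partial\mathcal{D}$. Hence every term $\partial_{x_\ell}g^m_k(x)\,g^m_\ell(x)$ vanishes, and $g^0_k(x)=0$ as well.
\end{itemize}
Consequently the solution of~\eqref{eq:sdeNagumogStratoIto} coincides with that of~\eqref{eq:sdeNagumogStrato} and stays in $\mathcal{D}$.

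\textbf{Step 2: the scheme~\eqref{eq:schemeXStrato} is exactly scheme~\eqref{eq:schemeX} for the It\^o formulation.} Lemma~\ref{lem:g2f} yields a continuous factorization $\overline{g}^0_k=\sigma(x_k)\widetilde{f}^0_k$. We then show $\widetilde{f}^0_k=\overline{f}^0_k$ on $\mathcal{D}$, where $\overline{f}^0_k$ is given by~\eqref{eq:defbarfk}.
\begin{itemize}
\item By~\eqref{eq:sigmaFkl}, $g^m_\ell=\sigma(x_k)f^m_{k,\ell}$. Therefore $\partial_{x_\ell}g^m_k\,g^m_\ell=\sigma(x_k)\,\partial_{x_\ell}g^m_k\,f^m_{k,\ell}$, so $\sigma(x_k)\overline{f}^0_k=\overline{g}^0_k$ identically.
\item On the interior, where $\sigma(x_k)\neq0$, the two factors agree. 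By continuity they agree on all of $\mathcal{D}$.
\end{itemize}
The diffusion factors $f^m_k$ are unchanged. So the scheme~\eqref{eq:schemeXStrato} is literally~\eqref{eq:schemeX} built from $(\overline{g}^0,g^1,\ldots,g^M)$.

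\textbf{Step 3: the conclusion.} The hypotheses on $\Phi$ (Assumptions~\ref{ass:integratorDP} and~\ref{ass:integrator}, and $\|\Phi\|_{3,\beta}<\infty$) are identical in both statements. Theorem~\ref{theo:strong} then gives~\eqref{eq:strongStrato} directly.

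\textbf{Main obstacle.} The only delicate point is Step 2: the proof of Theorem~\ref{theo:strong} uses the specific factor $f^0_k$ from~\eqref{eq:fmk}, and we must be sure that $\overline{f}^0_k$ may be substituted for it. I expect the proof of Theorem~\ref{theo:strong} to use only the continuity and boundedness of $f^0_k$ together with $g^0_k=\sigma f^0_k$, not the explicit formula~\eqref{eq:fmk}. Under that expectation, the bounded continuous function $\overline{f}^0_k$ can be used directly, and the interior-uniqueness argument above makes the identification rigorous in any case.
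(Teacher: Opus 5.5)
Your proposal is correct and takes the paper's route. The paper gives no separate proof: it simply says the corollary follows from applying Theorem~\ref{theo:strong} to the equivalent It\^o formulation~\eqref{eq:sdeNagumogStratoIto}. Your Steps 1--2 make explicit the checks the paper only asserts in the text before the corollary, namely that $\overline{g}^0$ satisfies Assumption~\ref{ass:boundary} and that $\overline{f}^0_k$ is the continuous factor of $\overline{g}^0_k=\sigma(x_k)\overline{f}^0_k$, unique by density of the interior.
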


\begin{corollary}\label{cor:weakStrato}
Assume that the vector field $g^0$ is of class $\mathcal{C}^3$ on $\mathcal{D}$, that the vector fields $g^m$ for $m\in\{1,\ldots,M\}$ are of class $\mathcal{C}^4$ on $\mathcal{D}$, and that Assumption~\ref{ass:boundaryStrato} is satisfied. Let $\bigl(X(t)\bigr)_{t\ge 0}$ denote the solution to the stochastic differential equation~\eqref{eq:sdeNagumogStrato} with initial value $X(0)=x_0\in\mathcal D$. Given the final time $T\in(0,\infty)$ and the time-step size $\dt=T/N$ with $N\in\N$, let $\bigl(X_n\bigr)_{n=0,\ldots,N}$ be the solution to the domain preserving scheme~\eqref{eq:schemeXStrato}.

Assume that the integrator $\Phi$ satisfies Assumptions~\ref{ass:integratorDP} and~\ref{ass:integrator}, and that there exists $\beta\in(0,\infty)$ such that $\|\Phi\|_{4,\beta}<\infty$ (see the definition~\eqref{eq:regPhi} of $\|\Phi\|_{p,\beta}$).

For all $T\in(0,\infty)$ and any mapping $\theta\colon \mathcal{D}\to\R$ of class $\mathcal{C}^3$, there exists $C(T,\theta)\in(0,\infty)$ such that for all $\dt=T/N$ with $N\in\N$, one has
\begin{equation}\label{eq:weakStrato}
|\E\left[ \theta(X_N) \right]-\E\left[ \theta(X(T))\right]|\leq C(T,\theta)\dt.
\end{equation}
\end{corollary}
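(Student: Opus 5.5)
The plan is to reduce Corollary~\ref{cor:weakStrato} to Theorem~\ref{theo:weak}, applied to the It\^o formulation~\eqref{eq:sdeNagumogStratoIto} with drift $\overline{g}^0$ and diffusion vector fields $g^1,\ldots,g^M$. The Stratonovich solution of~\eqref{eq:sdeNagumogStrato} coincides with the It\^o solution of~\eqref{eq:sdeNagumogStratoIto}. It then remains to check two things. First, the scheme~\eqref{eq:schemeXStrato} must be exactly the scheme~\eqref{eq:schemeX} built from the vector fields $(\overline{g}^0,g^1,\ldots,g^M)$. Second, these vector fields must satisfy the hypotheses of Theorem~\ref{theo:weak}: class $\mathcal{C}^3$ on $\mathcal{D}$ and Assumption~\ref{ass:boundary}. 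The assumptions on $\Phi$ (Assumptions~\ref{ass:integratorDP} and~\ref{ass:integrator}, and $\|\Phi\|_{4,\beta}<\infty$) and on $\theta$ are identical in both statements, so they transfer directly.

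\emph{Regularity and boundary condition.} The diffusion fields $g^m$, $m\ge1$, are $\mathcal{C}^4$, hence $\mathcal{C}^3$. The drift $\overline{g}^0=g^0+\frac12\sum_m (g^m)'g^m$ involves first derivatives of the $\mathcal{C}^4$ fields $g^m$. It is therefore $\mathcal{C}^3$ because $g^0$ is $\mathcal{C}^3$; this explains the regularity shift compared with Theorem~\ref{theo:weak}. For Assumption~\ref{ass:boundary}, the text preceding the corollary asserts that under Assumption~\ref{ass:boundaryStrato} the fields $\overline{g}^0,g^1,\ldots,g^M$ satisfy Assumption~\ref{ass:boundary}, and I take that as given.

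\emph{Identifying the scheme.} Theorem~\ref{theo:weak} uses the decomposition $\overline{g}^0_k=\sigma(x_k)\tilde f^0_k$, where $\tilde f^0_k$ is produced by Lemma~\ref{lem:g2f} via formula~\eqref{eq:fmk}. I must check that $\tilde f^0_k=\overline{f}^0_k$ as defined in~\eqref{eq:defbarfk}. From~\eqref{eq:sigmaFkl} one has $g_\ell^m=\sigma(x_k)f^m_{k,\ell}$, so
\[
\overline{g}^0_k=\sigma(x_k)\Bigl(f^0_k+\tfrac12\sum_{m,\ell}\partial_{x_\ell}g^m_k\,f^m_{k,\ell}\Bigr)=\sigma(x_k)\overline{f}^0_k .
\]
On the interior, $\sigma(x_k)\neq0$, so the two factors agree there. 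Both are continuous, so they agree on all of $\mathcal{D}$. Hence the quantity $f_k^0(X_n)\dt$ in~\eqref{eq:schemeX} is precisely $\overline{f}^0_k(X_n)\dt$, and~\eqref{eq:schemeXStrato} is the scheme~\eqref{eq:schemeX} for~\eqref{eq:sdeNagumogStratoIto}.

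\emph{Where the analysis may depend on the factor's regularity.} The proof of Theorem~\ref{theo:weak} may use regularity of the drift factor $f^0_k$ beyond what $\overline{g}^0\in\mathcal{C}^3$ gives through Lemma~\ref{lem:g2f}: that route only yields $\overline{f}^0_k\in\mathcal{C}^1$. If the proof needs more, I would bound the regularity of $\overline{f}^0_k$ directly from~\eqref{eq:defbarfk}. Here $f^0_k\in\mathcal{C}^1$ comes from $g^0\in\mathcal{C}^3$, the factors $\partial_{x_\ell}g^m_k$ are $\mathcal{C}^3$, and $f^m_{k,\ell}\in\mathcal{C}^2$ comes from $g^m\in\mathcal{C}^4$. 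This matches whatever Theorem~\ref{theo:weak} extracts from its own $\mathcal{C}^3$ hypothesis on $g^0$. Once this identification is verified, Theorem~\ref{theo:weak} gives~\eqref{eq:weakStrato} immediately.
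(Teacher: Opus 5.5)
Your proposal is correct and follows the same route as the paper, which states the corollary without proof as a direct application of Theorem~\ref{theo:weak} to the It\^o formulation~\eqref{eq:sdeNagumogStratoIto}; your checks that $\overline{g}^0\in\mathcal{C}^3$ and that $\overline{f}^0_k$ equals the factor Lemma~\ref{lem:g2f} gives for $\overline{g}^0_k$ (both are continuous and agree wherever $\sigma(x_k)\neq 0$) fill in exactly the details the paper leaves implicit. Your final hedge about extra regularity of $\overline{f}^0_k$ is unnecessary: Theorem~\ref{theo:weak} is applied as a black box with hypotheses only on the vector fields, and its proof uses only the boundedness of the factors $f^m_k$.
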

We conclude this section with a remark explaining the need to consider the equivalent It\^o formulation~\eqref{eq:sdeNagumogStratoIto} of the Stratonovich stochastic differential equation~\eqref{eq:sdeNagumogStrato} when designing consistent domain preserving numerical schemes.
\begin{remark}\label{rem:badschemeStrato}
Writing $g=f\sigma$, and, on each time interval $[t_n,t_{n+1}]$, freezing the evolution of $f$ and solving exactly the Stratonovich stochastic differential equation
\[
\dd Y(t)=\sigma(Y(t))\circ \dd B(t)
\]
provides the scheme
\begin{equation}\label{eq:badschemeStrato}
X_{n+1}=\varphi\left(f(X_n)\delta B_n,X_n\right),\quad \forall~n\in\{0,\ldots,N-1\}.
\end{equation}
The expression~\eqref{eq:schemeintBStrato} of the scheme~\eqref{eq:schemeX} obtained for the simplified Stratonovich stochastic differential equation~\eqref{eq:SDEStratosimple} suggests that the scheme~\eqref{eq:badschemeStrato} is not consistent in general (if $f$ is not constant), and that a correction term $\frac12\sigma f'f$ is needed. This is illustrated in numerical experiments below. This justifies the need to consider the equivalent It\^o formulation~\eqref{eq:sdeNagumogStratoIto} of the Stratonovich stochastic differential equation~\eqref{eq:sdeNagumogStrato} for the construction of consistent domain preserving numerical schemes~\eqref{eq:schemeXStrato}.
\end{remark}

\subsection{Variant of the numerical scheme~\eqref{eq:schemeX1} applied to~\eqref{eq:sdeNagumogStrato} ($\mathbf{M=1}$)}\label{sec:variantscheme1Strato}

When $M=1$, applying the domain preserving scheme~\eqref{eq:schemeX1} to the It\^o formulation~\eqref{eq:sdeNagumogStratoIto} of the Stratonovich stochastic differential equation~\eqref{eq:sdeNagumogStrato} provides the following class of domain preserving schemes for~\eqref{eq:sdeNagumogStrato}:  given the initial value $X_0=x_0$, for all $k\in\{1,\ldots,d\}$, for all $n\in\{0,\ldots,N-1\}$, set
\begin{equation}\label{eq:schemeX1Strato}
X_{n+1,k}=\Phi\left(f_k(X_n)^2\dt,f_k(X_n)\delta B_n+\frac12 (g\cdot\nabla)f_k(X_n)(\delta B_n^2-\dt)+\overline{f}^0_k(X_n)\dt,X_{n,k}\right),
\end{equation}
where the auxiliary mappings $\overline{f}_k^0$ are defined by~\eqref{eq:defbarfk}. As for the scheme~\eqref{eq:schemeXStrato}, the integrator $\Phi$ is assumed to satisfy Assumption~\ref{ass:integratorDP} to ensure that the scheme~\eqref{eq:schemeX1Strato} is domain preserving: if $X_0\in\mathcal{D}$, then almost surely one has $X_n=\bigl(X_{n,1},\ldots,X_{n,d}\bigr)\in\mathcal{D}$ for all $n\in\{1,\ldots,N\}$. This is a straightforward corollary of Proposition~\ref{propo:dpX}.

In addition, let us exemplify the domain preserving numerical scheme~\eqref{eq:schemeXStrato} for the simplified version of the stochastic differential equation~\eqref{eq:sdeNagumogStrato}, with no drift, in dimension $d=1$ and with a one-dimensional Brownian motion. Choosing the integrator~\eqref{integratorA} from Example~\ref{exA}, one obtains the domain preserving scheme
\begin{equation}\label{eq:scheme1intAStrato}
X_{n+1}=\varphi\left(f(X_n)\delta B_n+\frac12(f'g)(X_n)(\delta B_n^2-\dt)+\frac12(g'f)(X_n)\tau,\phi(f(X_n)^2\dt,X_n)\right),\quad \forall~n\in\{1,\ldots,N\},
\end{equation}
whereas choosing the integrator~\eqref{integratorB} from Example~\ref{exB}, one obtains the domain preserving scheme
\begin{equation}\label{eq:scheme1intBStrato}
X_{n+1}=\varphi\left(f(X_n)\delta B_n+\frac12\bigl(f'g\bigr)\delta B_n^2,X_n\right),\quad \forall~n\in\{1,\ldots,N\}.
\end{equation}
Observe that the scheme~\eqref{eq:scheme1intBStrato} has a simple expression as
\[
X_{n+1}=\overline{\Phi}(\delta B_n,X_n),
\]
where the mapping $\overline{\Phi}:\R\times\mathcal{D}\to\R$ is defined as
\begin{equation}\label{eq:interpretorder2}
\overline{\Phi}(\gamma,x)=\varphi\left(f(x)\gamma+\frac12(f'g)\gamma^2,x\right),\quad \forall~\gamma\in\R,~x\in\mathcal{D}.
\end{equation}
It can be checked that $\overline{\Phi}$ is a domain preserving second-order integrator for the ordinary differential equation
\[
\dot{x}(t)=g(x(t))
\]
with vector field $g=f\sigma$.

By standard techniques, it can be shown that a scheme of type~\eqref{eq:interpretorder2} converges to the solution of the Stratonovich stochastic differential equation~\eqref{eq:SDEStratosimple} with strong order $1$ for such second-order integrators $\overline{\Phi}$ applied to the associated ordinary differential equations. Note also that the first-order scheme~\eqref{eq:scheme1intBStrato} is obtained from the scheme~\eqref{eq:schemeintBStrato} when $\dt$ is replaced by $\delta B_n^2$.

The expression of the general version~\eqref{eq:schemeX1Strato} of the scheme is designed as a natural generalization of the scheme~\eqref{eq:scheme1intBStrato}. Then the scheme~\eqref{eq:schemeX1} for the It\^o stochastic differential equation~\eqref{eq:sdeNagumog} can be retrieved from the application of the scheme~\eqref{eq:schemeX1Strato} to its equivalent Stratonovich version. The details of the computation are omitted.

Finally, let us state a convergence result for the scheme~\eqref{eq:schemeX1Strato} with strong order $1$. The result below follows from applying Theorem~\ref{theo:stronghigherorder} to the scheme~\eqref{eq:schemeX1Strato} for the equivalent It\^o formulation~\eqref{eq:sdeNagumogStratoIto} of~\eqref{eq:sdeNagumogStrato}.

\begin{corollary}\label{cor:stronghigherorderStrato}
Assume that $M=1$, that the vector field $g^0$ is of class $\mathcal{C}^3$, that the vector field $g=g^1$ is of class $\mathcal{C}^4$ on $\mathcal{D}$ and that Assumption~\ref{ass:boundaryStrato} is satisfied. Let $\bigl(X(t)\bigr)_{t\ge 0}$ denote the solution to the stochastic differential equation~\eqref{eq:sdeNagumogStrato} with initial value $X(0)=x_0\in\mathcal D$ and $M=1$. Given the final time $T\in(0,\infty)$ and the time-step size $\dt=T/N$ with $N\in\N$, let $\bigl(X_n\bigr)_{n=0,\ldots,N}$ be the solution to the domain preserving scheme~\eqref{eq:schemeX1Strato} with $M=1$.

Assume that the integrator $\Phi$ satisfies Assumptions~\ref{ass:integratorDP} and~\ref{ass:integrator-higher}, and that there exists $\beta\in(0,\infty)$ such that $\|\Phi\|_{4,\beta}<\infty$ (see the definition~\eqref{eq:regPhi} of $\|\Phi\|_{4,\beta}$).

There exists $\overline{\dt}\in(0,1)$ such that the following holds. For all $T\in(0,\infty)$, there exists $C(T)\in(0,\infty)$ such that for all $\dt=T/N$ with $N\in\N$ with $\dt\in(0,\overline{\dt})$, one has
\begin{equation}\label{eq:stronghigherorderStrato}
\sup_{0\leq n\leq N} \left( \E \left[ \|X_n-X(t_n)\|^2 \right] \right)^{1/2}\leq C(T)\dt.
\end{equation}
\end{corollary}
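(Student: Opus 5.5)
The plan is to reduce Corollary~\ref{cor:stronghigherorderStrato} to Theorem~\ref{theo:stronghigherorder} applied to the equivalent It\^o formulation~\eqref{eq:sdeNagumogStratoIto}. In that formulation the drift $g^0$ is replaced by $\overline{g}^0$ and the diffusion coefficient $g=g^1$ is unchanged. The work therefore consists of three checks. First, the data $(\overline{g}^0,g)$ satisfy the hypotheses of Theorem~\ref{theo:stronghigherorder}. Second, the scheme~\eqref{eq:schemeX1Strato} is exactly the scheme~\eqref{eq:schemeX1} built from these data. Third, the solution of~\eqref{eq:sdeNagumogStrato} coincides with that of~\eqref{eq:sdeNagumogStratoIto}.

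\emph{Step 1: regularity.} Since $g^0\in\mathcal{C}^3$ and $g\in\mathcal{C}^4$, the correction $\frac12 g'g$ is $\mathcal{C}^3$. Hence $\overline{g}^0=g^0+\frac12 g'g$ is of class $\mathcal{C}^3$, and $g$ is in particular of class $\mathcal{C}^3$, as Theorem~\ref{theo:stronghigherorder} requires.

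\emph{Step 2: boundary condition.} Assumption~\ref{ass:boundary} must hold for $\overline{g}^0$ and $g$. For $g$ and $g^0$ it follows from Assumption~\ref{ass:boundaryStrato}, read as vanishing of $g_k^m$ whenever $x_k\in\{\pm1\}$ together with the additional vanishing needed for the correction term. For the correction, $\overline{g}^0_k=g^0_k+\frac12\sum_\ell \partial_{x_\ell}g_k\,g_\ell$. On the face $x_k=\pm1$, the terms with $\ell\neq k$ vanish because $g_k$ vanishes identically along that face, so its tangential derivatives $\partial_{x_\ell}g_k$ vanish there. The term $\ell=k$ vanishes because $g_k=0$ on the face. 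So $\overline{g}^0$ satisfies Assumption~\ref{ass:boundary}, as asserted in the text.

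\emph{Step 3: the $\sigma$-factorization and identification of the scheme.} Recall the factorization $\overline{g}^0_k=\sigma(x_k)\overline{f}^0_k$ with $\overline{f}^0_k$ from~\eqref{eq:defbarfk}; this factor is continuous by~\eqref{eq:fmkl}. The subtle point is uniqueness. On the open interior $\sigma(x_k)\neq0$, so any continuous $F$ with $\overline{g}^0_k=\sigma F$ is determined there and, by continuity, on all of $\mathcal{D}$. Therefore $\overline{f}^0_k$ coincides with the function that Lemma~\ref{lem:g2f} and~\eqref{eq:fmk} associate with $\overline{g}^0_k$. The factors $f_k$ of $g$ are unchanged, and $(g\cdot\nabla)f_k$ is the same expression in both schemes. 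Consequently~\eqref{eq:schemeX1Strato} is literally~\eqref{eq:schemeX1} for the Itô system~\eqref{eq:sdeNagumogStratoIto}.

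\emph{Step 4: conclusion.} The Stratonovich solution and the Itô solution are the same process, by the standard Itô--Stratonovich conversion valid for $\mathcal{C}^2$ coefficients. Theorem~\ref{theo:stronghigherorder}, under Assumptions~\ref{ass:integratorDP} and~\ref{ass:integrator-higher} and $\|\Phi\|_{4,\beta}<\infty$, then yields~\eqref{eq:stronghigherorderStrato} with the same $\overline{\dt}$.

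The main obstacle I expect is Step 2, together with the regularity bookkeeping in Step 1 for the bounds on $\overline{f}^0$. Theorem~\ref{theo:stronghigherorder} uses derivatives of $f^0_k$, which requires $\overline{g}^0\in\mathcal{C}^3$, and this is where the hypothesis $g\in\mathcal{C}^4$ is consumed. The tangential-derivative argument on faces of the hypercube also needs to be stated carefully.
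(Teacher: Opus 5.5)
Your proposal is correct and follows exactly the paper's route. The paper only says that the corollary follows by applying Theorem~\ref{theo:stronghigherorder} to the It\^o formulation~\eqref{eq:sdeNagumogStratoIto}, and your four checks are precisely what it leaves implicit: $\overline{g}^0\in\mathcal{C}^3$, the boundary condition, the identification of $\overline{f}^0_k$ from~\eqref{eq:defbarfk} with the factor given by~\eqref{eq:fmk} via uniqueness on the interior, and equality of the Stratonovich and It\^o solutions.

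One small remark on Step 2 versus Step 3. Your tangential-derivative argument in Step 2 uses only Assumption~\ref{ass:boundary} for $g^0$ and $g$, so the paper's warning that $\overline{g}^0$ may fail Assumption~\ref{ass:boundary} is overly cautious. The stronger Assumption~\ref{ass:boundaryStrato}, read on $\partial\mathcal{D}$, is really consumed in Step 3, where~\eqref{eq:fmkl} requires every component $g_\ell$ to vanish on the faces $x_k=\pm1$.
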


\subsection{Numerical experiment}

Finally, let us present numerical illustrations. The experiments reported in Section~\ref{sec:main} are not repeated, since one can transform any Stratonovich stochastic differential equation into its equivalent It\^o formulation. The only objective of this numerical experiment is to illustrate in the simple case of equation~\eqref{eq:SDEStratosimple} (no drift, $d=1$, $M=1$) the convergence of the domain preserving  schemes~\eqref{eq:schemeXStrato} and~\eqref{eq:schemeX1Strato}, and the fact that the scheme~\eqref{eq:badschemeStrato} proposed in Remark~\ref{rem:badschemeStrato} is not consistent.

Figure~\ref{fig:plotStratoa} is an illustration of Corollaries~\ref{cor:strongStrato}~and~\ref{cor:stronghigherorderStrato}, and of the content of Remark~\ref{rem:badschemeStrato}, with $d=1$, $M=1$ and $f^1(x)=x^2$. The expectation is estimated by Monte Carlo averaging over $10^3$ independent realizations, which has been verified to be sufficient in practice to have a negligible statistical error for the observation of the rate of convergence. The final time is $T=1$. The time-step size $\dt$ takes values ranging from $2^{-4}$ to $2^{-14}$. Moreover, the reference solution $X^{\text{ref}}$ is computed using the scheme~\eqref{eq:scheme1intAStrato} with time-step size $\dt_{\text{ref}}=2^{-14}$. In this plot, reference slopes indicating the order of convergence $1/2$, resp. $1$ with respect to the time-step size $\dt$ are also given. As expected one observes that the scheme~\eqref{eq:badschemeStrato} is not consistent with the Stratonovich stochastic differential equation~\eqref{eq:sdeNagumogStrato}, and one observes the strong rates of convergence $1/2$ and $1$.

Figure~\ref{fig:plotStratob} is an illustration of Corollary~\ref{cor:weakStrato} for the same problem as above. 
The function $\theta$ is given by one of the examples below: 
for all $x\in[-1,1]$ one has
\[
\theta_1(x)=x^2,\quad \theta_2(x)=\cos(x).
\]
The time-step size $\dt$ takes values ranging from $2^{-4}$ to $2^{-12}$. Moreover, the reference solution $X^{\text{ref}}$ is computed using the scheme~\eqref{eq:scheme1intAStrato} with time-step size $\dt_{\text{ref}}=2^{-12}$. In this plot, reference slopes indicating the order of convergence $1/2$, resp. $1$ with respect to the time-step size $\dt$ are also given. In this figure, one observes the weak rates of convergence $1$ 
for the schemes~\eqref{eq:schemeintAStrato}~and~\eqref{eq:schemeintBStrato}.

\begin{figure}[h]
\centering
\begin{subfigure}{.5\textwidth}
\includegraphics[width=\textwidth]{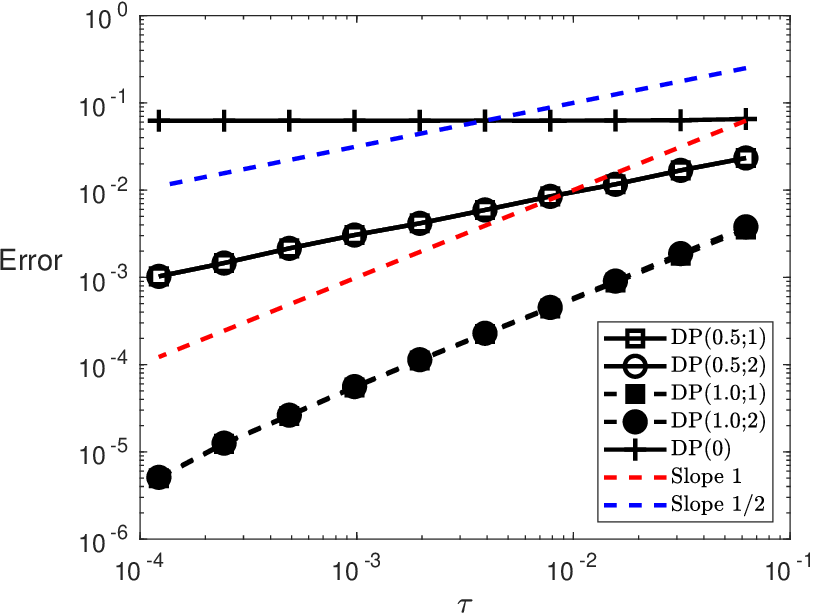} 
\caption{Mean-square error.}
\label{fig:plotStratoa}
\end{subfigure}%
\begin{subfigure}{.5\textwidth}
\includegraphics[width=\textwidth]{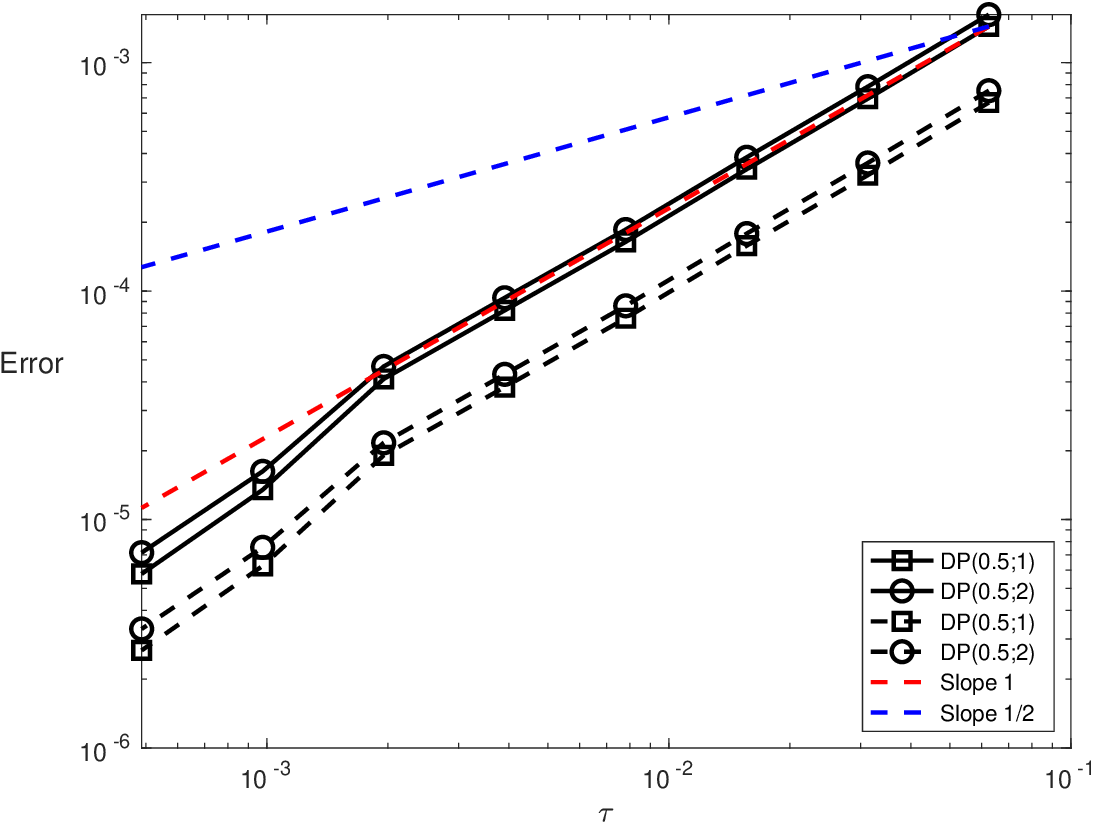} 
\caption{Weak error.% for the test functions $\theta_1(x)=x^2$ (solid lines) and $\theta_2(x)=\cos(x)$ (dashed lines)
}
\label{fig:plotStratob}
\end{subfigure}%
\caption{Illustration of Corollaries~\ref{cor:strongStrato},~\ref{cor:stronghigherorderStrato}~and~\ref{cor:weakStrato} for domain preserving schemes applied to Stratonovich stochastic differential equations. Behavior of the mean-square error (Figure~\ref{fig:plotStratoa}) and weak error (Figure~\ref{fig:plotStratob}) for the domain preserving schemes~\eqref{eq:schemeintAStrato}~and~\eqref{eq:schemeintBStrato} (denoted~\sI~and~\sII), the domain preserving schemes~\eqref{eq:scheme1intAStrato}~and~\eqref{eq:scheme1intBStrato} (denoted~\hosI~and~\hosII), and the not consistent scheme~\eqref{eq:badschemeStrato} (denoted by \upshape DP(0)).}
\label{fig:plotStrato}
\end{figure}

\section{Proof of Theorem~\ref{theo:strong}}\label{sec:proof1}

The objective of this section is to prove Theorem~\ref{theo:strong}, i.\,e. that the domain preserving scheme~\eqref{eq:schemeX} converges with strong rate of convergence equal to $1/2$ when the mapping $\Phi$ satisfies the conditions given in the statement of Theorem~\ref{theo:strong}. Specifically, in the following it is assumed that Assumptions~\ref{ass:integratorDP} and~\ref{ass:integrator} are satisfied, and that there exists $\beta\in(0,\infty)$ such that $\|\Phi\|_{3,\beta}<\infty$. As a result, recalling the definition~\eqref{eq:psi} of the mapping $\psi$, there exists $C\in(0,\infty)$ such that for all $s\ge 0$, all $\gamma\in\R$ and $y\in[-1,1]$ one has
\begin{align}
|\partial_\gamma\Phi(s,\gamma,y)|+|\psi(y)|\le Ce^{C(s+|\gamma|)},\label{eq:boundPhipsi}\\
|\partial_s\partial_\gamma\Phi(s,\gamma,y)|+|\partial_\gamma^2\Phi(s,\gamma,y)|+|\partial_y\partial_\gamma\Phi(s,\gamma,y)|\le Ce^{C(s+|\gamma|)},\label{eq:LipPhi}\\
|\partial_s\psi(s,\gamma,y)|+|\partial_\gamma\psi(s,\gamma,y)|+|\partial_y\psi(s,\gamma,y)|\le Ce^{C(s+|\gamma|)}.\label{eq:Lippsi}
\end{align}

This section is organized as follows. In Section~\ref{sec:aux1}, we introduce and study the auxiliary processes $\widetilde{Z}_k$ and $\widetilde{X}_k$, for all $k\in\{1,\ldots,d\}$. Note that these two auxiliary processes are also employed in Section~\ref{sec:proof2} for the proof of Theorem~\ref{theo:weak} on the weak convergence of the numerical scheme. In Section~\ref{sec:error1}, we explain how the strong error is decomposed and we provide the proof of Theorem~\ref{theo:strong}.

\subsection{Auxiliary processes}\label{sec:aux1}

First, for all $k\in\{1,\ldots,d\}$, the auxiliary process $\widetilde{Z}_k$ is given by~\eqref{eq:Ztilde} below: for all $n\in\{0,\ldots,N-1\}$ and $t\in[t_n,t_{n+1})$, set
\begin{equation}\label{eq:Ztilde}
\widetilde{Z}_k(t)=\Bigl(\|f_k(X_n)\|^2(t-t_n),f_k^0(X_n)(t-t_n)+f_k(X_n)\cdot(B(t)-B(t_n)),X_{n,k}\Bigr).
\end{equation}
Note that one has $\widetilde{Z}_k(t)\in\R^+\times\R\times[-1,1]$ for all $t\in[0,T]$. Moreover, for all $n\in\{0,\ldots,N-1\}$, one has
$\widetilde{Z}_k(t_n)=\bigl(0,0,X_{n,k}\bigr)$. Observe that the process $\widetilde{Z}_k$ is discontinuous at the grid times $t_n$.

Next, for all $k\in\{1,\ldots,d\}$, the auxiliary process $\widetilde{X}_k$  is given by~\eqref{eq:Xtilde} below: for all $n\in\{0,\ldots,N-1\}$ and $t\in[t_n,t_{n+1})$, we set
\begin{equation}\label{eq:Xtilde}
\widetilde{X}_k(t)=\Phi\bigl(\widetilde{Z}_k(t)\bigr)=\Phi\Bigl(\|f_k(X_n)\|^2(t-t_n),f_k^0(X_n)(t-t_n)+f_k(X_n)\cdot(B(t)-B(t_n)),X_{n,k}\Bigr).
\end{equation}
Note that one has $\widetilde{X}_k(t)\in[-1,1]$ for all $t\in[0,T]$, owing to the condition~\eqref{eq:condPhiDP} from Assumption~\ref{ass:integrator} on the integrator $\Phi$. Moreover, for all $n\in\{0,\ldots,N-1\}$, one has $\widetilde{X}_k(t_n)=\Phi(\widetilde{Z}_k(t_n))=\Phi(0,0,X_{n,k})=X_{n,k}$. Finally, by the definition~\eqref{eq:schemeX} of the domain preserving scheme, for all $k\in\{1,\ldots,d\}$ the process $\widetilde{X}_k$ is continuous on $[0,T]$: for all $n\in\{0,\ldots,N-1\}$ one has $\underset{t\to t_{n+1}^{-}}\lim~\widetilde{X}_k(t)=X_{n+1,k}=\widetilde{X}_k(t_{n+1})$.

The following results, Lemma~\ref{lem:auxZtilde} and Lemma~\ref{lem:auxXtilde} below, provide properties of the auxiliary processes $\widetilde{Z}_k$ and $\widetilde{X}_k$ respectively.

\begin{lemma}\label{lem:auxZtilde}
For all $p\in[1,\infty)$ and $T\in(0,\infty)$, there exists $C_p(T)\in(0,\infty)$ such that for all $\dt\in(0,1)$, one has the increment bounds
\begin{equation}\label{eq:auxZtilde1}
\underset{1\le k\le d}\sup~\underset{0\le n\le N-1}\sup~\underset{t\in[t_n,t_{n+1})}\sup~\E[\|\widetilde{Z}_k(t)-\widetilde{Z}_k(t_n)\|^{2p}]\le C_p(T)\dt^p,
\end{equation}
and the moment bounds
\begin{equation}\label{eq:auxZtilde2}
\underset{1\le k\le d}\sup~\underset{t\in[0,T]}\sup~\E[\|\widetilde{Z}_k(t)\|^{2p}]\leq C_p(T).
\end{equation}
\end{lemma}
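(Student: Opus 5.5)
The plan is to exploit the explicit structure of $\widetilde{Z}_k$ given by~\eqref{eq:Ztilde}, together with the boundedness of the coefficient functions $f_k^0$ and $f_k$ on $\mathcal{D}$ and the fact that the numerical solution satisfies $X_n\in\mathcal{D}$ almost surely (Proposition~\ref{propo:dpX}). No recursion or Gronwall-type argument is needed: every component of $\widetilde{Z}_k(t)-\widetilde{Z}_k(t_n)$ is an explicit bounded-coefficient polynomial in $t-t_n$ and the Brownian increment $B(t)-B(t_n)$.

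\textbf{Increment bound~\eqref{eq:auxZtilde1}.} Fix $k$, $n$ and $t\in[t_n,t_{n+1})$. Since $\widetilde{Z}_k(t_n)=(0,0,X_{n,k})$, the difference $\widetilde{Z}_k(t)-\widetilde{Z}_k(t_n)$ has third component $0$. Its first two components are
\[
\|f_k(X_n)\|^2(t-t_n)
\quad\text{and}\quad
f_k^0(X_n)(t-t_n)+f_k(X_n)\cdot(B(t)-B(t_n)).
\]
Because $X_n\in\mathcal{D}$ almost surely and $f_k^0,f_k$ are continuous on the compact set $\mathcal{D}$, we have $|f_k^0(X_n)|\le\|f_k^0\|_\infty$ and $\|f_k(X_n)\|\le\|f_k\|_\infty$. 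By the Cauchy--Schwarz inequality in $\R^M$,
\[
\|\widetilde{Z}_k(t)-\widetilde{Z}_k(t_n)\|\le C\bigl(\dt+\|B(t)-B(t_n)\|\bigr),
\]
with $C$ depending only on $\|f_k^0\|_\infty$ and $\|f_k\|_\infty$. Raising this to the power $2p$ uses $(a+b)^{2p}\le 2^{2p-1}(a^{2p}+b^{2p})$. The Gaussian moment bound gives $\E\|B(t)-B(t_n)\|^{2p}=c_{p,M}(t-t_n)^p\le c_{p,M}\dt^p$. Finally, $\dt^{2p}\le\dt^p$ because $\dt<1$. This yields~\eqref{eq:auxZtilde1} with a constant uniform in $k$, $n$ and $t$; the constant does not even depend on $T$.

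\textbf{Moment bound~\eqref{eq:auxZtilde2}.} Write $\|\widetilde{Z}_k(t)\|\le\|\widetilde{Z}_k(t_n)\|+\|\widetilde{Z}_k(t)-\widetilde{Z}_k(t_n)\|$. Here $\|\widetilde{Z}_k(t_n)\|=|X_{n,k}|\le1$ almost surely, so the bound follows from~\eqref{eq:auxZtilde1}. One should also cover $t=T=t_N$, which lies outside every interval $[t_n,t_{n+1})$. There I would either define $\widetilde{Z}_k(T)=(0,0,X_{N,k})$, which is trivially bounded, or take the left limit; in both cases the same bound holds.

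No step is a genuine obstacle. The only point to state carefully is that uniformity in $n$ relies on the almost sure domain preservation $X_n\in\mathcal{D}$ from Proposition~\ref{propo:dpX}. This is what makes the frozen coefficients bounded without any moment estimates on the scheme.
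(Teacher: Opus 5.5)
Your proposal is correct and follows essentially the same route as the paper. You write out the increment $\widetilde{Z}_k(t)-\widetilde{Z}_k(t_n)$ explicitly, bound the frozen coefficients using $X_n\in\mathcal{D}$ almost surely (Proposition~\ref{propo:dpX}), use Gaussian moments of $B(t)-B(t_n)$, and obtain the moment bound from $\|\widetilde{Z}_k(t_n)\|=|X_{n,k}|\le 1$ via the triangle inequality; your remark on the endpoint $t=T$, which the paper leaves implicit, is a harmless clarification.
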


\begin{proof}[Proof of Lemma~\ref{lem:auxZtilde}]

First, let us prove the inequality~\eqref{eq:auxZtilde1}. By definition~\eqref{eq:Ztilde} of the auxiliary process $\widetilde{Z}_k$, for all $n\in\{0,\ldots,N-1\}$, for all $k\in\{1,\ldots,d\}$, one has for all $t\in[t_n,t_{n+1})$
\[
\widetilde{Z}_k(t)-\widetilde{Z}_k(t_n)=\bigl(\|f_k(X_n)\|^2(t-t_n),f_k^0(X_n)(t-t_n)+f_k(X_n)\cdot(B(t)-B(t_n)),0\bigr).
\]
This gives the upper bound
\[
\|\widetilde{Z}_k(t)-\widetilde{Z}_k(t_n)\|^2\le \|f_k(X_n)\|^4(t-t_n)^2+2|f_k^0(X_n)|^2(t-t_n)^2+2|f_k(X_n)\cdot(B(t)-B(t_n))|^2.
\]
Recall that $X_n\in\mathcal{D}$ almost surely by Proposition~\ref{propo:dpX}. Furthermore, the mappings $f_k^0,f_k^1,\ldots,f_k^{M}$ are bounded on $\mathcal{D}$, owing to the definition~\eqref{eq:fmk} and since the vector fields $g^0,g^1,\ldots,g^M$ are assumed to be of class $\mathcal{C}^2$. Therefore, there exists $C_p(T)\in(0,\infty)$ such that one has the upper bound
\[
\E[\|\widetilde{Z}_k(t)-\widetilde{Z}_k(t_n)\|^{2p}]\le C_p(T)\Bigl(\bigl(t-t_n\bigr)^{2p}+\E[\|B(t)-B(t_n)\|^{2p}]\Bigr).
\]
Finally, for the increments of the Brownian motion, one has $\E[\|B(t)-B(t_n)\|^{2p}]=c_p(t-t_n)^p$ for some $c_p\in(0,\infty)$. One thus obtains the upper bound~\eqref{eq:auxZtilde1}.

It remains to prove the inequality~\eqref{eq:auxZtilde2}. Note that for all $t\in[t_n,t_{n+1})$, one has
\[
\widetilde{Z}_k(t)=\widetilde{Z}_k(t)-\widetilde{Z}_k(t_n)+\widetilde{Z}_k(t_n),
\]
with $\|\widetilde{Z}_k(t_n)\|=|X_{n,k}|\le 1$ since $X_n\in\mathcal{D}$ almost surely. It suffices then to apply the inequality~\eqref{eq:auxZtilde1} to obtain the upper bound~\eqref{eq:auxZtilde2}.

The proof of Lemma~\ref{lem:auxZtilde} is thus completed.
\end{proof}

In the proof of Lemma~\ref{lem:auxXtilde} below and in the convergence proofs, the following  auxiliary result is employed.
\begin{lemma}\label{lem:auxexpo}
Let $a\colon\mathcal{D}\to\R^+$ and $b\colon\mathcal{D}\to\R^M$ be continuous functions. There exists $C\in(0,\infty)$ such that for all $\dt\in(0,1)$ and all $n\in\{0,\ldots,N-1\}$ one has 
\begin{equation}\label{eq:lemauxexpo}
\underset{t\in[t_n,t_{n+1}]}\sup~\underset{x\in\mathcal{D}}\sup~\E\left[e^{a(x)(t-t_n)}e^{|b(x)\cdot(B(t)-B(t_n))|}\right]\le C.
\end{equation}
\end{lemma}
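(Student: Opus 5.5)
The plan is to bound the exponential factor $e^{a(x)(t-t_n)}$ deterministically and to compute the expectation of $e^{|b(x)\cdot(B(t)-B(t_n))|}$ explicitly as a Gaussian integral. First, since $a$ and $b$ are continuous on the compact set $\mathcal{D}$, they are bounded. Set $\|a\|_\infty=\sup_{x\in\mathcal{D}}a(x)$ and $\|b\|_\infty=\sup_{x\in\mathcal{D}}\|b(x)\|$, both finite. For $t\in[t_n,t_{n+1}]$ and $\dt\in(0,1)$ one has $0\le t-t_n\le\dt\le 1$. Hence the deterministic factor satisfies $e^{a(x)(t-t_n)}\le e^{\|a\|_\infty}$, uniformly in $x$, $n$ and $t$.

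Next, I treat the stochastic factor. For fixed $x\in\mathcal{D}$, the real random variable $b(x)\cdot(B(t)-B(t_n))$ is centered Gaussian with variance $\|b(x)\|^2(t-t_n)$. Indeed, the components $B^m$ are independent standard Brownian motions, so this is a linear combination of independent centered Gaussians with variances $t-t_n$.

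I then use the elementary inequality $e^{|z|}\le e^{z}+e^{-z}$ together with the formula for the Gaussian moment generating function, $\E[e^{\pm\xi}]=e^{v/2}$ for $\xi\sim\mathcal{N}(0,v)$. This gives
\[
\E\left[e^{|b(x)\cdot(B(t)-B(t_n))|}\right]\le 2e^{\frac12\|b(x)\|^2(t-t_n)}\le 2e^{\frac12\|b\|_\infty^2}.
\]
Note that $x$ is a deterministic parameter here, so no conditioning argument is required.

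Combining the two bounds yields the estimate \eqref{eq:lemauxexpo} with $C=2e^{\|a\|_\infty+\frac12\|b\|_\infty^2}$, which is independent of $\dt$, $n$, $t$ and $x$. There is no real obstacle. The only points requiring care are the uniformity of the constant, which follows from compactness of $\mathcal{D}$ and from the restriction $\dt<1$, and the handling of the absolute value, which is done via $e^{|z|}\le e^z+e^{-z}$. In later applications the lemma will presumably be used with $x$ replaced by the $\mathcal{F}_{t_n}$-measurable $X_n$. That case follows by conditioning on $\mathcal{F}_{t_n}$, using independence of $B(t)-B(t_n)$ from $\mathcal{F}_{t_n}$ and the uniform-in-$x$ bound just obtained.
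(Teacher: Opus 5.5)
Your proof is correct and follows essentially the same route as the paper: you bound the deterministic factor using boundedness of $a$ and $b$ on the compact set $\mathcal{D}$ together with $t-t_n\le\dt\le 1$, and then apply a Gaussian exponential-moment bound. The only difference is that you compute the scalar Gaussian moment generating function via $e^{|z|}\le e^{z}+e^{-z}$, which gives the explicit constant $2e^{\|a\|_\infty+\frac12\|b\|_\infty^2}$; the paper instead bounds $|b(x)\cdot(B(t)-B(t_n))|\le C\|B(t)-B(t_n)\|$ and uses Brownian scaling together with $\E[e^{C\|Z_M\|}]<\infty$.
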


\begin{proof}[Proof of Lemma~\ref{lem:auxexpo}]
The mappings $a$ and $b$ are continuous on the compact domain $\mathcal{D}$, hence they are bounded on $\mathcal{D}$. Therefore, there exists $C\in(0,\infty)$ such that for all $x\in\mathcal{D}$, under the conditions $0\le t-t_n\le \dt\le 1$, one has
\[
\E[e^{a(x)(t-t_n)}e^{|b(x)\cdot(B(t)-B(t_n))|}]\le C\E[e^{C\|B(t)-B(t_n)\|}].
\]
If $Z_M$ denotes a standard centered $m$-dimensional Gaussian random variable, then $B(t)-B(t_n)$ is equal in distribution to $\sqrt{t-t_n}Z_M$. Thus under the conditions $0\le t-t_n\le \dt\le 1$, one has
\[
\E[e^{C\|B(t)-B(t_n)\|}]=\E[e^{C\sqrt{t-t_n}\|Z_M\|}]\le \E[e^{C\|Z_M\|}].
\]
Finally, one has $\E[e^{c\|Z_M\|}]<\infty$ for all $c\in(0,\infty)$. This concludes the proof of Lemma~\ref{lem:auxexpo}.
\end{proof}

Lemma~\ref{lem:auxXtilde} below provides stochastic differential equations~\eqref{eq:auxXtilde1} satisfied by the auxiliary processes $\widetilde{X}_k$ on the intervals $[t_n,t_{n+1})$. Moreover, it states increment bounds for this auxiliary process, see~\eqref{eq:auxXtilde2}.
\begin{lemma}\label{lem:auxXtilde}
For all $n\in\{0,\ldots,N-1\}$, for all $k\in\{1,\ldots,d\}$, the stochastic process $t\in[t_n,t_{n+1})\mapsto \widetilde{X}_k(t)$ is solution to the stochastic differential equation
\begin{equation}\label{eq:auxXtilde1}
\dd \widetilde{X}_k(t)=\partial_\gamma\Phi(\widetilde{Z}_k(t))f_k^0(X_n)\dd t+\partial_\gamma\Phi(\widetilde{Z}_k(t))f_k(X_n)\cdot\dd B(t)+\psi(\widetilde{Z}_k(t))\|f_k(X_n)\|^2\dd t.
\end{equation}

Furthermore, for all $T\in(0,\infty)$, there exists $C(T)\in(0,\infty)$ such that for all $\dt\in(0,1)$, one has the regularity property
\begin{equation}\label{eq:auxXtilde2}
\underset{1\le k\le d}\sup~\underset{0\le n\le N-1}\sup~\underset{t\in[t_n,t_{n+1})}\sup~\E[|\widetilde{X}_k(t)-X_{n,k}|^2]\leq C(T)\dt.
\end{equation}
\end{lemma}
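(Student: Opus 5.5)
The plan is to apply It\^o's formula to $\Phi$ composed with the semimartingale $\widetilde{Z}_k(t)$ on the interval $[t_n,t_{n+1})$, then deduce the increment bound from the resulting SDE together with the growth bounds~\eqref{eq:boundPhipsi} and Lemma~\ref{lem:auxexpo}.

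\textbf{Step 1: the SDE~\eqref{eq:auxXtilde1}.} Conditionally on $\mathcal{F}_{t_n}$, the quantities $X_n$, $f_k^0(X_n)$ and $f_k(X_n)$ are frozen, and the three components of $\widetilde{Z}_k(t)$ have simple differentials:
\begin{itemize}
\item the first component satisfies $\dd s=\|f_k(X_n)\|^2\dd t$;
\item the second satisfies $\dd\gamma=f_k^0(X_n)\dd t+f_k(X_n)\cdot\dd B(t)$, with quadratic variation $\dd\langle\gamma\rangle=\|f_k(X_n)\|^2\dd t$;
\item the third component $X_{n,k}$ is constant.
\end{itemize}
Since $\Phi$ is of class $\mathcal{C}^2$ (Assumption~\ref{ass:integrator}), It\^o's formula gives
\[
\dd\widetilde{X}_k=\partial_s\Phi\,\|f_k(X_n)\|^2\dd t+\partial_\gamma\Phi\,\bigl(f_k^0(X_n)\dd t+f_k(X_n)\cdot\dd B\bigr)+\tfrac12\partial_\gamma^2\Phi\,\|f_k(X_n)\|^2\dd t,
\]
with all derivatives evaluated at $\widetilde{Z}_k(t)$. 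Grouping the $\partial_s\Phi$ and $\tfrac12\partial_\gamma^2\Phi$ terms through the definition~\eqref{eq:psi} of $\psi$ yields exactly~\eqref{eq:auxXtilde1}.

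\textbf{Step 2: the increment bound~\eqref{eq:auxXtilde2}.} Integrate~\eqref{eq:auxXtilde1} from $t_n$ to $t$ and use $\widetilde{X}_k(t_n)=X_{n,k}$ (which follows from~\eqref{eq:condPhi0}). This splits $\widetilde{X}_k(t)-X_{n,k}$ into
\begin{itemize}
\item two $\dd t$ integrals, whose integrands are $\partial_\gamma\Phi(\widetilde{Z}_k)f_k^0(X_n)$ and $\psi(\widetilde{Z}_k)\|f_k(X_n)\|^2$;
\item one stochastic integral, with integrand $\partial_\gamma\Phi(\widetilde{Z}_k)f_k(X_n)$.
\end{itemize}
For the $\dd t$ terms, apply Cauchy--Schwarz in time, which gives a bound of order $(t-t_n)\int_{t_n}^t\E[\cdot^2]\,\dd r$. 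For the stochastic integral, apply the It\^o isometry, which gives a bound of order $\int_{t_n}^t\E[\cdot^2]\,\dd r$. The functions $f_k^0$ and $f_k$ are bounded on $\mathcal{D}$, and $X_n\in\mathcal{D}$ by Proposition~\ref{propo:dpX}. It therefore suffices to show that $\E[|\partial_\gamma\Phi(\widetilde{Z}_k(r))|^2]$ and $\E[|\psi(\widetilde{Z}_k(r))|^2]$ are bounded uniformly in $r$, $n$, $k$ and $\dt\in(0,1)$.

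\textbf{Step 3: uniform moment bounds.} These moment bounds are the only nontrivial point. By~\eqref{eq:boundPhipsi}, both squared quantities are at most $C^2e^{2C(s+|\gamma|)}$ evaluated at $\widetilde{Z}_k(r)$. Substituting $s=\|f_k(X_n)\|^2(r-t_n)$ and $|\gamma|\le|f_k^0(X_n)|(r-t_n)+|f_k(X_n)\cdot(B(r)-B(t_n))|$ bounds this by
\[
e^{a(X_n)(r-t_n)}\,e^{|b(X_n)\cdot(B(r)-B(t_n))|},\qquad a=2C(\|f_k\|^2+|f_k^0|),\quad b=2Cf_k,
\]
both of which are continuous on $\mathcal{D}$. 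Because $X_n$ is $\mathcal{F}_{t_n}$-measurable and the increment $B(r)-B(t_n)$ is independent of $\mathcal{F}_{t_n}$, we condition on $\mathcal{F}_{t_n}$ and take the supremum over $x\in\mathcal{D}$. Lemma~\ref{lem:auxexpo} then bounds the expectation by a constant.

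Combining Steps 2 and 3 gives $\E|\widetilde{X}_k(t)-X_{n,k}|^2\le C(\dt^2+\dt)\le C\dt$, which is~\eqref{eq:auxXtilde2}. The step needing the most care is this conditioning argument to handle the exponential growth of the derivatives of $\Phi$; the It\^o computation itself is routine.
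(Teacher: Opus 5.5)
Your proposal is correct and follows the paper's proof essentially step for step. You apply It\^o's formula to $\Phi(\widetilde{Z}_k(t))$ and group the $\partial_s\Phi$ and $\tfrac12\partial_\gamma^2\Phi$ terms into $\psi$. You then use Cauchy--Schwarz and the It\^o isometry, and conclude with the growth bound~\eqref{eq:boundPhipsi}, a conditioning argument and Lemma~\ref{lem:auxexpo}, with $a$ a multiple of $h_k$.
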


\begin{proof}[Proof of Lemma~\ref{lem:auxXtilde}]

First, let us prove the identity~\eqref{eq:auxXtilde1}. For all $n\in\{0,\ldots,N-1\}$, for all $k\in\{1,\ldots,d\}$, applying the It\^o formula, for all $t\in[t_n,t_{n+1})$, one obtains the identity
\begin{align*}
\dd\widetilde{X}_k(t)&=\partial_s\Phi(\widetilde{Z}_k(t)) \|f_k(X_n)\|^2\dd t\\
&+\partial_\gamma\Phi(\widetilde{Z}_k(t)) f_k^0(X_n)\dd t\\
&+\partial_\gamma\Phi(\widetilde{Z}_k(t)) f_k(X_n)\cdot\dd B(t)+\frac12\partial_{\gamma}^2\Phi(\widetilde{Z}_k(t)) \|f_k(X_n)\|^2\dd t.
\end{align*}
Recalling the definition~\eqref{eq:psi} of the auxiliary mapping $\psi$, one then obtains~\eqref{eq:auxXtilde1}.

Next, let us prove the inequality~\eqref{eq:auxXtilde2}. For all $n\in\{0,\ldots,N-1\}$, for all $k\in\{1,\ldots,d\}$, owing to the SDE~\eqref{eq:auxXtilde1}, for all $t\in[t_n,t_{n+1})$ one gets
\begin{align*}
\widetilde{X}_k(t)-X_{n,k}&=\widetilde{X}_k(t)-\widetilde{X}_k(t_n)\\
&=\int_{t_n}^{t}\partial_\gamma\Phi(\widetilde{Z}_k(s))f_k^0(X_n)\dd s+\int_{t_n}^{t}\partial_\gamma\Phi(\widetilde{Z}_k(s))f_k(X_n)\cdot\dd B(s)+\int_{t_n}^{t}\psi(\widetilde{Z}_k(s))\|f_k(X_n)\|^2\dd s.
\end{align*}
Applying the Cauchy--Schwarz inequality and the It\^o isometry formula, one then obtains 
\begin{align*}
\E[|\widetilde{X}_k(t)-X_{n,k}|^2]&\le 3(t-t_n)\int_{t_n}^{t}\E[\partial_\gamma\Phi(\widetilde{Z}_k(s))^2f_k^0(X_n)^2]\dd s\\
&+3\int_{t_n}^{t}\E[\partial_\gamma\Phi(\widetilde{Z}_k(s))^2\|f_k(X_n)\|^2]\dd s\\
&+3(t-t_n)\int_{t_n}^{t}\E[\psi(\widetilde{Z}_k(s))^2\|f_k(X_n)\|^4]\dd s.
\end{align*}
Next, recall that $X_n\in\mathcal{D}$ almost surely owing to Proposition~\ref{propo:dpX}. Furthermore, the mappings $f_k^0,f_k^1,\ldots,f_k^M$ defined by~\eqref{eq:fmk} are bounded on $\mathcal{D}$. Therefore, there exists $C(T)\in(0,\infty)$ such that
\[
\E[|\widetilde{X}_k(t)-X_{n,k}|^2]\le C(T)\int_{t_n}^{t}\Bigl(\E[|\partial_\gamma\Phi(\widetilde{Z}_k(s))|^2]+\E[|\psi(\widetilde{Z}_k(s))|^2]\Bigr)\dd s.
\]

Applying the upper bound~\eqref{eq:boundPhipsi} on $\partial_\gamma\Phi$ and $\psi$ (following from regularity conditions imposed on $\Phi$), one then obtains the upper bound
\begin{align*}
\E[|\widetilde{X}_k(t)-X_{n,k}|^2]&\le C(T)\int_{t_n}^{t}\E[e^{C(h_k(X_n)(s-t_n)}e^{C|f_k(X_n)\cdot(B(s)-B(t_n))|}]\dd s,
\end{align*}
where $h_k$ is defined by~\eqref{eq:defhk}. Applying a conditional expectation argument and the inequality~\eqref{eq:lemauxexpo} from Lemma~\ref{lem:auxexpo}, one then obtains the upper bounds
\[
\E[|\widetilde{X}_k(t)-X_{n,k}|^2]\le C(T)(t-t_n)\le C(T)\dt,\quad \forall~t\in[t_n,t_{n+1}),
\]
which concludes the proof of the inequality~\eqref{eq:auxXtilde2}.

The proof of Lemma~\ref{lem:auxXtilde} is thus completed.
\end{proof}

\subsection{Error analysis}\label{sec:error1}

The first step of the proof of Theorem~\ref{theo:strong} is to provide a decomposition of the strong error of the numerical scheme~\eqref{eq:schemeX}: for all $n\in\{0,\ldots,N-1\}$, we set
\begin{align*}
e_n&=X_n-X(t_n),\\
e_{n,k}&=X_{n,k}-X_k(t_n),\qquad \forall~k\in\{1,\ldots,d\},
\end{align*}
and note that one has the identity
\[
\E[\|e_n\|^2]=\sum_{k=1}^{d}\E[|e_{n,k}|^2].
\]
The error terms $e_{n,k}$ are then decomposed as follows.

On the one hand, on each interval $[t_n,t_{n+1})$, the auxiliary process $\bigl(\widetilde{X}_k(t)\bigr)_{t\in[0,T]}$ is solution to the stochastic differential equation~\eqref{eq:auxXtilde1}. Recalling that $X_{n,k}=\widetilde{X}_k(t_n)$, for all $n\in\{1,\ldots,N\}$, one thus obtains the identity 
\begin{align*}
X_{n,k}-x_{0,k}&=\sum_{j=0}^{n-1}\bigl(X_{j+1,k}-X_{j,k}\bigr)\\
&=\sum_{j=0}^{n-1}\bigl(\widetilde{X}_k(t_{j+1})-\widetilde{X}_k(t_j)\bigr)\\
&=\sum_{j=0}^{n-1}\int_{t_j}^{t_{j+1}}\partial_\gamma\Phi(\widetilde{Z}_k(t))f_k^0(X_j)\dd t+\sum_{j=0}^{n-1}\int_{t_j}^{t_{j+1}}\partial_\gamma\Phi(\widetilde{Z}_k(t))f_k(X_j)\cdot\dd B(t)\\
&+\sum_{j=0}^{n-1}\int_{t_j}^{t_{j+1}}\psi(\widetilde{Z}_k(t))\|f_k(X_j)\|^2\dd t.
\end{align*}

On the other hand, for all $k\in\{1,\ldots,d\}$, the component $\bigl(X_k(t)\bigr)_{t\in[0,T]}$ is solution to the stochastic differential equation
\begin{align*}
\dd X_k(t)&=g_k^0(X(t))\dd t+g_k(X(t))\cdot \dd B(t)\\
&=\sigma(X_k(t))f_k^0(X(t))\dd t+\sigma(X_k(t))f_k(X(t))\cdot \dd B(t).
\end{align*}
Therefore, for all $n\in\{1,\ldots,N\}$, one obtains the identity 
\begin{align*}
X_k(t_n)-x_{0,k}&=\int_{0}^{t_n}g_k^0(X(t))\dd t+\int_{0}^{t_n}g_k(X(t))\dd B(t)\\
&=\sum_{j=0}^{n-1}\int_{t_j}^{t_{j+1}}g_k^0(X(t))\dd t+\sum_{j=0}^{n-1}\int_{t_j}^{t_{j+1}}g_k(X(t))\dd B(t).
\end{align*}

As a result, for all $n\in\{1,\ldots,N\}$ and $k\in\{1,\ldots,d\}$, one has the decomposition of the error
\begin{equation}\label{eq:decomp_e_n}
e_{n,k}=X_{n,k}-X_k(t_n)=e_{n,k}^{(1)}+e_{n,k}^{(2)}+e_{n,k}^{(3)},
\end{equation}
where the error terms $e_{n,k}^{(1)}$, $e_{n,k}^{(2)}$ and $e_{n,k}^{(3)}$ are defined as 
\begin{align}
e_{n,k}^{(1)}&=\sum_{j=0}^{n-1}\int_{t_j}^{t_{j+1}}\bigl[\partial_\gamma\Phi(\widetilde{Z}_k(t))f_k^0(\widetilde{X}(t_j))-g_k^0(X(t))\bigr]\dd t,\label{eq:e_n1}\\
e_{n,k}^{(2)}&=\sum_{j=0}^{n-1}\int_{t_j}^{t_{j+1}}\bigl[\partial_\gamma\Phi(\widetilde{Z}_k(t))f_k(\widetilde{X}(t_j))-g_k(X(t))\bigr]\cdot \dd B(t),\label{eq:e_n2}\\
e_{n,k}^{(3)}&=\sum_{j=0}^{n-1}\int_{t_j}^{t_{j+1}}\psi(\widetilde{Z}_k(t))\|f_k(\widetilde{X}(t_j))\|^2 \dd t.\label{eq:e_n3}
\end{align}
The error terms $e_{n,k}^{(1)}$ and $e_{n,k}^{(2)}$ are then decomposed as follows.

For the error term $e_{n,k}^{(1)}$, defined by~\eqref{eq:e_n1}, one has the decomposition
\begin{equation}\label{eq:decomp_e_n1}
e_{n,k}^{(1)}=e_{n,k}^{(1,1)}+e_{n,k}^{(1,2)}+e_{n,k}^{(1,3)},
\end{equation}
where the error terms $e_{n,k}^{(1,1)}$, $e_{n,k}^{(1,2)}$ and $e_{n,k}^{(1,3)}$ are defined as
\begin{align}
e_{n,k}^{(1,1)}&=\sum_{j=0}^{n-1}\int_{t_j}^{t_{j+1}}\bigl[g_k^0(X(t_j))-g_k^0(X(t))\bigr]\dd t,
\label{eq:e_n11}\\
e_{n,k}^{(1,2)}&=\sum_{j=0}^{n-1}\int_{t_j}^{t_{j+1}}\bigl[g_k^0(\widetilde{X}(t_j))-g_k^0(X(t_j))\bigr]\dd t,\label{eq:e_n12}\\
e_{n,k}^{(1,3)}&=\sum_{j=0}^{n-1}\int_{t_j}^{t_{j+1}}\bigl[\partial_\gamma\Phi(\widetilde{Z}_k(t))f_k^0(\widetilde{X}(t_j))-g_k^0(\widetilde{X}(t_j))\bigr]\dd t.\label{eq:e_n13}
\end{align}
Similarly, for the error term $e_{n,k}^{(2)}$, defined by~\eqref{eq:e_n2}, one has the decomposition
\begin{equation}\label{eq:decomp_e_n2}
e_{n,k}^{(2)}=e_{n,k}^{(2,1)}+e_{n,k}^{(2,2)}+e_{n,k}^{(2,3)},
\end{equation}
where the error terms $e_{n,k}^{(2,1)}$, $e_{n,k}^{(2,2)}$ and $e_{n,k}^{(2,3)}$ are defined as
\begin{align}
e_{n,k}^{(2,1)}&=\sum_{j=0}^{n-1}\int_{t_j}^{t_{j+1}}\bigl[g_k(X(t_j))-g_k(X(t))\bigr]\cdot \dd B(t),
\label{eq:e_n21}\\
e_{n,k}^{(2,2)}&=\sum_{j=0}^{n-1}\int_{t_j}^{t_{j+1}}\bigl[g_k(\widetilde{X}(t_j))-g_k(X(t_j))\bigr]\cdot \dd B(t),\label{eq:e_n22}\\
e_{n,k}^{(2,3)}&=\sum_{j=0}^{n-1}\int_{t_j}^{t_{j+1}}\bigl[\partial_\gamma\Phi(\widetilde{Z}_k(t))f_k(\widetilde{X}(t_j))-g_k(\widetilde{X}(t_j))\bigr]\cdot \dd B(t).\label{eq:e_n23}
\end{align}
It remains to provide upper bounds for the error terms appearing in the decompositions~\eqref{eq:decomp_e_n},~\eqref{eq:decomp_e_n1} and~\eqref{eq:decomp_e_n2}.

\begin{proof}[Proof of Theorem~\ref{theo:strong}]
The proof is divided into three parts, where the error terms are grouped together when similar techniques are employed.
\begin{itemize}
\item the treatment of the error terms $e_{n,k}^{(1,1)}$ and $e_{n,k}^{(2,1)}$
\item the treatment of the error terms $e_{n,k}^{(1,2)}$ and $e_{n,k}^{(2,2)}$
\item the treatment of the error terms $e_{n,k}^{(1,3)}$, $e_{n,k}^{(2,3)}$ and $e_{n,k}^{(3)}$.
\end{itemize}
The strong error estimates~\eqref{eq:strong} are finally established in the conclusion of the proof.

\paragraph{\bf Treatment of the error terms $e_{n,k}^{(1,1)}$ and $e_{n,k}^{(2,1)}$}
The mappings $g_k^0$ and $g_k$ are Lipschitz continuous on $\mathcal{D}$. Applying the Cauchy--Schwarz inequality and the It\^o isometry formula, there exists $C(T)\in(0,\infty)$ such that for all $n\in\{1,\ldots,N\}$ one has
\[
\E[|e_{n,k}^{(1,1)}|^2]+\E[|e_{n,k}^{(2,1)}|^2]\le C(T)\sum_{j=0}^{n-1}\int_{t_j}^{t_{j+1}}\E[\|X(t_j)-X(t)\|^2]\dd t.
\]
Applying the regularity property~\eqref{eq:reg_exact} from Proposition~\ref{propo:reg_exact}, one then obtains the upper bound
\begin{equation}\label{eq:enk11e_nk21}
\underset{0\le n\le N}\sup~\E[|e_{n,k}^{(1,1)}|^2]+\underset{0\le n\le N}\sup~\E[|e_{n,k}^{(2,1)}|^2]\le C(T)\dt.
\end{equation}

\paragraph{\bf Treatment of the error terms $e_{n,k}^{(1,2)}$ and $e_{n,k}^{(2,2)}$.}
The mappings $g_k^0$ and $g_k$ are Lipschitz continuous on $\mathcal{D}$. Applying the Cauchy--Schwarz inequality and the It\^o isometry formula, there exists $C(T)\in(0,\infty)$ such that for all $n\in\{1,\ldots,N\}$ one has
\begin{equation}\label{eq:enk12e_nk22}
\E[|e_{n,k}^{(1,2)}|^2]+\E[|e_{n,k}^{(2,2)}|^2]\le C(T)\dt \sum_{j=0}^{n-1}\E[\|\widetilde{X}(t_j)-X(t_j)\|^2]=C(T)\dt \sum_{j=0}^{n-1}\E[\|e_j\|^2].
\end{equation}

\paragraph{\bf Treatment of the error terms $e_{n,k}^{(1,3)}$, $e_{n,k}^{(2,3)}$ and $e_{n,k}^{(3)}$.}
Recall that $g_k^m(x)=\sigma(x_k)f_k^m(x)$ for all $x\in\mathcal{D}$, all $m\in\{0,\ldots,M\}$ and all $k\in\{1,\ldots,d\}$. Moreover, one has $\partial_\gamma\Phi(0,0,y)=\sigma(y)$ for $y\in[-1,1]$ owing to the condition~\eqref{eq:condPhi1} in Assumption~\ref{ass:integrator} on the integrator $\Phi$. As a result, the error terms $e_{n,k}^{(1,3)}$ and $e_{n,k}^{(2,3)}$ are written as
\begin{align*}
e_{n,k}^{(1,3)}&=\sum_{j=0}^{n-1}\int_{t_j}^{t_{j+1}}\bigl[\partial_\gamma\Phi(\widetilde{Z}_k(t))-\partial_\gamma\Phi(\widetilde{Z}_k(t_j))\bigr]f_k^0(\widetilde{X}(t_j))\dd t\\
e_{n,k}^{(2,3)}&=\sum_{j=0}^{n-1}\int_{t_j}^{t_{j+1}}\bigl[\partial_\gamma\Phi(\widetilde{Z}_k(t))-\partial_\gamma\Phi(\widetilde{Z}_k(t_j))\bigr]f_k(\widetilde{X}(t_j))\cdot \dd B(t).
\end{align*}
Finally, owing to the condition~\eqref{eq:condPhi2} from Assumption~\ref{ass:integrator} on the integrator $\Phi$, the mapping $\psi$ defined by~\eqref{eq:psi} satisfies the condition $\psi(0,0,y)=0$ for all $y\in[-1,1]$. Recall that $\widetilde{Z}_k(t_j)=(0,0,X_{j,k})$. As a result, the error term $e_{n,k}^{(3)}$ is written as
\[
e_{n,k}^{(3)}=\sum_{j=0}^{n-1}\int_{t_j}^{t_{j+1}}\bigl[\psi(\widetilde{Z}_k(t))-\psi(\widetilde{Z}_k(t_j))\bigr]\|f_k(\widetilde{X}(t_j))\|^2 \dd t.
\]
Recall that the mappings $f_k^0$ and $f_k$ are bounded on $\mathcal{D}$. Thus, applying the Cauchy--Schwarz inequality and the It\^o isometry formula, there exists $C(T)\in(0,\infty)$ such that for all $n\in\{1,\ldots,N\}$ one has
\begin{align*}
\E[|e_{n,k}^{(1,3)}|^2]+\E[|e_{n,k}^{(2,3)}|^2]+\E[|e_{n,k}^{(3)}|^2]&\le C(T)\sum_{j=0}^{n-1}\int_{t_j}^{t_{j+1}}\E[\big|\partial_\gamma\Phi(\widetilde{Z}_k(t))-\partial_\gamma\Phi(\widetilde{Z}_k(t_j))\big|^2]\dd t\\
&+C(T)\sum_{j=0}^{n-1}\int_{t_j}^{t_{j+1}}\E[\big|\psi(\widetilde{Z}_k(t))-\psi(\widetilde{Z}_k(t_j))\big|^2]\dd t.
\end{align*}
Recall that $h_k$ is defined by~\eqref{eq:defhk}. Applying the mean-value theorem, the upper bound~\eqref{eq:LipPhi} on the derivatives of $\partial_\gamma\Phi$ (which follows from regularity conditions imposed on $\Phi$), and the Cauchy--Schwarz inequality, for all $j\in\{0,\ldots,N-1\}$ and all $t\in[t_j,t_{j+1})$, one has 
\begin{align*}
\E[\big|\partial_\gamma\Phi(\widetilde{Z}_k(t))-\partial_\gamma\Phi(\widetilde{Z}_k(t_j))\big|^2]&\le C\E\bigl[e^{Ch_k(\widetilde{X}(t_j))(t-t_j)+|f_k(\widetilde{X}(t_j))\cdot(B(t)-B(t_j))|}\|\widetilde{Z}(t)-\widetilde{Z}_k(t_j)\|^2  \bigr]\\
&\le C\bigl(\E[e^{2Ch_k(\widetilde{X}(t_j))(t-t_j)+|f_k(\widetilde{X}(t_j))\cdot(B(t)-B(t_j))|}]\bigr)^{\frac12}\bigl(\E[\|\widetilde{Z}(t)-\widetilde{Z}_k(t_j)\|^4]\bigr)^{\frac12}.
\end{align*}
The mappings $h_k$ and $f_k$ are bounded on $\mathcal{D}$. As a result, using Lemma~\ref{lem:auxexpo} (with a conditional expectation argument) with $a=h_k$ and $b=f_k$, there exists $C(T)\in(0,\infty)$ such that
\[
\underset{0\le j\le N-1}\sup~\underset{t\in[t_j,t_{j+1})}\sup~\bigl(\E[e^{2Ch_k(\widetilde{X}(t_j))(t-t_j)+|f_k(\widetilde{X}(t_j))\cdot(B(t)-B(t_j))|}]\bigr)^{\frac12}\le C(T).
\]
Applying the inequality~\eqref{eq:auxZtilde1} from Lemma~\ref{lem:auxZtilde}, one then obtains the upper bound
\[
\underset{0\le j\le N-1}\sup~\underset{t\in[t_j,t_{j+1})}\sup~\E[\big|\partial_\gamma\Phi(\widetilde{Z}_k(t))-\partial_\gamma\Phi(\widetilde{Z}_k(t_j))\big|^2]\le C(T)\dt.
\]
Similarly, applying the upper bound~\eqref{eq:Lippsi} on the derivatives of $\partial_\gamma\psi$ (which follows from regularity conditions imposed on $\Phi$) one also gets the upper bound
\[
\underset{0\le j\le N-1}\sup~\underset{t\in[t_j,t_{j+1})}\sup~\E[\big|\psi(\widetilde{Z}_k(t))-\psi(\widetilde{Z}_k(t_j))\big|^2]\le C(T)\dt.
\]
As a result, one obtains the upper bound
\begin{equation}\label{eq:enk13_enk23_enk3}
\underset{0\le n\le N}\sup~\E[|e_{n,k}^{(1,3)}|^2]+\underset{0\le n\le N}\sup~\E[|e_{n,k}^{(2,3)}|^2]+\underset{0\le n\le N}\sup~\E[|e_{n,k}^{(3)}|^2]\le C(T)\dt.
\end{equation}

\paragraph{\bf Conclusion.}
Owing to the decomposition~\eqref{eq:decomp_e_n} of the error terms $e_{n,k}$, to the decompositions~\eqref{eq:decomp_e_n1} and~\eqref{eq:decomp_e_n2} of the error terms $e_{n,k}^{(1)}$ and $e_{n,k}^{(2)}$, and owing to the upper bounds~\eqref{eq:enk11e_nk21},~\eqref{eq:enk12e_nk22} and~\eqref{eq:enk13_enk23_enk3}, one finally obtains the following result: there exists $C(T)\in(0,\infty)$ such that for all $n\in\{1,\ldots,N\}$ one has
\[
\E[\|e_n\|^2]=\sum_{k=1}^{d}\E[|e_{n,k}|^2]\le C(T)\dt \sum_{j=0}^{n-1}\E[\|e_j\|^2]+C(T)\dt.
\]
Applying the discrete Gr\"onwall inequality then provides the strong error estimates~\eqref{eq:strong} and concludes the proof of Theorem~\ref{theo:strong}. 
\end{proof}

\section{Proof of Theorem~\ref{theo:weak}}\label{sec:proof2}

The objective of this section is to prove Theorem~\ref{theo:weak}, i.\,e. that the numerical scheme~\eqref{eq:schemeX} converges with weak rate equal to $1$ when the vector fields $g^0,g^1,\ldots,g^M$ and the mapping $\Phi$ satisfy the conditions given in the statement of Theorem~\ref{theo:weak}. Specifically, in the following it is assumed that Assumptions~\ref{ass:integratorDP} and~\ref{ass:integrator} are satisfied, and that there exists $\beta\in(0,\infty)$ such that $\|\Phi\|_{4,\beta}<\infty$, see~\eqref{eq:regPhi}. This assumption provides a control on the growth of the mappings $\partial_\gamma\Phi$ and $\psi=\partial_s\Phi+\frac12\partial_{\gamma}^2\Phi$ (defined by~\eqref{eq:psi}) and of their first and second order derivatives: there exists $C\in(0,\infty)$ such that for all $s\ge 0$, all $\gamma\in\R$ and $y\in[-1,1]$ one has
\begin{equation}\label{eq:regPhipsi-weak}
\sum_{|\alpha|\le 2}\left|\partial_s^{\alpha_s}\partial_\gamma^{\alpha_\gamma}\partial_y^{\alpha_y}\bigl(\partial_\gamma\Phi\bigr)(s,\gamma,y) \right|+\sum_{|\alpha|\le 2}\left|\partial_s^{\alpha_s}\partial_\gamma^{\alpha_\gamma}\partial_y^{\alpha_y}\psi(s,\gamma,y) \right|\le Ce^{C(s+|\gamma|)}.
\end{equation}

This section is organized as follows. Section~\ref{sec:decompweakerror} is devoted to the decomposition of the weak error using the solution to the associated Kolmogorov equation and to the statement of auxiliary error estimates (Lemmas~\ref{lem:epsilonI},~\ref{lem:epsilonII} and~\ref{lem:epsilonIII}). Additional auxiliary results are provided in Section~\ref{sec:auxweak}. The proofs of Lemmas~\ref{lem:epsilonI},~\ref{lem:epsilonII} and~\ref{lem:epsilonIII} are then given in Section~\ref{sec:proofsauxweak}. Finally, the proof of Theorem~\ref{theo:weak} is presented in Section~\ref{sec:conclusionproofweak}.

\subsection{Decomposition of the weak error}\label{sec:decompweakerror} 

The fundamental tool needed to establish weak error estimates is the Kolmogorov equation, for which we refer to~\cite{MR1840644}. Assume that $\theta\colon\mathcal{D}\to\R$ is of class $\mathcal{C}^3$, and introduce the mapping $u\colon[0,T]\times\R\to\R$ defined by
\begin{equation}\label{eq:u}
u(t,x)=\E[\theta(X(t))|X(0)=x],\qquad \forall~t\ge 0,\quad \forall~x\in\mathcal{D}.
\end{equation}
The function $u$ is the solution to the Kolmogorov equation 
\begin{equation}\label{eq:Kolmogorov}
\left\lbrace
\begin{aligned}
&\partial_tu(t,x)=\sum_{k=1}^{d}g_k^0(x)\partial_{x_k}u(t,x)+\frac12\sum_{k,\ell=1}^{d}g_k(x)\cdot g_\ell(x)\partial_{x_k x_\ell}^2u(t,x),\qquad \forall~(t,x)\in\R^+\times\mathcal{D},\\
&u(0,x)=\theta(x),\qquad \forall~x\in\mathcal{D}.
\end{aligned}
\right.
\end{equation}
Moreover, for all $t\ge 0$, the mapping $u(t,\cdot)$ is of class $\mathcal{C}^3$, and for all $T\in(0,\infty)$ there exists $C(T)\in(0,\infty)$ such that
\begin{equation}\label{eq:boundKolmogorov}
\sum_{|\alpha|\le 3}\underset{t\in[0,T]}\sup~\underset{x\in\mathcal{D}}\sup~|\partial_x^{\alpha}u(t,x)|\le C(T)\sum_{|\alpha|\le 3}\underset{x\in\mathcal{D}}\sup~|\partial_x^{\alpha}\theta(x)|,
\end{equation}
where for any multi-index $\alpha=(\alpha_k)_{1\le k\le d}$ one has $|\alpha|=\sum_{k=1}^{d}\alpha_k$ and $\partial_x^\alpha=\partial_{x_1}^{\alpha_1}\ldots\partial_{x_d}^{\alpha_d}$. To simplify notation, in the sequel the right-hand side of~\eqref{eq:boundKolmogorov} is denoted as $C(T,\theta)$.

The first step in the weak error analysis is to interpret the weak error, i.\,e. the left-hand side of~\eqref{eq:weak}, using the function $u$:  recalling that the initial values of the exaxt and numerical solutions satisfy $X(0)=X_0=x_0$, applying the definition~\eqref{eq:u} of the function $u$ yields
\[
\E[\theta(X_N)]-\E[\theta(X(T))]=\E[u(0,X_N)]-\E[u(T,x_0)]=\E[u(0,X_N)]-\E[u(T,X_0)].
\]
Next, by a telescoping sum argument, the weak error is decomposed as
\begin{equation}\label{eq:weakerror}
\E[\theta(X_N)]-\E[\theta(X(T))]=\sum_{n=0}^{N-1}\epsilon_{n},
\end{equation}
where, for all $n\in\{0,\ldots,N-1\}$, the error term $\epsilon_n$ is defined as
\[
\epsilon_n=\E[u(T-t_{n+1},X_{n+1})]-\E[u(T-t_n,X_n)].
\]
Recall that, for all $k\in\{1,\ldots,d\}$, the auxiliary process $\widetilde{X}_k$ is defined by~\eqref{eq:Xtilde}. For all $t\in[0,T]$, let
\[
\widetilde{X}(t)=\bigl(\widetilde{X}_1(t),\ldots,\widetilde{X}_d(t)\bigr).
\]
By construction, one has $\widetilde{X}(t_n)=X_n$ for all $n\in\{0,\ldots,N\}$. Therefore for all $n\in\{0,\ldots,N-1\}$ the error term $\epsilon_n$ is written as
\begin{equation}\label{eq:epsilon_n}
\epsilon_n=\E[u(T-t_{n+1},\widetilde{X}(t_{n+1}))]-\E[u(T-t_n,\widetilde{X}(t_n))].
\end{equation}
Furthermore, for all $k\in\{1,\ldots,d\}$, the auxiliary process $\widetilde{X}$ is the solution to the stochastic differential equation~\eqref{eq:auxXtilde1} on each interval $[t_n,t_{n+1})$. As a result, applying the It\^o formula, one obtains 
\begin{align*}
\epsilon_n&=-\int_{t_n}^{t_{n+1}}\E[\partial_tu(T-t,\widetilde{X}(t))]\dd t\\
&+\sum_{k=1}^{d}\int_{t_n}^{t_{n+1}}\E[\partial_\gamma\Phi(\widetilde{Z}_k(t))f_k^0(X_n)\partial_{x_k}u(T-t,\widetilde{X}(t))]\dd t\\
&+\frac12\sum_{k,\ell=1}^{d}\int_{t_n}^{t_{n+1}}\E[\partial_\gamma\Phi(\widetilde{Z}_k(t))\partial_\gamma\Phi(\widetilde{Z}_\ell(t))f_k(X_n)\cdot f_\ell(X_n)\partial_{x_k x_\ell}^2u(T-t,\widetilde{X}(t))]\dd t\\
&+\sum_{k=1}^d\int_{t_n}^{t_{n+1}}\E[\psi(\widetilde{Z}_k(t))\|f_k(X_n)\|^2\partial_{x_k}u(T-t,\widetilde{X}(t))]\dd t.
\end{align*}
Finally, the function $u$ is the solution to the Kolmogorov equation~\eqref{eq:Kolmogorov}. As a result, for all $n\in\{0,\ldots,N-1\}$, the error term $\epsilon_n$ is decomposed as
\begin{equation}\label{eq:decomp_epsilon_n}
\epsilon_n=\sum_{k=1}^d\epsilon_{n,k}^{(1)}+\sum_{k,\ell=1}^{d}\epsilon_{n,k,\ell}^{(2)}+\sum_{k=1}^d\epsilon_{n,k}^{(3)},
\end{equation}
where, for all $n\in\{0,\ldots,N-1\}$ and $k,\ell\in\{1,\ldots,d\}$, the error terms $\epsilon_{n,k}^{(1)}$, $\epsilon_{n,k}^{(2)}$ and $\epsilon_{n,k}^{(3)}$ are defined as
\begin{align}
\epsilon_{n,k}^{(1)}&=\int_{t_n}^{t_{n+1}}\E\left[\left(\partial_\gamma\Phi(\widetilde{Z}_k(t))f_k^0(X_n)-g_k^0(\widetilde{X}(t))\right)\partial_{x_k}u(T-t,\widetilde{X}(t))\right]\dd t \label{eq:epsilon_n1}\\
\epsilon_{n,k,\ell}^{(2)}&=\frac12\int_{t_n}^{t_{n+1}}\E\left[\left(\partial_\gamma\Phi(\widetilde{Z}_k(t))\partial_\gamma\Phi(\widetilde{Z}_\ell(t))\bigl(f_k\cdot f_\ell\bigr)(X_n)-\bigl(g_k\cdot g_\ell\bigr)(\widetilde{X}(t))\right)\partial_{x_k x_\ell}^2u(T-t,\widetilde{X}(t))\right]\dd t \label{eq:epsilon_n2}\\
\epsilon_{n,k}^{(3)}&=\int_{t_n}^{t_{n+1}}\E\left[\psi(\widetilde{Z}_k(t))\|f_k(X_n)\|^2\partial_{x_k}u(T-t,\widetilde{X}(t))\right]\dd t. \label{eq:epsilon_n3}
\end{align}
The error terms $\epsilon_{n,k}^{(1)}$ and $\epsilon_{n,k,\ell}^{(2)}$ are decomposed into further error terms, as explained below.

First, consider the error term $\epsilon_{n,k}^{(1)}$, for all $n\in\{0,\ldots,N-1\}$ and $k\in\{1,\ldots,d\}$, defined by~\eqref{eq:epsilon_n1}. Recall that by construction of the auxiliary processes $\widetilde{X}$ and $\widetilde{Z}_k$ and by assumption~\ref{eq:condPhi1}, one has the identities $X_n=\widetilde{X}(t_n)$, $\widetilde{Z}_k(t_n)=\bigl(0,0,\widetilde{X}_k(t_n)\bigr)$, $\partial_\gamma\Phi(\widetilde{Z}_k(t_n))=\sigma(\widetilde{X}_k(t_n))$. As a result, owing to~\eqref{eq:sigmaF}, one has
\[
\partial_\gamma\Phi(\widetilde{Z}_k(t_n))f_k^0(\widetilde{X}(t_n))=g_k^0(X_n).
\]
The error term $\epsilon_{n,k}^{(1)}$ defined by~\eqref{eq:epsilon_n1} is thus decomposed as
\begin{equation}\label{eq:decomp_epsilon_n1}
\epsilon_{n,k}^{(1)}=\epsilon_{n,k}^{(1,1)}+\epsilon_{n,k}^{(1,2)},
\end{equation}
where the error terms $\epsilon_{n,k}^{(1,1)}$ and $\epsilon_{n,k}^{(1,2)}$ are defined as
\begin{align}
\epsilon_{n,k}^{(1,1)}&=\int_{t_n}^{t_{n+1}}\E\left[\left(g_k^0(\widetilde{X}(t_n))-g_k^0(\widetilde{X}(t))\right)\partial_{x_k}u(T-t,\widetilde{X}(t))\right]\dd t,\label{eq:epsilon_n11}\\
\epsilon_{n,k}^{(1,2)}&=\int_{t_n}^{t_{n+1}}\E\left[\left(\partial_\gamma\Phi(\widetilde{Z}_k(t))-\partial_\gamma\Phi(\widetilde{Z}_k(t_n))\right)f_k^0(\widetilde{X}(t_n))\partial_{x_k}u(T-t,\widetilde{X}(t))\right]\dd t.\label{eq:epsilon_n12}
\end{align}

Second, consider the error term $\epsilon_{n,k,\ell}^{(2)}$, for all $n\in\{0,\ldots,N-1\}$ and $k,\ell\in\{1,\ldots,d\}$, defined by~\eqref{eq:epsilon_n2}. To simplify the notation, for all $n\in\{0,\ldots,N-1\}$, $k\in\{1,\ldots,d\}$, and $t\in[t_n,t_{n+1})$, set
\begin{equation}\label{eq:Delta_nk}
\Delta_{n,k}(t)=\partial_\gamma\Phi(\widetilde{Z}_k(t))-\partial_\gamma\Phi(\widetilde{Z}_k(t_n)).
\end{equation}
Recall that by construction of the auxiliary processes $\widetilde{X}$ and $\widetilde{Z}_k$, one has the identities $X_n=\widetilde{X}(t_n)$, $\widetilde{Z}_k(t_n)=\bigl(0,0,\widetilde{X}_k(t_n)\bigr)$, $\partial_\gamma\Phi(\widetilde{Z}_k(t_n))=\sigma(\widetilde{X}_k(t_n))$. As a result, owing to~\eqref{eq:sigmaF}, one has
\begin{align*}
\partial_\gamma\Phi(\widetilde{Z}_k(t))\partial_\gamma\Phi(\widetilde{Z}_\ell(t))&\bigl(f_k\cdot f_\ell\bigr)(X_n)-\bigl(g_k\cdot g_\ell\bigr)(\widetilde{X}(t))\\
&=\bigl(g_k\cdot g_\ell\bigr)(\widetilde{X}(t_n))-\bigl(g_k\cdot g_\ell\bigr)(\widetilde{X}(t))\\
&+\partial_\gamma\Phi(\widetilde{Z}_k(t))\partial_\gamma\Phi(\widetilde{Z}_\ell(t))\bigl(f_k\cdot f_\ell\bigr)(\widetilde{X}(t_n))-\bigl(g_k\cdot g_\ell\bigr)(\widetilde{X}(t_n))\\
&=\bigl(g_k\cdot g_\ell\bigr)(\widetilde{X}(t_n))-\bigl(g_k\cdot g_\ell\bigr)(\widetilde{X}(t))\\
&+\Bigl(\partial_\gamma\Phi(\widetilde{Z}_k(t))\partial_\gamma\Phi(\widetilde{Z}_\ell(t))-\partial_\gamma\Phi(\widetilde{Z}_k(t_n))\partial_\gamma\Phi(\widetilde{Z}_\ell(t_n))\Bigr)\bigl(f_k\cdot f_\ell\bigr)(X_n).
\end{align*}
In addition, owing to the auxiliary notation~\eqref{eq:Delta_nk} introduced above, one has
\begin{align*}
\partial_\gamma\Phi(\widetilde{Z}_k(t))\partial_\gamma\Phi(\widetilde{Z}_\ell(t))&-\partial_\gamma\Phi(\widetilde{Z}_k(t_n))\partial_\gamma\Phi(\widetilde{Z}_\ell(t_n))\\
&=\Delta_{n,k}(t)\partial_\gamma\Phi(\widetilde{Z}_\ell(t_n))+\partial_\gamma\Phi(\widetilde{Z}_k(t_n))\Delta_{n,\ell}(t)+\Delta_{n,k}(t)\Delta_{n,\ell}(t)\\
&=\Delta_{n,k}(t)\sigma(\widetilde{X}_\ell(t_n))+\sigma(\widetilde{X}_k(t_n))\Delta_{n,\ell}(t)+\Delta_{n,k}(t)\Delta_{n,\ell}(t).
\end{align*}
As a result, the error term $\epsilon_{n,k,\ell}^{(2)}$ defined by~\eqref{eq:epsilon_n2} is decomposed as
\begin{equation}\label{eq:decomp_epsilon_n2}
\epsilon_{n,k,\ell}^{(2)}=\epsilon_{n,k,\ell}^{(2,1)}+\epsilon_{n,k,\ell}^{(2,2)}+\epsilon_{n,k,\ell}^{(2,3)},
\end{equation}
where the error terms $\epsilon_{n,k,\ell}^{(2,1)}$, $\epsilon_{n,k,\ell}^{(2,2)}$ and $\epsilon_{n,k,\ell}^{(2,3)}$ are defined as
\begin{align}
\epsilon_{n,k,\ell}^{(2,1)}&=\frac12\int_{t_n}^{t_{n+1}}\E\left[\left(\bigl(g_k\cdot g_\ell\bigr)(\widetilde{X}(t_n))-\bigl(g_k\cdot g_\ell\bigr)(\widetilde{X}(t))\right)\partial_{x_k x_\ell}^2u(T-t,\widetilde{X}(t))\right]\dd t\label{eq:epsilon_n21}\\
\epsilon_{n,k,\ell}^{(2,2)}&=\frac12\int_{t_n}^{t_{n+1}}\E\left[\left(\partial_\gamma\Phi(\widetilde{Z}_k(t))-\partial_\gamma\Phi(\widetilde{Z}_k(t_n))\right)\bigl(f_k\cdot g_\ell\bigr)(X_n)\partial_{x_k x_\ell}^2u(T-t,\widetilde{X}(t))\right]\dd t \label{eq:epsilon_n22}\\
\epsilon_{n,k,\ell}^{(2,3)}&=\frac12\int_{t_n}^{t_{n+1}}\E\left[\Delta_{n,k}(t)\Delta_{n,\ell}(t)\bigl(f_k\cdot f_\ell\bigr)(X_n)\partial_{x_k x_\ell}^2u(T-t,\widetilde{X}(t))\right]\dd t.\label{eq:epsilon_n23}
\end{align}
Finally, owing to the condition~\eqref{eq:condPhi2} and the definition~\eqref{eq:psi} of the function $\psi$, for all $n\in\{0,\ldots,N-1\}$ and $k\in\{1,\ldots,d\}$, the error term $\epsilon_{n,k}^{(3)}$ is expressed as
\begin{equation}\label{eq:epsilon_n3bis}
\epsilon_{n,k}^{(3)}=\int_{t_n}^{t_{n+1}}\E\left[\left(\psi(\widetilde{Z}_k(t))-\psi(\widetilde{Z}_k(t_n))\right)\|f_k(X_n)\|^2\partial_{x_k}u(T-t,\widetilde{X}(t))\right]\dd t.
\end{equation}
To simplify the presentation and in order to avoid repeating the same arguments several times, the error terms introduced above are not treated independently. The following results are proved below.

\begin{lemma}\label{lem:epsilonI} 
There exists $C(T,\theta)\in(0,\infty)$ such that
\begin{equation}\label{eq:lem_epsilonI}
\sum_{k=1}^{d}\underset{0\le n\le N-1}\sup~|\epsilon_{n,k}^{(1,2)}|+\sum_{k,\ell=1}^{d}\underset{0\le n\le N-1}\sup~|\epsilon_{n,k,\ell}^{(2,2)}|+\sum_{k=1}^{d}\underset{0\le n\le N-1}\sup~|\epsilon_{n,k}^{(3)}|\le C(T,\theta)\dt^2.
\end{equation}
\end{lemma}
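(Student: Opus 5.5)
The three error terms share one structure: an increment $G(\widetilde{Z}_k(t))-G(\widetilde{Z}_k(t_n))$ (with $G=\partial_\gamma\Phi$ or $G=\psi$), multiplied by an $\mathcal{F}_{t_n}$-measurable bounded coefficient and by a smooth function $v(T-t,\widetilde{X}(t))$ (either $\partial_{x_k}u$ or $\partial^2_{x_kx_\ell}u$). The integral runs over $[t_n,t_{n+1}]$, so it suffices to show
\[
\Bigl|\E\bigl[\bigl(G(\widetilde{Z}_k(t))-G(\widetilde{Z}_k(t_n))\bigr)\,c_n\,v(T-t,\widetilde{X}(t))\bigr]\Bigr|\le C(T,\theta)\dt
\]
uniformly in $t\in[t_n,t_{n+1})$. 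A naive bound only gives $\dt^{1/2}$ by Lemma~\ref{lem:auxZtilde}, so the plan is to extract the martingale part and show that its contribution vanishes to leading order.

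First, I apply the It\^o formula to $G(\widetilde{Z}_k(r))$ on $[t_n,t]$. From \eqref{eq:Ztilde}, $\dd\widetilde{Z}_k=(\|f_k(X_n)\|^2\dd r,\ f_k^0(X_n)\dd r+f_k(X_n)\cdot\dd B(r),\ 0)$. This gives
\[
G(\widetilde{Z}_k(t))-G(\widetilde{Z}_k(t_n))=\int_{t_n}^t \mathcal{L}G(\widetilde{Z}_k(r))\dd r+\int_{t_n}^t\partial_\gamma G(\widetilde{Z}_k(r))f_k(X_n)\cdot\dd B(r),
\]
where $\mathcal{L}G$ involves $\partial_sG$, $\partial_\gamma G$ and $\partial_\gamma^2G$. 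These are at most third derivatives of $\Phi$ for $G=\partial_\gamma\Phi$, but up to fourth derivatives for $G=\psi$; this is exactly why $\|\Phi\|_{4,\beta}<\infty$ and \eqref{eq:regPhipsi-weak} are needed. By \eqref{eq:regPhipsi-weak}, Lemma~\ref{lem:auxexpo} (with conditioning on $\mathcal{F}_{t_n}$) and the boundedness of $v$ from \eqref{eq:boundKolmogorov}, the $\dd r$ part contributes $O(t-t_n)=O(\dt)$.

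For the stochastic integral I write $v(T-t,\widetilde{X}(t))=v(T-t,X_n)+\bigl[v(T-t,\widetilde{X}(t))-v(T-t,X_n)\bigr]$. The first piece is $\mathcal{F}_{t_n}$-measurable, so its product with the stochastic integral has zero expectation. For the second piece, Cauchy--Schwarz applies: the Itô isometry together with the exponential bounds shows the stochastic integral has $L^2$ norm $O(\dt^{1/2})$. Since $v$ is Lipschitz in $x$ (this uses $\partial^3u$, available because $\theta\in\mathcal{C}^3$), \eqref{eq:auxXtilde2} in Lemma~\ref{lem:auxXtilde} shows the bracket also has $L^2$ norm $O(\dt^{1/2})$. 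The product is therefore $O(\dt)$.

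For the time variable I evaluate $v$ at $T-t$ throughout, so no time regularity of $u$ is needed. Integrating over $[t_n,t_{n+1}]$ then yields the bound $\dt^2$, uniformly in $n$ and $k,\ell$.

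The main obstacle is the case $G=\psi$ in $\epsilon^{(3)}_{n,k}$. There the generator term contains $\partial_\gamma^3\partial_\gamma\Phi$-type quantities, i.e.\ fourth derivatives of $\Phi$, and one must check carefully that each is covered by $\|\Phi\|_{4,\beta}$ via \eqref{eq:regPhipsi-weak}. The exponential weights $e^{C(s+|\gamma|)}$ are then handled by Lemma~\ref{lem:auxexpo}, using Hölder's inequality wherever they multiply other random factors.
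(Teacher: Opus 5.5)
Your proof is correct and follows essentially the paper's argument: It\^o's formula for $G(\widetilde{Z}_k)$ with $G\in\{\partial_\gamma\Phi,\psi\}$, the martingale part vanishing against $\mathcal{F}_{t_n}$-measurable bounded factors, and Cauchy--Schwarz with $O(\dt^{1/2})$ increment bounds coming from Lemmas~\ref{lem:auxZtilde}, \ref{lem:auxXtilde} and~\ref{lem:delta12}. The only difference is the order of the splitting. The paper first writes $v(T-t,\widetilde{X}(t))=v(T-t,X_n)+\delta(t)$, bounds the $\delta$-piece using the mean-value bound on $G(\widetilde{Z}_k(t))-G(\widetilde{Z}_k(t_n))$, and applies It\^o's formula only to the frozen piece. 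You apply It\^o first and split $v$ only inside the martingale term, where the It\^o isometry replaces that mean-value bound.
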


\begin{lemma}\label{lem:epsilonII}
There exists $C(T,\theta)\in(0,\infty)$ such that
\begin{equation}\label{eq:lem_epsilonII}
\sum_{k=1}^{d}\underset{0\le n\le N-1}\sup~|\epsilon_{n,k}^{(1,1)}|+\sum_{k,\ell=1}^{d}\underset{0\le n\le N-1}\sup~|\epsilon_{n,k,\ell}^{(2,1)}|\le C(T,\theta)\dt^2.
\end{equation}
\end{lemma}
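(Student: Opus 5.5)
The plan is to treat $\epsilon_{n,k}^{(1,1)}$ and $\epsilon_{n,k,\ell}^{(2,1)}$ together. Both have the form
\[
\int_{t_n}^{t_{n+1}}\E\bigl[(G(\widetilde{X}(t_n))-G(\widetilde{X}(t)))\,\partial^\alpha u(T-t,\widetilde{X}(t))\bigr]\dd t,
\]
with $G=g_k^0$ and $|\alpha|=1$, or $G=\frac12 g_k\cdot g_\ell$ and $|\alpha|=2$. Since each term is an integral over an interval of length $\dt$, it suffices to show that the integrand is $O(\dt)$ uniformly in $t\in[t_n,t_{n+1})$, $n$ and $k,\ell$. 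A direct Lipschitz bound only gives $\dt^{1/2}$ by Lemma~\ref{lem:auxXtilde}, so the stochastic-integral part must be killed by taking expectations.

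Set $\Psi(t,x)=(G(X_n)-G(x))\,\partial^\alpha u(T-t,x)$ with $X_n$ frozen. For $t\in[t_n,t_{n+1})$ we have $\Psi(t_n,\widetilde{X}(t_n))=0$. We apply the It\^o formula to $s\mapsto\Psi(s,\widetilde{X}(s))$ on $[t_n,t]$, using the SDE~\eqref{eq:auxXtilde1} for each component $\widetilde{X}_k$. The stochastic integral has zero expectation, because its integrand is $\partial_\gamma\Phi(\widetilde{Z}_k)f_k(X_n)$ times bounded functions, and it is square integrable by~\eqref{eq:boundPhipsi} and Lemma~\ref{lem:auxexpo}. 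This leaves
\[
\E[\Psi(t,\widetilde{X}(t))]=\int_{t_n}^t\E[\mathcal{R}(s)]\dd s.
\]
Here $\mathcal{R}(s)$ collects three kinds of terms: $\partial_t\Psi$; first derivatives of $\Psi$ in $x$ multiplied by $\partial_\gamma\Phi(\widetilde{Z}_k)f_k^0(X_n)+\psi(\widetilde{Z}_k)\|f_k(X_n)\|^2$; and second derivatives of $\Psi$ multiplied by $\frac12\partial_\gamma\Phi(\widetilde{Z}_k)\partial_\gamma\Phi(\widetilde{Z}_\ell)(f_k\cdot f_\ell)(X_n)$.

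It remains to bound $\E|\mathcal{R}(s)|$ uniformly. The spatial derivatives require up to two derivatives of $G$, so $G\in\mathcal{C}^2$; this holds since $g^m\in\mathcal{C}^3$. They also require $\partial_x^\beta u$ with $|\beta|\le 4$ when $|\alpha|=2$, which is one more than~\eqref{eq:boundKolmogorov} provides. The time derivative $\partial_t\partial^\alpha u$ involves derivatives of order $|\alpha|+2$ by the Kolmogorov equation~\eqref{eq:Kolmogorov}. This regularity count is the main obstacle.

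To get around it, I would avoid differentiating $\partial^2 u$ twice in the quadratic-variation and time terms. The point is that these terms always come paired with the factor $G(X_n)-G(\widetilde{X}(s))$ or its derivatives. The order-4 terms therefore appear only multiplied by $G(X_n)-G(\widetilde{X}(s))$, which is $O(\dt^{1/2})$ in $L^2$. As an alternative, I would split
\[
\partial^\alpha u(T-t,\widetilde{X}(t))=\partial^\alpha u(T-t,X_n)+\bigl[\partial^\alpha u(T-t,\widetilde{X}(t))-\partial^\alpha u(T-t,X_n)\bigr].
\]
The second piece times $G(X_n)-G(\widetilde{X}(t))$ is a product of two $O(\dt^{1/2})$ factors in $L^2$, by Lipschitz continuity of $G$ and of $\partial^\alpha u$ (which needs only $|\alpha|+1\le3$ derivatives) together with~\eqref{eq:auxXtilde2}; Cauchy--Schwarz then gives $O(\dt)$. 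For the first piece, $\partial^\alpha u(T-t,X_n)$ is $\mathcal{F}_{t_n}$-measurable and bounded, so conditioning on $\mathcal{F}_{t_n}$ reduces it to bounding $\E[G(\widetilde{X}(t))-G(X_n)\,|\,\mathcal{F}_{t_n}]$. The It\^o formula applied to $G$ alone, which is $\mathcal{C}^2$, gives drift terms bounded by $Ce^{C(h_k(X_n)(s-t_n)+|f_k(X_n)\cdot(B(s)-B(t_n))|)}$ via~\eqref{eq:boundPhipsi}. Lemma~\ref{lem:auxexpo}, with the conditional expectation argument, bounds these uniformly, so the conditional increment is $O(\dt)$.

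Summing gives $|\epsilon^{(1,1)}_{n,k}|+|\epsilon^{(2,1)}_{n,k,\ell}|\le C(T,\theta)\dt\cdot\dt$, uniformly in $n,k,\ell$. This uses only~\eqref{eq:boundKolmogorov} and $g^m\in\mathcal{C}^3$.
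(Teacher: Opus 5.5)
Your final argument is correct and is the same as the paper's proof. You split $\partial^\alpha u(T-t,\widetilde{X}(t))$ at $\widetilde{X}(t_n)=X_n$. You bound the difference piece by Cauchy--Schwarz using Lemma~\ref{lem:auxXtilde} and \eqref{eq:bounddelta12}. You treat the frozen piece by the It\^o formula applied to $G(\widetilde{X})$ alone, with the martingale part removed by conditioning on $\mathcal{F}_{t_n}$ and the drift controlled by Lemma~\ref{lem:auxexpo}. This is exactly the paper's decomposition into $\epsilon_{n,k}^{(1,1,1)},\epsilon_{n,k}^{(1,1,2)}$ and $\epsilon_{n,k,\ell}^{(2,1,1)},\epsilon_{n,k,\ell}^{(2,1,2)}$. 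Your preliminary remark that the order-$4$ terms are ``paired'' with $G(X_n)-G(\widetilde{X}(s))$ would not on its own remove the need for bounded $\partial_x^4 u$, but you correctly drop that route in favour of the split.
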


\begin{lemma}\label{lem:epsilonIII}
There exists $C(T,\theta)\in(0,\infty)$ such that
\begin{equation}\label{eq:lem_epsilonIII}
\sum_{k,\ell=1}^{d}\underset{0\le n\le N-1}\sup~|\epsilon_{n,k,\ell}^{(2,3)}|\le C(T,\theta)\dt^2.
\end{equation}
\end{lemma}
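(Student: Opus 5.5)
Lemma~\ref{lem:epsilonIII} concerns the term $\epsilon_{n,k,\ell}^{(2,3)}$ defined by~\eqref{eq:epsilon_n23}, which contains the product $\Delta_{n,k}(t)\Delta_{n,\ell}(t)$ of two increments of $\partial_\gamma\Phi$ along $\widetilde{Z}$. Each increment is of order $\dt^{1/2}$ in $L^2$, so the product is of order $\dt$ in $L^1$. Integrating over $[t_n,t_{n+1}]$ then gives the factor $\dt^2$. No cancellation or further Itô expansion is needed, in contrast with Lemmas~\ref{lem:epsilonI} and~\ref{lem:epsilonII}, and the argument is a direct moment estimate.

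First, I will bound the coefficients $(f_k\cdot f_\ell)(X_n)$ and $\partial^2_{x_kx_\ell}u(T-t,\widetilde{X}(t))$ uniformly. Recall that $X_n\in\mathcal{D}$ and $\widetilde{X}(t)\in\mathcal{D}$ almost surely, by Proposition~\ref{propo:dpX} and Assumption~\ref{ass:integratorDP}. The functions $f_k$ are bounded on $\mathcal{D}$, and the derivatives of $u$ are bounded by $C(T,\theta)$ owing to~\eqref{eq:boundKolmogorov}. This yields
\[
|\epsilon_{n,k,\ell}^{(2,3)}|\le C(T,\theta)\int_{t_n}^{t_{n+1}}\E[|\Delta_{n,k}(t)\Delta_{n,\ell}(t)|]\dd t\le C(T,\theta)\int_{t_n}^{t_{n+1}}\bigl(\E[\Delta_{n,k}(t)^2]\bigr)^{\frac12}\bigl(\E[\Delta_{n,\ell}(t)^2]\bigr)^{\frac12}\dd t,
\]
where the second inequality uses the Cauchy--Schwarz inequality.

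Second, I will use the estimate already established in the proof of Theorem~\ref{theo:strong}:
\[
\sup_{0\le n\le N-1}\sup_{t\in[t_n,t_{n+1})}\E[|\Delta_{n,k}(t)|^2]\le C(T)\dt.
\]
Its derivation only requires a few ingredients. The first is the mean-value theorem applied to $\partial_\gamma\Phi$, with the exponential bound~\eqref{eq:regPhipsi-weak} on its first derivatives; this bound is implied by $\|\Phi\|_{4,\beta}<\infty$, and only $\|\Phi\|_{3,\beta}<\infty$ would be needed here. The second is the Cauchy--Schwarz inequality. The third is Lemma~\ref{lem:auxexpo}, used with a conditional expectation with respect to $\mathcal{F}_{t_n}$ and with $a=2Ch_k$, $b=2Cf_k$. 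The last is the fourth-moment increment bound~\eqref{eq:auxZtilde1} from Lemma~\ref{lem:auxZtilde} with $p=2$. Inserting this estimate for both $k$ and $\ell$ gives $|\epsilon_{n,k,\ell}^{(2,3)}|\le C(T,\theta)\dt\cdot\dt$, uniformly in $n$. Summing over the finitely many pairs $(k,\ell)$ then yields~\eqref{eq:lem_epsilonIII}.

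The only delicate point is handling the exponential growth factor $e^{C(s+|\gamma|)}$ in the mean-value estimate. The intermediate point lies on the segment between $\widetilde{Z}_k(t_n)$ and $\widetilde{Z}_k(t)$, so its $s$ and $|\gamma|$ components are bounded by those of $\widetilde{Z}_k(t)$. The exponential factor is therefore controlled by $e^{C h_k(X_n)(t-t_n)+C|f_k(X_n)\cdot(B(t)-B(t_n))|}$. The moments of this factor are uniformly bounded by Lemma~\ref{lem:auxexpo}, because $X_n$ is $\mathcal{F}_{t_n}$-measurable and independent of the Brownian increment. Everything else is routine.
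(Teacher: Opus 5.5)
Your proposal is correct and follows the paper's proof: you bound $(f_k\cdot f_\ell)(X_n)$ and $\partial^2_{x_kx_\ell}u$ uniformly via~\eqref{eq:boundKolmogorov}, apply Cauchy--Schwarz, and insert the bound $\E[|\Delta_{n,k}(t)|^2]\le C\dt$ for both $k$ and $\ell$; the paper states this bound as Lemma~\ref{lem:Delta_nk}, and it is the same estimate you take from the proof of Theorem~\ref{theo:strong}. Your justification of the exponential factor is also correct: it is controlled along the segment from $(0,0,X_{n,k})$ to $\widetilde{Z}_k(t)$, and then bounded using Lemma~\ref{lem:auxexpo} with conditioning on $\mathcal{F}_{t_n}$, which is exactly what that lemma's proof uses.
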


The proofs of Lemmas~\ref{lem:epsilonI},~\ref{lem:epsilonII} and~\ref{lem:epsilonIII} are postponed to Section~\ref{sec:proofsauxweak}. Theorem~\ref{theo:weak} is then established as a straightforward consequence of those results in Section~\ref{sec:conclusionproofweak}.

\subsection{Auxiliary results}\label{sec:auxweak}

The objective of this section is to provide some auxiliary results in order to simplify the proofs given in Section~\ref{sec:proofsauxweak} below.

Lemma~\ref{lem:Delta_nk} provides upper bounds on the auxiliary error terms $\Delta_{n,k}(t)$ defined by~\eqref{eq:Delta_nk}.
\begin{lemma}\label{lem:Delta_nk}
There exists $C\in(0,\infty)$ such that one has 
\begin{equation}\label{eq:lemDelta_nk}
\sum_{k=1}^d\underset{0\le n\le N-1}\sup~\underset{t\in[t_n,t_{n+1})}\sup~\E[|\Delta_{n,k}(t)|^2]\le C\dt.
\end{equation}
\end{lemma}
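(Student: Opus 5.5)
The plan is to repeat the argument already used for $e_{n,k}^{(1,3)}$ and $e_{n,k}^{(2,3)}$ in the proof of Theorem~\ref{theo:strong}. Fix $k\in\{1,\ldots,d\}$, $n\in\{0,\ldots,N-1\}$ and $t\in[t_n,t_{n+1})$.

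\textbf{Step 1 (mean-value bound).} The segment joining $\widetilde{Z}_k(t_n)=(0,0,X_{n,k})$ and $\widetilde{Z}_k(t)$ keeps the third coordinate equal to $X_{n,k}$. Along it, the first two coordinates are bounded in modulus by those of $\widetilde{Z}_k(t)$. Applying the mean-value theorem together with~\eqref{eq:LipPhi}, and recalling the definition~\eqref{eq:defhk} of $h_k$, gives
\[
|\Delta_{n,k}(t)|\le Ce^{C\bigl(h_k(X_n)(t-t_n)+|f_k(X_n)\cdot(B(t)-B(t_n))|\bigr)}\|\widetilde{Z}_k(t)-\widetilde{Z}_k(t_n)\|.
\]
The exponent uses $\|f_k\|^2(t-t_n)\le h_k(X_n)(t-t_n)$ and $|f_k^0|(t-t_n)\le h_k(X_n)(t-t_n)$.

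\textbf{Step 2 (Cauchy--Schwarz).} Squaring, taking expectations and applying the Cauchy--Schwarz inequality yields
\[
\E[|\Delta_{n,k}(t)|^2]\le C\bigl(\E[e^{4Ch_k(X_n)(t-t_n)+4C|f_k(X_n)\cdot(B(t)-B(t_n))|}]\bigr)^{\frac12}\bigl(\E[\|\widetilde{Z}_k(t)-\widetilde{Z}_k(t_n)\|^4]\bigr)^{\frac12}.
\]

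\textbf{Step 3 (the exponential factor).} I condition on $\mathcal{F}_{t_n}$. The random variable $X_n$ is $\mathcal{F}_{t_n}$-measurable and lies in $\mathcal{D}$ by Proposition~\ref{propo:dpX}. The increment $B(t)-B(t_n)$ is independent of $\mathcal{F}_{t_n}$. Lemma~\ref{lem:auxexpo}, applied with $a=4Ch_k$ and $b=4Cf_k$ (both continuous on $\mathcal{D}$), therefore bounds the exponential expectation uniformly in $n$, $t$ and $\dt\in(0,1)$.

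\textbf{Step 4 (the increment factor).} By~\eqref{eq:auxZtilde1} of Lemma~\ref{lem:auxZtilde} with $p=2$, the fourth-moment factor is at most $C\dt^2$, so its square root is $C\dt$. Combining Steps 2--4 gives $\E[|\Delta_{n,k}(t)|^2]\le C\dt$. Taking the supremum over $n$ and $t$ and summing over $k$ proves~\eqref{eq:lemDelta_nk}.

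\textbf{Main obstacle.} The only delicate point is Step 3: handling the exponential weight, whose argument depends on the random point $X_n$. This is resolved by the conditioning argument, using that $h_k$ and $f_k$ are bounded on $\mathcal{D}$.

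\textbf{Remaining cases.} For $\dt\ge1$ the bound is trivial once $\dt$ is bounded by $T$. Alternatively, one can restrict to $\dt\in(0,1)$, as in the preceding lemmas.
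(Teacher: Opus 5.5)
Your proposal is correct and follows the same route as the paper. The paper uses a mean-value bound on $\partial_\gamma\Phi$ with exponential growth control, then the Cauchy--Schwarz inequality, then Lemma~\ref{lem:auxexpo} via conditioning on $\mathcal{F}_{t_n}$, and finally \eqref{eq:auxZtilde1} with $p=2$; your version additionally spells out the segment argument bounding the exponent by $h_k(X_n)(t-t_n)+|f_k(X_n)\cdot(B(t)-B(t_n))|$, which the paper leaves implicit.
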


\begin{proof}[Proof of Lemma~\ref{lem:Delta_nk}]
Let $k\in\{1,\ldots,d\}$ and $n\in\{0,\ldots,N-1\}$. Recall the definition~\eqref{eq:defhk} of the mapping $h_k$. Applying the regularity property of $\Phi$ and the Cauchy--Schwarz inequality, for all $t\in[t_n,t_{n+1})$ one then obtains
\begin{align*}
\E[|\Delta_{n,k}(t)|^2]&\le C\E\bigl[e^{Ch_k(X_n)(t-t_n)+C|f_k(X_n)\cdot(B(t)-B(t_n))|}\|\widetilde{Z}_k(t)-\widetilde{Z}_k(t_n)\|^2  \bigr]\\
&\le C\bigl(\E\bigl[e^{2Ch_k(X_n)(t-t_n)+2C|f_k(X_n)\cdot(B(t)-B(t_n))|}\bigr]\bigr)^{\frac12}\bigl(\E\bigl[\|\widetilde{Z}_k(t)-\widetilde{Z}_k(t_n)\|^4 \bigr]\bigr)^{\frac12}.
\end{align*}
Applying the inequality~\eqref{eq:lemauxexpo} from Lemma~\ref{lem:auxexpo} (with a conditional expectation argument) and the regularity property~\eqref{lem:auxZtilde} from Lemma~\ref{lem:auxZtilde} on the auxiliary process $\widetilde{Z}_k$, for all $t\in[t_n,t_{n+1})$ one obtains the upper bound~\eqref{eq:lemDelta_nk} and the proof of Lemma~\ref{lem:Delta_nk} is completed.
\end{proof}

Next, for all $n\in\{0,\ldots,N-1\}$ and $t\in[t_n,t_{n+1})$, set
\begin{align}
\delta_{n,k}^{1}(t)&=\partial_{x_k}u(T-t,\widetilde{X}(t))-\partial_{x_k}u(T-t,\widetilde{X}(t_n))
\label{eq:delta1}\\
\delta_{n,k,\ell}^{2}(t)&=\partial_{x_k x_\ell}^2u(T-t,\widetilde{X}(t))-\partial_{x_k x_\ell}^2u(T-t,\widetilde{X}(t_n)).\label{eq:delta2}
\end{align}

Lemma~\ref{lem:delta12} provides upper bounds on the auxiliary error terms $\delta_{n,k}^{1}(t)$ and $\delta_{n,k,\ell}^{2}(t)$ defined by~\eqref{eq:delta1} and~\eqref{eq:delta2} above.
\begin{lemma}\label{lem:delta12}
There exists $C(T,\theta)\in(0,\infty)$ such that one has
\begin{equation}\label{eq:bounddelta12}
\sum_{k=1}^{d}\underset{0\le n\le N-1}\sup~\underset{t\in[t_n,t_{n+1}]}\sup~\E[|\delta_{n,k}^{1}(t)|^2]+\sum_{k,\ell=1}^{d}\underset{0\le n\le N-1}\sup~\underset{t\in[t_n,t_{n+1}]}\sup~\E[|\delta_{n,k,\ell}^{2}(t)|^2]\le C(T,\theta)\dt.
\end{equation}
\end{lemma}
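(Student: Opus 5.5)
The argument is a direct Lipschitz estimate, combining the spatial regularity of the Kolmogorov solution with the local increment bound~\eqref{eq:auxXtilde2} from Lemma~\ref{lem:auxXtilde}. By the bound~\eqref{eq:boundKolmogorov}, since $\theta$ is of class $\mathcal{C}^3$, the derivatives $\partial_x^\alpha u(t,\cdot)$ with $|\alpha|\le 3$ are bounded on $\mathcal{D}$ uniformly in $t\in[0,T]$ by $C(T,\theta)$. As a result, the maps $x\mapsto\partial_{x_k}u(T-t,x)$ and $x\mapsto\partial_{x_kx_\ell}^2u(T-t,x)$ are Lipschitz continuous on the convex set $\mathcal{D}$, with Lipschitz constant bounded by $C(T,\theta)$ uniformly in $t\in[0,T]$. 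This is exactly where the third derivatives of $u$ are needed: the Lipschitz bound for the second derivatives requires control of the third ones.

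The first step uses convexity. Both $\widetilde{X}(t)$ and $\widetilde{X}(t_n)=X_n$ lie in $\mathcal{D}$ almost surely, by Assumption~\ref{ass:integratorDP} and Proposition~\ref{propo:dpX}. Since $\mathcal{D}$ is convex, the mean-value theorem applies along the segment joining them and gives, almost surely,
\[
|\delta_{n,k}^1(t)|+|\delta_{n,k,\ell}^2(t)|\le C(T,\theta)\|\widetilde{X}(t)-\widetilde{X}(t_n)\|.
\]

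The second step squares this inequality and takes expectations, giving
\[
\E[|\delta_{n,k}^1(t)|^2]+\E[|\delta_{n,k,\ell}^2(t)|^2]\le C(T,\theta)\sum_{k'=1}^d\E[|\widetilde{X}_{k'}(t)-X_{n,k'}|^2].
\]
For $t\in[t_n,t_{n+1})$, the inequality~\eqref{eq:auxXtilde2} from Lemma~\ref{lem:auxXtilde} bounds the right-hand side by $C(T)\dt$, uniformly in $n$ and $k'$.

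The endpoint $t=t_{n+1}$ needs one extra observation, because~\eqref{eq:auxXtilde2} is stated only on the half-open interval. The process $\widetilde{X}$ is continuous and bounded, taking values in $\mathcal{D}$. Letting $t\to t_{n+1}^-$ and applying dominated convergence therefore extends the same bound to the closed interval.

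Summing over the finitely many indices $k,\ell$ then yields~\eqref{eq:bounddelta12}. None of these steps presents a real obstacle. The only points requiring care are the convexity of $\mathcal{D}$, which is needed to apply the mean-value theorem, and the endpoint limit just described.
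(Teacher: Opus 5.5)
Your proposal is correct and follows the paper's proof. Both combine the Lipschitz continuity of $\partial_{x_k}u(T-t,\cdot)$ and $\partial^2_{x_kx_\ell}u(T-t,\cdot)$, which comes from the bound~\eqref{eq:boundKolmogorov} on derivatives up to order three, with the increment bound~\eqref{eq:auxXtilde2} of Lemma~\ref{lem:auxXtilde}; your continuity and dominated-convergence argument for the endpoint $t=t_{n+1}$ is a correct extra detail that the paper passes over silently.
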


\begin{proof}
Let $k,\ell\in\{1,\ldots,d\}$ and $n\in\{0,\ldots,N-1\}$. The spatial second and third order partial derivatives of the solution $u$ to the Kolmogorov equation~\eqref{eq:Kolmogorov} are bounded owing to the regularity property~\eqref{eq:boundKolmogorov}. As a consequence, for all $t\in[t_n,t_{n+1}]$ one has
\begin{align*}
\E[|\delta_{n,k}^{1}(t)|^2]\le C(T,\theta)\E[\|\widetilde{X}(t)-\widetilde{X}(t_n)\|^2]\\
\E[|\delta_{n,k,\ell}^{2}(t)|^2]\le C(T,\theta)\E[\|\widetilde{X}(t)-\widetilde{X}(t_n)\|^2].
\end{align*}
Applying the regularity property~\eqref{eq:auxXtilde2} from Lemma~\ref{lem:auxXtilde} for the processes $\widetilde{X}_1,\ldots,\widetilde{X}_d$, one obtains the following upper bound: for all $t\in[t_n,t_{n+1}]$ one has
\[
\E[|\delta_{n,k}^{1}(t)|^2]+\E[|\delta_{n,k,\ell}^{2}(t)|^2]\le C(T,\theta)\dt.
\]
This yields the inequality~\eqref{eq:bounddelta12} and the proof of Lemma~\ref{lem:delta12} is completed.
\end{proof}

\subsection{Proof of Lemmas~\ref{lem:epsilonI},~\ref{lem:epsilonII} and~\ref{lem:epsilonIII}}\label{sec:proofsauxweak}

\begin{proof}[Proof of Lemma~\ref{lem:epsilonI}]

The objective is to prove the error bounds~\eqref{eq:lem_epsilonI} on the error terms $\epsilon_{n,k}^{(1,2)}$, $\epsilon_{n,k,\ell}^{(2,2)}$ and $\epsilon_{n,k,\ell}^{(3)}$ defined by~\eqref{eq:epsilon_n12},~\eqref{eq:epsilon_n22} and~\eqref{eq:epsilon_n3}.

Let $k,\ell\in\{1,\ldots,d\}$ and $n\in\{0,\ldots,N-1\}$.

First, the error term $\epsilon_{n,k}^{(1,2)}$ is decomposed as
\begin{equation}\label{eq:decomp_epsilon_n12}
\epsilon_{n,k}^{(1,2)}=\epsilon_{n,k}^{(1,2,1)}+\epsilon_{n,k}^{(1,2,2)},
\end{equation}
where the error terms $\epsilon_{n,k}^{(1,2,1)}$ and $\epsilon_{n,k}^{(1,2,2)}$ are defined as
\begin{align}
\epsilon_{n,k}^{(1,2,1)}&=\int_{t_n}^{t_{n+1}}\E\left[\left(\partial_\gamma\Phi(\widetilde{Z}_k(t))-\partial_\gamma\Phi(\widetilde{Z}_k(t_n))\right)f_k^0(\widetilde{X}(t_n))\delta_{n,k}^{1}(t)\right]\dd t,\label{eq:epsilon_n121}\\
\epsilon_{n,k}^{(1,2,2)}&=\int_{t_n}^{t_{n+1}}\E\left[\left(\partial_\gamma\Phi(\widetilde{Z}_k(t))-\partial_\gamma\Phi(\widetilde{Z}_k(t_n))\right)f_k^0(\widetilde{X}(t_n))\partial_{x_k}u(T-t,\widetilde{X}(t_n))\right]\dd t,\label{eq:epsilon_n122}
\end{align}
with the auxiliary error term $\delta_{n,k}^{1}(t)$ defined by~\eqref{eq:delta1}.

Second, the error term $\epsilon_{n,k,\ell}^{(2,2)}$ is decomposed as
\begin{equation}\label{eq:decomp_epsilon_n22}
\epsilon_{n,k}^{(2,2)}=\epsilon_{n,k}^{(2,2,1)}+\epsilon_{n,k}^{(2,2,2)},
\end{equation}
where the error terms $\epsilon_{n,k}^{(2,2,1)}$ and $\epsilon_{n,k}^{(2,2,2)}$ are defined as
\begin{align}
\epsilon_{n,k,\ell}^{(2,2,1)}&=\frac12\int_{t_n}^{t_{n+1}}\E\left[\left(\partial_\gamma\Phi(\widetilde{Z}_k(t))-\partial_\gamma\Phi(\widetilde{Z}_k(t_n))\right)\bigl(f_k\cdot g_\ell\bigr)(X_n)\delta_{n,k,\ell}^2(t)\right]\dd t \label{eq:epsilon_n221}\\
\epsilon_{n,k,\ell}^{(2,2,2)}&=\frac12\int_{t_n}^{t_{n+1}}\E\left[\left(\partial_\gamma\Phi(\widetilde{Z}_k(t))-\partial_\gamma\Phi(\widetilde{Z}_k(t_n))\right)\bigl(f_k\cdot g_\ell\bigr)(X_n)\partial_{x_k x_\ell}^2u(T-t,\widetilde{X}(t_n))\right]\dd t,\label{eq:epsilon_n222}
\end{align}
with the auxiliary error term $\delta_{n,k,\ell}^{2}(t)$ defined by~\eqref{eq:delta2}.

Finally, the error term $\epsilon_{n,k}^{(3)}$ is decomposed as
\begin{equation}\label{eq:decomp_epsilon_n3}
\epsilon_{n,k}^{(3)}=\epsilon_{n,k}^{(3,1)}+\epsilon_{n,k}^{(3,2)},
\end{equation}
where the error terms $\epsilon_{n,k}^{(3,1)}$ and $\epsilon_{n,k}^{(3,2)}$ are defined as
\begin{align}
\epsilon_{n,k}^{(3,1)}&=\int_{t_n}^{t_{n+1}}\E\left[\left(\psi(\widetilde{Z}(t))-\psi(\widetilde{Z}(t_n))\right)\|f_k(X_n)\|^2\delta_{n,k}^1(t)\right]\dd t,\label{eq:epsilon_n31}\\
\epsilon_{n,k}^{(3,2)}&=\int_{t_n}^{t_{n+1}}\E\left[\left(\psi(\widetilde{Z}(t))-\psi(\widetilde{Z}(t_n))\right)\|f_k(X_n)\|^2\partial_{x_k}u(T-t,\widetilde{X}(t_n))\right]\dd t.\label{eq:epsilon_n32}
\end{align}
with the auxiliary error term $\delta_{n,k}^1(t)$ defined by~\eqref{eq:delta1}.

\paragraph{\it Treatment of the error terms $\epsilon_{n,k}^{(1,2,1)}$, $\epsilon_{n,k,\ell}^{(2,2,1)}$ and $\epsilon_{n,k}^{(3,1)}$.}

Recall that the mappings $f_k$ and $g_\ell$ are bounded on $\mathcal{D}$. Applying the Cauchy--Schwarz inequality and the upper bound~\eqref{eq:bounddelta12} for $\delta_{n,k}^{1}(t)$ and $\delta_{n,k,\ell}^{2}(t)$, one has
\begin{align*}
|\epsilon_{n,k}^{(1,2,1)}|+|\epsilon_{n,k,\ell}^{(2,2,1)}|&\le C\int_{t_n}^{t_{n+1}}\bigl(\E[|\partial_\gamma\Phi(\widetilde{Z}_k(t))-\partial_\gamma\Phi(\widetilde{Z}_k(t_n))|^2]\bigr)^{\frac12}\bigl(\E[|\delta_{n,k}^{1}(t)|^2]\bigr)^{\frac12}\dd t\\
&+C\int_{t_n}^{t_{n+1}}\bigl(\E[|\partial_\gamma\Phi(\widetilde{Z}_k(t))-\partial_\gamma\Phi(\widetilde{Z}_k(t_n))|^2]\bigr)^{\frac12}\bigl(\E[|\delta_{n,k,\ell}^{2}(t)|^2]\bigr)^{\frac12}\dd t\\
&+C\int_{t_n}^{t_{n+1}}\bigl(\E[|\psi(\widetilde{Z}_k(t))-\psi(\widetilde{Z}_k(t_n))|^2]\bigr)^{\frac12}\bigl(\E[|\delta_{n,k}^{1}(t)|^2]\bigr)^{\frac12}\dd t\\
&\le C\dt^{\frac12}\int_{t_n}^{t_{n+1}}\bigl(\E[|\partial_\gamma\Phi(\widetilde{Z}_k(t))-\partial_\gamma\Phi(\widetilde{Z}_k(t_n))|^2]\bigr)^{\frac12}\dd t\\
&+C\dt^{\frac12}\int_{t_n}^{t_{n+1}}\bigl(\E[|\psi(\widetilde{Z}_k(t))-\psi(\widetilde{Z}_k(t_n))|^2]\bigr)^{\frac12}\dd t.
\end{align*}
Moreover, the mappings $\partial_\gamma\Phi$ and $\psi$ (defined by~\eqref{eq:psi}) are of class $\mathcal{C}^1$ and the growth of their derivatives is controlled since one has assumed that $\|\Phi\|_{3,\beta}\le\|\Phi\|_{4,\beta}<\infty$ for some $\beta\in(0,\infty)$, see~\eqref{eq:regPhi} and~\eqref{eq:regPhipsi-weak}. Applying the Cauchy--Schwarz inequality, one then has
\begin{align*}
|\epsilon_{n,k}^{(1,2,1)}|+|\epsilon_{n,k,\ell}^{(2,2,1)}|&\le C\dt^{\frac12}\int_{t_n}^{t_{n+1}}\bigl(\E\bigl[e^{Ch_k(X_n)(t-t_n)+C|f_k(X_n)\cdot(B(t)-B(t_n))|}\|\widetilde{Z}_k(t)-\widetilde{Z}_k(t_n)\|^2\bigr]\bigr)^{\frac12}\dd t\\
&\le C\dt^{\frac12}\int_{t_n}^{t_{n+1}}\bigl(\E\bigl[e^{2Ch_k(X_n)(t-t_n)+2C|f_k(X_n)\cdot(B(t)-B(t_n))|}]\bigr)^{\frac14}\bigl(\E[\|\widetilde{Z}_k(t)-\widetilde{Z}_k(t_n)\|^4 \bigr]\bigr)^{\frac14}\dd t,
\end{align*}
where the mapping $h_k$ is defined by~\eqref{eq:defhk}. Applying the inequality~\eqref{eq:lemauxexpo} from Lemma~\ref{lem:auxexpo} (with a conditional expectation argument) and the regularity property~\eqref{eq:auxZtilde1} from Lemma~\ref{lem:auxZtilde} for the process $\widetilde{Z}_k$, one obtains the following upper bound for the error terms $\epsilon_{n,k}^{(1,2,1)}$, $\epsilon_{n,k,\ell}^{(2,2,1)}$ and $\epsilon_{n,k}^{(3,1)}$ defined by~\eqref{eq:epsilon_n121},~\eqref{eq:epsilon_n221} and~\eqref{eq:epsilon_n31}: there exists $C(T,\theta)\in(0,\infty)$ such that for all $n\in\{0,\ldots,N-1\}$ one has
\begin{equation}\label{eq:boundepsilon_n121-221-31}
\sum_{k=1}^{d}|\epsilon_{n,k}^{(1,2,1)}|+\sum_{k,\ell=1}^{d}|\epsilon_{n,k,\ell}^{(2,2,1)}|+\sum_{k=1}^{d}|\epsilon_{n,k}^{(3,1)}|\le C(T,\theta)\dt^2.
\end{equation}

\paragraph{\it Treatment of the error terms $\epsilon_{n,k}^{(1,2,2)}$ and $\epsilon_{n,k,\ell}^{(2,2,2)}$.}

The mapping $\partial_\gamma\Phi$ is of class $\mathcal{C}^2$ and the growth of its first and second order derivatives is controlled since one has assumed that $\|\Phi\|_{3,\beta}\le\|\Phi\|_{4,\beta}<\infty$ for some $\beta\in(0,\infty)$, see~\eqref{eq:regPhi} and~\eqref{eq:regPhipsi-weak}. Owing to the definition~\eqref{eq:Ztilde} of the auxiliary process $\widetilde{Z}_k$, applying the It\^o formula, for all $t\in[t_n,t_{n+1})$ one has
\begin{align*}
\partial_\gamma\Phi(\widetilde{Z}_k(t))-\partial_\gamma\Phi(\widetilde{Z}_k(t_n))&=\int_{t_n}^{t}\partial_s\partial_\gamma\Phi(\widetilde{Z}_k(s))\|f_k(X_n)\|^2 \dd s\\
&+\int_{t_n}^{t}\partial_{\gamma}^2\Phi(\widetilde{Z}_k(s))f_k^0(X_n)\dd s\\
&+\int_{t_n}^{t}\partial_{\gamma}^2\Phi(\widetilde{Z}_k(s))f_k(X_n)\cdot\dd B(s)\\
&+\frac12\int_{t_n}^{t}\partial_{\gamma}^3\Phi(\widetilde{Z}_k(s))\|f_k(X_n)\|^2\dd s.
\end{align*}
By a conditional expectation argument, one has
\begin{align*}
&\E\left[\int_{t_n}^{t}\partial_{\gamma}^2\Phi(\widetilde{Z}_k(s))f_k(X_n)\cdot\dd B(s)f_k^0(\widetilde{X}(t_n))\partial_{x_k}u(T-t,\widetilde{X}(t_n))\right]=0\\
&\int_{t_n}^{t_{n+1}}\E\left[\int_{t_n}^{t}\partial_{\gamma}^2\Phi(\widetilde{Z}_k(s))f_k(X_n)\cdot\dd B(s)\bigl(f_k\cdot g_\ell\bigr)(X_n)\partial_{x_k x_\ell}^2u(T-t,\widetilde{X}(t_n))\right]\dd t=0.
\end{align*}
Therefore, since the mappings $f_k^0$, $f_k$ and $g_\ell$ are bounded on $\mathcal{D}$, recalling that $\|\Phi\|_{3,\beta}\le \|\Phi\|_{4,\beta}<\infty$ for some $\beta\in(0,\infty)$, one obtains the upper bounds
\begin{align*}
|\epsilon_{n,k}^{(1,2,2)}|&\le C\int_{t_n}^{t_{n+1}}\int_{t_n}^{t}\E[e^{Ch_k(X_n)(s-t_n)+|f_k(X_n)\cdot(B(s)-B(t_n))|}|\partial_{x_k}u(T-t,\widetilde{X}(t_n))|]\dd s\dd t,\\
|\epsilon_{n,k,\ell}^{(2,2,2)}|&\le C\int_{t_n}^{t_{n+1}}\int_{t_n}^{t}\E[e^{Ch_k(X_n)(s-t_n)+|f_k(X_n)\cdot(B(s)-B(t_n))|}|\partial_{x_kx_\ell}^2u(T-t,\widetilde{X}(t_n))|]\dd s\dd t.
\end{align*}
The first-order spatial partial derivative $\partial_{x_k}u$ and the second-order spatial partial derivative $\partial_{x_kx_\ell}^2u$ of the solution $u$ to the Kolmogorov equation are bounded, see Equation~\eqref{eq:boundKolmogorov}. Applying the inequality~\eqref{eq:lemauxexpo} from Lemma~\ref{lem:auxexpo} (with a conditional expectation argument) then yields the following upper bound for the error terms $\epsilon_{n,k}^{(1,2,2)}$ and $\epsilon_{n,k}^{(2,2,2)}$ defined by~\eqref{eq:epsilon_n122} and~\eqref{eq:epsilon_n222}: there exists $C(T,\theta)\in(0,\infty)$ such that for all $n\in\{0,\ldots,N-1\}$ one has
\begin{equation}\label{eq:boundepsilon_n122-222}
\sum_{k=1}^{d}|\epsilon_{n,k}^{(1,2,2)}|+\sum_{k,\ell=1}^{d}|\epsilon_{n,k,\ell}^{(2,2,2)}|\le C(T,\theta)\dt^2.
\end{equation}

\paragraph{\it Treatment of the error term $\epsilon_{n,k}^{(3,2)}$.}

The mapping $\psi$ (defined by~\eqref{eq:psi}) is of class $\mathcal{C}^2$ and the growth of its first and second order derivatives is controlled since one has assumed that $\|\Phi\|_{4,\beta}<\infty$ for some $\beta\in(0,\infty)$, see~\eqref{eq:regPhi} and~\eqref{eq:regPhipsi-weak}. Owing to the definition~\eqref{eq:Ztilde} of the auxiliary process $\widetilde{Z}_k$, applying the It\^o formula, for all $t\in[t_n,t_{n+1})$ one has
\begin{align*}
\psi(\widetilde{Z}_k(t))-\psi(\widetilde{Z}_k(t_n))&=\int_{t_n}^{t}\partial_{\gamma}\psi(\widetilde{Z}_k(s))f_k^0(X_n)\dd s\\
&+\int_{t_n}^{t}\partial_{\gamma}\psi(\widetilde{Z}_k(s))f_k(X_n)\cdot\dd B(s)\\
&+\int_{t_n}^{t}\bigl(\partial_{s}\psi+\frac12\partial_{\gamma}^2\psi\bigr)(\widetilde{Z}_k(s))\|f_k(X_n)\|^2\dd s.
\end{align*}
By a conditional expectation argument, one has
\[\E\left[\int_{t_n}^{t}\partial_{\gamma}\psi(\widetilde{Z}_k(s))f_k(X_n)\cdot\dd B(s)\|f_k(X_n)\|^2\partial_{x_k}u(T-t,\widetilde{X}(t_n))\right]=0.
\]
Therefore, since the mappings $f_k^0$ and $f_k$ are bounded on $\mathcal{D}$, recalling that $\|\Phi\|_{4,\beta}<\infty$, one obtains the upper bound
\[
|\epsilon_{n,k}^{(3,2)}|\le C\int_{t_n}^{t_{n+1}}\int_{t_n}^{t}\E[e^{Ch_k(X_n)(s-t_n)+|f_k(X_n)\cdot(B(s)-B(t_n))|}|\partial_{x_k}u(T-t,\widetilde{X}(t_n))|]\dd s\dd t.
\]
The first-order spatial partial derivative $\partial_{x_k}u$ of the solution $u$ to the Kolmogorov equation is bounded, see Equation~\eqref{eq:boundKolmogorov}. Applying the inequality~\eqref{eq:lemauxexpo} from Lemma~\ref{lem:auxexpo} (with a conditional expectation argument) then yields the following upper bound for the error term $\epsilon_{n,k}^{(3,2)}$ defined by~\eqref{eq:epsilon_n32}: there exists $C(T,\theta)\in(0,\infty)$ such that for all $n\in\{0,\ldots,N-1\}$ one has
\begin{equation}\label{eq:boundepsilon_n32}
\sum_{k=1}^{d}|\epsilon_{n,k}^{(3,2)}|\le C(T,\theta)\dt^2.
\end{equation}

\paragraph{\it Conclusion}

Combining the upper bounds~\eqref{eq:boundepsilon_n121-221-31},~\eqref{eq:boundepsilon_n122-222} and~\eqref{eq:boundepsilon_n32} with the decompositions~\eqref{eq:decomp_epsilon_n12},~\eqref{eq:decomp_epsilon_n22} and~\eqref{eq:decomp_epsilon_n3} of the error terms $\epsilon_{n,k}^{(1,2)}$, $\epsilon_{n,k,\ell}^{(2,2)}$ and $\epsilon_{n,k}^{(3)}$, one obtains the inequality~\eqref{eq:lem_epsilonI} and the proof of Lemma~\ref{lem:epsilonI} is thus completed.

\end{proof}

\begin{proof}[Proof of Lemma~\ref{lem:epsilonII}]

The objective is to prove the error bounds~\eqref{eq:lem_epsilonII} on the error terms $\epsilon_{n,k}^{(1,1)}$ and $\epsilon_{n,k,\ell}^{(2,1)}$ defined by~\eqref{eq:epsilon_n11} and~\eqref{eq:epsilon_n21}.

Let $k,\ell\in\{1,\ldots,d\}$ and $n\in\{0,\ldots,N-1\}$.

First, the error term $\epsilon_{n,k}^{(1,1)}$ is decomposed as
\begin{equation}\label{eq:decomp_epsilon_n11}
\epsilon_{n,k}^{(1,1)}=\epsilon_{n,k}^{(1,1,1)}+\epsilon_{n,k}^{(1,1,2)},
\end{equation}
where the error terms $\epsilon_{n,k}^{(1,1,1)}$ and $\epsilon_{n,k}^{(1,1,2)}$ are defined as
\begin{align}
\epsilon_{n,k}^{(1,1,1)}&=\int_{t_n}^{t_{n+1}}\E\left[\left(g_k^0(\widetilde{X}(t_n))-g_k^0(\widetilde{X}(t))\right)\delta_{n,k}^1(t)\right]\dd t,\label{eq:epsilon_n111}\\
\epsilon_{n,k}^{(1,1,2)}&=\int_{t_n}^{t_{n+1}}\E\left[\left(g_k^0(\widetilde{X}(t_n))-g_k^0(\widetilde{X}(t))\right)\partial_{x_k}u(T-t,\widetilde{X}(t_n))\right]\dd t,\label{eq:epsilon_n112}
\end{align}
with the auxiliary error term $\delta_{n,k}^1(t)$ defined by~\eqref{eq:delta1}.

Second, the error term $\epsilon_{n,k,\ell}^{(2,1)}$ is decomposed as
\begin{equation}\label{eq:decomp_epsilon_n21}
\epsilon_{n,k,\ell}^{(2,1)}=\epsilon_{n,k,\ell}^{(2,1,1)}+\epsilon_{n,k,\ell}^{(2,1,2)},
\end{equation}
where the error terms $\epsilon_{n,k,\ell}^{(2,1,1)}$ and $\epsilon_{n,k,\ell}^{(2,1,2)}$ are defined as
\begin{align}
\epsilon_{n,k,\ell}^{(2,1,1)}&=\int_{t_n}^{t_{n+1}}\E\left[\left(\bigl(g_k\cdot g_\ell\bigr)(\widetilde{X}(t_n))-\bigl(g_k\cdot g_\ell\bigr)(\widetilde{X}(t))\right)\delta_{n,k,\ell}^2(t)\right]\dd t,\label{eq:epsilon_n211}\\
\epsilon_{n,k,\ell}^{(2,1,2)}&=\int_{t_n}^{t_{n+1}}\E\left[\left(\bigl(g_k\cdot g_\ell\bigr)(\widetilde{X}(t_n))-\bigl(g_k\cdot g_\ell\bigr)(\widetilde{X}(t))\right)\partial_{x_k x_\ell}^2u(T-t,\widetilde{X}(t_n))\right]\dd t,\label{eq:epsilon_n212}
\end{align}
with the auxiliary error term $\delta_{n,k,\ell}^2(t)$ defined by~\eqref{eq:delta2}.

\paragraph{\it Treatment of the error terms $\epsilon_{n,k}^{(1,1,1)}$ and $\epsilon_{n,k,\ell}^{(2,1,1)}$.}

The mapping $g_k^0$ is Lipschitz continuous on $\mathcal{D}$. In addition, the mappings $g_k$ and $g_\ell$ are bounded and Lipschiz continuous on $\mathcal{D}$, the mapping $g_k\cdot g_\ell$ is thus Lipschitz continuous on $\mathcal{D}$. Applying the Cauchy--Schwarz inequality, one has
\[
|\epsilon_{n,k}^{(1,1,1)}|+|\epsilon_{n,k,\ell}^{(2,1,1)}|\le C\int_{t_n}^{t_{n+1}}\bigl(\E[\|\widetilde{X}(t)-\widetilde{X}(t_n)\|^2]\bigr)^{\frac12}\left(\bigl(\E[|\delta_{n,k}^{1}(t)|^2]\bigr)^{\frac12}+\bigl(\E[|\delta_{n,k,\ell}^{2}(t)|^2]\bigr)^{\frac12}\right)\dd t.
\]
Applying the regularity property~\eqref{eq:auxXtilde2} from Lemma~\ref{lem:auxXtilde} and the upper bound~\eqref{eq:bounddelta12} for $\delta_{n,k}^{1}(t)$ and $\delta_{n,k,\ell}^{2}(t)$, one obtains the upper bound
\begin{equation}\label{eq:boundepsilon_n111-211}
|\epsilon_{n,k}^{(1,1,1)}|+|\epsilon_{n,k,\ell}^{(2,1,1)}|\le C(T,\theta)\dt^2.
\end{equation}

\paragraph{\it Treatment of the error terms $\epsilon_{n,k}^{(1,1,2)}$ and $\epsilon_{n,k,\ell}^{(2,1,2)}$.}

Owing to Lemma~\ref{lem:auxXtilde}, on the interval $[t_n,t_{n+1})$ the stochastic process $\widetilde{X}_k$ is solution to the stochastic differential equation~\eqref{eq:auxXtilde1}. For a function $\rho\colon\mathcal{D}\to\R$ of class $\mathcal{C}^2$, applying the It\^o formula one has
\begin{align*}
\rho(\widetilde{X}(t))-\rho(\widetilde{X}(t_n))&=\sum_{\ell=1}^{d}\int_{t_n}^{t}\partial_{x_\ell} \rho(\widetilde{X}(s))\partial_\gamma\Phi(\widetilde{Z}_\ell(s))f_\ell^0(X_n)\dd s\\
&+\sum_{\ell=1}^{d}\int_{t_n}^{t}\partial_{x_\ell} \rho(\widetilde{X}(s))\partial_\gamma\Phi(\widetilde{Z}_\ell(s))f_\ell(X_n)\cdot \dd B(s)\\
&+\frac12\sum_{\ell_1,\ell_2=1}^{d}\int_{t_n}^{t}\partial_{x_{\ell_1}x_{\ell_2}}\rho(\widetilde{X}(s))\partial_\gamma\Phi(\widetilde{Z}_{\ell_1}(s))\partial_\gamma\Phi(\widetilde{Z}_{\ell_2}(s))f_{\ell_1}(X_n)\cdot f_{\ell_2}(X_n)\dd s\\
&+\sum_{\ell=1}^{d}\int_{t_n}^{t}\partial_{x_\ell} \rho(\widetilde{X}(s))\psi(\widetilde{Z}_\ell(s))\|f_\ell(X_n)\|^2\dd s. 
\end{align*}
The expression above is applied for $\rho=g_k^0$ to deal with the error term $\epsilon_{n,k}^{(1,1,2)}$ and for $\rho=g_k\cdot g_\ell$ to deal with the error term $\epsilon_{n,k,\ell}^{(2,1,2)}$. By a conditional expectation argument, one has
\begin{align*}
&\sum_{\ell=1}^{d}\E\left[\int_{t_n}^{t}\partial_{x_\ell} g_k^0(\widetilde{X}(s))\partial_\gamma\Phi(\widetilde{Z}_\ell(s))f_\ell(X_n)\cdot \dd B(s)\partial_{x_k}u(T-t,\widetilde{X}(t_n))\right]=0\\
&\sum_{\ell=1}^{d}\E\left[\int_{t_n}^{t}\partial_{x_\ell}\bigl(g_k\cdot g_\ell\bigr)(\widetilde{X}(s))\partial_\gamma\Phi(\widetilde{Z}_\ell(s))f_\ell(X_n)\cdot \dd B(s)\partial_{x_k x_\ell}^2u(T-t,\widetilde{X}(t_n))\right]=0.
\end{align*}
The mappings $f_k^0$ and $f_k$ are bounded. Moreover, the mappings $g_k^0$ and $g_k\cdot g_\ell$ have bounded first and second order derivatives. Finally, the mappings $\partial_\gamma\Phi$ and $\psi$ (defined by~\eqref{eq:psi}) are continuous and their growth is controlled since one has assumed that $\|\Phi\|_{2,\beta}\le\|\Phi\|_{4,\beta}<\infty$ for some $\beta\in(0,\infty)$, see~\eqref{eq:regPhi} and~\eqref{eq:regPhipsi-weak}. One thus obtains the upper bounds
\begin{align*}
|\epsilon_{n,k}^{(1,1,2)}|&\le C\int_{t_n}^{t_{n+1}}\int_{t_n}^{t}\E[e^{Ch_k(X_n)(s-t_n)+|f_k(X_n)\cdot(B(s)-B(t_n))|}|\partial_{x_k}u(T-t,\widetilde{X}(t_n))|]\dd s\dd t,\\
|\epsilon_{n,k,\ell}^{(2,1,2)}|&\le C\int_{t_n}^{t_{n+1}}\int_{t_n}^{t}\E[e^{Ch_k(X_n)(s-t_n)+|f_k(X_n)\cdot(B(s)-B(t_n))|}|\partial_{x_kx_\ell}^2u(T-t,\widetilde{X}(t_n))|]\dd s\dd t.
\end{align*}
The first-order spatial partial derivative $\partial_{x_k}u$ and the second-order spatial partial derivative $\partial_{x_kx_\ell}^2u$ of the solution $u$ to the Kolmogorov equation are bounded, see Equation~\eqref{eq:boundKolmogorov}. Applying the inequality~\eqref{eq:lemauxexpo} from Lemma~\ref{lem:auxexpo} (with a conditional expectation argument) then yields the following upper bound for the error terms $\epsilon_{n,k}^{(1,1,2)}$ and $\epsilon_{n,k}^{(2,1,2)}$ defined by~\eqref{eq:epsilon_n112} and~\eqref{eq:epsilon_n212}: there exists $C(T,\theta)\in(0,\infty)$ such that for all $n\in\{0,\ldots,N-1\}$ one has
\begin{equation}\label{eq:boundepsilon_n112-212}
\sum_{k=1}^{d}|\epsilon_{n,k}^{(1,1,2)}|+\sum_{k,\ell=1}^{d}|\epsilon_{n,k,\ell}^{(2,1,2)}|\le C(T,\theta)\dt^2.
\end{equation}

\paragraph{\it Conclusion}

Combining the upper bounds~\eqref{eq:boundepsilon_n111-211} and~\eqref{eq:boundepsilon_n112-212} with the decompositions~\eqref{eq:decomp_epsilon_n11} and~\eqref{eq:decomp_epsilon_n21} of the error terms $\epsilon_{n,k}^{(1,1)}$ and $\epsilon_{n,k,\ell}^{(2,1)}$, one obtains the inequality~\eqref{eq:lem_epsilonII} and the proof of Lemma~\ref{lem:epsilonII} is thus completed.

\end{proof}

\begin{proof}[Proof of Lemma~\ref{lem:epsilonIII}]

Let $k,\ell\in\{1,\ldots,d\}$ and $n\in\{0,\ldots,N-1\}$, and recall that the error term $\epsilon_{n,k,\ell}^{(2,3)}$ is defined by~\eqref{eq:epsilon_n23}. The mappings $f_k$ and $f_\ell$ are bounded on $\mathcal{D}$. Moreover, the second-order spatial partial derivative $\partial_{x_kx_\ell}^2u$ of the solution $u$ to the Kolmogorov equation is bounded, see Equation~\eqref{eq:boundKolmogorov}. Applying the Cauchy--Schwarz inequality and the inequality~\eqref{eq:lemDelta_nk} from Lemma~\ref{lem:Delta_nk} for the error terms $\Delta_{n,k}(t)$ and $\Delta_{n,\ell}(t)$, one then obtains
\[
|\epsilon_{n,k,\ell}^{(2,3)}|\le C(T,\theta)\int_{t_n}^{t_{n+1}}\bigl(\E[|\Delta_{n,k}(t)|^2]\bigr)^{\frac12}\bigl(\E[|\Delta_{n,\ell}(t)|^2]\bigr)^{\frac12}\dd t\le C(T,\theta)\dt^2.
\]
This concludes the proof of the error bounds~\eqref{eq:lem_epsilonIII} and the proof of Lemma~\ref{lem:epsilonIII} is thus completed.
\end{proof}

\subsection{Conclusion and proof of Theorem~\ref{theo:weak}}\label{sec:conclusionproofweak}

Theorem~\ref{theo:weak} is obtained as a straightforward consequence of Lemmas~\ref{lem:epsilonI}~\ref{lem:epsilonII}~and~\ref{lem:epsilonIII}.

\begin{proof}[Proof of Theorem~\ref{theo:weak}]
Recall the expression~\eqref{eq:weakerror} of the weak error and the decomposition~\eqref{eq:decomp_epsilon_n} of $\epsilon_n$. Combining the three error bounds~\eqref{eq:lem_epsilonI},~\eqref{eq:lem_epsilonII} and~\eqref{eq:lem_epsilonIII} from Lemmas~\ref{lem:epsilonI},~\ref{lem:epsilonII} and~\ref{lem:epsilonIII}, one obtains the following upper bound:
\[
\underset{0\le n\le N-1}\sup~|\epsilon_n|\le \sum_{k=1}^{d}\underset{0\le n\le N-1}\sup~|\epsilon_{n,k}^{(1)}|+\sum_{k,\ell=1}^{d}\underset{0\le n\le N-1}\sup~|\epsilon_{n,k,\ell}^{(2)}|+\sum_{k,\ell=1}^{d}\underset{0\le n\le N-1}\sup~|\epsilon_{n,k}^{(3)}|\le C(T,\theta)\dt^2.
\]
Summing for $n\in\{0,\ldots,N-1\}$ then yields the weak error estimate~\eqref{eq:weak} and the proof of Theorem~\ref{theo:weak} is completed.
\end{proof}

\section{Proof of Theorem~\ref{theo:stronghigherorder}}\label{sec:proof3}

This section is devoted to the proof of Theorem~\ref{theo:stronghigherorder}. In this section, $\bigl(X_n\bigr)_{0\le n\le N}$ denotes the solution to the numerical scheme~\eqref{eq:schemeX1}. Recall that the system is driven by a one-dimensional Brownian motion ($M=1$) and that Assumptions~\ref{ass:integratorDP}~and~\ref{ass:integrator-higher} are satisfied. The proof is substantially more involved than the proof of Theorem~\ref{theo:strong} presented in Section~\ref{sec:proof1}. 

This section is organized as follows. Section~\ref{sec:aux3} first provides results on auxiliary processes $\widehat{Z}_k$ and $\widehat{X}_k$. Section~\ref{sec:decomp3} presents the decomposition of the error and states auxiliary error estimates. Finally, these auxiliary error estimates are proved in Section~\ref{sec:error3}. The proof of Theorem~\ref{theo:stronghigherorder} is finally given in Section~\ref{sec:conclusionproofstronghigher}.

\subsection{Auxiliary results}\label{sec:aux3}

As in Subsection~\ref{sec:aux1}, let us define the auxiliary processes $\widehat{Z}_k$ for all $k\in\{1,\ldots,d\}$: for all $n\in\{0,\ldots,N-1\}$ and $t\in[t_n,t_{n+1})$, set
\begin{equation}\label{eq:Zhat}
\left\lbrace
\begin{aligned}
\widehat{Z}_k(t)&=\Bigl(f_k(X_n)^2(t-t_n),\widehat{\Gamma}_{k}(t),X_{n,k}\Bigr)\\
\widehat{\Gamma}_{k}(t)&=f_k^0(X_n)(t-t_n)+f_k(X_n)(B(t)-B(t_n))+\bigl(g\cdot\nabla\bigr)f_k(X_n)\int_{t_n}^{t}\bigl[B(s)-B(t_n)\bigr]\dd B(s),
\end{aligned}
\right.
\end{equation}
where we recall that $X_{n,k}$ is defined by~\eqref{eq:schemeX1}. Note that one has $\widehat{Z}_k(t)\in\R^+\times\R\times[-1,1]$ for all $t\in[0,T]$. Furthermore, for all $n\in\{0,\ldots,N\}$, one has
$\widehat{Z}_k(t_n)=\bigl(0,0,X_{n,k}\bigr)$. Observe that the process $\widehat{Z}_k$ is discontinuous at the grid times $t_n$.

Next, for all $k\in\{1,\ldots,d\}$, the auxiliary process $\widehat{X}_k$ is defined as follows: for all $n\in\{0,\ldots,N-1\}$ and $t\in[t_n,t_{n+1})$, set
\begin{equation}\label{eq:Xhat}
\widehat{X}_k(t)=\Phi\bigl(\widehat{Z}_k(t)\bigr).
\end{equation}
Note that $\widehat{X}_k(t)\in[-1,1]$ for all $t\ge 0$, owing to the condition~\eqref{eq:condPhiDP} from Assumption~\ref{ass:integrator} on the integrator $\Phi$. In addition, for all $n\in\{0,\ldots,N\}$, one has 
$\widehat{X}_k(t_n)=\Phi(\widehat{Z}_k(t_n))=\Phi(0,0,X_{n,k})=X_{n,k}$. Finally, by the definition~\eqref{eq:schemeX} of the domain preserving scheme, for all $k\in\{1,\ldots,d\}$ the process $\widehat{X}_k$ is continuous on $[0,T]$: for all $n\in\{0,\ldots,N-1\}$ one has $\underset{t\to t_{n+1}^{-}}\lim~\widehat{X}_k(t)=X_{n+1,k}=\widehat{X}_k(t_{n+1})$.

First, Lemma~\ref{lem:auxZhat} provides properties of the process $\widehat{Z}_k$, see Lemma~\ref{lem:auxZtilde} for a similar statement for the process $\widetilde{Z}_k$ related to the numerical scheme~\eqref{eq:schemeX} (see Section~\ref{sec:aux1}).
\begin{lemma}\label{lem:auxZhat}
For all $p\in[1,\infty)$ and $T\in(0,\infty)$, there exists $C_p(T)\in(0,\infty)$ such that for all $\dt\in(0,1)$, one has the increment bounds
\begin{equation}\label{eq:auxZhat1}
\underset{1\le k\le d}\sup~\underset{0\le n\le N-1}\sup~\underset{t\in[t_n,t_{n+1})}\sup~\E[\|\widehat{Z}_k(t)-\widehat{Z}_k(t_n)\|^{2p}]\le C_p(T)\dt^p,
\end{equation}
and the moment bounds
\begin{equation}\label{eq:auxZhat2}
\underset{1\le k\le d}\sup~\underset{t\in[0,T]}\sup~\E[\|\widehat{Z}_k(t)\|^{2p}]\leq C_p(T).
\end{equation}
\end{lemma}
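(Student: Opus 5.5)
I would follow the proof of Lemma~\ref{lem:auxZtilde} essentially line by line. The only new ingredient is the iterated Itô integral in $\widehat{\Gamma}_k$.

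\emph{Step 1: the increment.} Fix $k$, $n$ and $t\in[t_n,t_{n+1})$. By definition~\eqref{eq:Zhat},
\[
\widehat{Z}_k(t)-\widehat{Z}_k(t_n)=\bigl(f_k(X_n)^2(t-t_n),\widehat{\Gamma}_k(t),0\bigr).
\]
Hence
\[
\|\widehat{Z}_k(t)-\widehat{Z}_k(t_n)\|^{2p}\le C_p\Bigl(f_k(X_n)^{4p}(t-t_n)^{2p}+|f_k^0(X_n)|^{2p}(t-t_n)^{2p}+|f_k(X_n)|^{2p}|B(t)-B(t_n)|^{2p}+|(g\cdot\nabla)f_k(X_n)|^{2p}\Bigl|\int_{t_n}^t[B(s)-B(t_n)]\dd B(s)\Bigr|^{2p}\Bigr).
\]

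\emph{Step 2: bounding the coefficients.} By Proposition~\ref{propo:dpX}, applied to scheme~\eqref{eq:schemeX1}, one has $X_n\in\mathcal{D}$ almost surely. Since $g^0,g$ are of class $\mathcal{C}^3$, Lemma~\ref{lem:g2f} (with $p=1$) gives that $f_k^0$ and $f_k$ are $\mathcal{C}^1$. Therefore $f_k^0$, $f_k$ and $(g\cdot\nabla)f_k$ are continuous, hence bounded, on $\mathcal{D}$. All the random prefactors in the display of Step~1 are thus bounded by a deterministic constant.

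\emph{Step 3: the Gaussian terms.} For the Brownian increment, $\E|B(t)-B(t_n)|^{2p}=c_p(t-t_n)^p$. For the iterated integral ($M=1$), one can use the explicit identity
\[
\int_{t_n}^t[B(s)-B(t_n)]\dd B(s)=\tfrac12\bigl[(B(t)-B(t_n))^2-(t-t_n)\bigr].
\]
Its $2p$-th moment is then bounded by $C_p\bigl(\E|B(t)-B(t_n)|^{4p}+(t-t_n)^{2p}\bigr)\le C_p(t-t_n)^{2p}$. Alternatively, the Burkholder--Davis--Gundy inequality gives the same bound.

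\emph{Step 4: conclusion.} Since $t-t_n\le\dt<1$, every term in Step~1 is bounded by $C_p\dt^p$. This gives~\eqref{eq:auxZhat1}, with the Brownian increment being the dominant term. For the moment bound~\eqref{eq:auxZhat2}, write $\widehat{Z}_k(t)=(\widehat{Z}_k(t)-\widehat{Z}_k(t_n))+\widehat{Z}_k(t_n)$ and use $\|\widehat{Z}_k(t_n)\|=|X_{n,k}|\le1$. The bounds are uniform in $k$, $n$ and $t$.

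There is no real obstacle. The only point requiring care is justifying the moment bound on the iterated integral, which the explicit identity in Step~3 handles directly.
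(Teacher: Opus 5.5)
Your proof is correct and follows the paper's argument essentially step by step. Both use the componentwise bound on $\widehat{Z}_k(t)-\widehat{Z}_k(t_n)=\bigl(f_k(X_n)^2(t-t_n),\widehat{\Gamma}_k(t),0\bigr)$, boundedness of $f_k^0$, $f_k$, $(g\cdot\nabla)f_k$ on $\mathcal{D}$ together with $X_n\in\mathcal{D}$, Gaussian moments of the Brownian increment, and the splitting $\widehat{Z}_k(t)=(\widehat{Z}_k(t)-\widehat{Z}_k(t_n))+\widehat{Z}_k(t_n)$ for the moment bound. The only difference is that the paper controls the iterated integral with the Burkholder--Davis--Gundy inequality, whereas your identity $\int_{t_n}^t[B(s)-B(t_n)]\dd B(s)=\frac12[(B(t)-B(t_n))^2-(t-t_n)]$ (available since $M=1$) does the same job more directly and gives the sharper order $(t-t_n)^{2p}$.
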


\begin{proof}[Proof of Lemma~\ref{lem:auxZhat}]
The proof is similar to the proof of Lemma~\ref{lem:auxZtilde} given in Section~\ref{sec:aux1}.

First, let us prove the inequality~\eqref{eq:auxZhat1}. By definition~\eqref{eq:Zhat} of the auxiliary process $\widehat{Z}_k$, for all $n\in\{0,\ldots,N-1\}$, for all $k\in\{1,\ldots,d\}$, one has for all $t\in[t_n,t_{n+1})$
\[
\widehat{Z}_k(t)-\widehat{Z}_k(t_n)=\Bigl(f_k(X_n)^2(t-t_n),\widehat{\Gamma}_{k}(t),0\Bigr),
\]
with $\widehat{\Gamma}_{k}(t)$ given in~\eqref{eq:Zhat}. Therefore, one has 
almost surely 
\begin{align*}
\|\widetilde{Z}_k(t)-\widetilde{Z}_k(t_n)\|^2&\le f_k(X_n)^4(t-t_n)^2+|\widehat{\Gamma}_{k}(t)|^2\\
&\le f_k(X_n)^4(t-t_n)^2+3|f_k^0(X_n)|^2(t-t_n)^2\\
&+3|f_k(X_n)|^2|B(t)-B(t_n))|^2+3\big|\bigl(g\cdot\nabla\bigr)f_k(X_n)\big|^2\Big|\int_{t_n}^{t}\bigl[B(s)-B(t_n)\bigr]\dd B(s)\Big|^2.    
\end{align*}
Recall that $X_n\in\mathcal{D}$ almost surely owing to Proposition~\ref{propo:dpX}. Furthermore, the mappings $f_k^0$, $f_k$, $\nabla f_k$ and $g$ are bounded on $\mathcal{D}$. Therefore, for all $p\in\N$, there exists $C_p(T)\in(0,\infty)$ such that one has
\[
\E[\|\widetilde{Z}_k(t)-\widetilde{Z}_k(t_n)\|^{2p}]\le C_p(T)\Bigl(\bigl(t-t_n\bigr)^{2p}+\E[|B(t)-B(t_n)|^{2p}]+\E\left[\left|\int_{t_n}^{t}\bigl[B(s)-B(t_n)\bigr]\dd B(s)\right|^{2p}\right]\Bigr).
\]
Since $B(t)-B(t_n)$ is a Gaussian random variable, one has $\E[|B(t)-B(t_n)|^{2p}]=c_p(t-t_n)^p$, for some $c_p\in(0,\infty)$. Moreover, applying the Burkholder--Davis--Gundy inequality, there exists $C_p\in(0,\infty)$ such that
\[
\E\left[\left|\int_{t_n}^{t}\bigl[B(s)-B(t_n)\bigr]\dd B(s)\right|^{2p}\right]\le C_p\int_{t_n}^{t}\E[\big|B(s)-B(t_n)|^{2p}]\dd s\le C_p|t-t_n|^p.
\]
As a result, one obtains the inequality~\eqref{eq:auxZhat1}.

It remains to prove the inequality~\eqref{eq:auxZhat2}. Note that for all $t\in[t_n,t_{n+1})$, one has
\[
\widehat{Z}_k(t)=\widehat{Z}_k(t)-\widehat{Z}_k(t_n)+\widehat{Z}_k(t_n),
\]
with $\|\widehat{Z}_k(t_n)\|=|X_{n,k}|\le 1$, since $X_n\in\mathcal{D}$ almost surely. It suffices then to apply the inequality~\eqref{eq:auxZhat1} to obtain the upper bound~\eqref{eq:auxZhat2}.

The proof of Lemma~\ref{lem:auxZhat} is thus completed.
\end{proof}

Lemma~\ref{lem:auxexpo2} below is a variant of Lemma~\ref{lem:auxexpo}, which is adapted to the treatment of exponential moment bounds for $\widehat{\Gamma}_k(t)$, and which is applied in the proof of Lemma~\ref{lem:auxXhat} and of the error estimates. 

\begin{lemma}\label{lem:auxexpo2}
Let $a,b,c\colon\mathcal{D}\to\R^+$ be continuous functions. There exists $\dt_0\in(0,1)$ and $C\in(0,\infty)$ such that for all $\dt\in(0,\dt_0)$ and all $n\in\{0,\ldots,N-1\}$, one has 
\begin{equation}\label{eq:lemauxexpo2}
\underset{t\in[t_n,t_{n+1}]}\sup~\underset{x\in\mathcal{D}}\sup~\E\left[e^{a(x)(t-t_n)}e^{b(x)|B(t)-B(t_n)|}e^{c(x)|B(t)-B(t_n)|^2}\right]\le C.
\end{equation}
\end{lemma}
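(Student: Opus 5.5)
The plan follows the proof of Lemma~\ref{lem:auxexpo}: bound the coefficients uniformly, then reduce to a single standard Gaussian and use the scaling $t-t_n\le \dt$. Here $M=1$, so $B(t)-B(t_n)$ is real-valued.

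First, the functions $a,b,c$ are continuous on the compact set $\mathcal{D}$, hence bounded by some $K\in(0,\infty)$. For all $x\in\mathcal{D}$ and all $t\in[t_n,t_{n+1}]$ with $\dt<1$, this gives
\[
\E\left[e^{a(x)(t-t_n)}e^{b(x)|B(t)-B(t_n)|}e^{c(x)|B(t)-B(t_n)|^2}\right]\le e^{K}\,\E\left[e^{K|B(t)-B(t_n)|+K|B(t)-B(t_n)|^2}\right].
\]
The right-hand side no longer depends on $x$, so the supremum over $x\in\mathcal{D}$ is handled.

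Next, write $B(t)-B(t_n)=\sqrt{t-t_n}\,Z$ in distribution, with $Z\sim\mathcal{N}(0,1)$. Since $0\le t-t_n\le\dt$, the integrand is pointwise bounded by $e^{K|Z|+K\dt Z^2}$, hence by $e^{K|Z|+K\dt_0 Z^2}$ whenever $\dt<\dt_0$. Using $K|Z|\le \frac{1}{8}Z^2+2K^2$, one gets
\[
\E\left[e^{K|Z|+K\dt_0 Z^2}\right]\le e^{2K^2}\,\E\left[e^{(\frac18+K\dt_0)Z^2}\right].
\]

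The one substantive point is that the Gaussian quadratic exponential moment $\E[e^{\lambda Z^2}]$ is finite only when $\lambda<\frac12$, and then it equals $(1-2\lambda)^{-1/2}$. This is exactly why the restriction $\dt<\dt_0$ is needed, in contrast with Lemma~\ref{lem:auxexpo}. Choose $\dt_0=\min\bigl(\tfrac12,\tfrac{1}{8K}\bigr)$. Then $\lambda=\frac18+K\dt_0\le\frac14$, so $\E[e^{\lambda Z^2}]\le\sqrt2$.

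Combining these bounds gives the uniform constant $C=\sqrt2\,e^{K+2K^2}$, valid for all $n$, all $t\in[t_n,t_{n+1}]$ and all $x\in\mathcal{D}$, which is~\eqref{eq:lemauxexpo2}. In the later applications the argument $x$ is the random variable $X_n$. There one conditions on $\mathcal{F}_{t_n}$, using that $X_n$ is $\mathcal{F}_{t_n}$-measurable and independent of the increment, exactly as with Lemma~\ref{lem:auxexpo}.
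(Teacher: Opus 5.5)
Your proof is correct and follows essentially the same route as the paper: bound $a,b,c$ uniformly on the compact set $\mathcal{D}$, use the scaling $B(t)-B(t_n)=\sqrt{t-t_n}\,Z$ in distribution, absorb the linear term with Young's inequality, and apply $\E[e^{\lambda Z^2}]=(1-2\lambda)^{-1/2}$ for $\lambda<\frac12$, with $\dt_0$ small relative to the bound on $c$. The only difference is that the paper first splits the expectation with Cauchy--Schwarz and treats the linear and quadratic exponentials separately, whereas you combine them into a single quadratic exponential moment, which is slightly more direct.
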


\begin{proof}[Proof of Lemma~\ref{lem:auxexpo2}]
The mappings $a,b,c$ are continuous on the compact domain $\mathcal{D}$, hence they are bounded on $\mathcal{D}$. Therefore, applying the Cauchy--Schwarz inequality, there exists $C\in(0,\infty)$ such that for all $x\in\mathcal{D}$, under the conditions $0\le t-t_n\le \dt\le 1$, one has
\[
\E[e^{a(x)(t-t_n)}e^{b(x)|B(t)-B(t_n)|}e^{c(x)|B(t)-B(t_n)|^2}]\le C\bigl(\E[e^{2b(x)|B(t)-B(t_n)|}]\bigr)^{\frac12}\bigl(\E[e^{2c(x)|B(t)-B(t_n)|^2}]\bigr)^{\frac12}.
\]
Let $Z$ be a standard centered real-valued Gaussian random variable, then $B(t)-B(t_n)$ is equal in distribution to $\sqrt{t-t_n}Z$. Furthermore, for any $\varsigma\in(0,1/2)$, one has
\[
\E[e^{\varsigma Z^2}]=\frac{1}{\sqrt{1-2\varsigma}}.
\]
On the one hand, applying the Young inequality, if $\dt\in(0,1)$ one has
\[
\E[e^{2b(x)|B(t)-B(t_n)|}]\le e^{16b(x)^2}\E[e^{\frac14|B(t)-B(t_n)|^2}]\le \frac{C}{\sqrt{1-\frac12(t-t_n)}}\le C.
\]
On the other hand, define
\[
\dt_0=\frac{1}{8\underset{x\in\mathcal{D}}\sup~c(x)+1}.
\]
If the time-step size $\dt$ satisfies the condition $\dt\in(0,\dt_0)$, and if $0\le t-t_n\le \dt<\dt_0$, one has
\[
\E[e^{2c(x)|B(t)-B(t_n)|^2}]=\frac{1}{\sqrt{1-4c(x)(t-t_n)}}\le C.
\]
Combining the inequalities above yields the inequality~\eqref{eq:lemauxexpo2} and concludes the proof of Lemma~\ref{lem:auxexpo2}.
\end{proof}

Lemma~\ref{lem:auxXhat} below is a variant of Lemma~\ref{lem:auxXtilde} 
related to the domain preserving scheme~\eqref{eq:schemeX} (see Section~\ref{sec:aux1}). First, it provides the stochastic differential equations~\eqref{eq:auxXhat1} satisfied by the auxiliary processes $\widehat{X}_k$ on each interval $[t_n,t_{n+1})$, which contains additional terms compared to~\eqref{eq:auxXtilde1} (see Lemma~\ref{lem:auxXtilde}). Moreover, it states the increment bounds~\eqref{eq:auxXhat2}, which are similar to~\eqref{eq:auxXtilde2}.

\begin{lemma}\label{lem:auxXhat}
For all $n\in\{0,\ldots,N-1\}$, for all $k\in\{1,\ldots,d\}$, the stochastic process $t\in[t_n,t_{n+1})\mapsto \widehat{X}_k(t)$ is the solution to the stochastic differential equation
\begin{equation}\label{eq:auxXhat1}
\begin{aligned}
\dd \widehat{X}_k(t)
&=\partial_\gamma\Phi(\widehat{Z}_k(t))f_k^0(X_n)\dd t+\partial_\gamma\Phi(\widehat{Z}_k(t))f_k(X_n)\dd B(t)\\
&+\partial_\gamma\Phi(\widehat{Z}_k(t))\bigl(g\cdot\nabla\bigr)f_k(X_n)\bigl[B(t)-B(t_n)\bigr]\dd B(t)\\
&+\partial_{\gamma}^2\Phi(\widehat{Z}_k(t))\bigl(g\cdot\nabla\bigr)f_k(X_n)f_k(X_n)\bigl[B(t)-B(t_n)]\dd t\\
&+\frac12\partial_{\gamma}^2\Phi(\widehat{Z}_k(t))\bigl(\bigl(g\cdot\nabla\bigr)f_k(X_n)\bigr)^2\bigl[B(t)-B(t_n)]^2\dd t\\
&+\psi(\widehat{Z}_k(t))f_k(X_n)^2\dd t.
\end{aligned}
\end{equation}
In addition, for all $T\in(0,\infty)$, there exists $C(T)\in(0,\infty)$ such that 
for all $\dt\in(0,\overline{\dt})$, one has the regularity property
\begin{equation}\label{eq:auxXhat2}
\underset{1\le k\le d}\sup~\underset{0\le n\le N-1}\sup~\underset{t\in[t_n,t_{n+1})}\sup~\E[|\widehat{X}_k(t)-X_{n,k}|^2]\leq C(T)\dt.
\end{equation}
\end{lemma}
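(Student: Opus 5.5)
The plan follows the proof of Lemma~\ref{lem:auxXtilde}, with $\widetilde{Z}_k$ replaced by $\widehat{Z}_k$. There are two steps: derive the SDE~\eqref{eq:auxXhat1} by the It\^o formula, then bound the increments by the It\^o isometry together with exponential moment bounds.

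\textbf{Step 1: the SDE.} Fix $n$ and $k$. On $[t_n,t_{n+1})$ the first and third components of $\widehat{Z}_k(t)$ are deterministic given $\mathcal{F}_{t_n}$: one is linear in $t$ with rate $f_k(X_n)^2$, the other is constant. Set $\zeta=(g\cdot\nabla)f_k(X_n)$. The second component satisfies
\[
\dd\widehat{\Gamma}_k(t)=f_k^0(X_n)\dd t+\bigl(f_k(X_n)+\zeta[B(t)-B(t_n)]\bigr)\dd B(t),
\]
so its quadratic variation is $\bigl(f_k(X_n)+\zeta[B(t)-B(t_n)]\bigr)^2\dd t$. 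Applying the It\^o formula to $\Phi\in\mathcal{C}^2$ gives
\[
\dd\widehat{X}_k=\partial_s\Phi\, f_k(X_n)^2\dd t+\partial_\gamma\Phi\,\dd\widehat{\Gamma}_k+\tfrac12\partial_\gamma^2\Phi\,\bigl(f_k(X_n)+\zeta[B(t)-B(t_n)]\bigr)^2\dd t .
\]
Expand the square. The term $\tfrac12\partial_\gamma^2\Phi\, f_k(X_n)^2$ combines with the $\partial_s\Phi$ term to give $\psi(\widehat{Z}_k(t))f_k(X_n)^2$. The cross term produces $\partial_\gamma^2\Phi\,\zeta f_k(X_n)[B(t)-B(t_n)]$, and the last term produces $\tfrac12\partial_\gamma^2\Phi\,\zeta^2[B(t)-B(t_n)]^2$. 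This is exactly~\eqref{eq:auxXhat1}.

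\textbf{Step 2: the increment bound.} Integrate~\eqref{eq:auxXhat1} from $t_n$ to $t$, using $\widehat{X}_k(t_n)=X_{n,k}$. Bound the square of the sum of the six terms by six times the sum of squares. Treat the $\dd t$ terms by Cauchy--Schwarz and the two $\dd B$ terms by the It\^o isometry. The coefficients $f_k^0$, $f_k$, $\zeta$ are bounded on $\mathcal{D}$, since $g$ is $\mathcal{C}^3$ and therefore $f_k$ is $\mathcal{C}^1$ by Lemma~\ref{lem:g2f}. This yields
\[
\E\bigl[|\widehat{X}_k(t)-X_{n,k}|^2\bigr]\le C\int_{t_n}^t\E\Bigl[\bigl(|\partial_\gamma\Phi|^2+|\partial_\gamma^2\Phi|^2+|\psi|^2\bigr)(\widehat{Z}_k(s))\bigl(1+|B(s)-B(t_n)|^4\bigr)\Bigr]\dd s .
\]
Since $\|\Phi\|_{4,\beta}<\infty$, each of these derivatives at $\widehat{Z}_k(s)$ is bounded by $Ce^{C(f_k(X_n)^2(s-t_n)+|\widehat{\Gamma}_k(s)|)}$.

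\textbf{Main obstacle: exponential moments of $\widehat{\Gamma}_k$.} The stochastic integral in $\widehat{\Gamma}_k$ has the closed form $\int_{t_n}^{s}[B(r)-B(t_n)]\dd B(r)=\tfrac12\bigl([B(s)-B(t_n)]^2-(s-t_n)\bigr)$. Hence
\[
|\widehat{\Gamma}_k(s)|\le a(s-t_n)+b|B(s)-B(t_n)|+c|B(s)-B(t_n)|^2
\]
for some constants $a,b,c$ built from bounded functions of $X_n$. The $e^{C|B|^2}$ factor has finite moments only if $\dt$ is small, which is why Lemma~\ref{lem:auxexpo} does not apply and Lemma~\ref{lem:auxexpo2} is needed. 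Absorb the polynomial factor $1+|B(s)-B(t_n)|^4$ into a slightly larger $b$, or handle it by Cauchy--Schwarz. Conditioning on $\mathcal{F}_{t_n}$ (under which $X_n$ is fixed and $B(s)-B(t_n)$ is independent of it), Lemma~\ref{lem:auxexpo2} bounds the expectation uniformly provided $\dt<\dt_0$. Here $\dt_0$ depends only on $\sup c$ and the constant $C$ from $\|\Phi\|_{4,\beta}$. Defining $\overline{\dt}$ as this $\dt_0$ (any smaller value in $(0,1)$ also works), the integrand is bounded by a constant. Integration then gives $C(T)(t-t_n)\le C(T)\dt$, uniformly in $k$, $n$ and $t$, which is~\eqref{eq:auxXhat2}.
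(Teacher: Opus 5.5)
Your proposal is correct and follows the paper's proof essentially step by step. The ingredients are the same: It\^o's formula using the quadratic variation of $\widehat{\Gamma}_k$ and the identity $\psi=\partial_s\Phi+\frac12\partial_\gamma^2\Phi$, then Cauchy--Schwarz and the It\^o isometry, the exponential growth bound on the derivatives of $\Phi$, and Lemma~\ref{lem:auxexpo2} for $\dt$ small. Your version is in fact slightly more careful than the paper's: you keep the factors $|B(s)-B(t_n)|^2$ and $|B(s)-B(t_n)|^4$ coming from the $\zeta$-terms and absorb them into the exponential moment, whereas the paper's displayed intermediate bound silently drops them.
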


\begin{proof}[Proof of Lemma~\ref{lem:auxXhat}]

Let $k\in\{1,\ldots,d\}$ and $n\in\{0,\ldots,N-1\}$. Note that on the interval $t\in[t_n,t_{n+1})$ the process $t\mapsto \widehat{\Gamma}_{k}(t)$ is the solution to the stochastic differential equation
\[
\dd\widehat{\Gamma}_{k}(t)=f_k^0(X_n)\dd t+\left(f_k(X_n)+ \bigl(g\cdot\nabla\bigr)f_k(X_n)\bigl(B(t)-B(t_n)\right)\dd B(t),
\]
and its quadratic variation thus satisfies
\[
\dd \langle\widehat{\Gamma}_k\rangle(t)=\left[f_k(X_n)+ \bigl(g\cdot\nabla\bigr)f_k(X_n)\bigl(B(t)-B(t_n)\right]^2 \dd t.
\]
Applying the It\^o formula, owing to the definition~\eqref{eq:Xhat} of $\widehat{X}_k(t)$, on the interval $[t_n,t_{n+1})$ one has
\begin{align*}
\dd\widehat{X}_k(t)&=\partial_s\Phi(\widehat{Z}_k(t))f_k(X_n)^2\dd t+\partial_\gamma\Phi(\widehat{Z}_k(t))\dd \widehat{\Gamma}_k(t)+\frac12\partial_\gamma^2\Phi(\widehat{Z}_k(t))\dd \langle\widehat{\Gamma}_k\rangle(t)\\
&=\partial_s\Phi(\widehat{Z}_k(t))f_k(X_n)^2\dd t\\
&+\partial_\gamma\Phi(\widehat{Z}_k(t))f_k^0(X_n)\dd t+\partial_\gamma\Phi(\widehat{Z}_k(t))f_k(X_n)\dd B(t)\\
&+\partial_\gamma\Phi(\widehat{Z}_k(t))\bigl(g\cdot\nabla\bigr)f_k(X_n)\bigl[B(t)-B(t_n)\bigr]\dd B(t)\\
&+\frac12\partial_{\gamma}^2\Phi(\widehat{Z}_k(t))\left[f_k(X_n)+ \bigl(g\cdot\nabla\bigr)f_k(X_n)\bigl(B(t)-B(t_n)\right]^2 \dd t.
\end{align*}
Finally, expanding the square in the last line above and recalling the definition~\eqref{eq:psi} of $\psi=\partial_s\Phi+\frac12\partial_\gamma^2\Phi$, one obtains the stochastic differential equation~\eqref{eq:auxXhat1}.

It remains to prove the increment bounds~\eqref{eq:auxXhat1}. Using the SDE~\eqref{eq:auxXhat1}, integrating from $t_n$ to $t\in[t_n,t_{n+1})$ and recalling that $\widehat{X}_{k}(t_n)=X_{n,k}$, one has  
\begin{equation*}
\begin{aligned}
\widehat{X}_k(t)-X_{n,k}&=\widehat{X}_k(t)-\widehat{X}_k(t_n)\\
&=\int_{t_n}^t\partial_\gamma\Phi(\widehat{Z}_k(s))f_k^0(X_n)\dd s+\int_{t_n}^t\partial_\gamma\Phi(\widehat{Z}_k(s))f_k(X_n)\dd B(s)\\
&+\int_{t_n}^t\partial_\gamma\Phi(\widehat{Z}_k(s))\bigl(g\cdot\nabla\bigr)f_k(X_n)\bigl[B(s)-B(t_n)\bigr]\dd B(s)\\
&+\int_{t_n}^t\partial_{\gamma}^2\Phi(\widehat{Z}_k(s))\bigl(g\cdot\nabla\bigr)f_k(X_n)f_k(X_n)\bigl[B(s)-B(t_n)]\dd s\\
&+\frac12\int_{t_n}^t\partial_{\gamma}^2\Phi(\widehat{Z}_k(s))\bigl(\bigl(g\cdot\nabla\bigr)f_k(X_n)\bigr)^2\bigl[B(s)-B(t_n)]^2\dd s\\
&+\int_{t_n}^t\psi(\widehat{Z}_k(s))\|f_k(X_n)\|^2\dd s.
\end{aligned}
\end{equation*}
Applying the Cauchy--Schwarz inequality and the It\^o isometry formula, and recalling that the mappings $f_k^0$, $f_k$ and $g$ are bounded on the domain $\mathcal D$, one then obtains
\[
\E[|\widehat{X}_k(t)-X_{n,k}|^2]\le C(T)\int_{t_n}^{t}\Bigl(\E[|\partial_\gamma\Phi(\widehat{Z}_k(s))|^2]+\E[|\partial^2_{\gamma}\Phi(\widehat{Z}_k(s))|^2]+\E[|\psi(\widehat{Z}_k(s))|^2]\Bigr)\dd s.
\]
The growth of the mappings $\partial_\gamma\Phi$, $\partial_\gamma^2\Phi$  and $\psi$ follows from assuming that the integrator $\Phi$ satisfies $\|\Phi\|_{2,\beta}<\infty$ for some $\beta\in(0,\infty)$: there exists $C\in(0,\infty)$ such that one has
\begin{align*}
\E&[|\widehat{X}_k(t)-X_{n,k}|^2]\\
&\le C(T)\int_{t_n}^{t}\E[e^{C(f_k(X_n)^2}e^{C|\widehat{\Gamma}_k(s)|}]\dd s\\
&\le C(T)\int_{t_n}^{t}\E[e^{C(f_k(X_n)^2+|f_k^0(X_n)|)(s-t_n)}e^{C|f_k(X_n)||B(s)-B(t_n)|}e^{\frac{C|(g\cdot \nabla)f_k(X_n)|^2}{2}|B(s)-B(t_n)|^2}]\dd s.
\end{align*}
Assume that $\overline{\dt}$ is sufficiently small, to ensure that Lemma~\ref{lem:auxexpo2} can be applied with $\overline{\dt}\le \dt_0$. Applying a conditional expectation argument and the inequality~\eqref{eq:lemauxexpo2} from Lemma~\ref{lem:auxexpo2}, if $\dt\in(0,\overline{\dt})$ one has the upper bounds
\[
\E[|\widehat{X}_k(t)-X_{n,k}|^2]\le C(T)(t-t_n)\leq C(T)\dt.
\]
The proof of Lemma~\ref{lem:auxXhat} is thus completed.
\end{proof}

\subsection{Decomposition of the error}\label{sec:decomp3}

Compared with the error analysis presented in Section~\ref{sec:error1} for the general domain preserving scheme~\eqref{eq:schemeX}, in order to exhibit the first-order mean-square convergence in the case of the stochastic differential equation~\eqref{eq:sdeNagumog} driven by one dimensional Brownian motion, more effort is necessary, and the decomposition of the error is more delicate.

For all $n\in\{0,\ldots,N-1\}$, set 
\begin{align}
e_n&=X_n-X(t_n),\label{eq:def_en}\\
e_{n,k}&=X_{n,k}-X_k(t_n),\qquad \forall k\in\{1,\ldots,d\}.\label{eq:def_enk}
\end{align}
Note that $e_0=0$ and $e_{0,k}=0$ for all $k\in\{1,\ldots,d\}$. The objective of this section is to obtain the decomposition~\eqref{eq:decomp_enk} for the error terms $e_{n,k}$. This requires first to analyze separately the components $X_k(t_n)$ and $X_{n,k}$ of the exact and numerical solutions respectively.

On the one hand, the exact solution is solution to the stochastic differential equation~\eqref{eq:sdeNagumog} with $M=1$. For all $k\in\{1,\ldots,d\}$ and for all $n\in\{1,\ldots,N\}$, one has 
\begin{align*}
X_k(t_n)-x_{0,k}&=\int_{0}^{t_n}g_k^0(X(t))\dd t+\int_{0}^{t_n}g_k(X(t))\dd B(t)\\
&=\sum_{j=0}^{n-1}\int_{t_j}^{t_{j+1}}g_k^0(X(t))\dd t+\sum_{j=0}^{n-1}\int_{t_j}^{t_{j+1}}g_k(X(t))\dd B(t)\\
&=\sum_{j=0}^{n-1}\int_{t_j}^{t_{j+1}}g_k^0(X(t_j))\dd t+\varepsilon_{n,k}^{(1,1)}\\
&+\sum_{j=0}^{n-1}\int_{t_j}^{t_{j+1}}\Bigl[g_k(X(t_j))+\bigl(g\cdot\nabla\bigr)g_k(X(t_j))\bigl[B(t)-B(t_j)\bigr]\Bigr]\dd B(t)+\varepsilon_{n,k}^{(1,2)},
\end{align*}
where the error terms $\varepsilon_{n,k}^{(1,1)}$ and $\varepsilon_{n,k}^{(1,2)}$ are defined as
\begin{align}
\varepsilon_{n,k}^{(1,1)}&=\sum_{j=0}^{n-1}\int_{t_j}^{t_{j+1}}\bigl[g_k^0(X(t))-g_k^0(X(t_j))\bigr]\dd t,\label{eq:def_varepsnk11}\\
\varepsilon_{n,k}^{(1,2)}&=\sum_{j=0}^{n-1}\int_{t_j}^{t_{j+1}}\Bigl[g_k(X(t))-g_k(X(t_j))-\bigl(g\cdot\nabla\bigr)g_k(X(t_j))\bigl[B(t)-B(t_j)\bigr]\Bigr]\dd B(t).\label{eq:def_varepsnk12}
\end{align}

On the other hand, the numerical solution is given by~\eqref{eq:schemeX1}. Recall that the mappings $g_k^0$ and $g_k$ are related to $f_k^0$ and $f_k$ by~\eqref{eq:sigmaF}. Moreover, the integrator $\Phi$ is assumed to satisfy Assumption~\ref{ass:integrator-higher}. Applying a telescoping sum argument and the stochastic differential equation~\eqref{eq:auxXhat1} for the auxiliary process $\widehat{X}_k$ on each interval $[t_j,t_{j+1})$, for all $n\in\{1,\ldots,N\}$, one has
\begin{align*}
X_{n,k}-x_{0,k}&=\sum_{j=0}^{n-1}\bigl(X_{j+1,k}-X_{j,k}\bigr)=\sum_{j=0}^{n-1}\bigl(\widehat{X}_{k}(t_{j+1})-\widehat{X}_{k}(t_j)\bigr)\\
&=\sum_{j=0}^{n-1}\int_{t_j}^{t_{j+1}}g_k^0(X_j)\dd t+\varepsilon_{n,k}^{(2,1)}\\
&+\sum_{j=0}^{n-1}\int_{t_j}^{t_{j+1}}\Bigl[g_k(X_j)+\sigma'(X_{j,k})g_k(X_j)f_k(X_j)\bigl[B(t)-B(t_j)\bigr]\Bigr]\dd B(t)+\varepsilon_{n,k}^{(2,2)}\\
&+\sum_{j=0}^{n-1}\int_{t_j}^{t_{j+1}}\sigma(X_{j,k})\bigl(g\cdot\nabla\bigr) f_k(X_j)\bigl[B(t)-B(t_j)\bigr]\dd B(t)+\varepsilon_{n,k}^{(2,3)}\\
&+\varepsilon_{n,k}^{(2,4)}+\varepsilon_{n,k}^{(2,5)}+\varepsilon_{n,k}^{(2,6)},
\end{align*}
where the error terms $\varepsilon_{n,k}^{(2,1)}$, $\varepsilon_{n,k}^{(2,2)}$ and $\varepsilon_{n,k}^{(2,3)}$ are given by
\begin{align}
\varepsilon_{n,k}^{(2,1)}&=\sum_{j=0}^{n-1}\int_{t_j}^{t_{j+1}}\bigl[\partial_\gamma\Phi(\widehat{Z}_k(t))-\sigma(X_{j,k})\bigr]f_k^0(X_j)\dd t\nonumber\\
&=\sum_{j=0}^{n-1}\int_{t_j}^{t_{j+1}}\bigl[\partial_\gamma\Phi(\widehat{Z}_k(t))-\partial_\gamma\Phi(\widehat{Z}_k(t_j))\bigr]f_k^0(X_j)\dd t,\label{eq:def_varepsnk21}\\
\varepsilon_{n,k}^{(2,2)}&=\sum_{j=0}^{n-1}\int_{t_j}^{t_{j+1}}\Bigl[\partial_\gamma\Phi(\widehat{Z}_k(t))f_k(X_j)-g_k(X_j)-\sigma'(X_{j,k})g_k(X_j)f_k(X_j)\bigl[B(t)-B(t_j)\bigr]\Bigr]\dd B(t)\nonumber\\
&=\sum_{j=0}^{n-1}\int_{t_j}^{t_{j+1}}f_k(X_j)\Bigl[ \partial_\gamma\Phi(\widehat{Z}_k(t))-\partial_\gamma\Phi(\widehat{Z}_k(t_j))-\partial_{\gamma}^2\Phi(\widehat{Z}_k(t_j))f_k(X_j)\bigl[B(t)-B(t_j)\bigr]\Bigr]\dd B(t),\label{eq:def_varepsnk22}\\
\varepsilon_{n,k}^{(2,3)}&=\sum_{j=0}^{n-1}\int_{t_j}^{t_{j+1}}\Bigl[\partial_\gamma\Phi(\widehat{Z}_k(t))-\sigma(X_{j,k})\Bigr]\bigl(g\cdot\nabla\bigr) f_k(X_j)\bigl[B(t)-B(t_j)\bigr]\dd B(t)\nonumber\\
&=\sum_{j=0}^{n-1}\int_{t_j}^{t_{j+1}}\Bigl[\partial_\gamma\Phi(\widehat{Z}_k(t))-\partial_\gamma\Phi(\widehat{Z}_k(t_j))\Bigr]\bigl(g\cdot \nabla\bigr) f_k(X_j)\bigl[B(t)-B(t_j)\bigr]\dd B(t),\label{eq:def_varepsnk23}
\end{align}
and the error terms $\varepsilon_{n,k}^{(2,4)}$, $\varepsilon_{n,k}^{(2,5)}$ and $\varepsilon_{n,k}^{(2,6)}$ are defined as
\begin{align}
\varepsilon_{n,k}^{(2,4)}&=\sum_{j=0}^{n-1}\int_{t_j}^{t_{j+1}}\partial_{\gamma}^2\Phi(\widehat{Z}_k(t))\bigl(g\cdot\nabla\bigr)f_k(X_j)f_k(X_j)\bigl[B(t)-B(t_j)\bigr]\dd t,\label{eq:def_varepsnk24}\\
\varepsilon_{n,k}^{(2,5)}&=\frac12\sum_{j=0}^{n-1}\int_{t_j}^{t_{j+1}}\partial_{\gamma}^2\Phi(\widehat{Z}_k(t))\bigl(\bigl(g\cdot\nabla\bigr)f_k(X_j)\bigr)^2\bigl[B(t)-B(t_j)\bigr]^2\dd t,\label{eq:def_varepsnk25}\\
\varepsilon_{n,k}^{(2,6)}&=\sum_{j=0}^{n-1}\int_{t_j}^{t_{j+1}}\bigl[\psi(\widehat{Z}_k(t))-\psi(\widehat{Z}_k(t_j))\bigr]f_k(X_j)^2\dd t.\label{eq:def_varepsnk26}
\end{align}
Note that the condition~\eqref{eq:condPhi1} from Assumption~\ref{ass:integrator} is applied to obtain the expression~\eqref{eq:def_varepsnk21} for the error term $\varepsilon_{n,k}^{(2,1)}$. To obtain the expression~\eqref{eq:def_varepsnk22} for the error term $\varepsilon_{n,k}^{(2,2)}$, one also applies the condition~\eqref{eq:condPhi-higher} from Assumption~\ref{ass:integrator-higher}. Finally, the condition~\eqref{eq:condPhi2} is applied to obtain the expression~\eqref{eq:def_varepsnk26} for the error term $\varepsilon_{n,k}^{(2,6)}$.

Next, for all $k\in\{1,\ldots,d\}$, observe that due to the expression $g_k(x)=f_k(x)\sigma(x_k)$, see~\eqref{eq:sigmaF}, one has the following property: for all $x\in\mathcal{D}$
\[
\bigl(g_k\cdot\nabla\bigr)g_k(x)=\sigma'(x_k)g_k(x)f_k(x)+\sigma(x_k)\bigl(g\cdot\nabla\bigr)f_k(x).
\]
Applying that property, the following decomposition of the error term $e_{n,k}$ is obtained: for all $n\in\{1,\ldots,N\}$, one has 
\begin{equation}\label{eq:decomp_enk}
e_{n,k}=X_{n,k}-X_{k}(t_n)=-\sum_{\iota=1}^{2}\varepsilon_{n,k}^{(1,\iota)}+\sum_{\iota=1}^{6}\varepsilon_{n,k}^{(2,\iota)}+\sum_{\iota=1}^{3}\varepsilon_{n,k}^{(3,\iota)},
\end{equation}
where the error terms $\varepsilon_{n,k}^{(3,1)}$, $\varepsilon_{n,k}^{(3,2)}$ and $\varepsilon_{n,k}^{(3,2)}$ are defined as
\begin{align}
\varepsilon_{n,k}^{(3,1)}&=\sum_{j=0}^{n-1}\int_{t_j}^{t_{j+1}}\Bigl[g_k^0(X_j)-g_k^0(X(t_j))\Bigr]\dd t,\label{eq:def_varepsnk31}\\
\varepsilon_{n,k}^{(3,2)}&=\sum_{j=0}^{n-1}\int_{t_j}^{t_{j+1}}\Bigl[g_k(X_j)-g_k(X(t_j))\Bigr]\dd B(t),\label{eq:def_varepsnk32}\\
\varepsilon_{n,k}^{(3,3)}&=\sum_{j=0}^{n-1}\int_{t_j}^{t_{j+1}}\Bigl[\bigl(g\cdot \nabla\bigr)g_k(X_j)-\bigl(g\cdot\nabla\bigr)g_k(X(t_j))\Bigr]\bigl[B(t)-B(t_j)\bigr]\dd B(t).\label{eq:def_varepsnk33}
\end{align}

Theorem~\ref{theo:stronghigherorder} is then a straightforward corollary of Lemmas~\ref{lem:varepsilon1},~\ref{lem:varepsilon2} and~\ref{lem:varepsilon3} stated below on the auxiliary error terms appearing in the right-hand side of the decomposition~\eqref{eq:decomp_enk}.

\begin{lemma}\label{lem:varepsilon1}
There exists $C(T)\in(0,\infty)$ such that
\begin{equation}\label{eq:lem_varepsilon1}
\sum_{\iota=1}^{2}\sum_{k=1}^{d}\underset{1\le n\le N}\sup~\E[|\varepsilon_{n,k}^{(1,\iota)}|^2]\le C(T)\dt^2.
\end{equation}
\end{lemma}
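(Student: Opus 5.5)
The two error terms $\varepsilon_{n,k}^{(1,1)}$ and $\varepsilon_{n,k}^{(1,2)}$ involve only the exact solution $X$, so Lemma~\ref{lem:varepsilon1} is a statement about the classical Milstein-type local expansion of~\eqref{eq:sdeNagumog} with $M=1$. Under the assumptions of Theorem~\ref{theo:stronghigherorder}, $g^0$ and $g$ are of class $\mathcal{C}^3$ on the compact set $\mathcal{D}$. Hence $g_k^0$, $g_k$ and their derivatives up to order two are bounded, and $X(t)\in\mathcal{D}$ almost surely. I will treat the stochastic term $\varepsilon_{n,k}^{(1,2)}$ and the drift term $\varepsilon_{n,k}^{(1,1)}$ separately.

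\textbf{The stochastic term.} By the It\^o isometry,
\[
\E[|\varepsilon_{n,k}^{(1,2)}|^2]=\sum_{j=0}^{n-1}\int_{t_j}^{t_{j+1}}\E\bigl[|R_{j,k}(t)|^2\bigr]\dd t,
\]
where $R_{j,k}(t)=g_k(X(t))-g_k(X(t_j))-(g\cdot\nabla)g_k(X(t_j))[B(t)-B(t_j)]$. It therefore suffices to show $\E[|R_{j,k}(t)|^2]\le C\dt^2$ uniformly. Apply It\^o's formula to $g_k(X(\cdot))$ on $[t_j,t]$:
\[
g_k(X(t))-g_k(X(t_j))=\int_{t_j}^t \mathcal{L}g_k(X(s))\dd s+\int_{t_j}^t (g\cdot\nabla)g_k(X(s))\dd B(s),
\]
where $\mathcal{L}$ is the generator, so $\mathcal{L}g_k$ is bounded. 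The remainder then splits as
\[
R_{j,k}(t)=\int_{t_j}^t \mathcal{L}g_k(X(s))\dd s+\int_{t_j}^t\bigl[(g\cdot\nabla)g_k(X(s))-(g\cdot\nabla)g_k(X(t_j))\bigr]\dd B(s).
\]
The first piece has second moment at most $C\dt^2$. For the second, the It\^o isometry, the Lipschitz continuity of $(g\cdot\nabla)g_k$ (it is $\mathcal{C}^1$ since $g$ is $\mathcal{C}^3$... indeed $\mathcal{C}^2$) and Proposition~\ref{propo:reg_exact} give the bound $C\int_{t_j}^t|s-t_j|\dd s\le C\dt^2$. Summing over $j$ yields $\E[|\varepsilon_{n,k}^{(1,2)}|^2]\le C(T)\dt^2$.

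\textbf{The drift term.} This is the main obstacle. A crude Cauchy--Schwarz estimate only gives order $\dt$, because $g_k^0(X(t))-g_k^0(X(t_j))$ is of size $\dt^{1/2}$. Instead, apply It\^o's formula to $g_k^0(X(\cdot))$:
\[
g_k^0(X(t))-g_k^0(X(t_j))=\int_{t_j}^t\mathcal{L}g_k^0(X(s))\dd s+\int_{t_j}^t(g\cdot\nabla)g_k^0(X(s))\dd B(s).
\]
Here $g^0\in\mathcal{C}^3$, so $\mathcal{L}g_k^0$ is bounded. The bounded-variation part contributes, after summing over $j$, a term whose $L^2$ norm is at most $\sum_j\int_{t_j}^{t_{j+1}}C(t-t_j)\dd t\le C(T)\dt$. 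For the martingale part, a stochastic Fubini argument rewrites
\[
\sum_{j}\int_{t_j}^{t_{j+1}}\int_{t_j}^t(g\cdot\nabla)g_k^0(X(s))\dd B(s)\dd t=\sum_j\int_{t_j}^{t_{j+1}}(t_{j+1}-s)(g\cdot\nabla)g_k^0(X(s))\dd B(s).
\]
This is a single stochastic integral with integrand bounded by $C\dt$, so its second moment is at most $C\dt^2T$, again by the It\^o isometry. Combining both parts gives $\sup_n\E[|\varepsilon_{n,k}^{(1,1)}|^2]\le C(T)\dt^2$. Summing over $k$ and $\iota$ concludes the proof of~\eqref{eq:lem_varepsilon1}.
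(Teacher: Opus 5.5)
Your proof is correct and follows essentially the same route as the paper. For the drift term you use the Itô expansion of $g_k^0(X)$, then stochastic Fubini and the Itô isometry on the martingale part; for the diffusion term you use the Itô isometry together with Proposition~\ref{propo:reg_exact} on the Milstein remainder. The only difference is cosmetic: you group the two $\dd s$ contributions into the generator $\mathcal{L}g_k$ (resp.\ $\mathcal{L}g_k^0$), while the paper bounds the $(g^0\cdot\nabla)$ part and the $\frac12\nabla^2:gg^T$ part as separate error terms.
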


\begin{lemma}\label{lem:varepsilon2}
There exists $C(T)\in(0,\infty)$ such that
\begin{equation}\label{eq:lem_varepsilon2}
\sum_{\iota=1}^{6}\sum_{k=1}^{d}\underset{1\le n\le N}\sup~\E[|\varepsilon_{n,k}^{(2,\iota)}|^2]\le C(T)\dt^2.
\end{equation}
\end{lemma}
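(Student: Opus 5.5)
The proof treats the six error terms $\varepsilon_{n,k}^{(2,\iota)}$ one at a time. It separates the two stochastic-integral terms $\varepsilon_{n,k}^{(2,2)},\varepsilon_{n,k}^{(2,3)}$, handled by the It\^o isometry, from the four Lebesgue-integral terms $\varepsilon_{n,k}^{(2,1)},\varepsilon_{n,k}^{(2,4)},\varepsilon_{n,k}^{(2,5)},\varepsilon_{n,k}^{(2,6)}$.

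\emph{Common tool.} Derivatives of $\Phi$ up to order $4$, and hence of $\partial_\gamma\Phi$ and $\psi$ up to order $2$, are bounded by $Ce^{C(s+|\gamma|)}$. Evaluated at $\widehat{Z}_k$, and using $f_k,f_k^0,(g\cdot\nabla)f_k$ bounded together with $\bigl|\int_{t_j}^t[B(s)-B(t_j)]\dd B(s)\bigr|\le \frac12(|B(t)-B(t_j)|^2+(t-t_j))$, these weights are dominated by $e^{a(t-t_j)+b|B(t)-B(t_j)|+c|B(t)-B(t_j)|^2}$. Lemma~\ref{lem:auxexpo2} then bounds all their moments uniformly, provided $\dt<\overline{\dt}\le\dt_0$. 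This restriction is the only reason for $\overline{\dt}$. Combining Cauchy--Schwarz with Lemma~\ref{lem:auxZhat}, every quantity of the form ``weight $\times\|\widehat{Z}_k(t)-\widehat{Z}_k(t_j)\|^q$'' has $L^2$ norm $O(\dt^{q/2})$.

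\emph{Stochastic integrals.} By the It\^o isometry,
\[
\E|\varepsilon_{n,k}^{(2,2)}|^2\le C\sum_j\int_{t_j}^{t_{j+1}}\E|R_j(t)|^2\dd t,
\]
where $R_j(t)$ is the integrand of~\eqref{eq:def_varepsnk22}, and it suffices to show $\E|R_j(t)|^2\le C\dt^2$. To do so, Taylor-expand $\partial_\gamma\Phi$ at $\widehat{Z}_k(t_j)$ to second order.
\begin{itemize}
\item The first-order terms are $\partial_s\partial_\gamma\Phi\,f_k^2(t-t_j)$ and $\partial_\gamma^2\Phi\,\widehat{\Gamma}_k(t)$. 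Since $\widehat{\Gamma}_k(t)-f_k(X_j)[B(t)-B(t_j)]=f_k^0(t-t_j)+(g\cdot\nabla)f_k\int_{t_j}^t[B(s)-B(t_j)]\dd B(s)$ is $O(\dt)$ in every $L^p$, subtracting $\partial_\gamma^2\Phi(\widehat{Z}_k(t_j))f_k[B(t)-B(t_j)]$ leaves an $O(\dt)$ term.
\item The second-order remainder is a weighted $\|\widehat{Z}_k(t)-\widehat{Z}_k(t_j)\|^2$, hence also $O(\dt)$ in $L^2$.
\end{itemize}
For $\varepsilon_{n,k}^{(2,3)}$, the integrand is the product of $\partial_\gamma\Phi(\widehat{Z}_k(t))-\partial_\gamma\Phi(\widehat{Z}_k(t_j))$, which is $O(\dt^{1/2})$ by the mean value theorem, and $B(t)-B(t_j)$. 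Cauchy--Schwarz gives $L^2$ norm $O(\dt)$. In both cases the sum over $j$ of $\int\E|\cdot|^2$ is at most $CT\dt^2$.

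\emph{Lebesgue integrals.} A crude per-step bound of $O(\dt^{3/2})$ summed over $n\sim\dt^{-1}$ steps only gives $\dt^{1/2}$, so we use a conditional-mean/martingale splitting.
\begin{itemize}
\item For $\varepsilon_{n,k}^{(2,1)}$ and $\varepsilon_{n,k}^{(2,6)}$, apply It\^o's formula to $\partial_\gamma\Phi(\widehat{Z}_k(\cdot))$, resp.\ $\psi(\widehat{Z}_k(\cdot))$, on $[t_j,t]$. Computing the differential of $\widehat{\Gamma}_k$ as in the proof of Lemma~\ref{lem:auxXhat}, this produces a $\dd s$-part and a $\dd B(s)$-part; the $\psi$ case requires $\psi\in\mathcal{C}^2$, i.e.\ $\|\Phi\|_{4,\beta}<\infty$.
\begin{itemize}
\item The $\dd s$-part has integrands with uniformly bounded moments. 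Each step then contributes $O(\dt^2)$ in $L^2$, and the triangle inequality gives $O(\dt)$ overall.
\item The $\dd B$-part yields per-step terms $\xi_j=\int_{t_j}^{t_{j+1}}\int_{t_j}^t(\cdots)\dd B(s)\dd t$. These satisfy $\E[\xi_j\,|\,\mathcal{F}_{t_j}]=0$ and are $\mathcal{F}_{t_{j+1}}$-measurable, hence orthogonal across $j$, and each has $\E|\xi_j|^2\le C\dt^3$. Therefore $\E|\sum_j\xi_j|^2=\sum_j\E|\xi_j|^2\le C\dt^2$.
\end{itemize}
\item For $\varepsilon_{n,k}^{(2,4)}$, write $\partial_\gamma^2\Phi(\widehat{Z}_k(t))=\partial_\gamma^2\Phi(\widehat{Z}_k(t_j))+[\text{increment}]$.
\begin{itemize}
\item The frozen part multiplies $\int_{t_j}^{t_{j+1}}[B(t)-B(t_j)]\dd t$ by an $\mathcal{F}_{t_j}$-measurable factor. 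These terms are again orthogonal with second moments $O(\dt^3)$.
\item The increment part is $O(\dt^{1/2})\cdot O(\dt^{1/2})$ in $L^2$, so each step contributes $O(\dt^2)$.
\end{itemize}
\item For $\varepsilon_{n,k}^{(2,5)}$, the integrand contains $[B(t)-B(t_j)]^2$, so it is $O(\dt)$ in $L^2$ directly. Each step is $O(\dt^2)$, and the total is $O(\dt)$ in $L^2$.
\end{itemize}

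Summing over $\iota$ and $k$ and taking the supremum over $n$ gives~\eqref{eq:lem_varepsilon2}. The main obstacle is making the orthogonality argument rigorous while tracking the exponential weights. Each $\xi_j$ must be genuinely $\mathcal{F}_{t_{j+1}}$-measurable with zero $\mathcal{F}_{t_j}$-conditional mean, and its $L^2$ bound must survive the quadratic-exponential factor in $|B(t)-B(t_j)|^2$. I would handle both by conditioning on $\mathcal{F}_{t_j}$, so that $X_j$ is frozen, and applying Lemma~\ref{lem:auxexpo2} uniformly in $x\in\mathcal{D}$, with H\"older exponents chosen small enough that the Gaussian quadratic exponential moments stay finite for $\dt<\overline{\dt}$.
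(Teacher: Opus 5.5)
Your proposal is correct and follows the paper's proof in all essentials. It uses the same term-by-term treatment: It\^o's formula for $\partial_\gamma\Phi(\widehat{Z}_k)$ and $\psi(\widehat{Z}_k)$ in $\varepsilon_{n,k}^{(2,1)}$ and $\varepsilon_{n,k}^{(2,6)}$, the frozen/increment split for $\varepsilon_{n,k}^{(2,4)}$, the mean value theorem with Cauchy--Schwarz for $\varepsilon_{n,k}^{(2,3)}$ and $\varepsilon_{n,k}^{(2,5)}$, and Lemmas~\ref{lem:auxZhat} and~\ref{lem:auxexpo2} for small $\dt$. The differences are only cosmetic. For $\varepsilon_{n,k}^{(2,2)}$ you bound the integrand with a second-order Taylor expansion of $\partial_\gamma\Phi$ at $\widehat{Z}_k(t_j)$, where the paper uses a six-term It\^o decomposition. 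For the martingale parts you argue by conditional orthogonality of the $\xi_j$, where the paper uses stochastic Fubini and the It\^o isometry; this is the same computation.
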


\begin{lemma}\label{lem:varepsilon3}
There exists $C(T)\in(0,\infty)$ such that for all $n\in\{1,\ldots,N\}$ one has
\begin{equation}\label{eq:lem_varepsilon3}
\sum_{\iota=1}^{3}\sum_{k=1}^{d}\E[|\varepsilon_{n,k}^{(3,\iota)}|^2]\le C(T)\dt \sum_{j=0}^{n-1}\E[\|e_j\|^2].
\end{equation}
\end{lemma}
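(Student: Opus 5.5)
The three error terms $\varepsilon_{n,k}^{(3,1)}$, $\varepsilon_{n,k}^{(3,2)}$ and $\varepsilon_{n,k}^{(3,3)}$ defined by~\eqref{eq:def_varepsnk31},~\eqref{eq:def_varepsnk32} and~\eqref{eq:def_varepsnk33} are treated one at a time, in the same way as the terms $e_{n,k}^{(1,2)}$ and $e_{n,k}^{(2,2)}$ in the proof of Theorem~\ref{theo:strong}. Each is a sum over $j$ of increments of the form $\rho(X_j)-\rho(X(t_j))$, with $\rho$ one of $g_k^0$, $g_k$ or $(g\cdot\nabla)g_k$. In every case the integrand is either a Lebesgue integrand or an It\^o integrand that is $\mathcal{F}_{t_j}$-measurable, or $\mathcal{F}_{t_j}$-measurable times $[B(t)-B(t_j)]$.

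The first step is to check that all three mappings $\rho$ are Lipschitz continuous on the compact set $\mathcal{D}$. For $g_k^0$ and $g_k$ this follows from the $\mathcal{C}^3$ (hence $\mathcal{C}^1$) assumption. For $(g\cdot\nabla)g_k=\sum_\ell g_\ell\partial_{x_\ell}g_k$, the $\mathcal{C}^3$ regularity of $g$ gives that this map is $\mathcal{C}^2$ on $\mathcal{D}$, hence Lipschitz continuous. Since both $X_j$ and $X(t_j)$ belong to $\mathcal{D}$ almost surely (Proposition~\ref{propo:dpX}), one then has $|\rho(X_j)-\rho(X(t_j))|\le L\|e_j\|$ almost surely.

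For $\varepsilon_{n,k}^{(3,1)}$, I apply the Cauchy--Schwarz inequality to the sum of integrals:
\[
\E[|\varepsilon_{n,k}^{(3,1)}|^2]\le t_n\sum_{j=0}^{n-1}\dt\,\E[|g_k^0(X_j)-g_k^0(X(t_j))|^2]\le C(T)\dt\sum_{j=0}^{n-1}\E[\|e_j\|^2].
\]
For $\varepsilon_{n,k}^{(3,2)}$, which is a single It\^o integral over $[0,t_n]$ with a piecewise constant adapted integrand, the It\^o isometry gives
\[
\E[|\varepsilon_{n,k}^{(3,2)}|^2]=\sum_{j=0}^{n-1}\dt\,\E[|g_k(X_j)-g_k(X(t_j))|^2]\le C\dt\sum_{j=0}^{n-1}\E[\|e_j\|^2].
\]

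For $\varepsilon_{n,k}^{(3,3)}$, the It\^o isometry gives
\[
\E[|\varepsilon_{n,k}^{(3,3)}|^2]=\sum_{j=0}^{n-1}\int_{t_j}^{t_{j+1}}\E\bigl[|\rho(X_j)-\rho(X(t_j))|^2|B(t)-B(t_j)|^2\bigr]\dd t.
\]
Conditioning on $\mathcal{F}_{t_j}$ and using the independence of $B(t)-B(t_j)$ from $\mathcal{F}_{t_j}$ produces the factor $t-t_j\le\dt$. This yields the bound $C\dt^2\sum_{j}\E[\|e_j\|^2]$, which is even better than required and is dominated by $C\dt\sum_j\E[\|e_j\|^2]$ because $\dt<1$.

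Summing the three bounds over $k\in\{1,\ldots,d\}$ gives~\eqref{eq:lem_varepsilon3}. The argument is routine. The only points needing care are the adaptedness of $X_j$ (it is $\mathcal{F}_{t_j}$-measurable by construction of the scheme~\eqref{eq:schemeX1}) and the Lipschitz continuity of $(g\cdot\nabla)g_k$. The latter is exactly where the $\mathcal{C}^3$ assumption on $g$ is used (in fact $\mathcal{C}^2$ would suffice for this lemma alone).
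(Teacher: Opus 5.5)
Your proposal is correct and follows essentially the same route as the paper's proof. The paper also uses Lipschitz continuity of $g_k^0$, $g_k$ and $(g\cdot\nabla)g_k$ on $\mathcal{D}$, Cauchy--Schwarz for $\varepsilon_{n,k}^{(3,1)}$, the It\^o isometry for $\varepsilon_{n,k}^{(3,2)}$, and the It\^o isometry plus conditioning on $\mathcal{F}_{t_j}$ for $\varepsilon_{n,k}^{(3,3)}$, which gives the extra factor $\dt$. Your side remarks are also right: $\mathcal{C}^2$ regularity of $g$ suffices here, and the final absorption $\dt^2\le T\dt$ does not need $\dt<1$.
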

The proofs of Lemmas~\ref{lem:varepsilon1},~\ref{lem:varepsilon2} and~\ref{lem:varepsilon3} are postponed to Section~\ref{sec:error3}.

\subsection{Proof of Lemmas~\ref{lem:varepsilon1},~\ref{lem:varepsilon2} and~\ref{lem:varepsilon3}}\label{sec:error3}

In this section, we prove Lemmas~\ref{lem:varepsilon1},~\ref{lem:varepsilon2} and~\ref{lem:varepsilon3}.

\subsubsection{Proof of Lemma~\ref{lem:varepsilon1}}

The objective of this section is to prove the upper bounds~\eqref{eq:lem_varepsilon1} for the error terms $\varepsilon_{n,k}^{(1,1)}$ and $\varepsilon_{n,k}^{(1,2)}$, defined by~\eqref{eq:def_varepsnk11} and~\eqref{eq:def_varepsnk12}, related to the exact solution. The proof is divided into two parts where each error term is treated separately.

\begin{proof}[Proof of the upper bound~\eqref{eq:lem_varepsilon1} for the error term $\varepsilon_{n,k}^{(1,1)}$]

As a preliminary, a decomposition of the error term $\varepsilon_{n,k}^{(1,1)}$ defined by~\eqref{eq:def_varepsnk11} is provided.

Let $k\in\{1,\ldots,d\}$. Applying the It\^o formula, for all $j\in\{0,\ldots,N-1\}$ and all $t\in(t_j,t_{j+1})$ one has
\begin{align*}
g_k^0(X(t))-g_k^0(X(t_j))&=\int_{t_j}^{t}(g^0\cdot\nabla)g_k^0(X(s))\dd s\\
&+\int_{t_j}^{t}(g\cdot\nabla)g_k^0(X(s))\dd B(s)\\
&+\frac12\int_{t_j}^{t}\bigl(\nabla^2g_k^0:gg^T\bigr)(X(s))\dd s.
\end{align*}
As a result, for all $n\in\{1,\ldots,N\}$, the error term $\varepsilon_{n,k}^{(1,1)}$ is decomposed as
\begin{equation}\label{eq:decomp_varepsnk11}
\varepsilon_{n,k}^{(1,1)}=\varepsilon_{n,k}^{(1,1,1)}+\varepsilon_{n,k}^{(1,1,2)}+\varepsilon_{n,k}^{(1,1,3)},
\end{equation}
where the error terms $\varepsilon_{n,k}^{(1,1,1)}$, $\varepsilon_{n,k}^{(1,1,2)}$ and $\varepsilon_{n,k}^{(1,1,3)}$ are defined as
\begin{align}
\varepsilon_{n,k}^{(1,1,1)}
&=\sum_{j=0}^{n-1}\int_{t_j}^{t_{j+1}}\int_{t_j}^{t}(g^0\cdot\nabla)g_k^0(X(s))\dd s\dd t,\label{eq:def_varepsnk111}
\\
\varepsilon_{n,k}^{(1,1,2)}
&=\sum_{j=0}^{n-1}\int_{t_j}^{t_{j+1}}\int_{t_j}^{t}(g\cdot\nabla)g_k^0(X(s))\dd B(s)\dd t,\label{eq:def_varepsnk112}
\\
\varepsilon_{n,k}^{(1,1,3)}
&=\frac12\sum_{j=0}^{n-1}\int_{t_j}^{t_{j+1}}\int_{t_j}^{t}\bigl(\nabla^2g_k^0:gg^T\bigr)(X(s))\dd s\dd t.\label{eq:def_varepsnk113}
\end{align}

In the sequel, the error terms $\varepsilon_{n,k}^{(1,1,1)}$, $\varepsilon_{n,k}^{(1,1,2)}$ and $\varepsilon_{n,k}^{(1,1,3)}$ are treated separately. 

$\bullet$ {\bf Treatment of the error term $\varepsilon_{n,k}^{(1,1,1)}$}.

The mappings $g^0$ and $\nabla g_k^0$ are bounded on $\mathcal{D}$ and $X(s)\in\mathcal{D}$ almost surely for all $s\in[0,T]$. Therefore one obtains
\begin{align}
\E[|\varepsilon_{n,k}^{(1,1,1)}|^2]
&\le C(T)\sum_{j=0}^{n-1}\int_{t_j}^{t_{j+1}}\E[\bigl(\int_{t_j}^{t}\big|(g^0\cdot\nabla)g_k^0(X(s))\big|\dd s\bigr)^2]\dd t\nonumber\\
&\le C(T)\sum_{j=0}^{n-1}\int_{t_j}^{t_{j+1}}\dt^2\dd t\nonumber\\
&\le C(T)\dt^2.\label{eq:bound_varepsnk111}
\end{align}

$\bullet$ {\bf Treatment of the error term $\varepsilon_{n,k}^{(1,1,2)}$}.

Applying the stochastic Fubini theorem, the error term $\varepsilon_{n,k}^{(1,1,2)}$ is expressed as
\[
\varepsilon_{n,k}^{(1,1,2)}
=\sum_{j=0}^{n-1}\int_{t_j}^{t_{j+1}}(t_{j+1}-s)(g\cdot\nabla)g_k^0(X(s))\dd B(s).
\]
The mappings $g$ and $\nabla g_k^0$ are bounded on $\mathcal{D}$ and $X(s)\in\mathcal{D}$ almost surely for all $s\in[0,T]$. Applying the It\^o isometry formula, one obtains
\begin{align}
\E[|\varepsilon_{n,k}^{(1,1,2)}|^2]&=\sum_{j=0}^{n-1}\int_{t_j}^{t_{j+1}}(t_{j+1}-s)^2\E[|(g\cdot\nabla)g_k^0(X(s))|^2]\dd s\nonumber\\
&\le C(T)\dt^2.\label{eq:bound_varepsnk112}
\end{align}

$\bullet$ {\bf Treatment of the error term $\varepsilon_{n,k}^{(1,1,3)}$}.

The mappings $g$ and $\nabla^2 g_k^0$ are bounded on $\mathcal{D}$ and $X(s)\in\mathcal{D}$ almost surely for all $s\in[0,T]$. Therefore one obtains
\begin{align}
\E[|\varepsilon_{n,k}^{(1,1,3)}|^2]
&\le C(T)\sum_{j=0}^{n-1}\int_{t_j}^{t_{j+1}}\E[\bigl(\int_{t_j}^{t}\big|\bigl(\nabla^2g_k^0:gg^T\bigr)(X(s))\big|\dd s\bigr)^2]\dd t\nonumber\\
&\le C(T)\sum_{j=0}^{n-1}\int_{t_j}^{t_{j+1}}\dt^2\dd t\nonumber\\
&\le C(T)\dt^2.\label{eq:bound_varepsnk113}
\end{align}

$\bullet$ {\bf Conclusion.}

Recalling the decomposition~\eqref{eq:decomp_varepsnk11} of the error term $\varepsilon_{n,k}^{(1,1)}$ and combining the upper bounds~\eqref{eq:bound_varepsnk111},~\eqref{eq:bound_varepsnk112} and~\eqref{eq:bound_varepsnk113} for the error terms defined by~\eqref{eq:def_varepsnk111},~\eqref{eq:def_varepsnk112} and~\eqref{eq:bound_varepsnk113}, one finally obtains the following result: there exists $C(T)\in(0,\infty)$ such that
\begin{equation}\label{eq:bound_varepsnk11}
\sum_{k=1}^{d}\underset{1\le n\le N}\sup~\E[|\varepsilon_{n,k}^{(1,1)}|^2]\le C(T)\dt^2.
\end{equation}
This concludes the proof of the upper bound~\eqref{eq:lem_varepsilon1} for the error term $\varepsilon_{n,k}^{(1,1)}$.
\end{proof}

\begin{proof}[Proof of the upper bound~\eqref{eq:lem_varepsilon1} for the error term $\varepsilon_{n,k}^{(1,2)}$]

As a preliminary, a decomposition of the error term $\varepsilon_{n,k}^{(1,2)}$ defined by~\eqref{eq:def_varepsnk12} is provided.

Let $k\in\{1,\ldots,d\}$. Applying the It\^o formula, for all $j\in\{0,\ldots,N-1\}$ and all $t\in(t_j,t_{j+1})$ one has
\begin{align*}
g_k(X(t))-g_k(X(t_j))&=\int_{t_j}^{t}(g^0\cdot\nabla)g_k(X(s))\dd s\\
&+\int_{t_j}^{t}(g\cdot\nabla)g_k(X(s))\dd B(s)\\
&+\frac12\int_{t_j}^{t}\bigl(\nabla^2g_k:gg^T\bigr)(X(s))\dd s.
\end{align*}

Note that
\begin{align*}
\int_{t_j}^{t}(g\cdot\nabla)g_k(X(s))\dd B(s)
&=(g\cdot\nabla)g_k(X(t_j))\bigl[B(t)-B(t_j)]\\
&+\int_{t_j}^{t}\bigl[(g\cdot\nabla)g_k(X(s))-(g\cdot\nabla)g_k(X(t_j))\bigr]\dd B(s).
\end{align*}

As a result, for all $n\in\{1,\ldots,N\}$, the error term $\varepsilon_{n,k}^{(1,2)}$ is decomposed as
\begin{equation}\label{eq:decomp_varepsnk12}
\varepsilon_{n,k}^{(1,2)}=\varepsilon_{n,k}^{(1,2,1)}+\varepsilon_{n,k}^{(1,2,2)}+\varepsilon_{n,k}^{(1,2,3)},
\end{equation}
where the error terms $\varepsilon_{n,k}^{(1,2,1)}$, $\varepsilon_{n,k}^{(1,2,2)}$ and $\varepsilon_{n,k}^{(1,2,3)}$ are defined as
\begin{align}
\varepsilon_{n,k}^{(1,2,1)}
&=\sum_{j=0}^{n-1}\int_{t_j}^{t_{j+1}}\int_{t_j}^{t}(g^0\cdot\nabla)g_k(X(s))\dd s\dd B(t),\label{eq:def_varepsnk121}
\\
\varepsilon_{n,k}^{(1,2,2)}
&=\sum_{j=0}^{n-1}\int_{t_j}^{t_{j+1}}\int_{t_j}^{t}\bigl[(g\cdot\nabla)g_k(X(s))-(g\cdot\nabla)g_k(X(t_j))\bigr]\dd B(s)\dd B(t),
\label{eq:def_varepsnk122}
\\
\varepsilon_{n,k}^{(1,2,3)}
&=\frac12\sum_{j=0}^{n-1}\int_{t_j}^{t_{j+1}}\int_{t_j}^{t}\bigl(\nabla^2g_k:gg^T\bigr)(X(s))\dd s\dd B(t).\label{eq:def_varepsnk123}
\end{align}

In the sequel, the error terms $\varepsilon_{n,k}^{(1,2,1)}$, $\varepsilon_{n,k}^{(1,2,2)}$ and $\varepsilon_{n,k}^{(1,2,3)}$ are treated separately.

$\bullet$ {\bf Treatment of the error term $\varepsilon_{n,k}^{(1,2,1)}$}.

The mappings $g^0$ and $\nabla g_k$ are bounded on $\mathcal{D}$ and $X(s)\in\mathcal{D}$ almost surely for all $s\in[0,T]$. Applying the It\^o isometry formula, one has
\begin{align}
\E[|\varepsilon_{n,k}^{(1,2,1)}|^2]
&\le\sum_{j=0}^{n-1}\int_{t_j}^{t_{j+1}}\E[\bigl(\int_{t_j}^{t}\big|(g^0\cdot\nabla)g_k(X(s))\big|\dd s\bigr)^2]\dd t\nonumber\\
&\le C(T)\sum_{j=0}^{n-1}\int_{t_j}^{t_{j+1}}\dt^2\dd t\nonumber\\
&\le C(T)\dt^2.\label{eq:bound_varepsnk121}
\end{align}

$\bullet$ {\bf Treatment of the error term $\varepsilon_{n,k}^{(1,2,2)}$}.

Applying the It\^o isometry twice, one has
\begin{align*}
\E[|\varepsilon_{n,k}^{(1,2,2)}|^2]
&=\sum_{j=0}^{n-1}\int_{t_j}^{t_{j+1}}\E\Bigl[\Big|\int_{t_j}^{t}\bigl[(g\cdot\nabla)g_k(X(s))-(g\cdot\nabla)g_k(X(t_j))\bigr]\dd B(s)\Big|^2\Bigr]\dd t\\
&=\sum_{j=0}^{n-1}\int_{t_j}^{t_{j+1}}\int_{t_j}^{t}\E\Bigl[\big|(g\cdot\nabla)g_k(X(s))-(g\cdot\nabla)g_k(X(t_j))\big|^2\Bigr]\dd s\dd t.
\end{align*}
Note again that the mappings $g$ and $\nabla g_k$ are bounded on $\mathcal{D}$ and $X(s)\in\mathcal{D}$ almost surely for all $s\in[0,T]$.
Using the regularity property~\eqref{eq:reg_exact} of 
the mild solution $X$, one then obtains
\begin{align}
\E[|\varepsilon_{n,k}^{(1,2,2)}|^2]&\le C(T)\sum_{j=0}^{n-1}\int_{t_j}^{t_{j+1}}\int_{t_j}^{t}(s-t_j)\dd s\dd t\nonumber\\
&\le C(T)\dt^2.\label{eq:bound_varepsnk122} 
\end{align}

$\bullet$ {\bf Treatment of the error term $\varepsilon_{n,k}^{(1,2,3)}$}.

Applying the It\^o isometry property and recalling that the mappings $\nabla^2g_k$ 
and $g$ are bounded on $\mathcal D$ and that $X(s)\in\mathcal D$ almost surely for all $s\in[0,T]$, one obtains
\begin{align}
\E[|\varepsilon_{n,k}^{(1,2,3)}|^2]
&\le C(T)\sum_{j=0}^{n-1}\int_{t_j}^{t_{j+1}}\E[\bigl(\int_{t_j}^{t}\big|\bigl(\nabla^2g_k:gg^T\bigr)(X(s))\big|\dd s\bigr)^2]\dd t\nonumber\\
&\le C(T)\sum_{j=0}^{n-1}\int_{t_j}^{t_{j+1}}\dt^2\dd t\nonumber\\
&\le C(T)\dt^2.\label{eq:bound_varepsnk123}
\end{align}

$\bullet$ {\bf Conclusion.}

Recalling the decomposition~\eqref{eq:decomp_varepsnk11} of the error term $\varepsilon_{n,k}^{(1,2)}$ and combining the upper bounds~\eqref{eq:bound_varepsnk121},~\eqref{eq:bound_varepsnk122} and~\eqref{eq:bound_varepsnk123} for the error terms defined by~\eqref{eq:def_varepsnk121},~\eqref{eq:def_varepsnk122} and~\eqref{eq:bound_varepsnk123}, one thus obtains the following result: there exists $C(T)\in(0,\infty)$ such that
\begin{equation}\label{eq:bound_varepsnk12}
\sum_{k=1}^{d}\underset{1\le n\le N}\sup~\E[|\varepsilon_{n,k}^{(1,2)}|^2]\le C(T)\dt^2.
\end{equation}
This concludes the proof of the upper bound~\eqref{eq:lem_varepsilon1} for the error term $\varepsilon_{n,k}^{(1,2)}$.
\end{proof}

\begin{proof}[Proof of Lemma~\ref{lem:varepsilon1}]
It suffices to combine the upper bounds~\eqref{eq:bound_varepsnk11} and~\eqref{eq:bound_varepsnk12} for the error terms $\varepsilon_{n,k}^{(1,1)}$ and $\varepsilon_{n,k}^{(1,2)}$ defined by~\eqref{eq:def_varepsnk11} and~\eqref{eq:def_varepsnk12} to obtain the inequality~\eqref{eq:lem_varepsilon1}. This concludes the proof of Lemma~\ref{lem:varepsilon1}.
\end{proof}

\subsubsection{Proof of Lemma~\ref{lem:varepsilon2}}

The objective of this section is to prove the upper bounds~\eqref{eq:lem_varepsilon2} for the error terms $\varepsilon_{n,k}^{(2,\iota)}$ for $\iota=1,\ldots,6$, defined by~\eqref{eq:def_varepsnk21},\eqref{eq:def_varepsnk22},~\eqref{eq:def_varepsnk23},~\eqref{eq:def_varepsnk24},~\eqref{eq:def_varepsnk25} and~\eqref{eq:def_varepsnk26}, related to the numerical solution. The proof is divided into six parts where each error term is treated separately.

Before proceeding, it is convenient to state and prove the following auxiliary result which is employed several times in this section.
\begin{lemma}\label{lem:auxexpo3}
Let $\rho\colon\R^+\times\R\times[-1,1]\to[-1,1]$ be a continuous mapping, which satisfies the condition $\|\rho\|_{0,\beta}<\infty$ for some $\beta\in(0,\infty)$. If $\overline{\dt}$ is sufficiently small, there exists $C\in(0,\infty)$ such that for all $\dt\in(0,\overline{\dt})$, all $j\in\{0,\ldots,N-1\}$ and all $t\in(t_j,t_{j+1})$ one has
\begin{equation}\label{eq:lemauxexpo3}
\int_{t_j}^{t}\bigl(\E[|\rho(\widehat{Z}_k(s))|^{2}]\bigr)^{\frac12}\dd s+\int_{t_j}^{t}\bigl(\E[|\rho(\widehat{Z}_k(s))|^{4}]\bigr)^{\frac14}\dd s\le C(t-t_j).
\end{equation}
\end{lemma}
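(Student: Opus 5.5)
The plan is to reduce \eqref{eq:lemauxexpo3} to a uniform-in-$s$ moment bound: it suffices to show that for $q\in\{2,4\}$ there is $C$ with
\[
\underset{0\le j\le N-1}\sup~\underset{s\in[t_j,t_{j+1})}\sup~\E[|\rho(\widehat{Z}_k(s))|^{q}]\le C,
\]
since then each integrand is bounded by a constant and integrating over $[t_j,t]$ gives $C(t-t_j)$. The condition $\|\rho\|_{0,\beta}<\infty$ means that for all $(s',\gamma,y)$ one has $|\rho(s',\gamma,y)|\le \|\rho\|_{0,\beta}e^{\beta(s'+|\gamma|)}$. Hence $|\rho(\widehat{Z}_k(s))|^q\le C e^{q\beta f_k(X_j)^2(s-t_j)}e^{q\beta|\widehat{\Gamma}_k(s)|}$. (If $\rho$ really takes values in $[-1,1]$ the bound is trivial, but I would argue via growth to cover the intended uses with $\rho=\partial_\gamma\Phi,\psi,\dots$.)

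The main step is to control $|\widehat{\Gamma}_k(s)|$ by quantities covered by Lemma~\ref{lem:auxexpo2}. Since $M=1$, the iterated integral is explicit: $\int_{t_j}^{s}[B(r)-B(t_j)]\dd B(r)=\frac12\bigl((B(s)-B(t_j))^2-(s-t_j)\bigr)$. Therefore
\[
|\widehat{\Gamma}_k(s)|\le |f_k^0(X_j)|(s-t_j)+|f_k(X_j)||B(s)-B(t_j)|+\tfrac12|(g\cdot\nabla)f_k(X_j)|\bigl(|B(s)-B(t_j)|^2+(s-t_j)\bigr).
\]
Inserting this into the exponential gives a product of the form $e^{a(X_j)(s-t_j)}e^{b(X_j)|B(s)-B(t_j)|}e^{c(X_j)|B(s)-B(t_j)|^2}$. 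Here $a=q\beta(f_k^2+|f_k^0|+\frac12|(g\cdot\nabla)f_k|)$, $b=q\beta|f_k|$ and $c=\frac{q\beta}{2}|(g\cdot\nabla)f_k|$, with $q=4$ covering both cases. These are continuous on $\mathcal{D}$ because $g$ is $\mathcal{C}^3$, which makes $f_k$ of class $\mathcal{C}^1$ by Lemma~\ref{lem:g2f}.

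Next I would condition on $\mathcal{F}_{t_j}$. The variable $X_j$ is $\mathcal{F}_{t_j}$-measurable and lies in $\mathcal{D}$ by Proposition~\ref{propo:dpX}, while $B(s)-B(t_j)$ is independent of $\mathcal{F}_{t_j}$. So the conditional expectation is bounded by the supremum over $x\in\mathcal{D}$ appearing in \eqref{eq:lemauxexpo2}. Choosing $\overline{\dt}\le\dt_0$, where $\dt_0$ comes from Lemma~\ref{lem:auxexpo2} applied with these $a,b,c$, yields the uniform constant.

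The only delicate point is the quadratic exponential $e^{c|\delta B|^2}$, which is integrable only when $c(x)(s-t_j)<1/2$. This is exactly why $\overline{\dt}$ must be small, and it is handled by Lemma~\ref{lem:auxexpo2}. Because the $q=4$ case is the most restrictive, fixing $\dt_0$ for it handles both terms. The same $\overline{\dt}$ can be taken as the minimum with the one used in Lemma~\ref{lem:auxXhat}.
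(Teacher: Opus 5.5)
Your proposal is correct and follows essentially the same route as the paper: the exponential growth bound coming from $\|\rho\|_{0,\beta}<\infty$, a bound on $|\widehat{\Gamma}_k(s)|$ in terms of $|B(s)-B(t_j)|$ and $|B(s)-B(t_j)|^2$, a conditional expectation argument with Lemma~\ref{lem:auxexpo2} (which is what forces $\overline{\dt}\le\dt_0$), and integration of the resulting uniform moment bound. Your explicit use of the identity $\int_{t_j}^{s}[B(r)-B(t_j)]\dd B(r)=\frac12\bigl((B(s)-B(t_j))^2-(s-t_j)\bigr)$ makes this slightly more careful than the paper, which writes the quadratic coefficient as $|(g\cdot\nabla)f_k|^2/2$ instead of $|(g\cdot\nabla)f_k|/2$ and drops the extra drift term; you are also right that the codomain $[-1,1]$ in the statement must be read as $\R$ for the lemma to have content.
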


\begin{proof}[Proof of Lemma~\ref{lem:auxexpo3}]

Let $p\in\{1,2\}$. Due to the condition $\|\rho\|_{0,\beta}<\infty$, there exists $C\in(0,\infty)$ such that for all $s\in(t_j,t_{j+1})$ one has 
\begin{align*}
\E[|\partial_{\gamma}^2\Phi (\widehat{Z}_k(s))|^{2p}]
&\le C\E[e^{Cpf_k(X_j))^2(s-t_j)}e^{Cp|\widehat{\Gamma}_k(s)|}]\\
&\le C\bigl(\E[e^{Cp(f_k(X_j))^2+|f_k^0(X_j)|)(s-t_j)}e^{Cp|f_k(X_j)||B(s)-B(t_j)|}e^{\frac{Cp|(g\cdot \nabla)f_k(X_j)|^2}{2}|B(s)-B(t_j)|^2}].
\end{align*}
If $\overline{\dt}$ is sufficiently small, Lemma~\ref{lem:auxexpo2} can be applied with $a=Cp(f_k^2+|f_k^0|)$, $b=Cp|f_k|$ and $c=\frac{Cp|(g\cdot \nabla)f_k|^2}{2}$. As a result, there exists $C_k\in(0,\infty)$ such that
\[
\E[|\partial_{\gamma}^2\Phi (\widehat{Z}_k(s))|^{2p}]\le C_k.
\]
Integrating for $s\in(t_j,t)$ with $t\in(t_j,t_{j+1})$ yields the inequality~\eqref{eq:lemauxexpo3} with $C=\sup~\{C_k;~1\le k\le d\}$ and the proof of Lemma~\ref{lem:auxexpo3} is thus completed.
\end{proof}
Lemma~\ref{lem:auxexpo3} is applied below with $\rho=\partial_\gamma^2\Phi$, $\rho=\partial_\gamma^3\Phi$, $\rho=\psi=\partial_s\Phi+\frac12\partial_\gamma^2\Phi$, $\rho=\partial_\gamma\psi=\partial_s\partial_\gamma\Phi+\frac12\partial_\gamma^3\Phi$, $\rho=\partial_\gamma^2\psi=\partial_s\partial_\gamma^2\Phi+\frac12\partial_\gamma^4\Phi$ and $\rho=\partial_s\psi+\frac12\partial_\gamma^2\psi=\partial_s^2\Phi+\partial_s\partial_\gamma^2\Phi+\frac14\partial_\gamma^4\Phi$. The growth of all those functions is controlled assuming that there exists $\beta\in(0,\infty)$ such that $\|\Phi\|_{4,\beta}<\infty$, see~\eqref{eq:regPhi}.

\begin{proof}[Proof of the upper bound~\eqref{eq:lem_varepsilon2} for the error term $\varepsilon_{n,k}^{(2,1)}$]

As a preliminary, a decomposition of the error term $\varepsilon_{n,k}^{(2,1)}$ defined by~\eqref{eq:def_varepsnk21} is provided.

Let $k\in\{1,\ldots,d\}$. The mapping $\partial_\gamma\Phi$ is of class $\mathcal{C}^2$ and the growth of its first and second order derivatives is controlled since one has assumed that $\|\Phi\|_{3,\beta}<\infty$ for some $\beta\in(0,\infty)$, see~\eqref{eq:regPhi}.

Applying the It\^o formula for $t\mapsto \rho(\widehat{Z}_k(t))$ with $\rho=\partial_\gamma\Phi$, for all $j\in\{0,\ldots,N-1\}$ and all $t\in(t_j,t_{j+1})$, the error term $\varepsilon_{n,k}^{(2,1)}$ is decomposed as
\begin{equation}\label{eq:decomp_varepsnk21}
\varepsilon_{n,k}^{(2,1)}=\varepsilon_{n,k}^{(2,1,1)}+\varepsilon_{n,k}^{(2,1,2)}+\varepsilon_{n,k}^{(2,1,3)}+\varepsilon_{n,k}^{(2,1,4)}+\varepsilon_{n,k}^{(2,1,5)}+\varepsilon_{n,k}^{(2,1,6)},
\end{equation}
where the error terms $\varepsilon_{n,k}^{(2,1,\iota)}$ for $\iota=1,\ldots,6$ are defined as
\begin{align}
\varepsilon_{n,k}^{(2,1,1)}
&=\sum_{j=0}^{n-1}\int_{t_j}^{t_{j+1}}\int_{t_j}^{t}\partial_{\gamma}^2\Phi (\widehat{Z}_k(s)) \dd s f_k^0(X_j)\dd t,\label{eq:def_varepsnk211}
\\
\varepsilon_{n,k}^{(2,1,2)}
&=\sum_{j=0}^{n-1}\int_{t_j}^{t_{j+1}} \int_{t_j}^{t}\partial_{\gamma}^2 \Phi(\widehat{Z}_k(s))f_k(X_j)\dd B(s)f_k^0(X_j)\dd t,\label{eq:def_varepsnk212}
\\
\varepsilon_{n,k}^{(2,1,3)}
&=\sum_{j=0}^{n-1}\int_{t_j}^{t_{j+1}} \int_{t_j}^{t}\partial_{\gamma}^2\Phi(\widehat{Z}_k(s))(g\cdot\nabla)f_k(X_j)[B(s)-B(t_j)]\dd B(s) f_k^0(X_j)\dd t,\label{eq:def_varepsnk213}
\\
\varepsilon_{n,k}^{(2,1,4)}
&=\sum_{j=0}^{n-1}\int_{t_j}^{t_{j+1}} \int_{t_j}^{t}\partial_{\gamma}^3\Phi(\widehat{Z}_k(t))f_k(X_j)(g\cdot\nabla)f_k(X_j)(B(s)-B(t_j))\dd s f_k^0(X_j)\dd t,\label{eq:def_varepsnk214}
\\
\varepsilon_{n,k}^{(2,1,5)}
&=\frac12\sum_{j=0}^{n-1}\int_{t_j}^{t_{j+1}}\int_{t_j}^{t}\partial_{\gamma}^3\Phi(\widehat{Z}_k(s))[(g\cdot\nabla)f_k(X_j)]^2[B(s)-B(t_j)]^2\dd s f_k^0(X_j)\dd t,\label{eq:def_varepsnk215}
\\
\varepsilon_{n,k}^{(2,1,6)}
&=\sum_{j=0}^{n-1}\int_{t_j}^{t_{j+1}}\int_{t_j}^{t}\partial_\gamma\psi(\widehat{Z}_k(s))f_k(X_j)^2\dd s f_k^0(X_j)\dd t.\label{eq:def_varepsnk216}
\end{align}

In the sequel, the error terms $\varepsilon_{n,k}^{(2,1,\iota)}$ for $\iota=1,\ldots,6$ are treated separately.

$\bullet$ {\bf Treatment of the error term $\varepsilon_{n,k}^{(2,1,1)}$}.

Recall that $X_j\in\mathcal{D}$ almost surely, and that the mapping $f_k^0$ is bounded on $\mathcal{D}$. Therefore applying the Cauchy--Schwarz inequality, one has
\begin{align*}
\E[|\varepsilon_{n,k}^{(2,1,1)}|^2]&\le C(T)\sum_{j=0}^{n-1}\int_{t_j}^{t_{j+1}}\E[\bigl(\int_{t_j}^{t}|\partial_{\gamma}^2\Phi (\widehat{Z}_k(s))|\dd s\bigr)^2]\dd t\\
&\le C(T)\sum_{j=0}^{n-1}\int_{t_j}^{t_{j+1}}(t-t_j)\int_{t_j}^{t}\E[|\partial_{\gamma}^2\Phi (\widehat{Z}_k(s))|^2]\dd s\dd t.
\end{align*}
Applying the inequality~\eqref{eq:lemauxexpo3} from Lemma~\ref{lem:auxexpo3} with $\rho=\partial_\gamma^2\Phi$, one then obtains the upper bounds
\begin{align}
\E[|\varepsilon_{n,k}^{(2,1,1)}|^2]&\le C(T)\sum_{j=0}^{n-1}\int_{t_j}^{t_{j+1}}(t-t_j)^2\dd t\nonumber\\
&\le C(T)\dt^2.\label{eq:bound_varepsnk211}
\end{align}

$\bullet$ {\bf Treatment of the error term $\varepsilon_{n,k}^{(2,1,2)}$}.

Applying the stochastic Fubini theorem, one has
\begin{align*}
\varepsilon_{n,k}^{(2,1,2)}&=\sum_{j=0}^{n-1}\int_{t_j}^{t_{j+1}} \int_{t_j}^{t}\partial_{\gamma}^2 \Phi(\widehat{Z}_k(s))f_k(X_j)f_k^0(X_j)\dd t\dd B(s)\\
&=\sum_{j=0}^{n-1}\int_{t_j}^{t_{j+1}} (t_{j+1}-s)\partial_{\gamma}^2 \Phi(\widehat{Z}_k(s))f_k(X_j)f_k^0(X_j)\dd B(s).
\end{align*}
Recall that $X_j\in\mathcal{D}$ almost surely, and that the mappings $f_k$ and $f_k^0$ are bounded on $\mathcal{D}$.
Applying the It\^o isometry formula, one has
\begin{align*}
\E[|\varepsilon_{n,k}^{(2,1,2)}|^2]&=\sum_{j=0}^{n-1}\int_{t_j}^{t_{j+1}}(t_{j+1}-s)^2\E[|\partial_{\gamma}^2 \Phi(\widehat{Z}_k(s))f_k(X_j)f_k^0(X_j)|^2]\dd s\\
&\le C\dt^2\sum_{j=0}^{n-1}\int_{t_j}^{t_{j+1}}\E[|\partial_{\gamma}^2 \Phi(\widehat{Z}_k(s))|^2]\dd s.
\end{align*}
Applying the inequality~\eqref{eq:lemauxexpo3} from Lemma~\ref{lem:auxexpo3}  with $\rho=\partial_\gamma^2\Phi$, for $\dt\in(0,\overline{\dt})$ one thus obtains the upper bound
\begin{equation}\label{eq:bound_varepsnk212}
\E[|\varepsilon_{n,k}^{(2,1,2)}|^2]\le C(T)\dt^2.
\end{equation}

$\bullet$ {\bf Treatment of the error term $\varepsilon_{n,k}^{(2,1,3)}$}.

Applying the stochastic Fubini theorem, one has
\begin{align*}
\varepsilon_{n,k}^{(2,1,3)}&=\sum_{j=0}^{n-1}\int_{t_j}^{t_{j+1}} \int_{t_j}^{t}\partial_{\gamma}^2\Phi(\widehat{Z}_k(s))(g\cdot\nabla)f_k(X_j)[B(s)-B(t_j)]f_k^0(X_j)\dd t\dd B(s)\\
&=\sum_{j=0}^{n-1}\int_{t_j}^{t_{j+1}} (t_{j+1}-s)\partial_{\gamma}^2\Phi(\widehat{Z}_k(s))(g\cdot\nabla)f_k(X_j)[B(s)-B(t_j)]f_k^0(X_j)\dd B(s).
\end{align*}
Recall that $X_j\in\mathcal{D}$ almost surely, and that the mappings $\bigl(g\cdot\nabla\bigr)f_k$ and $f_k^0$ are bounded on $\mathcal{D}$.
Applying the It\^o isometry formula and the Cauchy--Schwarz inequality, one obtains
\begin{align*}
\E[|\varepsilon_{n,k}^{(2,1,3)}|^2]&=\sum_{j=0}^{n-1}\int_{t_j}^{t_{j+1}}(t_{j+1}-s)^2\E[|\partial_{\gamma}^2\Phi(\widehat{Z}_k(s))(g\cdot\nabla)f_k(X_j)[B(s)-B(t_j)]f_k^0(X_j)|^2]\dd s\\
&\le C\dt^2\sum_{j=0}^{n-1}\int_{t_j}^{t_{j+1}}\E[|\partial_{\gamma}^2\Phi(\widehat{Z}_k(s))|^2|B(s)-B(t_j)|^2]\dd s\\
&\le C\dt^2\sum_{j=0}^{n-1}\int_{t_j}^{t_{j+1}}\bigl(\E[|\partial_{\gamma}^2\Phi(\widehat{Z}_k(s))|^4]\bigr)^{\frac12}\bigl(\E[|B(s)-B(t_j)|^4]\bigr)^{\frac12}\dd s.
\end{align*}
Applying the inequality~\eqref{eq:lemauxexpo3} from Lemma~\ref{lem:auxexpo3} with $\rho=\partial_\gamma^2\Phi$, for $\dt\in(0,\overline{\dt})$ one obtains the upper bounds
\begin{align}
\E[|\varepsilon_{n,k}^{(2,1,3)}|^2]&\le C\dt^2\sum_{j=0}^{n-1}\int_{t_j}^{t_{j+1}}\bigl(\E[|\partial_{\gamma}^2\Phi(\widehat{Z}_k(s))|^4]\bigr)^{\frac12}(s-t_j)\dd s\nonumber\\
&\le C\dt^3\sum_{j=0}^{n-1}\int_{t_j}^{t_{j+1}}\bigl(\E[|\partial_{\gamma}^2\Phi(\widehat{Z}_k(s))|^4]\bigr)^{\frac12}\dd s\nonumber\\
&\le C(T)\dt^3.\label{eq:bound_varepsnk213}
\end{align}

$\bullet$ {\bf Treatment of the error term $\varepsilon_{n,k}^{(2,1,4)}$}.

Recall that $X_j\in\mathcal{D}$ almost surely, and that the mappings $f_k$, $\bigl(g\cdot\nabla\bigr)f_k$ and $f_k^0$ are bounded on $\mathcal{D}$. Applying the Cauchy--Schwarz inequality, one has
\begin{align*}
\E[|\varepsilon_{n,k}^{(2,1,4)}|^2]&\le C(T)\sum_{j=0}^{n-1}\int_{t_j}^{t_{j+1}}\E[\Big|\int_{t_j}^{t}\partial_{\gamma}^3\Phi(\widehat{Z}_k(t))f_k(X_j)(g\cdot\nabla)f_k(X_j)(B(s)-B(t_j))f_k^0(X_j)\dd s\Big|^2]\dd t\\
&\le C(T)\sum_{j=0}^{n-1}\int_{t_j}^{t_{j+1}}(t-t_j)\int_{t_j}^{t}\E[|\partial_{\gamma}^3\Phi(\widehat{Z}_k(t))|^2|B(s)-B(t_j)|^2]\dd s\dd t\\
&\le C(T)\sum_{j=0}^{n-1}\int_{t_j}^{t_{j+1}}(t-t_j)\int_{t_j}^{t}\bigl(\E[|\partial_{\gamma}^3\Phi(\widehat{Z}_k(t))|^4\bigr)^{\frac12}\bigl(\E[|B(s)-B(t_j)|^4]\bigr)^{\frac12}\dd s\dd t.
\end{align*}
Applying the inequality~\eqref{eq:lemauxexpo3} from Lemma~\ref{lem:auxexpo3} with $\rho=\partial_\gamma^3\Phi$, one obtains the upper bounds
\begin{align}
\E[|\varepsilon_{n,k}^{(2,1,4)}|^2]&\le C(T)\sum_{j=0}^{n-1}\int_{t_j}^{t_{j+1}}(t-t_j)^2\int_{t_j}^{t}\bigl(\E[|\partial_{\gamma}^3\Phi(\widehat{Z}_k(t))|^4\bigr)^{\frac12}\dd s\dd t\nonumber\\
&\le C(T)\sum_{j=0}^{n-1}\int_{t_j}^{t_{j+1}}(t-t_j)^3\dd t\nonumber\\
&\le C(T)\dt^3.\label{eq:bound_varepsnk214}
\end{align}

$\bullet$ {\bf Treatment of the error term $\varepsilon_{n,k}^{(2,1,5)}$}.

Recall that $X_j\in\mathcal{D}$ almost surely, and that the mappings $\bigl(g\cdot\nabla\bigr)f_k$ and $f_k^0$ are bounded on $\mathcal{D}$. Applying the Cauchy--Schwarz inequality, one has
\begin{align*}
\E[|\varepsilon_{n,k}^{(2,1,5)}|^2]&\le C(T)\sum_{j=0}^{n-1}\int_{t_j}^{t_{j+1}}\E[\Big|\int_{t_j}^{t}\partial_{\gamma}^3\Phi(\widehat{Z}_k(s))[(g\cdot\nabla)f_k(X_j)]^2[B(s)-B(t_j)]^2f_k^0(X_j)\dd s\Big|^2]\dd t\\
&\le C(T)\sum_{j=0}^{n-1}\int_{t_j}^{t_{j+1}}(t-t_j)\int_{t_j}^{t}\E[|\partial_{\gamma}^3\Phi(\widehat{Z}_k(s))|^2|B(s)-B(t_j)|^4]\dd s\dd t\\
&\le C(T)\sum_{j=0}^{n-1}\int_{t_j}^{t_{j+1}}(t-t_j)\int_{t_j}^{t}\bigl(\E[|\partial_{\gamma}^3\Phi(\widehat{Z}_k(t))|^4\bigr)^{\frac12}\bigl(\E[|B(s)-B(t_j)|^8]\bigr)^{\frac12}\dd s\dd t.
\end{align*}
Applying the inequality~\eqref{eq:lemauxexpo3} from Lemma~\ref{lem:auxexpo3} with $\rho=\partial_\gamma^3\Phi$, for $\dt\in(0,\overline{\dt})$, one obtains the upper bounds
\begin{align}
\E[|\varepsilon_{n,k}^{(2,1,5)}|^2]&\le C(T)\sum_{j=0}^{n-1}\int_{t_j}^{t_{j+1}}(t-t_j)^3\int_{t_j}^{t}\bigl(\E[|\partial_{\gamma}^3\Phi(\widehat{Z}_k(t))|^4\bigr)^{\frac12}\dd s\dd t\nonumber\\
&\le C(T)\sum_{j=0}^{n-1}\int_{t_j}^{t_{j+1}}(t-t_j)^4\dd t\nonumber\\
&\le C(T)\dt^4.\label{eq:bound_varepsnk215}
\end{align}

$\bullet$ {\bf Treatment of the error term $\varepsilon_{n,k}^{(2,1,6)}$}.

Recall that $X_j\in\mathcal{D}$ almost surely, and that the mappings $f_k$ and $f_k^0$ are bounded on $\mathcal{D}$. Applying the Cauchy--Schwarz inequality, one has
\begin{align*}
\E[|\varepsilon_{n,k}^{(2,1,6)}|^2]&\le C(T)\sum_{j=0}^{n-1}\int_{t_j}^{t_{j+1}}\E[\Big|\int_{t_j}^{t}\partial_\gamma\psi(\widehat{Z}_k(s))f_k(X_j)^2f_k^0(X_j)\dd s\Big|^2]\dd t\\
&\le C(T)\sum_{j=0}^{n-1}\int_{t_j}^{t_{j+1}}(t-t_j)\int_{t_j}^{t}\E[|\partial_\gamma\psi(\widehat{Z}_k(s))|^2]\dd s\dd t.
\end{align*}
Applying the inequality~\eqref{eq:lemauxexpo3} from Lemma~\ref{lem:auxexpo3} with $\rho=\partial_\gamma\psi$, one obtains the upper bounds
\begin{align}
\E[|\varepsilon_{n,k}^{(2,1,6)}|^2]&\le C(T)\sum_{j=0}^{n-1}\int_{t_j}^{t_{j+1}}(t-t_j)^2\dd s\dd t\nonumber\\
&\le C(T)\dt^2.\label{eq:bound_varepsnk216}
\end{align}

$\bullet$ {\bf Conclusion.}

Recalling the decomposition~\eqref{eq:decomp_varepsnk21} of the error term $\varepsilon_{n,k}^{(2,1)}$ and combining the upper bounds~\eqref{eq:bound_varepsnk211},~\eqref{eq:bound_varepsnk212},~\eqref{eq:bound_varepsnk213},~\eqref{eq:bound_varepsnk214},~\eqref{eq:bound_varepsnk215} and~\eqref{eq:bound_varepsnk216} for the error terms defined by~\eqref{eq:def_varepsnk211},~\eqref{eq:def_varepsnk212},~\eqref{eq:def_varepsnk213},~\eqref{eq:def_varepsnk214},~\eqref{eq:def_varepsnk215} and~\eqref{eq:def_varepsnk216}, one finally obtains the following result: there exists $C(T)\in(0,\infty)$ such that
\begin{equation}\label{eq:bound_varepsnk21}
\sum_{k=1}^{d}\underset{1\le n\le N}\sup~\E[|\varepsilon_{n,k}^{(2,1)}|^2]\le C(T)\dt^2.
\end{equation}
This concludes the proof of the upper bound~\eqref{eq:lem_varepsilon2} for the error term $\varepsilon_{n,k}^{(2,1)}$.
\end{proof}

\begin{proof}[Proof of the upper bound~\eqref{eq:lem_varepsilon2} for the error term $\varepsilon_{n,k}^{(2,2)}$]

As a preliminary, a decomposition of the error term $\varepsilon_{n,k}^{(2,2)}$ defined by~\eqref{eq:def_varepsnk22} is provided.

Let $k\in\{1,\ldots,d\}$. The mapping $\partial_\gamma\Phi$ is of class $\mathcal{C}^2$ and the growth of its first and second order derivatives is controlled since one has assumed that $\|\Phi\|_{3,\beta}<\infty$ for some $\beta\in(0,\infty)$, see~\eqref{eq:regPhi}.

Applying the It\^o formula for $t\mapsto \rho(\widehat{Z}_k(t))$ with $\rho=\partial_\gamma\Phi$, for all $j\in\{0,\ldots,N-1\}$ and all $t\in(t_j,t_{j+1})$, the error term $\varepsilon_{n,k}^{(2,2)}$ is decomposed as
\begin{equation}\label{eq:decomp_varepsnk22}
\varepsilon_{n,k}^{(2,2)}=\varepsilon_{n,k}^{(2,2,1)}+\varepsilon_{n,k}^{(2,2,2)}+\varepsilon_{n,k}^{(2,2,3)}+\varepsilon_{n,k}^{(2,2,4)}+\varepsilon_{n,k}^{(2,2,5)}+\varepsilon_{n,k}^{(2,2,6)},
\end{equation}
where the error terms $\varepsilon_{n,k}^{(2,2,\iota)}$ for $\iota=1,\ldots,6$ are defined as
\begin{align}
\varepsilon_{n,k}^{(2,2,1)}
&=\sum_{j=0}^{n-1}\int_{t_j}^{t_{j+1}} \int_{t_j}^{t}\partial_{\gamma}^2\Phi(\widehat{Z}_k(s)) \dd s f_k(X_j)\dd B(t),\label{eq:def_varepsnk221}
\\
\varepsilon_{n,k}^{(2,2,2)}
&=\sum_{j=0}^{n-1}\int_{t_j}^{t_{j+1}} \int_{t_j}^{t}\bigl[\partial_{\gamma}^2\Phi(\widehat{Z}_k(s))-\partial_{\gamma}^2\Phi(\widehat{Z}_k(t_j))\bigr]f_k(X_j)\dd B(s) f_k(X_j)\dd B(t),\label{eq:def_varepsnk222}
\\
\varepsilon_{n,k}^{(2,2,3)}
&=\sum_{j=0}^{n-1}\int_{t_j}^{t_{j+1}} \int_{t_j}^{t}\partial_{\gamma}^2\Phi(\widehat{Z}_k(s))(g\cdot\nabla)f_k(X_j)[B(s)-B(t_j)]\dd B(s) f_k(X_j)\dd B(t),\label{eq:def_varepsnk223}
\\
\varepsilon_{n,k}^{(2,2,4)}
&=\sum_{j=0}^{n-1}\int_{t_j}^{t_{j+1}} \int_{t_j}^{t}\partial_{\gamma}^3\Phi(\widehat{Z}_k(t))f_k(X_j)(g\cdot\nabla)f_k(X_j)(B(s)-B(t_j))\dd s f_k(X_j)\dd B(t),\label{eq:def_varepsnk224}
\\
\varepsilon_{n,k}^{(2,2,5)}
&=\sum_{j=0}^{n-1}\int_{t_j}^{t_{j+1}} \int_{t_j}^{t} \partial_{\gamma}^3\Phi(\widehat{Z}_k(s))[(g\cdot\nabla)f_k(X_j)]^2[B(s)-B(t_j)]^2\dd s f_k(X_j)\dd B(t),\label{eq:def_varepsnk225}
\\
\varepsilon_{n,k}^{(2,2,6)}
&=\sum_{j=0}^{n-1}\int_{t_j}^{t_{j+1}} \int_{t_j}^{t}\partial_\gamma\psi(\widehat{Z}_k(s))f_k(X_j)^2\dd s f_k(X_j)\dd B(t).\label{eq:def_varepsnk226}
\end{align}

In the sequel, the error terms $\varepsilon_{n,k}^{(2,2,\iota)}$ for $\iota=1,\ldots,6$ are treated separately.

$\bullet$ {\bf Treatment of the error term $\varepsilon_{n,k}^{(2,2,1)}$}.

Recall that $X_j\in\mathcal{D}$ almost surely, and that the mapping $f_k$ is bounded on $\mathcal{D}$. Applying the It\^o isometry formula and the Cauchy--Schwarz inequality, one has
\begin{align*}
\E[|\varepsilon_{n,k}^{(2,2,1)}|^2]&=\sum_{j=0}^{n-1}\int_{t_j}^{t_{j+1}} \E[\big|\int_{t_j}^{t}|\partial_{\gamma}^2\Phi(\widehat{Z}_k(s))|\dd s\big|^2f_k(X_j)^2]\dd t\\
&\le \sum_{j=0}^{n-1}\int_{t_j}^{t_{j+1}} (t-t_j)\int_{t_j}^{t}\E[|\partial_{\gamma}^2\Phi(\widehat{Z}_k(s))|^2]\dd s\dd t.
\end{align*}
Applying the inequality~\eqref{eq:lemauxexpo3} from Lemma~\ref{lem:auxexpo3} with $\rho=\partial_{\gamma}^2\Phi$, one obtains the upper bounds
\begin{align}
\E[|\varepsilon_{n,k}^{(2,2,1)}|^2]&\le C\sum_{j=0}^{n-1}\int_{t_j}^{t_{j+1}}(t-t_j)^2\dd t\nonumber\\
&\le C(T)\dt^2.\label{eq:bound_varepsnk221}
\end{align}

$\bullet$ {\bf Treatment of the error term $\varepsilon_{n,k}^{(2,2,2)}$}.

Recall that $X_j\in\mathcal{D}$ almost surely, and that the mapping $f_k$ is bounded on $\mathcal{D}$. Applying the It\^o isometry formula twice, one has
\begin{align*}
\E[|\varepsilon_{n,k}^{(2,2,2)}|^2]
&=\sum_{j=0}^{n-1}\int_{t_j}^{t_{j+1}}\E[\Big|\int_{t_j}^{t}\bigl[\partial_{\gamma}^2\Phi(\widehat{Z}_k(s))-\partial_{\gamma}^2\Phi(\widehat{Z}_k(t_j))\bigr]f_k(X_j)^2\dd B(s)\Big|^2]\dd t\\
&=\sum_{j=0}^{n-1}\int_{t_j}^{t_{j+1}}\int_{t_j}^{t}\E[\bigl[\partial_{\gamma}^2\Phi(\widehat{Z}_k(s))-\partial_{\gamma}^2\Phi(\widehat{Z}_k(t_j))\bigr]^2f_k(X_j)^4]\dd s\dd t\\
&\le C\sum_{j=0}^{n-1}\int_{t_j}^{t_{j+1}}\int_{t_j}^{t}\E[\bigl[\partial_{\gamma}^2\Phi(\widehat{Z}_k(s))-\partial_{\gamma}^2\Phi(\widehat{Z}_k(t_j))\bigr]^2]\dd s\dd t.
\end{align*}
The growth of the derivatives of $\partial_\gamma^2\Phi$ are controlled since it is assumed that $\|\Phi\|_{3,\beta}<\infty$ for some $\beta\in(0,\infty)$. As a result, applying the Cauchy--Schwarz inequality and recalling the definition~\eqref{eq:Zhat} of $\widehat{Z}_k(t)$ and of $\widehat{\Gamma}_k(t)$, for all $t\in(t_j,t_{j+1})$, one has
\begin{align*}
\E\bigl[\big|\partial_\gamma\Phi(\widehat{Z}_k(t))-\partial_\gamma\Phi(\widehat{Z}_k(t_j))\big|^2\big]&\le C\E[e^{C|f_k(X_j)|^2(t-t_j)}e^{C|\Gamma_k(t_j)|}\|\widehat{Z}_k(t)-\widehat{Z}_k(t_j)\|^2]\\
&\le C\bigl(\E[e^{2C|f_k(X_j)|^2(t-t_j)}e^{2C|\Gamma_k(t_j)|}]\bigr)^{\frac12}\bigl(\E[\|\widehat{Z}_k(t)-\widehat{Z}_k(t_j)\|^4]\bigr)^{\frac12}.
\end{align*}
If $\overline{\dt}$ is sufficiently small, Lemma~\ref{lem:auxexpo2} is applicable. Applying also the inequality~\eqref{eq:auxZhat1} from Lemma~\ref{lem:auxZhat}, one obtains the upper bounds
\begin{align}
\E[|\varepsilon_{n,k}^{(2,2,2)}|^2]&\le C\sum_{j=0}^{n-1}\int_{t_j}^{t_{j+1}}\int_{t_j}^{t}(s-t_j)\dd s\dd t\nonumber\\
&\le C\sum_{j=0}^{n-1}\int_{t_j}^{t_{j+1}}(t-t_j)^2\dd t\nonumber\\
&\le C(T)\dt^2.\label{eq:bound_varepsnk222}
\end{align}

$\bullet$ {\bf Treatment of the error term $\varepsilon_{n,k}^{(2,2,3)}$}.

Recall that $X_j\in\mathcal{D}$ almost surely, and the mappings $f_k$ and $\bigl(g\cdot \nabla\bigr)f_k$ are bounded on $\mathcal{D}$. Applying the It\^o isometry formula twice and the Cauchy--Schwarz inequality, one has
\begin{align*}
\E[|\varepsilon_{n,k}^{(2,2,3)}|^2]&=\sum_{j=0}^{n-1}\int_{t_j}^{t_{j+1}} \E[\Big|\int_{t_j}^{t}\partial_{\gamma}^2\Phi(\widehat{Z}_k(s))(g\cdot\nabla)f_k(X_j)f_k(X_j)[B(s)-B(t_j)]\dd B(s)\Big|^2]\dd t\\
&=\sum_{j=0}^{n-1}\int_{t_j}^{t_{j+1}} \int_{t_j}^{t}\E[\big|\partial_{\gamma}^2\Phi(\widehat{Z}_k(s))\big|^2 \big|(g\cdot\nabla)f_k(X_j)f_k(X_j)\big|^2|B(s)-B(t_j)|^2]\dd s\dd t\\
&\le C\sum_{j=0}^{n-1}\int_{t_j}^{t_{j+1}} \int_{t_j}^{t}\bigl(\E[\big|\partial_{\gamma}^2\Phi(\widehat{Z}_k(s))\big|^4]\bigr)^{\frac12}\bigl(\E[|B(s)-B(t_j)|^2]\bigr)^{\frac12}\dd s\dd t.
\end{align*}
Applying the inequality~\eqref{eq:lemauxexpo3} from Lemma~\ref{lem:auxexpo3} with $\rho=\partial_{\gamma}^2\Phi$, one obtains the upper bounds
\begin{align}
\E[|\varepsilon_{n,k}^{(2,2,3)}|^2]&\le C\sum_{j=0}^{n-1}\int_{t_j}^{t_{j+1}} (t-t_j)\int_{t_j}^{t}\bigl(\E[\big|\partial_{\gamma}^2\Phi(\widehat{Z}_k(s))\big|^4]\bigr)^{\frac12}\dd s\dd t\nonumber\\
&\le C\sum_{j=0}^{n-1}\int_{t_j}^{t_{j+1}} (t-t_j)^2\dd t\nonumber\\
&\le C(T)\dt^2.\label{eq:bound_varepsnk223}
\end{align}

$\bullet$ {\bf Treatment of the error term $\varepsilon_{n,k}^{(2,2,4)}$}.

Recall that $X_j\in\mathcal{D}$ almost surely, and the mappings $f_k$ and $\bigl(g\cdot \nabla\bigr)f_k$ are bounded on $\mathcal{D}$. Applying the It\^o isometry formula and the Cauchy--Schwarz inequality, one has
\begin{align*}
\E[|\varepsilon_{n,k}^{(2,2,4)}|^2]
&=\sum_{j=0}^{n-1}\int_{t_j}^{t_{j+1}} \E[\Big|\int_{t_j}^{t}\partial_{\gamma}^3\Phi(\widehat{Z}_k(t))f_k(X_j)^2(g\cdot\nabla)f_k(X_j)[B(s)-B(t_j)]\dd s\Big|^2]\dd t\\
&\le \sum_{j=0}^{n-1}\int_{t_j}^{t_{j+1}}(t-t_j)\int_{t_j}^{t}\E[|\partial_{\gamma}^3\Phi(\widehat{Z}_k(t))|^2f_k(X_j)^4[(g\cdot\nabla)f_k(X_j)]^2|B(s)-B(t_j))|^2]\dd s\dd t\\
&\le \sum_{j=0}^{n-1}\int_{t_j}^{t_{j+1}}(t-t_j)\int_{t_j}^{t}\bigl(\E[|\partial_{\gamma}^3\Phi(\widehat{Z}_k(t))|^4]\bigr))^{\frac12}\bigl(\E[|B(s)-B(t_j))|^4]\bigr)^{\frac12}\dd s\dd t.
\end{align*}
Applying the inequality~\eqref{eq:lemauxexpo3} from Lemma~\ref{lem:auxexpo3} with $\rho=\partial_{\gamma}^3\Phi$, one obtains the upper bounds
\begin{align}
\E[|\varepsilon_{n,k}^{(2,2,4)}|^2]&\le C\sum_{j=0}^{n-1}\int_{t_j}^{t_{j+1}} (t-t_j)^2\int_{t_j}^{t}\bigl(\E[\big|\partial_{\gamma}^3\Phi(\widehat{Z}_k(s))\big|^4]\bigr)^{\frac12}\dd s\dd t\nonumber\\
&\le C\sum_{j=0}^{n-1}\int_{t_j}^{t_{j+1}} (t-t_j)^3\dd t\nonumber\\
&\le C(T)\dt^3.\label{eq:bound_varepsnk224}
\end{align}

$\bullet$ {\bf Treatment of the error term $\varepsilon_{n,k}^{(2,2,5)}$}.

Recall that $X_j\in\mathcal{D}$ almost surely, and the mappings $f_k$ and $\bigl(g\cdot \nabla\bigr)f_k$ are bounded on $\mathcal{D}$. Applying the It\^o isometry formula and the Cauchy--Schwarz inequality, one has
\begin{align*}
\E[|\varepsilon_{n,k}^{(2,2,5)}|^2]
&=\sum_{j=0}^{n-1}\int_{t_j}^{t_{j+1}}\E[\Big|\int_{t_j}^{t} \partial_{\gamma}^3\Phi(\widehat{Z}_k(s))f_k(X_j)[(g\cdot\nabla)f_k(X_j)]^2[B(s)-B(t_j)]^2\dd s\Big|^2]\dd t\\
&\le \sum_{j=0}^{n-1}\int_{t_j}^{t_{j+1}}(t-t_j)\int_{t_j}^{t}\E[|\partial_{\gamma}^3\Phi(\widehat{Z}_k(s))|^2f_k(X_j)^2[(g\cdot\nabla)f_k(X_j)]^4|B(s)-B(t_j)|^4]\dd s\dd t\\
&\le C\sum_{j=0}^{n-1}\int_{t_j}^{t_{j+1}}(t-t_j)\int_{t_j}^{t}\bigl(\E[|\partial_{\gamma}^3\Phi(\widehat{Z}_k(s))|^4]\bigr)^{\frac12}\bigl(\E[|B(s)-B(t_j)|^8]\bigr)^{\frac12}\dd s\dd t.
\end{align*}
Applying the inequality~\eqref{eq:lemauxexpo3} from Lemma~\ref{lem:auxexpo3} with $\rho=\partial_{\gamma}^3\Phi$, one obtains the upper bounds
\begin{align}
\E[|\varepsilon_{n,k}^{(2,2,5)}|^2]&\le C\sum_{j=0}^{n-1}\int_{t_j}^{t_{j+1}} (t-t_j)^3\int_{t_j}^{t}\bigl(\E[\big|\partial_{\gamma}^3\Phi(\widehat{Z}_k(s))\big|^4]\bigr)^{\frac12}\dd s\dd t\nonumber\\
&\le C\sum_{j=0}^{n-1}\int_{t_j}^{t_{j+1}} (t-t_j)^4\dd t\nonumber\\
&\le C(T)\dt^4.\label{eq:bound_varepsnk225}
\end{align}

$\bullet$ {\bf Treatment of the error term $\varepsilon_{n,k}^{(2,2,6)}$}.

Recall that $X_j\in\mathcal{D}$ almost surely, and that the mapping $f_k$ is bounded on $\mathcal{D}$. Applying the It\^o isometry formula and the Cauchy--Schwarz inequality, one has
\begin{align*}
\E[|\varepsilon_{n,k}^{(2,2,6)}|^2]
&=\sum_{j=0}^{n-1}\int_{t_j}^{t_{j+1}} \E[\Big|\int_{t_j}^{t}\partial_\gamma\psi(\widehat{Z}_k(s))f_k(X_j)^3\dd s\Big|^2] \dd t\\
&\le C\sum_{j=0}^{n-1}\int_{t_j}^{t_{j+1}}(t-t_j)\int_{t_j}^{t}\E[|\partial_\gamma\psi(\widehat{Z}_k(s))|^2f_k(X_j)^6]\dd s\dd t\\
&\le C\sum_{j=0}^{n-1}\int_{t_j}^{t_{j+1}}(t-t_j)\int_{t_j}^{t}\E[|\partial_\gamma\psi(\widehat{Z}_k(s))|^2]\dd s\dd t.
\end{align*}
Applying the inequality~\eqref{eq:lemauxexpo3} from Lemma~\ref{lem:auxexpo3} with $\rho=\partial_{\gamma}\psi$, one obtains the upper bounds
\begin{align}
\E[|\varepsilon_{n,k}^{(2,2,6)}|^2]&\le C\sum_{j=0}^{n-1}\int_{t_j}^{t_{j+1}} (t-t_j)^2\dd t\nonumber\\
&\le C(T)\dt^2.\label{eq:bound_varepsnk226}
\end{align}

$\bullet$ {\bf Conclusion.}

Recalling the decomposition~\eqref{eq:decomp_varepsnk22} of the error term $\varepsilon_{n,k}^{(2,2)}$ and combining the upper bounds~\eqref{eq:bound_varepsnk221},~\eqref{eq:bound_varepsnk222},~\eqref{eq:bound_varepsnk223},~\eqref{eq:bound_varepsnk224},~\eqref{eq:bound_varepsnk225} and~\eqref{eq:bound_varepsnk226} for the error terms defined by~\eqref{eq:def_varepsnk221},~\eqref{eq:def_varepsnk222},~\eqref{eq:def_varepsnk223},~\eqref{eq:def_varepsnk224},~\eqref{eq:def_varepsnk225} and~\eqref{eq:def_varepsnk226}, one finally obtains the following result: there exists $C(T)\in(0,\infty)$ such that
\begin{equation}\label{eq:bound_varepsnk22}
\sum_{k=1}^{d}\underset{1\le n\le N}\sup~\E[|\varepsilon_{n,k}^{(2,2)}|^2]\le C(T)\dt^2.
\end{equation}
This concludes the proof of the upper bound~\eqref{eq:lem_varepsilon2} for the error term $\varepsilon_{n,k}^{(2,2)}$.
\end{proof}

\begin{proof}[Proof of the upper bound~\eqref{eq:lem_varepsilon2} for the error term $\varepsilon_{n,k}^{(2,3)}$]

The error term $\varepsilon_{n,k}^{(2,3)}$ is defined by~\eqref{eq:def_varepsnk23}. Recall that $X_j\in\mathcal{D}$ almost surely, and that the mapping $\bigl(g\cdot\nabla\bigr)f_k$ is bounded on $\mathcal{D}$.
Applying the It\^o isometry formula and the Cauchy--Schwarz inequality, one has
\begin{align*}
\E[|\varepsilon_{n,k}^{(2,3)}|^2]&=\sum_{j=0}^{n-1}\int_{t_j}^{t_{j+1}}\E\left[\big|\partial_\gamma\Phi(\widehat{Z}_k(t))-\partial_\gamma\Phi(\widehat{Z}_k(t_j))\big|^2\big|\bigl(g\cdot \nabla\bigr) f_k(X_j)\big|^{2}\big|B(t)-B(t_j)\big|^2 \right]\dd t\\
&\le \sum_{j=0}^{n-1}\int_{t_j}^{t_{j+1}}\bigl(\E\bigl[\big|\partial_\gamma\Phi(\widehat{Z}_k(t))-\partial_\gamma\Phi(\widehat{Z}_k(t_j))\big|^4\bigr]\bigr)^{\frac12}\bigl(\E\bigl[\big|B(t)-B(t_j)|^4\bigr]\bigr)^{\frac12} \dd t\\
&\le \sum_{j=0}^{n-1}\int_{t_j}^{t_{j+1}}(t-t_j)\bigl(\E\bigl[\big|\partial_\gamma\Phi(\widehat{Z}_k(t))-\partial_\gamma\Phi(\widehat{Z}_k(t_j))\big|^4\bigr]\bigr)^{\frac12} \dd t.
\end{align*}
The growth of the derivatives of $\partial_\gamma\Phi$ are controlled since it is assumed that $\|\Phi\|_{2,\beta}<\infty$ for some $\beta\in(0,\infty)$. As a result, applying the Cauchy--Schwarz inequality and recalling the definition~\eqref{eq:Zhat} of $\widehat{Z}_k(t)$ and of $\widehat{\Gamma}_k(t)$, for all $t\in(t_j,t_{j+1})$, one has
\begin{align*}
\E\bigl[\big|\partial_\gamma\Phi(\widehat{Z}_k(t))-\partial_\gamma\Phi(\widehat{Z}_k(t_j))\big|^4\big]&\le C\E[e^{C|f_k(X_j)|^2(t-t_j)}e^{C|\Gamma_k(t_j)|}\|\widehat{Z}_k(t)-\widehat{Z}_k(t_j)\|^4]\\
&\le C\bigl(\E[e^{2C|f_k(X_j)|^2(t-t_j)}e^{2C|\Gamma_k(t_j)|}]\bigr)^{\frac12}\bigl(\E[\|\widehat{Z}_k(t)-\widehat{Z}_k(t_j)\|^8]\bigr)^{\frac12}.
\end{align*}
If $\overline{\dt}$ is sufficiently small, Lemma~\ref{lem:auxexpo2} is applicable. Applying also the inequality~\eqref{eq:auxZhat1} from Lemma~\ref{lem:auxZhat}, one obtains the upper bound
\[
\E[|\varepsilon_{n,k}^{(2,3)}|^2]\le C(T)\sum_{j=0}^{n-1}\int_{t_j}^{t_{j+1}}(t-t_j)^2 \dd t.
\]
Therefore one obtains the following result: there exists $C(T)\in(0,\infty)$ such that
\begin{equation}\label{eq:bound_varepsnk23}
\sum_{k=1}^{d}\underset{1\le n\le N}\sup~\E[|\varepsilon_{n,k}^{(2,3)}|^2]\le C(T)\dt^2.
\end{equation}
This concludes the proof of the upper bound~\eqref{eq:lem_varepsilon2} for the error term $\varepsilon_{n,k}^{(2,3)}$.
\end{proof}

\begin{proof}[Proof of the upper bound~\eqref{eq:lem_varepsilon2} for the error term $\varepsilon_{n,k}^{(2,4)}$]

The error term $\varepsilon_{n,k}^{(2,4)}$ is defined by~\eqref{eq:def_varepsnk24} and is decomposed as
\begin{equation}\label{eq:decomp_varepsnk24}
\varepsilon_{n,k}^{(2,4)}=\varepsilon_{n,k}^{(2,4,1)}+\varepsilon_{n,k}^{(2,4,2)},
\end{equation}
where the error terms $\varepsilon_{n,k}^{(2,4,1)}$ and $\varepsilon_{n,k}^{(2,4,2)}$ are defined as
\begin{align}
\varepsilon_{n,k}^{(2,4,1)}
&=\sum_{j=0}^{n-1}\int_{t_j}^{t_{j+1}}\bigl[\partial_{\gamma}^2\Phi(\widehat{Z}_k(t))-\partial_{\gamma}^2\Phi(\widehat{Z}_k(t_j))\bigr]\bigl(g\cdot\nabla\bigr)f_k(X_j)f_k(X_j)\bigl[B(t)-B(t_j)\bigr]\dd t,\label{eq:def_varepsnk241}
\\
\varepsilon_{n,k}^{(2,4,2)}
&=\sum_{j=0}^{n-1}\int_{t_j}^{t_{j+1}}\partial_{\gamma}^2\Phi(\widehat{Z}_k(t_j))\bigl(g\cdot\nabla\bigr)f_k(X_j)f_k(X_j)\bigl[B(t)-B(t_j)\bigr]\dd t.\label{eq:def_varepsnk242}
\end{align}

$\bullet$ {\bf Treatment of the error term $\varepsilon_{n,k}^{(2,4,1)}$}.

Recall that $X_j\in\mathcal{D}$ almost surely, and the mappings $f_k$ and $\bigl(g\cdot \nabla\bigr)f_k$ are bounded on $\mathcal{D}$. Applying the Cauchy--Schwarz inequality, one has
\begin{align*}
\E[|\varepsilon_{n,k}^{(2,4,1)}|^2]
&\le C(T)\sum_{j=0}^{n-1}\int_{t_j}^{t_{j+1}}\E[\big|\partial_{\gamma}^2\Phi(\widehat{Z}_k(t))-\partial_{\gamma}^2\Phi(\widehat{Z}_k(t_j))\big|^2[\bigl(g\cdot\nabla\bigr)f_k(X_j)]^2f_k(X_j)^2|B(t)-B(t_j)|^2]\dd t\\
&\le C(T)\sum_{j=0}^{n-1}\int_{t_j}^{t_{j+1}}\bigl(\E[\big|\partial_{\gamma}^2\Phi(\widehat{Z}_k(t))-\partial_{\gamma}^2\Phi(\widehat{Z}_k(t_j))\big|^4]\bigr)^{\frac12}\bigl(\E[|B(t)-B(t_j)|^4]\bigr)^{\frac12}\dd t.
\end{align*}
The growth of the derivatives of $\partial_\gamma^2\Phi$ are controlled since it is assumed that $\|\Phi\|_{3,\beta}<\infty$ for some $\beta\in(0,\infty)$. As a result, applying the Cauchy--Schwarz inequality and recalling the definition~\eqref{eq:Zhat} of $\widehat{Z}_k(t)$ and of $\widehat{\Gamma}_k(t)$, for all $t\in(t_j,t_{j+1})$, one has
\begin{align*}
\E\bigl[\big|\partial_\gamma\Phi(\widehat{Z}_k(t))-\partial_\gamma\Phi(\widehat{Z}_k(t_j))\big|^4\big]&\le C\E[e^{C|f_k(X_j)|^2(t-t_j)}e^{C|\Gamma_k(t_j)|}\|\widehat{Z}_k(t)-\widehat{Z}_k(t_j)\|^4]\\
&\le C\bigl(\E[e^{2C|f_k(X_j)|^2(t-t_j)}e^{2C|\Gamma_k(t_j)|}]\bigr)^{\frac12}\bigl(\E[\|\widehat{Z}_k(t)-\widehat{Z}_k(t_j)\|^8]\bigr)^{\frac12}.
\end{align*}
If $\overline{\dt}$ is sufficiently small, Lemma~\ref{lem:auxexpo2} is applicable. Applying also the inequality~\eqref{eq:auxZhat1} from Lemma~\ref{lem:auxZhat}, one obtains the upper bounds
\begin{align}
\E[|\varepsilon_{n,k}^{(2,4,1)}|^2]
&\le C(T)\sum_{j=0}^{n-1}\int_{t_j}^{t_{j+1}}\bigl(\E[\big|\partial_{\gamma}^2\Phi(\widehat{Z}_k(t))-\partial_{\gamma}^2\Phi(\widehat{Z}_k(t_j))\big|^4]\bigr)^{\frac12}(t-t_j)\dd t\nonumber\\
&\le C(T)\sum_{j=0}^{n-1}\int_{t_j}^{t_{j+1}}(t-t_j)^2\dd t\nonumber\\
&\le C(T)\dt^2.\label{eq:bound_varepsnk241}
\end{align}

$\bullet$ {\bf Treatment of the error term $\varepsilon_{n,k}^{(2,4,2)}$}.

Applying the stochastic Fubini theorem, one has
\begin{align*}
\varepsilon_{n,k}^{(2,4,2)}
&=\sum_{j=0}^{n-1}\int_{t_j}^{t_{j+1}}\int_{t_j}^{t}\partial_{\gamma}^2\Phi(\widehat{Z}_k(t_j))\bigl(g\cdot\nabla\bigr)f_k(X_j)f_k(X_j)\dd B(s)\dd t\\
&=\sum_{j=0}^{n-1}\int_{t_j}^{t_{j+1}}(t_{j+1}-s)\partial_{\gamma}^2\Phi(\widehat{Z}_k(t_j))\bigl(g\cdot\nabla\bigr)f_k(X_j)f_k(X_j)\dd B(s).
\end{align*}
Recall that $X_j\in\mathcal{D}$ almost surely, and the mappings $f_k$ and $\bigl(g\cdot \nabla\bigr)f_k$ are bounded on $\mathcal{D}$. Applying the It\^o isometry formula, one has
\begin{align*}
\E[|\varepsilon_{n,k}^{(2,4,2)}|^2]
&=\sum_{j=0}^{n-1}\int_{t_j}^{t_{j+1}}(t_{j+1}-s)^2\E[\big|\partial_{\gamma}^2\Phi(\widehat{Z}_k(t_j))\bigl(g\cdot\nabla\bigr)f_k(X_j)f_k(X_j)\big|^2]\dd s\\
&\le C\dt^2\sum_{j=0}^{n-1}\int_{t_j}^{t_{j+1}}\E[|\partial_{\gamma}^2\Phi(\widehat{Z}_k(t_j))|^2]\dd s.
\end{align*}
Applying the inequality~\eqref{eq:lemauxexpo3} from Lemma~\ref{lem:auxexpo3} with $\rho=\partial_{\gamma}^2\Phi$, one obtains the upper bound
\begin{equation}\label{eq:bound_varepsnk242}
\E[|\varepsilon_{n,k}^{(2,4,2)}|^2]\le C(T)\dt^2.
\end{equation}

Recalling the decomposition~\eqref{eq:decomp_varepsnk24} of the error term $\varepsilon_{n,k}^{(2,4)}$ and combining the upper bounds~\eqref{eq:bound_varepsnk241} and~\eqref{eq:bound_varepsnk242} for the error terms defined by~\eqref{eq:def_varepsnk241} and~\eqref{eq:def_varepsnk242}, one obtains the following result: there exists $C(T)\in(0,\infty)$ such that
\begin{equation}\label{eq:bound_varepsnk24}
\sum_{k=1}^{d}\underset{1\le n\le N}\sup~\E[|\varepsilon_{n,k}^{(2,4)}|^2]\le C(T)\dt^2.
\end{equation}
This concludes the proof of the upper bound~\eqref{eq:lem_varepsilon2} for the error term $\varepsilon_{n,k}^{(2,4)}$.
\end{proof}

\begin{proof}[Proof of the upper bound~\eqref{eq:lem_varepsilon2} for the error term $\varepsilon_{n,k}^{(2,5)}$]

The error term is defined by $\varepsilon_{n,k}^{(2,5)}$. Recall that $X_j\in\mathcal{D}$ almost surely, and that the mapping $\bigl(g\cdot\nabla\bigr)f_k$ is bounded on $\mathcal{D}$. Applying the Cauchy--Schwarz inequality, one has
\begin{align*}
\E[|\varepsilon_{n,k}^{(2,5)}|^2]
&\le C(T)\sum_{j=0}^{n-1}\int_{t_j}^{t_{j+1}}\E[|\partial_{\gamma}^2\Phi(\widehat{Z}_k(t))|^2|B(t)-B(t_j)|^4]\dd t\\
&\le C(T)\sum_{j=0}^{n-1}\int_{t_j}^{t_{j+1}}\bigl(\E[|\partial_{\gamma}^2\Phi(\widehat{Z}_k(t))|^4\bigr)^{\frac12}\bigl(\E[|B(t)-B(t_j)|^8]\bigr)^{\frac12}\dd t\\
&\le C(T)\dt^2\sum_{j=0}^{n-1}\int_{t_j}^{t_{j+1}}\bigl(\E[|\partial_{\gamma}^2\Phi(\widehat{Z}_k(t))|^4\bigr)^{\frac12}\dd t.
\end{align*}
Applying the inequality~\eqref{eq:lemauxexpo3} from Lemma~\ref{lem:auxexpo3} with $\rho=\partial_\gamma^2\Phi$, one obtains the following result: there exists $C(T)\in(0,\infty)$ such that
\begin{equation}\label{eq:bound_varepsnk25}
\sum_{k=1}^{d}\underset{1\le n\le N}\sup~\E[|\varepsilon_{n,k}^{(2,5)}|^2]\le C(T)\dt^2.
\end{equation}
This concludes the proof of the upper bound~\eqref{eq:lem_varepsilon2} for the error term $\varepsilon_{n,k}^{(2,5)}$.
\end{proof}

\begin{proof}[Proof of the upper bound~\eqref{eq:lem_varepsilon2} for the error term $\varepsilon_{n,k}^{(2,6)}$]

As a preliminary, a decomposition of the error term $\varepsilon_{n,k}^{(2,6)}$ defined by~\eqref{eq:def_varepsnk26} is provided.

Let $k\in\{1,\ldots,d\}$. The mapping $\psi=\partial_s\Phi+\frac12\partial_\gamma^2\Phi$ is of class $\mathcal{C}^2$ and the growth of its first and second order derivatives is controlled since one has assumed that $\|\Phi\|_{4,\beta}<\infty$ for some $\beta\in(0,\infty)$, see~\eqref{eq:regPhi}.

Applying the It\^o formula for $t\mapsto \rho(\widehat{Z}_k(t))$ with $\rho=\psi$, for all $j\in\{0,\ldots,N-1\}$ and all $t\in(t_j,t_{j+1})$, the error term $\varepsilon_{n,k}^{(2,6)}$ is decomposed as
\begin{equation}\label{eq:decomp_varepsnk26}
\varepsilon_{n,k}^{(2,6)}=\varepsilon_{n,k}^{(2,6,1)}+\varepsilon_{n,k}^{(2,6,2)}+\varepsilon_{n,k}^{(2,6,3)}+\varepsilon_{n,k}^{(2,6,4)}+\varepsilon_{n,k}^{(2,6,5)}+\varepsilon_{n,k}^{(2,6,6)},
\end{equation}
where the error terms $\varepsilon_{n,k}^{(2,6,\iota)}$ for $\iota=1,\ldots,6$ are defined as
\begin{align}
\varepsilon_{n,k}^{(2,6,1)}
&=\sum_{j=0}^{n-1}\int_{t_j}^{t_{j+1}} \int_{t_j}^{t}\partial_\gamma \psi (\widehat{Z}_k(s)) \dd s f_k(X_j)^2\dd t,\label{eq:def_varepsnk261}
\\
\varepsilon_{n,k}^{(2,6,2)}
&=\sum_{j=0}^{n-1}\int_{t_j}^{t_{j+1}} \int_{t_j}^{t}\partial_\gamma \psi(\widehat{Z}_k(s))f_k(X_j)\dd B(s) f_k(X_j)^2\dd t,\label{eq:def_varepsnk262}
\\
\varepsilon_{n,k}^{(2,6,3)}
&=\sum_{j=0}^{n-1}\int_{t_j}^{t_{j+1}} \int_{t_j}^{t}\partial_\gamma\psi(\widehat{Z}_k(s))(g\cdot\nabla)f_k(X_j)[B(s)-B(t_j)]\dd B(s) f_k(X_j)^2\dd t,\label{eq:def_varepsnk263}
\\
\varepsilon_{n,k}^{(2,6,4)}
&=\sum_{j=0}^{n-1}\int_{t_j}^{t_{j+1}} \int_{t_j}^{t}\partial_{\gamma}^2\psi(\widehat{Z}_k(t))f_k(X_j)(g\cdot\nabla)f_k(X_j)(B(s)-B(t_j))\dd s f_k(X_j)^2\dd t,\label{eq:def_varepsnk264}
\\
\varepsilon_{n,k}^{(2,6,5)}
&=\sum_{j=0}^{n-1}\int_{t_j}^{t_{j+1}} \int_{t_j}^{t} \partial_{\gamma}^2\psi(\widehat{Z}_k(s))[(g\cdot\nabla)f_k(X_j)]^2[B(s)-B(t_j)]^2\dd s f_k(X_j)^2\dd t,\label{eq:def_varepsnk265}
\\
\varepsilon_{n,k}^{(2,6,6)}
&=\sum_{j=0}^{n-1}\int_{t_j}^{t_{j+1}} \int_{t_j}^{t}\bigl(\partial_s\psi+\frac12\partial_{\gamma}^2\psi\bigr)(\widehat{Z}_k(s))f_k(X_j)^2\dd s f_k(X_j)^2\dd t.\label{eq:def_varepsnk266}
\end{align}

In the sequel, the error terms $\varepsilon_{n,k}^{(2,6,\iota)}$ for $\iota=1,\ldots,6$ are treated separately. The arguments are similar to those employed to deal with the error terms $\varepsilon_{n,k}^{(2,1,\iota)}$ for $\iota=1,\ldots,6$, the details are provided for completeness.

$\bullet$ {\bf Treatment of the error term $\varepsilon_{n,k}^{(2,6,1)}$}.

Recall that $X_j\in\mathcal{D}$ almost surely, and that the mapping $f_k$ is bounded on $\mathcal{D}$. Therefore applying the Cauchy--Schwarz inequality, one has
\begin{align*}
\E[|\varepsilon_{n,k}^{(2,6,1)}|^2]&\le C(T)\sum_{j=0}^{n-1}\int_{t_j}^{t_{j+1}}\E[\bigl(\int_{t_j}^{t}|\partial_{\gamma}\psi (\widehat{Z}_k(s))|\dd s\bigr)^2]\dd t\\
&\le C(T)\sum_{j=0}^{n-1}\int_{t_j}^{t_{j+1}}(t-t_j)\int_{t_j}^{t}\E[|\partial_{\gamma}\psi (\widehat{Z}_k(s))|^2]\dd s\dd t.
\end{align*}
Applying the inequality~\eqref{eq:lemauxexpo3} from Lemma~\ref{lem:auxexpo3} with $\rho=\partial_\gamma\psi$, one obtains the upper bounds
\begin{align}
\E[|\varepsilon_{n,k}^{(2,6,1)}|^2]&\le C(T)\sum_{j=0}^{n-1}\int_{t_j}^{t_{j+1}}(t-t_j)^2\dd t\nonumber\\
&\le C(T)\dt^2.\label{eq:bound_varepsnk261}
\end{align}

$\bullet$ {\bf Treatment of the error term $\varepsilon_{n,k}^{(2,6,2)}$}.

Applying the stochastic Fubini theorem, one has
\begin{align*}
\varepsilon_{n,k}^{(2,6,2)}&=\sum_{j=0}^{n-1}\int_{t_j}^{t_{j+1}} \int_{t_j}^{t}\partial_{\gamma} \psi(\widehat{Z}_k(s))f_k(X_j)^3\dd t\dd B(s)\\
&=\sum_{j=0}^{n-1}\int_{t_j}^{t_{j+1}} (t_{j+1}-s)\partial_{\gamma} \psi(\widehat{Z}_k(s))f_k(X_j)^3\dd B(s).
\end{align*}
Recall that $X_j\in\mathcal{D}$ almost surely, and that the mapping $f_k$ is bounded on $\mathcal{D}$.
Applying the It\^o isometry formula, one has 
\begin{align*}
\E[|\varepsilon_{n,k}^{(2,6,2)}|^2]&=\sum_{j=0}^{n-1}\int_{t_j}^{t_{j+1}}(t_{j+1}-s)^2\E[|\partial_{\gamma}^2 \psi(\widehat{Z}_k(s))|^2|f_k(X_j)|^6]\dd s\\
&\le C\dt^2\sum_{j=0}^{n-1}\int_{t_j}^{t_{j+1}}\E[|\partial_{\gamma}^2\psi(\widehat{Z}_k(s))|^2]\dd s.
\end{align*}
Applying the inequality~\eqref{eq:lemauxexpo3} from Lemma~\ref{lem:auxexpo3} from Lemma~\ref{lem:auxexpo3} with $\rho=\partial_\gamma^2\psi$, for $\dt\in(0,\overline{\dt})$ one obtains the upper bound
\begin{equation}\label{eq:bound_varepsnk262}
\E[|\varepsilon_{n,k}^{(2,6,2)}|^2]\le C(T)\dt^2.
\end{equation}

$\bullet$ {\bf Treatment of the error term $\varepsilon_{n,k}^{(2,6,3)}$}.

Applying the stochastic Fubini theorem, one has
\begin{align*}
\varepsilon_{n,k}^{(2,6,3)}&=\sum_{j=0}^{n-1}\int_{t_j}^{t_{j+1}} \int_{t_j}^{t}\partial_{\gamma}\psi(\widehat{Z}_k(s))(g\cdot\nabla)f_k(X_j)[B(s)-B(t_j)]f_k(X_j)^2\dd t\dd B(s)\\
&=\sum_{j=0}^{n-1}\int_{t_j}^{t_{j+1}} (t_{j+1}-s)\partial_{\gamma}\psi(\widehat{Z}_k(s))(g\cdot\nabla)f_k(X_j)[B(s)-B(t_j)]f_k(X_j)^2\dd B(s).
\end{align*}
Recall that $X_j\in\mathcal{D}$ almost surely, and that the mappings $\bigl(g\cdot\nabla\bigr)f_k$ and $f_k$ are bounded on $\mathcal{D}$.
Applying the It\^o isometry formula and the Cauchy--Schwarz inequality, one obtains
\begin{align*}
\E[|\varepsilon_{n,k}^{(2,6,3)}|^2]&=\sum_{j=0}^{n-1}\int_{t_j}^{t_{j+1}}(t_{j+1}-s)^2\E[|\partial_{\gamma}\psi(\widehat{Z}_k(s))(g\cdot\nabla)f_k(X_j)[B(s)-B(t_j)]f_k(X_j)^2|^2]\dd s\\
&\le C\dt^2\sum_{j=0}^{n-1}\int_{t_j}^{t_{j+1}}\E[|\partial_{\gamma}\psi(\widehat{Z}_k(s))|^2|B(s)-B(t_j)|^2]\dd s\\
&\le C\dt^2\sum_{j=0}^{n-1}\int_{t_j}^{t_{j+1}}\bigl(\E[|\partial_{\gamma}\psi(\widehat{Z}_k(s))|^4]\bigr)^{\frac12}\bigl(\E[|B(s)-B(t_j)|^4]\bigr)^{\frac12}\dd s.
\end{align*}
Applying the inequality~\eqref{eq:lemauxexpo3} from Lemma~\ref{lem:auxexpo3} with $\rho=\partial_\gamma\psi$, for $\dt\in(0,\overline{\dt})$ one obtains the upper bounds
\begin{align}
\E[|\varepsilon_{n,k}^{(2,6,3)}|^2]&\le C\dt^2\sum_{j=0}^{n-1}\int_{t_j}^{t_{j+1}}\bigl(\E[|\partial_{\gamma}\psi(\widehat{Z}_k(s))|^4]\bigr)^{\frac12}(s-t_j)\dd s\nonumber\\
&\le C\dt^3\sum_{j=0}^{n-1}\int_{t_j}^{t_{j+1}}\bigl(\E[|\partial_{\gamma}\psi(\widehat{Z}_k(s))|^4]\bigr)^{\frac12}\dd s\nonumber\\
&\le C(T)\dt^3.\label{eq:bound_varepsnk263}
\end{align}

$\bullet$ {\bf Treatment of the error term $\varepsilon_{n,k}^{(2,6,4)}$}.

Recall that $X_j\in\mathcal{D}$ almost surely, and that the mappings $f_k$ and $\bigl(g\cdot\nabla\bigr)f_k$ are bounded on $\mathcal{D}$. Applying the Cauchy--Schwarz inequality, one has
\begin{align*}
\E[|\varepsilon_{n,k}^{(2,6,4)}|^2]&\le C(T)\sum_{j=0}^{n-1}\int_{t_j}^{t_{j+1}}\E[\Big|\int_{t_j}^{t}\partial_{\gamma}^2\psi(\widehat{Z}_k(t))f_k(X_j)^3(g\cdot\nabla)f_k(X_j)(B(s)-B(t_j))\dd s\Big|^2]\dd t\\
&\le C(T)\sum_{j=0}^{n-1}\int_{t_j}^{t_{j+1}}(t-t_j)\int_{t_j}^{t}\E[|\partial_{\gamma}^2\psi(\widehat{Z}_k(t))|^2|B(s)-B(t_j)|^2]\dd s\dd t\\
&\le C(T)\sum_{j=0}^{n-1}\int_{t_j}^{t_{j+1}}(t-t_j)\int_{t_j}^{t}\bigl(\E[|\partial_{\gamma}^2\psi(\widehat{Z}_k(t))|^4\bigr)^{\frac12}\bigl(\E[|B(s)-B(t_j)|^4]\bigr)^{\frac12}\dd s\dd t.
\end{align*}
Applying the inequality~\eqref{eq:lemauxexpo3} from Lemma~\ref{eq:lemauxexpo3} with $\rho=\partial_\gamma^2\psi$, one obtains the upper bounds
\begin{align}
\E[|\varepsilon_{n,k}^{(2,6,4)}|^2]&\le C(T)\sum_{j=0}^{n-1}\int_{t_j}^{t_{j+1}}(t-t_j)^2\int_{t_j}^{t}\bigl(\E[|\partial_{\gamma}^2\psi(\widehat{Z}_k(t))|^4\bigr)^{\frac12}\dd s\dd t\nonumber\\
&\le C(T)\sum_{j=0}^{n-1}\int_{t_j}^{t_{j+1}}(t-t_j)^3\dd t\nonumber\\
&\le C(T)\dt^3.\label{eq:bound_varepsnk264}
\end{align}

$\bullet$ {\bf Treatment of the error term $\varepsilon_{n,k}^{(2,6,5)}$}.

Recall that $X_j\in\mathcal{D}$ almost surely, and that the mappings $\bigl(g\cdot\nabla\bigr)f_k$ and $f_k^0$ are bounded on $\mathcal{D}$. Applying the Cauchy--Schwarz inequality, one has
\begin{align*}
\E[|\varepsilon_{n,k}^{(2,6,5)}|^2]&\le C(T)\sum_{j=0}^{n-1}\int_{t_j}^{t_{j+1}}\E[\Big|\int_{t_j}^{t}\partial_{\gamma}^2\psi(\widehat{Z}_k(s))[(g\cdot\nabla)f_k(X_j)]^2[B(s)-B(t_j)]^2f_k(X_j)^2\dd s\Big|^2]\dd t\\
&\le C(T)\sum_{j=0}^{n-1}\int_{t_j}^{t_{j+1}}(t-t_j)\int_{t_j}^{t}\E[|\partial_{\gamma}^2\psi(\widehat{Z}_k(s))|^2|B(s)-B(t_j)|^4]\dd s\dd t\\
&\le C(T)\sum_{j=0}^{n-1}\int_{t_j}^{t_{j+1}}(t-t_j)\int_{t_j}^{t}\bigl(\E[|\partial_{\gamma}^2\psi(\widehat{Z}_k(t))|^4\bigr)^{\frac12}\bigl(\E[|B(s)-B(t_j)|^8]\bigr)^{\frac12}\dd s\dd t.
\end{align*}
Applying the inequality~\eqref{eq:lemauxexpo3} from Lemma~\ref{lem:auxexpo3} with $\rho=\partial_\gamma^2\psi$, for $\dt\in(0,\overline{\dt})$, one obtains the upper bounds
\begin{align}
\E[|\varepsilon_{n,k}^{(2,6,5)}|^2]&\le C(T)\sum_{j=0}^{n-1}\int_{t_j}^{t_{j+1}}(t-t_j)^3\int_{t_j}^{t}\bigl(\E[|\partial_{\gamma}^2\psi(\widehat{Z}_k(t))|^4\bigr)^{\frac12}\dd s\dd t\nonumber\\
&\le C(T)\sum_{j=0}^{n-1}\int_{t_j}^{t_{j+1}}(t-t_j)^4\dd t\nonumber\\
&\le C(T)\dt^4.\label{eq:bound_varepsnk265}
\end{align}

$\bullet$ {\bf Treatment of the error term $\varepsilon_{n,k}^{(2,6,6)}$}.

Recall that $X_j\in\mathcal{D}$ almost surely, and that the mappings $f_k$ is bounded on $\mathcal{D}$. Applying the Cauchy--Schwarz inequality, one has
\begin{align*}
\E[|\varepsilon_{n,k}^{(2,1,6)}|^2]&\le C(T)\sum_{j=0}^{n-1}\int_{t_j}^{t_{j+1}}\E[\Big|\int_{t_j}^{t}\bigl(\partial_s\psi+\frac12\partial_\gamma^2\psi\bigr)(\widehat{Z}_k(s))f_k(X_j)^4\dd s\Big|^2]\dd t\\
&\le C(T)\sum_{j=0}^{n-1}\int_{t_j}^{t_{j+1}}(t-t_j)\int_{t_j}^{t}\E[|\bigl(\partial_s\psi+\frac12\partial_\gamma^2\psi\bigr)(\widehat{Z}_k(s))|^2]\dd s\dd t.
\end{align*}
Applying the inequality~\eqref{eq:lemauxexpo3} from Lemma~\ref{lem:auxexpo3} with $\rho=\partial_s\psi+\frac12\partial_\gamma^2\psi$, one obtains the upper bounds
\begin{align}
\E[|\varepsilon_{n,k}^{(2,6,6)}|^2]&\le C(T)\sum_{j=0}^{n-1}\int_{t_j}^{t_{j+1}}(t-t_j)^2\dd s\dd t\nonumber\\
&\le C(T)\dt^2.\label{eq:bound_varepsnk266}
\end{align}

$\bullet$ {\bf Conclusion.}

Recalling the decomposition~\eqref{eq:decomp_varepsnk26} of the error term $\varepsilon_{n,k}^{(2,6)}$ and combining the upper bounds~\eqref{eq:bound_varepsnk261},~\eqref{eq:bound_varepsnk262},~\eqref{eq:bound_varepsnk263},~\eqref{eq:bound_varepsnk264},~\eqref{eq:bound_varepsnk265} and~\eqref{eq:bound_varepsnk266} for the error terms defined by~\eqref{eq:def_varepsnk261},~\eqref{eq:def_varepsnk262},~\eqref{eq:def_varepsnk263},~\eqref{eq:def_varepsnk264},~\eqref{eq:def_varepsnk265} and~\eqref{eq:def_varepsnk266}, one finally obtains the following result: there exists $C(T)\in(0,\infty)$ such that
\begin{equation}\label{eq:bound_varepsnk26}
\sum_{k=1}^{d}\underset{1\le n\le N}\sup~\E[|\varepsilon_{n,k}^{(2,6)}|^2]\le C(T)\dt^2.
\end{equation}
This concludes the proof of the upper bound~\eqref{eq:lem_varepsilon2} for the error term $\varepsilon_{n,k}^{(2,6)}$.
\end{proof}

\subsubsection{Proof of Lemma~\ref{lem:varepsilon3}}

The objective of this section is to prove the upper bounds~\eqref{eq:lem_varepsilon2} for the error terms $\varepsilon_{n,k}^{(3,1)}$, $\varepsilon_{n,k}^{(3,2)}$ and $\varepsilon_{n,k}^{(3,3)}$, defined by~\eqref{eq:def_varepsnk31},~\eqref{eq:def_varepsnk32} and~\eqref{eq:def_varepsnk33}, related to the comparison of the exact and numerical solutions.

\begin{proof}[Proof of Lemma~\ref{lem:varepsilon3}]

The error terms $\varepsilon_{n,k}^{(3,1)}$, $\varepsilon_{n,k}^{(3,2)}$ and $\varepsilon_{n,k}^{(3,3)}$ are treated separately.

$\bullet$ {\bf Treatment of the error term $\varepsilon_{n,k}^{(3,1)}$.}

The error term $\varepsilon_{n,k}^{(3,1)}$ is defined by~\eqref{eq:def_varepsnk31}. he mapping $g^0_k$ is Lipschitz continuous on $\mathcal{D}$. Applying the Cauchy--Schwarz inequality, one obtains 
\begin{align}
\E[|e_{n,k}^{(3,1)}|^2]&\le C(T)\dt \sum_{j=0}^{n-1}\E[\|X_j-X(t_j)\|^2]\nonumber\\
&\le C(T)\dt \sum_{j=0}^{n-1}\E[\|e_j\|^2].\label{eq:bound_varepsnk31}
\end{align}

$\bullet$ {\bf Treatment of the error term $\varepsilon_{n,k}^{(3,2)}$.}

The error term $\varepsilon_{n,k}^{(3,2)}$ is defined by~\eqref{eq:def_varepsnk32}. The mapping $g_k$ is Lipschitz continuous on $\mathcal{D}$. Applying the It\^o isometry formula, one obtains
\begin{align}
\E[|e_{n,k}^{(3,2)}|^2]&\le C(T)\dt \sum_{j=0}^{n-1}\E[\|X_j-X(t_j)\|^2]\\
&\le C(T)\dt \sum_{j=0}^{n-1}\E[\|e_j\|^2].\label{eq:bound_varepsnk32}
\end{align}

$\bullet$ {\bf Treatment of the error term $\varepsilon_{n,k}^{(3,3)}$.}

The error term $\varepsilon_{n,k}^{(3,3)}$ is defined by~\eqref{eq:def_varepsnk33}.  Applying the It\^o isometry formula, a conditional expectation argument and the identity $\E[|B(t)-B(t_j)|^2]=t-t_j$, one obtains 
\begin{align*}
\E[|e_{n,k}^{(3,3)}|^2]&\le C(T)\sum_{j=0}^{n-1}\int_{t_j}^{t_{j+1}} 
\E\left[ \left| \Bigl[\bigl(g\cdot \nabla\bigr)g_k(X_j)-\bigl(g\cdot\nabla\bigr)g_k(X(t_j))\Bigr]\bigl[B(t)-B(t_j)\bigr]\right|^2 \right]\dd t\\
&\leq C(T)\dt^2\sum_{j=0}^{n-1} \E\left[ \left| \Bigl[\bigl(g\cdot \nabla\bigr)g_k(X_j)-\bigl(g\cdot\nabla\bigr)g_k(X(t_j))\Bigr]\right|^2 \right].
\end{align*}
Finally, the mapping $(g\cdot\nabla)g_k$ is Lipschitz continuous on $\mathcal{D}$, therefore one obtains 
\begin{align}
\E[|e_{n,k}^{(3,3)}|^2]&\le C(T)\dt^2\sum_{j=0}^{n-1} \E[\|e_j\|^2]\nonumber\\
&\le C(T)\dt\sum_{j=0}^{n-1} \E[\|e_j\|^2].\label{eq:bound_varepsnk33}
\end{align}

$\bullet$ {\bf Conclusion.}

It suffices to combine the upper bounds~\eqref{eq:bound_varepsnk31},~\eqref{eq:bound_varepsnk32}and~\eqref{eq:bound_varepsnk33} for the error terms $\varepsilon_{n,k}^{(3,1)}$, $\varepsilon_{n,k}^{(3,2)}$ and $\varepsilon_{n,k}^{(3,3)}$ defined by~\eqref{eq:def_varepsnk31},~\eqref{eq:def_varepsnk32} and~\eqref{eq:def_varepsnk33} to obtain the inequality~\eqref{eq:lem_varepsilon3}. This concludes the proof of Lemma~\ref{lem:varepsilon3}.
\end{proof}

\subsection{Proof of Theorem~\ref{theo:stronghigherorder}}\label{sec:conclusionproofstronghigher}

\begin{proof}[Proof of Theorem~\ref{theo:stronghigherorder}]
Recalling the decomposition~\eqref{eq:decomp_enk} of the error term $e_{n,k}$, and combining the inequalities~\eqref{eq:lem_varepsilon1},~\eqref{eq:lem_varepsilon2} and~\eqref{lem:varepsilon3} from Lemmas~\ref{lem:varepsilon1},~\ref{lem:varepsilon2} and~\ref{lem:varepsilon3}, one obtains the following upper bound: there exists $C(T)\in(0,\infty)$ such that for all $n\in\{1,\ldots,N\}$ one has
\[
\E[\|e_n\|^2]=\sum_{k=1}^{d}\E[|e_{n,k}|^2]\le C(T)\dt^2+C(T)\dt\sum_{j=0}^{n-1} \E[\|e_j\|^2].
\]
Applying the discrete Gr\"onwall inequality then provides the strong error estimates~\eqref{eq:stronghigherorder} and the proof of Theorem~\ref{theo:stronghigherorder} is thus completed.
\end{proof}

\begin{appendix}
\section{Proof of Lemmas~\ref{lem:examples1} and~\ref{lem:examples2}}\label{app}

The mappings $\varphi$ and $\phi$ defined by~\eqref{eq:flowvarphi} and~\eqref{eq:flowphi} are the flows of the ordinary differential equations~\eqref{eq:y} and~\eqref{eq:y2} with vector fields $\sigma$ and $-\frac12\sigma'\sigma$. These vector fields are of class $\mathcal{C}^\infty$ and one has $\sigma(-1)=\sigma(+1)=0$. Therefore, one has the following properties:
\begin{itemize}
\item for all $y\in[-1,1]$ and all $s\in\R$, one has $\varphi(s,y)\in[-1,1]$ and $\phi(s,y)\in[-1,1]$ 
\item for all $y\in[-1,1]$, one has
\begin{align*}
&\varphi(0,y)=\phi(0,y)=0,\\
&\partial_s\varphi(0,y)=\sigma(y),\quad \partial_s\phi(0,y)=-\frac12(\sigma'\sigma)(y),\\
&\partial_{s}^2\varphi(0,y)=\frac12(\sigma'\sigma)(y).
\end{align*}
\end{itemize}

\begin{proof}[Proof of Lemma~\ref{lem:examples1}]
First, consider the integrator $\Phi$ defined by~\eqref{integratorA} from Example~\ref{exA}. The condition~\eqref{eq:condPhiDP} from Assumption~\ref{ass:integratorDP} is satisfied: for all $s\ge 0$, $\gamma\in\R$ and $y\in[-1,1]$, one has
\[
\Phi(s,\gamma,y)=\varphi(\gamma,\phi(s,y))\in[-1,1].
\]
To verify that Assumptions~\ref{ass:integrator} and~\ref{ass:integrator-higher} are satisfied, one needs to check the following properties.
\begin{itemize}
\item The condition~\eqref{eq:condPhi0} is satisfied: for all $y\in[-1,1]$,
\[
\Phi(0,0,y)=\varphi(0,\phi(0,y))=\phi(0,y)=y.
\]
\item The condition~\eqref{eq:condPhi1} is satisfied: for all $y\in[-1,1]$,
\[
\partial_\gamma\Phi(0,0,y)=\partial_s\varphi(0,\phi(0,y))=\sigma(\phi(0,y))=\sigma(y).
\]
\item The condition~\eqref{eq:condPhi-higher} is satisfied: for all $y\in[-1,1]$,
\begin{align*}
&\partial_s\Phi(0,0,y)=\partial_y\varphi(0,\phi(0,y))\partial_s\phi(0,y)=-\frac12\bigl(\sigma'\sigma\bigr)(y),\\
&\partial_\gamma^2\Phi(0,0,y)=\partial_s^2\varphi(\phi(0,y))=\frac12\bigl(\sigma'\sigma\bigr)(y).
\end{align*}
\item The condition~\eqref{eq:condPhi2} is satisfied as a straightforward consequence of the condition~\eqref{eq:condPhi-higher}.
\end{itemize}

Second, consider the integrator $\Phi$ defined by~\eqref{integratorB} from Example~\ref{exB}. The condition~\eqref{eq:condPhiDP} from Assumption~\ref{ass:integratorDP} is satisfied: for all $s\ge 0$, $\gamma\in\R$ and $y\in[-1,1]$, one has
\[
\Phi(s,\gamma,y)=\varphi(\gamma-\frac{\sigma'(y)s}{2},y)\in[-1,1].
\]
To verify that Assumptions~\ref{ass:integrator} and~\ref{ass:integrator-higher} are satisfied, one needs to check the following properties.
\begin{itemize}
\item The condition~\eqref{eq:condPhi0} is satisfied: for all $y\in[-1,1]$,
\[
\Phi(0,0,y)=\varphi(0,y)=y.
\]
\item The condition~\eqref{eq:condPhi1} is satisfied: for all $y\in[-1,1]$,
\[
\partial_\gamma\Phi(0,0,y)=\partial_s\varphi(0,y)=\sigma(y).
\]
\item The condition~\eqref{eq:condPhi-higher} is satisfied: for all $y\in[-1,1]$,
\begin{align*}
&\partial_s\Phi(0,0,y)=-\partial_s\varphi(0,y)\frac{\sigma'(y)}{2}=-\frac12\bigl(\sigma'\sigma\bigr)(y),\\
&\partial_\gamma^2\Phi(0,0,y)=\partial_s^2\varphi(y)=\frac12\bigl(\sigma'\sigma\bigr)(y).
\end{align*}
\item The condition~\eqref{eq:condPhi2} is satisfied as a straightforward consequence of the condition~\eqref{eq:condPhi-higher}.
\end{itemize}
This concludes the proof of Lemma~\ref{lem:examples1}.
\end{proof}

The derivatives of $\varphi$ and $\phi$ satisfy the following property: for all $p\in\N$, there exists $C_p\in(0,\infty)$ such that for all $y\in[-1,1]$ and all $s\in\R$ one has
\begin{equation}\label{eq:reg-varphi-phi}
\sum_{|\alpha|\le p}\left|\partial_s^{\alpha_s}\partial_y^{\alpha_y}\varphi(s,y)\right|+\sum_{|\alpha|\le p}\left|\partial_s^{\alpha_s}\partial_y^{\alpha_y}\phi(s,y)\right|\le e^{C_p|s|},
\end{equation}
where $\alpha=(\alpha_s,\alpha_y)\in\N_0^2$ and $|\alpha|=\alpha_s+\alpha_y$. The property~\eqref{eq:reg-varphi-phi} is a standard property for flows of ordinary differential equations with smooth vector fields. It suffices to establish first the property recursively for derivatives with respect to the variable $y$, and second the property when derivatives to the variable $s$ appear using the ordinary differential equations~\eqref{eq:y} and~\eqref{eq:y2}. The details are omitted.

\begin{proof}[Proof of Lemma~\ref{lem:examples2}]
The integrators $\Phi$ defined by~\eqref{integratorA} or~\eqref{integratorB} are written as compositions of the mappings $\varphi$, $\phi$ and $\sigma'$, therefore these integrators are of class $\mathcal{C}^\infty$.

For all $p\in\N$, the derivatives of $\Phi$ can be expressed in terms of the derivatives of the mappings $\varphi$, $\phi$ and $\sigma'$, which all satisfy the condition~\eqref{eq:reg-varphi-phi}. The existence of $\beta\in(0,\infty)$ (depending on $p$) such that $\|\Phi\|_{p,\beta}<\infty$ (given by~\eqref{eq:regPhi}) can be established by a recursion argument or by the application of the Fa\`a di Bruno formula. The details are omitted. The property can also be checked directly for the values of $p$ which are needed for Theorem~\ref{theo:strong} ($p=2$), Theorem~\ref{theo:weak} ($p=3$) and Theorem~\ref{theo:stronghigherorder} ($p=4$). This concludes the proof of Lemma~\ref{lem:examples2}.
\end{proof}

\end{appendix}

\section*{Acknowledgments}
This work was initiated thanks to the support of the SFVE-A program.
The work of DC was partially supported by the Swedish Research Council (VR) (projects nr. $2018-04443$
and $2024-04536$). The computations were performed on resources provided by
the National Academic Infrastructure for Supercomputing in Sweden (NAISS) at Vera, Chalmers e-Commons
at Chalmers University of Technology and partially funded by the Swedish Research Council
through grant agreement no. 2022-06725.

\end{document}